\title{A central limit theorem for the \slfv with selection}
\author{Rapha\"el Forien\footnote{CMAP - \'Ecole Polytechnique, Route de Saclay, 91128 Palaiseau Cedex, France, e-mail: \protect\url{raphael.forien@cmap.polytechnique.fr}}\: and Sarah Penington\footnote{Department of Statistics, University of Oxford, 1 South Parks Road, Oxford OX1 3TG, UK, e-mail: \protect\url{sarah.penington@sjc.ox.ac.uk}}}
\begin{document}

\maketitle

\begin{abstract}
We study the evolution of gene frequencies in a population living in $\R^d$, modelled by the \slfv with natural selection (\cite{barton_new_2010}, \cite{etheridge_rescaling_2014}).
We suppose that the population is divided into two genetic types, $a$ and $A$, and consider the proportion of the population which is of type $a$ at each spatial location.
If we let both the selection intensity and the fraction of individuals replaced during reproduction events tend to zero, the process can be rescaled so as to converge to the solution to a reaction-diffusion equation (typically the Fisher-KPP equation, \cite{etheridge_rescaling_2014}).
We show that the rescaled fluctuations converge in distribution to the solution to a linear \spde. 
Depending on whether offspring dispersal is only local or if large scale extinction-recolonization events are allowed to take place, the limiting equation is either the stochastic heat equation with a linear drift term driven by space-time white noise or the corresponding fractional heat equation driven by a coloured noise which is white in time.
If individuals are diploid (\textit{i.e.} either $AA$, $Aa$ or $aa$) and if natural selection favours heterozygous ($Aa$) individuals, a stable intermediate gene frequency is maintained in the population.
We give estimates for the asymptotic effect of random fluctuations around the equilibrium frequency on the local average fitness in the population.
In particular, we find that the size of this effect - known as the \textit{drift load} - depends crucially on the dimension $d$ of the space in which the population evolves, and is reduced relative to the case without spatial structure.

\paragraph*{AMS 2010 subject classifications.} \textit{Primary}: 60G57 60F05 60J25 92D10. \textit{Secondary}: 60G15.
\paragraph*{Key words:} Generalised Fleming-Viot process, population genetics, limit theorems, Fisher-KPP equation, stochastic heat equation.
\end{abstract}

\section*{Introduction}

Consider a population distributed across a geographical space (typically of dimension one or two). 
Suppose that each individual carries one of several possible versions (or \textit{alleles}) of a gene. 
How do the different allele frequencies evolve with time and how are they shaped by the main evolutionary forces, such as natural selection and migration?
To answer this question, early models from population genetics were adapted by G. Mal\'ecot  \cite{malecot_mathematiques_1948}, S. Wright \cite{wright_isolation_1943} and M. Kimura \cite{kimura_stepping-stone_1953} to include spatial structure.
These spatial models either considered subdivided populations reproducing locally and exchanging migrants at each generation or made inconsistent assumptions about the distribution of individuals across space.

In this work, we focus on a mathematical model for populations evolving in a spatial continuum, the \slfv (SLFV for short), originally proposed in \cite{etheridge_drift_2008}.
The main feature of this model is that instead of each individual carrying exponential clocks determining its reproduction and death times, reproduction times are specified by a \ppp of extinction-recolonization events.
At each of these events, some proportion - often denoted $u$ - of the individuals present in the region affected by the event is replaced by the offspring of an individual (the \textit{parent}) chosen within this region. 
(The proportion $u$ which is replaced is called the \textit{impact parameter}.)
We shall only consider cases where the region affected is a ($d$-dimensional) ball, and the \ppp specifies the time, centre and radius of reproduction events.
(Since we consider scaling limits, minor changes to this assumption would not change our results.) 
Natural selection can be included in the SLFV by introducing an independent \ppp of selective events which give an advantage to a particular type.
Multiple \textit{potential parents} are chosen in the region affected by the event and one is chosen to be the parent and have offspring in a biased way depending on their types.
The \textit{selection parameter} determines the rate of this \ppp.
A comprehensive survey of recent developments related to the SLFV can be found in \cite{barton_modeling_2013}.

Several works have focussed on characterising the behaviour of this model over large space and time scales, in the special case where only two types (or two alleles) $a$ and $A$ are present in the population.
In this case the state of the process is given by a map $q_t : \R^d\to[0,1]$ defined Lebesgue almost everywhere, where $q_t(x)$ denotes the proportion of type $a$ at location $x$ and at time $t$. 
We shall first consider the simplest form of selection when individuals are \textit{haploid}, i.e. each individual has one copy of the gene.
At selective events, two potential parents are chosen and if their types are different, the parent is the one which has type $A$.
In \cite{etheridge_rescaling_2014}, rescaling limits of this form of the \slfv with selection (SLFVS) have been obtained when both the impact parameter and the selection parameter tend to zero.
Earlier results on the large scale behaviour of the SLFV had already been established in \cite{berestycki_large_2013} in the neutral case (\textit{i.e.} without selection), but keeping the impact parameter macroscopic. 
The behaviour of the SLFVS in the corresponding regime is studied in \cite{etheridge_brownian_2015} and \cite{etheridge_branching_2015}.

The limiting process obtained by \cite{etheridge_rescaling_2014} turns out to be deterministic as soon as $d\geq 2$, and, when the reproduction events have bounded radius, it is given by the celebrated Fisher-KPP equation,
\begin{equation} \label{fisher_kpp_intro}
\deriv*{f_t}{t} = \frac{1}{2}\Delta f_t - s f_t(1-f_t).
\end{equation}
This result fits the original interpretation of this equation proposed by R. A. Fisher as a model for the spread of advantageous genes in a spatially distributed population \cite{fisher_wave_1937}. 
The \slfv with selection (SLFVS) can thus be thought of as a refinement of the Fisher-KPP equation, combining spatial structure and a random sampling effect at each generation - what biologists call \textit{genetic drift}.

In the present work we prove a slightly stronger form of convergence to this deterministic rescaling limit. We also study the fluctuations of the allele frequency about (an approximation of) $(f_t)_{t\geq 0}$. We find that if the impact parameter is sufficiently small compared to the selection parameter and the fluctuations are rescaled in the right way then in the limit they solve the following \spde,
\begin{equation} \label{eq:fluctuations}
\ld z_t = \left[ \frac{1}{2}\Delta z_t - s(1-2f_t)z_t \right]\ld t + \sqrt{f_t(1-f_t)}\,\ld W_t,
\end{equation}
where W is space-time white noise, and $f$ is the solution of \eqref{fisher_kpp_intro}. More detailed statements with the precise conditions on the parameters of the SLFVS are given in Section~\ref{sec:resu}.

A very similar result was proved by F. Norman in the non-spatial setting \cite{norman_approximation_1975} (see also \cite{norman_central_1974}, \cite{norman_ergodicity_1977} and \cite{norman_limit_1975}). 
Norman considered the Wright-Fisher model for a population of size $N$ under natural selection (see \cite{etheridge_mathematical_2009} for an introduction to such models). 
Let $p^N_n$ denote the proportion of individuals not carrying the favoured allele at generation $n$, and suppose that the selection parameter is given by $s_N=\varepsilon_N s$, with $\varepsilon_N\to 0$ and $\varepsilon_N N\to\infty$ as $N\to \infty$. 
(At each generation, individuals choose a parent of the favoured type with probability $\frac{(1+s_N)(1-p^N_n)}{1+s_N(1-p^N_n)}$.)
Norman showed that, as $N\to\infty$, $p^N_{\floor{t/\varepsilon_N}}$ converges to $g_t$, which satisfies
\begin{equation*}
\deriv{g_t}{t} = - s g_t(1-g_t).
\end{equation*}
(In the weak selection regime - \textit{i.e.} when $Ns_N=\bigO{1}$ - one recovers the classical Wright-Fisher diffusion.)
Furthermore, the fluctuations of $p^N_{t/\varepsilon_N}$ around $g_t$ are of order $(N\varepsilon_N)^{-1/2}$. 
More precisely, for $t=n\varepsilon_N$, $n\in\N$, set
\begin{equation*}
Z^N(t) = (N\varepsilon_N)^{1/2}\left( p^N_{t/\varepsilon_N}-g_t \right),
\end{equation*}
and define $Z^N(t)$ for all $t\geq 0$ by linear interpolation.
Theorem 2 in \cite{norman_approximation_1975} states that, as $N\to\infty$, $\proc{Z^N(t)}{t\geq 0}$ converges to the solution of the following stochastic differential equation,
\begin{equation*}
\ld z_t = -s(1-2g_t)z_t\ld t + \sqrt{g_t(1-g_t)}\ld B_t,
\end{equation*}
where $\proc{B}$ is a standard Brownian motion; note that $\proc{z}$ is a Gaussian diffusion.
A similar regime in the case of a neutral model with mutations was already studied by W.~Feller in \citep[Section 9]{feller_diffusion_1951}, who identified the limiting diffusion for the fluctuations around the equilibrium frequency. 

Norman's result can be extended to other classical models from population genetics, and in particular to continuous-time processes such as the Moran model and the (non-spatial) $\Lambda$-Fleming-Viot process (introduced in \cite{bertoin_stochastic_2003}). 
The necessary tools can be found mainly in \citep[Chapter 11]{ethier_markov_1986} (see also Chapter 6 of the same book) and in \cite{kurtz_limit_1971}.
In this paper we adapt these methods to the setting of the \slfv, with the necessary tools for \spde{s} taken from \cite{walsh_introduction_1986} (see also \cite{mueller_stochastic_1995} and \cite{de_masi_reaction-diffusion_1986}).

We also consider a second regime for the SLFVS to allow large scale extinction-recolonization events; we let the radius of reproduction events follow an $\alpha$-stable distribution truncated at zero. 
For this regime, as in \cite{etheridge_rescaling_2014}, we find the Fisher-KPP equation with non-local diffusion as a rescaling limit (\textit{i.e.} with a fractional Laplacian instead of the usual Laplacian). 
The Laplacian is also replaced by a fractional Laplacian in \eqref{eq:fluctuations}, the equation satisfied by the limiting fluctuations, and the noise $W$ becomes a coloured noise with spatial correlations of order $\abs{x-y}^{-\alpha}$ (see Subsection~\ref{subsec:resu:stable_radii}).

These results are valid for a general class of selection mechanisms, with modified versions of~\eqref{fisher_kpp_intro} and~\eqref{eq:fluctuations} (and our proof will cover the general case). As an application of our results on the fluctuations, we turn to a particular kind of selection mechanism.
Suppose a given gene is present in two different forms - denoted $A_1$ and $A_2$ - within a population.
Suppose also that each individual carries two copies of this gene (each inherited from one of two parents).
We say that individuals are \textit{diploid}, and \textit{homozygous} individuals are those who carry two copies of the same type ($A_1A_1$ or $A_2A_2$) while \textit{heterozygous} individuals carry one copy of each type ($A_1A_2$). 
\textit{Overdominance} occurs when the relative fitnesses of the three possible genotypes are as follows,
\[\begin{array}{ccc}
A_{1}A_{1} & A_{1}A_{2} & A_{2}A_{2} \\
1-s_{1} & 1 & 1-s_{2},
\end{array}\]
where $s_1$, $s_2>0$.
In words, heterozygous individuals produce more offspring than both types of homozygous individuals. 
In this setting, in an infinite population a stable intermediate allele frequency is expected to be maintained, preventing either type from disappearing.
If $q$ is the frequency of type $A_1$ and $p=1-q$ that of type $A_2$ and if mating is random, the respective proportions of the three genotypes will be $q^2$, $2qp$, $p^2$, hence the population cannot remain composed exclusively of heterozygous individuals. 
As a consequence, even when the stable equilibrium is reached, the mean fitness of the population will not be as high as the highest possible individual fitness (\textit{i.e.} that of heterozygous individuals). 
This fitness reduction is referred to as the \textit{segregation load}.

In finite populations, because of finite sample size, the allele frequency is never exactly at its optimum. 
This was the subject of a work by A. Robertson \cite{robertson_reduction_1970} who considered this specific configuration of the relative fitnesses. 
He argued that the mean fitness in a \textit{panmictic} population (\textit{i.e.} one with no spatial structure) with finite but relatively large size $N$ is reduced by a term of order $(4N)^{-1}$, irrespective of the strength of selection. 
This is due to a trade-off between genetic drift and natural selection. 
The stronger selection is, the quicker the allele frequency is pushed back to the equilibrium, but at the same time even a small step away from the optimal frequency is very costly in terms of mean fitness. 
On the other hand, if natural selection is relatively weak, the allele frequency can wander off more easily, but the  mean fitness of the population decreases more slowly. 
This reduction in the mean fitness due to genetic drift - which is added to the reduction from the segregation load - is called the \textit{drift load}.

Robertson's result can be made rigorous using tools found in \cite{norman_central_1974} and \cite{norman_markovian_1974}. 
We adapt these to our setting and study the same effect in spatially structured populations. 
We find that the spatial structure significantly reduces the drift load, in a way that depends crucially on dimension. 
It turns out that migration prevents the allele frequencies from straying too far away from the equilibrium frequency, because incoming migrants are on average close to this equilibrium.

The paper is laid out as follows. 
We define the \slfv for a haploid model with general frequency dependent selection and for a diploid model of overdominance in Section~\ref{sec:model}. 
In Section~\ref{sec:resu} we state the main convergence results for the SLFVS in the bounded radius and stable radius regimes and we present our estimate of the drift load in spatially structured populations.
In Section~\ref{sec:martingale_pbs}, we present the main ingredient of the proof: a martingale problem satisfied by the SLFVS.
At the end of Subsections~\ref{subsec:mart_pb:fixed} and~\ref{subsec:mart_pb:stable}, we state more general results on solutions to these martingale problems which imply our convergence results for the SLFVS.
Most of the remainder of the paper is dedicated to the proofs of these results. 
The central limit theorem in the bounded radius case is proved in Section~\ref{sec:proof_brownian}, while the stable regime is dealt with in Section~\ref{sec:proof_stable} (the two proofs share the same structure, but differ in the details of the estimates). 
Finally, the asymptotics of the drift load are derived in Section~\ref{sec:proof_drift_load}.

\section*{Acknowledgements}

The authors would like to thank Alison Etheridge for many helpful suggestions and comments and Amandine V\'eber for helpful feedback on an early draft of this paper.

\section{Definition of the model} \label{sec:model}

\subsection{The state space of the \slfv with selection}\label{subsec:state_space}

We now turn to a precise definition of the underlying model, the spatial $\Lambda$-Fleming-Viot process with selection on $\R^d$, starting with the state space of the process. At each time $t\geq 0$, $\{ q_t(x):x\in \R^d\}$ is a random function such that
$$q_t(x):=\text{proportion of type }a\text{ alleles at spatial position }x\text{ at time }t,$$
which is in fact defined up to a Lebesgue null set of $\R^d$. More precisely, let $\Xi$ be the quotient of the space of Lebesgue-measurable maps $f:\R^d\rightarrow [0,1]$ by the equivalence relation
$$ f\sim f' \iff Leb (\{x\in \R^d:f(x)\neq f'(x)\})=0.$$
We endow $\Xi$ with the topology of vague convergence: letting $\dual{f}=\intrd f(x) \phi(x) \ld x$, a sequence $\proc{f_n}{n}$ converges vaguely to $f\in\Xi$ if and only if $\dual{f_n}\cvgas{n}\dual{f}$ for any continuous and compactly supported function $\phi:\R^d\to\R$. A convenient metric for this topology is given by choosing a separating family $(\phi_n)_{n\geq 1}$ of smooth, compactly supported functions which are uniformly bounded in $L^1(\R^d)$. Then for $f,g\in \Xi$, 
\begin{equation} \label{d_defn_family}
d_\Xi(f,g)=\sum_{n\geq 1}\frac{1}{2^n}\left| \dual{f}{\phi_n}-\dual{g}{\phi_n}\right|
\end{equation}
defines a metric for the topology of vague convergence on $\Xi$. The SLFVS up to time $T$ is then going to be a $\sko{\Xi}$-valued random variable: a $\Xi$-valued process with \cadlag paths.
\begin{definition}\label{def_metric_xi}
For $T>0$, let $f,g\in\sko{\Xi}$ be a pair of \cadlag maps $\proc{f_t}{0\leq t\leq T}$, $\proc{g_t}{0\leq t\leq T}$ from $[0,T]$ to $\Xi$. Then
\begin{equation*}
d\left( f,g \right) = \sup_{t\in[0,T]}d_\Xi(f_t,g_t)
\end{equation*}
is a metric for the topology of uniform convergence on $\sko{\Xi}$.
\end{definition}
For more details, see Section 2.2 of \cite{veber_spatial_2015}.

\subsection{The \slfv with selection}\label{subsec:model}

Let us now define the dynamics of the process. Let $u\in (0,1]$ and $s\in [0,1]$, and let $\mu(dr)$ be a finite measure on $(0,\infty)$ satisfying
\begin{equation} \label{slfv_existence}
\int_0^\infty r^d \mu(\ld r)<\infty.
\end{equation} 
For $m\in \N$ and $w\in [0,1]$, let $\vec{B}^m_w$ be a vector of $m$ independent random variables taking the value $a$ with probability $w$ and $A$ otherwise.
Then let $F:[0,1]\rightarrow \R$ be a polynomial such that for some $m\in \N$ and $p:\{a,A\}^m\to [0,1]$, for each $w\in [0,1]$, 
\begin{equation} \label{selection_fn_cond}
w-F(w)=\E{p(\vec{B}^m_w)}.
\end{equation}
\begin{definition}[SLFVS, haploid case with general frequency dependent selection] \label{definition_slfv_selection_haploid}
Let $\Pi$ and $\Pi^S$ be two independent Poisson point processes on $\R_+\times \R^d\times (0,\infty)$ with intensity measures $(1-s)\,dt\otimes dx\otimes \mu(dr)$ and $s\,dt\otimes dx\otimes \mu(dr)$ respectively. 
The spatial $\Lambda$-Fleming-Viot process with selection for a haploid population with impact parameter $u$, radius of reproduction events given by $\mu(dr)$, selection parameter $s$ and selection function $F$ is defined as follows.
If $(t,x,r)\in \Pi$, a neutral event occurs at time $t$ within the ball $B(x,r)$:
\begin{enumerate}
\item Choose a location $y$ uniformly at random in $B(x,r)$ and sample a parental type $k\in \{a,A\}$ according to $q_{t^-}(y)$ (i.e. $k=a$ with probability $q_{t^-}(y)$).
\item Update $q$ as follows:
\begin{equation}\label{update_w}
\forall z \in \R^d, q_t(z)=q_{t^-}(z)+u\1{|x-z|<r}(\1{k=a}-q_{t^-}(z)). 
\end{equation}
\end{enumerate}
Similarly, if $(t,x,r)\in \Pi^S$, a selective event occurs at time $t$ inside $B(x,r)$:
\begin{enumerate}
\item Choose $m$ locations $y_1,\ldots,y_m$ independently uniformly at random in $B(x,r)$, sample a type $k_i$ at each location $y_i$ according to $q_{t^-}(y_i)$ and then let $k=a$ with probability $p(k_1,\ldots, k_m)$ and $k=A$ otherwise.
\item Update $q$ as in \eqref{update_w}.
\end{enumerate}
\end{definition}

Note that if we let $w=|B(x,r)|^{-1}\int_{B(x,r)}q_{t^-}(z)\,dz$, then at a neutral reproduction event, $\P{k=a}=w$ and at a selective event, $\P{k=a}=w-F(w)$. 

\begin{remark} \label{exist_uniq}
The existence of a unique $\Xi$-valued process following these dynamics under condition \eqref{slfv_existence} is proved in \citep[Theorem 1.2]{etheridge_rescaling_2014} in the special case $F(w)=w(1-w)$ (in the neutral case $s=0$, this was done in \cite{barton_new_2010}). In our general case, the condition on $w-F(w)$ allows us to define a dual process and hence prove existence and uniqueness in the same way as in \cite{etheridge_rescaling_2014}.
\end{remark}
We shall consider two different distributions $\mu$ for the radii of events,
\begin{enumerate}[label=\roman*)]
\item the fixed radius case~: $\mu(\ld r) = \delta_R(\ld r)$ for some $R>0$,
\item the stable radius case~: $\mu(\ld r)=\frac{\1{r\geq 1}}{r^{d+\alpha+1}}\ld r$ for a fixed $\alpha\in (0,2\wedge d)$.
\end{enumerate} 
In each case, \eqref{slfv_existence} is clearly satisfied.

We now give two variants of this definition corresponding to the two selection mechanisms discussed in the introduction. We begin with a model for a selective advantage for $A$ alleles in haploid reproduction.
\begin{definition}[SLFVS, haploid model, genic selection] \label{definition_slfv_selection}
The spatial $\Lambda$-Fleming-Viot process with genic selection with impact parameter $u$, radius of reproduction events given by $\mu(dr)$ and selection parameter $s$ is defined as in Definition~\ref{definition_slfv_selection_haploid} with $F(w)=w(1-w)$. In this case, $m=2$ and the function $p$ equals simply
\begin{equation*}
p(k_1, k_2) = \1{k_1 = k_2 = a}.
\end{equation*}
In other words, during selective reproduction events, two types are sampled in $B(x,r)$ and $k=a$ if and only if both types are $a$.
\end{definition}

We now define a variant of the SLFVS to model overdominance. 
Individuals are diploid and we study a gene which is present in two different forms within the population, denoted $A_{1}$ and $A_{2}$. 
For $t\geq 0$ and $x\in \R^d$, let
$$ q_{t}(x):= \text{ the proportion of the allele type }A_{1}\text{ at location }x\text{ at time }t.$$
(If $p_1$ is the proportion of $A_1 A_1$ indiduals and $p_H$ is the proportion of $A_1 A_2$ heterozygous individuals, then $q = p_1 + \frac{1}{2} p_H$.)
We assume that the relative fitnesses of the different genotypes are as follows:
\[\begin{array}{ccc}
A_{1}A_{1} & A_{1}A_{2} & A_{2}A_{2} \\
1-s_{1} & 1 & 1-s_{2}.
\end{array}\]
In other words, for an event $(t,x,r)$ in the SLFVS with $w=|B(x,r)|^{-1}\int_{B(x,r)}q_{t^-}(z)\,dz$, we want to choose parental types $(k_1, k_2)\in \{A_1,A_2\}^2$ at random with 
\begin{equation} \label{wright-fisher-selection}
\begin{aligned}
&\P{\{k_1,k_2\}=\{A_1, A_1\}}=\tfrac{(1-s_1)w^2}{1-s_1w^2-s_2(1-w)^2},
\\
&\P{\{k_1,k_2\}=\{A_1, A_2\}}=\tfrac{2w(1-w)}{1-s_1w^2-s_2(1-w)^2},
\\
&\P{\{k_1,k_2\}=\{A_2, A_2\}}=\tfrac{(1-s_2)(1-w)^2}{1-s_1w^2-s_2(1-w)^2}.
\end{aligned}
\end{equation}
Further, we suppose that, with probability $\nu_{1}$, the type $A_{1}$ alleles produced mutate to type $A_{2}$, and that, with probability $\nu_{2}$, the type $A_{2}$ mutate to type $A_{1}$ (this is a technical assumption to ensure that $q_t(x)\notin \{ 0,1\}$; we shall assume that $\nu_1$ and $\nu_2$ are small).

We are going to be interested in small values of $\nu_1$, $\nu_2$, $s_1$ and $s_2$.
We thus define the following model, which is an approximation of the one described by \eqref{wright-fisher-selection} to the first order in $s_i$ and $\nu_i$.

\begin{definition}[SLFVS, overdominance] \label{definition_slfv_diploid}
Suppose that $\nu_1+\nu_2+s_1+s_2 < 1$.
Let $\Pi$, $\Pi^{S_i}$ and $\Pi^{\nu_i}$, $i=1,2$ be five independent \ppp{s} on $\R_+\times \R^d\times (0,\infty)$ with respective intensity measures $(1-s_1-s_2-\nu_1-\nu_2) \, dt\otimes dx\otimes \mu(dr)$, $s_i \,dt\otimes dx\otimes \mu(dr)$ and $\nu_i \,dt\otimes dx\otimes \mu(dr)$.
The spatial $\Lambda$-Fleming-Viot process with overdominance with impact parameter $u$, radius of reproduction events given by $\mu$, selection parameters $s_1$, $s_2$ and mutation parameters $\nu_1$, $\nu_2$ is defined as follows. 
If $(t,x,r)\in\Pi$, a neutral event occurs at time $t$ in $B(x,r)$: 
\begin{enumerate}
\item Pick two locations $y_1$ and $y_2$ uniformly at random within $B(x,r)$ and sample one parental type $k_i \in \left\lbrace A_1, A_2 \right\rbrace$ at each location according to $q_{t^-}(y_i)$, independently of each other.
\item Update $q$ as follows:
\begin{equation} \label{eq:overdom_update}
\forall z\in \R^d, \quad q_{t}(z) = q_{t^{-}}(z) + u\inball{x}{z}\left(\tfrac{1}{2}(\1{k_1=A_{1}}+\1{k_2=A_{1}}) - q_{t^{-}}(z)\right).
\end{equation}
\end{enumerate}
If $(t,x,r)\in\Pi^{S_i}$, a selective event occurs at time $t$ in $B(x,r)$: 
\begin{enumerate}
\item Pick four locations uniformly at random within $B(x,r)$ and sample one type at each location, forming two pairs of types. If one pair is $\left\lbrace A_i, A_i \right\rbrace$, let $\left\lbrace k_1, k_2 \right\rbrace$ be the other pair; otherwise pick one pair at random, each with probability $1/2$. (If the two sampled pairs are $\left\lbrace A_i, A_i \right\rbrace$, then $\left\lbrace k_1, k_2 \right\rbrace = \left\lbrace A_i, A_i \right\rbrace$.)
\item Update $q$ as in \eqref{eq:overdom_update}.
\end{enumerate}
If $(t,x,r)\in\Pi^{\nu_i}$, a mutation event occurs at time $t$ in $B(x,r)$: 
\begin{enumerate}
\item Set $\left\lbrace k_1, k_2 \right\rbrace = \left\lbrace A_{3-i}, A_{3-i} \right\rbrace$, irrespective of the state of $q_{t^-}$. (In other words we suppose that the $A_i$ genes of the offspring mutate to type $A_{3-i}$.)
\item Update $q$ as in \eqref{eq:overdom_update}.
\end{enumerate}
\end{definition}
\begin{remark}
Similarly to the haploid case, existence and uniqueness for this process can be proved as in \cite{etheridge_rescaling_2014} using a dual process.
\end{remark}
We shall see in Section \ref{sec:martingale_pbs} that this process satisfies essentially the same martingale problem as the general haploid process in Definition~\ref{definition_slfv_selection_haploid} with $$ F(w)=w(1-w)(w-\frac{s_2}{s_1 +s_2})+\frac{\nu_1}{s_1+s_2}w -\frac{\nu_2}{s_1+s_2}(1-w). $$

\section{Statement of the results}\label{sec:resu}

In this section, we present our main results.
We consider the SLFVS as in Definitions~\ref{definition_slfv_selection_haploid} and~\ref{definition_slfv_diploid}, and we let the impact parameter and the selection and mutation parameters tend to zero.
On a suitable space and time scale (depending on the regime of the radii of reproduction events) the process $\proc{q^N}$ converges to a deterministic process.
We also characterise the limiting fluctuations of $\proc{q^N}$ about an approximation to this deterministic process as the solution to a \spde.

\subsection{Fixed radius of reproduction events}\label{subsec:resu:fixed_radius}

We begin by considering the regime in which the radii of the regions affected by reproduction events are bounded. 
We shall only give the proof in the case of fixed radius events; the proof for bounded radius events is the same but notationally awkward. 
Fix $u, s \in(0,1]$ and $R>0$, and choose $w_{0}:\R^{d}\to[0,1]$ with uniformly bounded spatial derivatives of up to the fourth order. 
Take two sequences $\proc{\varepsilon_{N}}{N\geq 1}$, $\proc{\delta_{N}}{N\geq 1}$ of positive real numbers in $(0,1]$ decreasing to zero, and set 
\begin{align*}
s_{N}=\delta_{N}^{2}s, && u_N=\varepsilon_{N}u, && r_N = \delta_N R, && q_{0}^{N}(x) = w_{0}(\delta_{N}x).
\end{align*}
%In the diploid setting of Definition~\ref{definition_slfv_diploid}, for some fixed $s_1$, $s_2$, $\nu_1$ and $\nu_2$, also set
%\begin{align*}
%s_{i,N} = \delta_N^2s_i, && \nu_{i,N} = \delta_N^2\nu_i, && s_N=\delta_N^2(s_1+s_2)
%\end{align*}
%for $i=1,2$. 
Let $\mu(dr)=\delta_R$, and let $F:\R\to\R$ be a smooth, bounded function with bounded first and second derivatives such that $F|_{[0,1]}$ satisfies \eqref{selection_fn_cond} for some $m\in \N$ and $p:\{a,A\}^m\to [0,1]$.
Then for $N\geq 1$, 
%in the haploid case of Definition~\ref{definition_slfv_selection_haploid}, 
let $\proc{q^{N}}$ be the \slfv with selection following the dynamics of Definition~\ref{definition_slfv_selection_haploid} with impact parameter $u_N$, radius of reproduction events $R$, selection parameter $s_{N}$ and selection function $F$ started from the initial condition $q_{0}^{N}$.

%Similarly, in the diploid case of Definition~\ref{definition_slfv_diploid}, let $\proc{q^{N}}$ be the \slfv following the dynamics of Definition~\ref{definition_slfv_diploid} with impact parameter $u_N$, radius of reproduction events $R$ and selection parameters $s_{i,N}$ and mutation parameters $\nu_{i,N}$ started from the initial condition $q_{0}^{N}$, and let $F(x)=x(1-x)(s_{1}x-s_{2}(1-x)) - \nu_{1}x+\nu_{2}(1-x) $. 

Define the rescaled process $\proc{\mathbf{q}^N}$ by setting:
\begin{equation}\label{definition_rescaled_process}
\forall x\in\R^{d},t\geq 0, \:\mathbf{q}^{N}_{t}(x) = q^{N}_{t/(\varepsilon_{N}\delta_{N}^{2})}(x/\delta_{N}).
\end{equation}

We justify this scaling as follows.
Consider an individual sitting at location $x$ at time $t$. 
It finds itself within a region affected by a reproduction event at rate $|B(0,R)|$.
The probability that it dies and is replaced by a new individual is $u_N = \varepsilon_N u$, so, if we rescale time by $1/\varepsilon_N$, this will happen at rate $\bigO{1}$.
Also, we are going to see later (see Section~\ref{subsec:mart_pb:fixed}) that the reproduction events act like a discrete heat flow on the allele frequencies.
We rescale time further by $1/\delta_N^2$ and space by $1/\delta_N$, which corresponds to the diffusive scaling of this discrete heat flow.
Since selective events also take place at rate $\bigO{\delta_N^2}$, this is the right scaling to consider in order to observe the effects of both migration and selection in the limit.
(Due to this diffusive scaling we shall refer to this regime as the Brownian case.)

We need to introduce some notation. Let $L^{1,\infty}(\R^d)$ denote the space of bounded and integrable real-valued functions on $\R^d$. For $r>0$, we set $V_{r} = \abs{B(0,r)}$ and, for $x,y\in\R^{d}$, 
\begin{equation} \label{ball_intersect}
V_{r}(x,y)=\abs{B(x,r)\cap B(y,r)}.
\end{equation}
For $\phi:\R^{d}\to\R$ and $x\in\R^{d}$, set
\begin{equation*}
\avg{\phi}(x,r)=\frac{1}{V_{r}}\int_{B(x,r)}\phi(y)\ld y.
\end{equation*}
When there is no ambiguity, we shall not specify the radius $r$ and simply write $\avg{\phi}(x)$. 
This notation will be used throughout this paper and formulae will routinely involve averages of averages, \textit{etc}. 
For example we also write
\begin{equation} \label{defn_double_avg}
\dbavg{\phi}(x,r) = \frac{1}{V_r^2}\intbr{x}\intbr{y}\phi(z)\ld z\ld y.
\end{equation}
Let us define a linear operator $\L$ by setting
\begin{equation}\label{definition_calL}
\L\phi(x) = \frac{d+2}{2r^{2}}\left(\dbavg{\phi}(x,r)-\phi(x)\right).
\end{equation}
Finally let $\calS(\R^d)$ denote the Schwartz space of rapidly decreasing smooth functions on $\R^d$, whose derivatives of all orders are also rapidly decreasing. 
Accordingly, let $\calS'(\R^d)$ denote the space of tempered distributions.

Let $f^{N} : \R_+ \times \R^d \to \R$ be a deterministic function defined as the solution to
\begin{equation}\label{definition:centering_term_fixed_radius}
\left\lbrace
\begin{aligned}
& \deriv*{\ft}{t} = uV_{R}\left[ \frac{2R^{2}}{d+2}\L{r_N}\ft - s\avg{F(\avg{\ft})}{r_N}\right], \\
& f^N_{0} = w_{0}.
\end{aligned}
\right.
\end{equation}
One can check that this defines a unique function by a Picard iteration.

As stated in the introduction, the \slfv with genic selection with fixed radius of reproduction events converges, under what can be considered a diffusive scaling, to the solution of the Fisher-KPP equation (as in \cite{etheridge_rescaling_2014} for $d\geq 2$) while the limiting fluctuations are given by the solution to a \spde which generalises the result obtained in \cite{norman_approximation_1975}. 
We can now give a precise statement of this result for general frequency dependent selection.
The same result holds for radius distributions given by a finite measure $\mu$ on a bounded interval.
 
\begin{thm}[Central Limit Theorem for the SLFVS with fixed radius of reproduction events]\label{thm:clt_slfv_fixed_radius}
Suppose that $\varepsilon_{N}=\littleO{\delta_{N}^{d+2}}$, then the process $\proc{\mathbf{q}^{N}}$ converges in $L^{1}$ and in probability (for the metric $d$ of Definition~\ref{def_metric_xi}) to the deterministic solution of the following PDE,
\begin{equation*}
\left\lbrace
\begin{aligned}
& \deriv*{f_{t}}{t} = uV_{R}\left[\frac{R^{2}}{d+2}\Delta f_{t} - s F(f_{t})\right], \\
& f_{0} = w_{0}.
\end{aligned}
\right.
\end{equation*}
In addition, 
\begin{equation*}
\zt = (\varepsilon_{N}\delta_{N}^{d-2})^{-1/2}(\mathbf{q}^{N}_{t}-\ft)
%\zt = \frac{\mathbf{w}^{N}_{t}-\ft}{\sqrt{u_{N}r_{N}^{d-2}}}
\end{equation*}
defines a sequence of distribution-valued processes converging in distribution in $\sko{\calS'(\R^{d})}$ to the solution of the following \spde,
\begin{equation*}
\left\lbrace
\begin{aligned}
&\ld z_{t} = uV_{R} \left[ \frac{R^{2}}{d+2} \Delta z_{t} - s F'(f_{t} )z_{t} \right]\ld t + uV_{R} \sqrt{f_{t}(1-f_{t})} \ld W_{t},\\
&z_0=0,
\end{aligned}
\right.
\end{equation*}
where $W$ is a space-time white noise.
\end{thm}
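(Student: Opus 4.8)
Everything rests on the martingale problem for $\proc{\mathbf{q}^N}$ established in Section~\ref{sec:martingale_pbs}: for each $\phi\in\calS(\R^d)$, the process $\langle\mathbf{q}^N_t,\phi\rangle$ is a semimartingale whose drift is exactly the right-hand side of \eqref{definition:centering_term_fixed_radius} evaluated at $\mathbf{q}^N$ rather than $f^N$ (same operator $\mathcal{L}_{r_N}$, same local averages over balls of radius $r_N$), and whose martingale part $M^{N,\phi}$ has predictable bracket asymptotically equal to $\varepsilon_N\delta_N^{d-2}\int_0^t(uV_R)^2\langle\overline{\mathbf{q}^N_s}(1-\overline{\mathbf{q}^N_s}),\phi^2\rangle\ld s$. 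The key structural point is that $f^N$ is chosen to solve \eqref{definition:centering_term_fixed_radius} \emph{exactly}: the drift of $\mathbf{q}^N-f^N$ then carries no discretisation error, only the difference of the two nonlinear terms. The required facts about martingale measures and linear \spde{s} are taken from \cite{walsh_introduction_1986}.

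\textbf{Convergence to $f$.} Subtracting the equation for $f^N$, using the Lipschitz property of $F$, Doob's $L^2$ inequality and the bracket bound $\langle M^{N,\phi}\rangle_T=\bigO{\varepsilon_N\delta_N^{d-2}}$, a Gronwall argument gives $\E{\sup_{t\le T}|\langle\mathbf{q}^N_t-f^N_t,\phi_n\rangle|^2}\to 0$ for each $\phi_n$ in the separating family of \eqref{d_defn_family}, hence convergence of $\mathbf{q}^N$ to $f^N$ in probability for the metric $d$, and (the state space being bounded) in $L^1$. Since $r_N=\delta_NR\to 0$ one has $\mathcal{L}_{r_N}\phi\to\tfrac12\Delta\phi$ for smooth $\phi$ and the averages converge to the identity, so a second Gronwall estimate gives $f^N\to f$, the solution of the stated Fisher--KPP equation.

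\textbf{The central limit theorem.} Set $z^N_t=(\varepsilon_N\delta_N^{d-2})^{-1/2}(\mathbf{q}^N_t-f^N_t)$, so $z^N_0=0$. Subtracting \eqref{definition:centering_term_fixed_radius} from the decomposition of $\mathbf{q}^N$, dividing by $(\varepsilon_N\delta_N^{d-2})^{1/2}$ and Taylor-expanding $F$ about $f^N$ to first order gives, for $\phi\in\calS(\R^d)$,
\[
\langle z^N_t,\phi\rangle=\int_0^t\big[uV_R\tfrac{2R^2}{d+2}\langle z^N_s,\mathcal{L}_{r_N}\phi\rangle-uV_Rs\,\langle\overline{F'(\overline{f^N_s})\,\overline{z^N_s}},\phi\rangle\big]\ld s+\widetilde M^{N,\phi}_t+\mathcal{E}^{N,\phi}_t,
\]
where $\widetilde M^{N,\phi}=(\varepsilon_N\delta_N^{d-2})^{-1/2}M^{N,\phi}$ has bracket converging, by the first part, to $\int_0^t(uV_R)^2\langle f_s(1-f_s),\phi^2\rangle\ld s$, and $\mathcal{E}^{N,\phi}$ collects the error terms (the second-order Taylor remainder and the errors from replacing $\mathcal{L}_{r_N}$ by $\tfrac12\Delta$ and the local averages by the identity). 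One first proves tightness of $\proc{z^N}$ in $\sko{\calS'(\R^d)}$ by Mitoma's theorem, reducing it to tightness of each $\proc{\langle z^N_\cdot,\phi\rangle}$ in $\sko{\R}$; this follows from a uniform second-moment bound $\sup_{N,\,t\le T}\E{\langle z^N_t,\phi\rangle^2}<\infty$ together with the Aldous--Rebolledo criterion, the jumps of $\widetilde M^{N,\phi}$ being $\bigO{(\varepsilon_N\delta_N^{d+2})^{1/2}}\to 0$ so that limit points are continuous. Passing to the limit along a convergent subsequence in the displayed identity (using $\mathcal{L}_{r_N}\phi\to\tfrac12\Delta\phi$, $f^N\to f$, convergence of the averages to the identity and $\mathcal{E}^{N,\phi}\to 0$) shows that any limit point $z$ satisfies: $\langle z_t,\phi\rangle-\int_0^t\langle z_s,uV_R(\tfrac{R^2}{d+2}\Delta\phi-sF'(f_s)\phi)\rangle\ld s$ is a continuous martingale with bracket $\int_0^t(uV_R)^2\langle f_s(1-f_s),\phi^2\rangle\ld s$, and with the correct cross-brackets, which identify the noise as a space-time white noise modulated by $\sqrt{f_s(1-f_s)}$. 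Because this martingale problem is linear with deterministic coefficients it has a unique solution — the stochastic convolution against the time-inhomogeneous heat propagator, cf.\ \cite{walsh_introduction_1986} — so the whole sequence $\proc{z^N}$ converges in distribution to it.

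\textbf{Main obstacle.} The crux is the control of $\mathcal{E}^{N,\phi}$, whose dominant contribution is the second-order Taylor remainder $-uV_Rs\,(\varepsilon_N\delta_N^{d-2})^{1/2}\int_0^t\langle\overline{\tfrac12F''(\xi^N_s)(\overline{z^N_s})^2},\phi\rangle\ld s$. Bounding it requires an estimate on $\E{\overline{z^N_s}(y)^2}$, the variance of a \emph{local average over a ball of radius $r_N$} of the fluctuation field. Since the limit field is genuinely distribution-valued for $d\ge 2$, this variance is not bounded uniformly in $N$; the natural estimate — bounding it by the squared $L^2$-norm $V_{r_N}^{-1}=\bigO{\delta_N^{-d}}$ of the normalised indicator of the ball — makes the remainder of order $\varepsilon_N^{1/2}\delta_N^{-(d+2)/2}$, and it is precisely to make this vanish that the hypothesis $\varepsilon_N=\littleO{\delta_N^{d+2}}$ is imposed (the same estimate also controls the analogous remainder in the law of large numbers for $\mathbf{q}^N$). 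Establishing the bound on $\E{\overline{z^N_s}(y)^2}$ itself — and, more generally, the negative-Sobolev-space moment bounds needed for tightness — relies on the mild (Duhamel) formulation of the $z^N$-equation, the $L^2$-contractivity of the semigroup of $\mathcal{L}_{r_N}$, and a bootstrap in which a preliminary weak second-moment bound is used to dominate $\mathcal{E}^{N,\phi}$ and is then upgraded; this is the bulk of the work of Section~\ref{sec:proof_brownian}.
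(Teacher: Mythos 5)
Your proposal follows essentially the same route as the paper's proof: the rescaled martingale problem (M1), centering at the solution $f^N$ of the discretised equation so the drift carries no discretisation error, linearisation of $F$ with the quadratic remainder controlled by the key estimate that the second moment of the local average of $Z^N$ over a ball of radius $r_N$ is of order $V_{r_N}^{-1}$ (proved via the mild formulation and Gronwall, and exactly where $\varepsilon_N = o(\delta_N^{d+2})$ enters), tightness via Mitoma's theorem and Aldous' criterion, and convergence of the rescaled martingale part to the Gaussian noise $\sqrt{f(1-f)}\,W$. The only minor difference is the final identification step — the paper writes $Z^N$ as a stochastic integral of time-dependent test functions against the martingale measure $M^N$, proves $M^N$ converges to a Gaussian martingale measure via the Jacod--Shiryaev criterion and invokes Walsh's Proposition 7.12, whereas you characterise limit points through the well-posed linear martingale problem with deterministic bracket — but both require the same quadratic-variation computation, so this is a cosmetic variation rather than a different argument.
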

\begin{remark}
The impact parameter $u_N$ is inversely proportional to the neighbourhood size - \textit{i.e.} the probability that two individuals have a common parent in the previous generation (see Section 3.6 of \cite{barton_modeling_2013} for details). Hence, letting $u_N$ tend to zero corresponds to letting the neighbourhood size grow to infinity.
\end{remark}
We shall show in Section \ref{sec:martingale_pbs} that Theorem~\ref{thm:clt_slfv_fixed_radius} is a consequence of Theorem~\ref{thm:clt_brownian}. The latter is a result on sequences of solutions to a martingale problem and is proved in Section~\ref{sec:proof_brownian}. 

\begin{remark}
It would have been more natural to consider the fluctuations directly around the deterministic limit $\proc{f}$, but in fact the difference between $f_N$ and $f$ is too large (of order $\delta _N^2$, see Proposition~\ref{prop:convergence_centering_term}).
We have that $|Z^N_t-(\varepsilon_N \delta_N^{d-2})^{-1/2}(\mathbf q ^N_t -f_t)|= \mathcal O(\delta_N^2(\varepsilon_N \delta_N^{d-2})^{-1/2})$ but if $\varepsilon_{N}=\littleO{\delta_{N}^{d+2}}$ then $(\varepsilon_N\delta_N^{d-2})^{-1/2}\delta _N^2\to \infty$ as $N\to \infty$ as soon as $d > 2$.
\end{remark}

\subsection{Stable radii of reproduction events}\label{subsec:resu:stable_radii}

In the previous subsection, we assumed that the radius of dispersion of the offspring produced at reproduction events was small.
We now wish to allow large scale extinction-recolonization events to take place to illustrate the fact that "catastrophic" extinction events can occur, followed by a quick replacement of the dead individuals by the offspring of a small subset (here only one individual) of the survivors.
To do so, we suppose that the intensity measure for the radius of reproduction events $\mu(\ld r)$ has a power law behaviour, following the work in \cite{etheridge_rescaling_2014}.
The corresponding limiting behaviour is described by reaction-diffusion equations with non-local diffusion, studied for example in \cite{chmaj_existence_2013,achleitner_traveling_2013}. 
Suppose that the measure $\mu(\ld r)$ for the radius of reproduction events is given by
\begin{equation} \label{defn_stable_radii}
\mu(\ld r) = \frac{\1{r\geq 1}}{r^{d+\alpha+1}}\ld r,
\end{equation}
for some $\alpha\in(0,2\wedge d)$.
Fix $u, s \in(0,1]$ and choose $w_{0}:\R^{d}\to[0,1]$ with uniformly bounded spatial derivatives of up to the second order. 
Again, take $\proc{\varepsilon_N}{N\geq 1}$ and $\proc{\delta_N}{N\geq 1}$ two sequences in $(0,1]$ decreasing to zero, and set
\begin{align*}
s_N = \delta_N^\alpha s, && u_N = \varepsilon_N u, && q_0^N(x) = w_0(\delta_N x).
\end{align*}
%In the diploid setting of Definition~\ref{definition_slfv_diploid}, for some fixed $s_1$, $s_2$, $\nu_1$ and $\nu_2$, also set
%\begin{align*}
%s_{i,N} = \delta_N^\alpha s_i, && \nu_{i,N} = \delta_N^\alpha \nu_i, && s_N=\delta_N^%\alpha (s_1+s_2).
%\end{align*}
Let $F:\R\to\R$ be a smooth, bounded function with bounded first and second derivatives such that $F|_{[0,1]}$ satisfies \eqref{selection_fn_cond} for some $m\in\N$ and $p:\{a,A\}^m\to [0,1]$.
Then for $N\geq 1$, 
%in the case of Definition~\ref{definition_slfv_selection_haploid}, 
let $\proc{q^{N}}$ be the \slfv with selection following the dynamics of Definition~\ref{definition_slfv_selection_haploid} with impact parameter $u_N$, radius of reproduction events given by $\mu(\ld r)$ in \eqref{defn_stable_radii}, selection parameter $s_{N}$ and selection function $F$ started from the initial condition $q_{0}^{N}$.

%Similarly, in the case of Definition~\ref{definition_slfv_diploid}, let $\proc{q^{N}}$ be the \slfv following the dynamics of Definition~\ref{definition_slfv_diploid} with impact parameter $u_N$, radius of reproduction events given by $\mu(\ld r)$ in \eqref{defn_stable_radii}, selection parameters $s_{i,N}$ and mutation parameters $\nu_{i,N}$ started from the initial condition $q_{0}^{N}$, and let $F(x)=x(1-x)(s_{1}x-s_{2}(1-x)) - \nu_{1}x+\nu_{2}(1-x) $. 

The main difference with the setting of Subsection~\ref{subsec:resu:fixed_radius} is that the flow resulting from the reproduction events is the $\alpha$-stable version of the heat flow (see Section~\ref{subsec:mart_pb:stable}). 
Thus we apply a stable scaling of time by $1/\delta_N^\alpha$ and space by $1/\delta_N$ (after rescaling time by $1/\varepsilon_N$ as previously).
Since we have chosen $s_N = \delta_N^\alpha s$, this is the right scaling to consider in order to observe both selection and migration in the limit.
For all $x\in\R^d$ and $t\geq 0$, set
\begin{equation} \label{definition_rescaled_stable}
\mathbf{q}^N_t(x) = q^N_{t/(\varepsilon_N \delta_N^\alpha)}(x/\delta_N).
\end{equation}

We need some more notation; recall the notation for double averages in \eqref{defn_double_avg}.
The following will take up the role played by $\avg{F(\avg{w})}$ in the fixed radius case.
For $H:[0,1]\to\R$, $\delta>0$, and $f\in\Xi$, set
\begin{equation}\label{definition_F_delta}
H^{(\delta)}(f):x\mapsto \alpha\intg{1}{\infty}\avg{H(\avg{f})}(x,\delta r)\frac{\ld r}{r^{\alpha+1}}.
\end{equation}
Recalling the notation in \eqref{ball_intersect}, set, for $x,y\in\R^d$,
\begin{align*}
\Phi(|x-y|)=\int_{\frac{|x-y|}{2}}^\infty \frac{V_r(x,y)}{V_r}\frac{\ld r}{r^{d+\alpha+1}}, && \Phi^{(\delta)}(|x-y|)=\int_{\frac{|x-y|}{2}\vee \delta}^\infty \frac{V_r(x,y)}{V_r}\frac{\ld r}{r^{d+\alpha+1}}.
\end{align*}
For $\phi:\R^d\to\R$,
\begin{align}\label{definition_fractional_laplacian}
\D\phi(x) = \intrd\Phi(\abs{x-y})(\phi(y)-\phi(x))\ld y, && \L*\phi(x) = \intrd\Phi^{(\delta)}(\abs{x-y})(\phi(y)-\phi(x))\ld y.
\end{align}
\begin{remark}
Up to a multiplicative constant, depending on $d$ and $\alpha$, $\D$ is the fractional Laplacian (this can be seen via the Fourier transform, see \cite{samko_fractional_1993}). 
\end{remark}

We can now formulate our result for the stable radii regime.
The main difference from Theorem~\ref{thm:clt_slfv_fixed_radius} is that the Laplacian has to be replaced by the operator $\D$ and that the noise driving the fluctuations is replaced by a coloured noise which is white in time and has spatial correlations which decay like $K_{\alpha}(z_1,z_2)$ as $|z_1 - z_2|\rightarrow \infty$, where
\begin{equation} \label{def_K_alpha}
K_{\alpha}(z_1,z_2) = \intg{\frac{\abs{z_1-z_2}}{2}}{\infty}V_r(z_1,z_2)\frac{\ld r}{r^{d+\alpha+1}} = \frac{C_{d,\alpha}}{\abs{z_1-z_2}^{\alpha}}.
\end{equation}
We also set the following notation: for $f\in\Xi$,
\begin{equation}\label{definition_alpha_average}
[f]_\alpha(z_1,z_2) = \frac{\intg{\frac{\abs{z_1-z_2}}{2}}{\infty}\frac{\ld r}{r^{d+\alpha+1}}\int_{B(z_1,r)\cap B(z_2,r)}\avg{f}(x,r)\ld x}{\intg{\frac{\abs{z_1-z_2}}{2}}{\infty}V_r(z_1,z_2)\frac{\ld r}{r^{d+\alpha+1}}}.
\end{equation}
Note that if $f$ denotes the frequency of type $a$ in $\mathbf q^N_t$ immediately before a (neutral) reproduction event which hits both $z_1$ and $z_2$ with $\abs{z_1-z_2}\geq 2\delta_N$, then $[f]_\alpha(z_1,z_2)$ is the probability that the offspring produced in this event are of type $a$.

Now define $\ft$ as the solution to
\begin{equation} \label{definition_centering_term_stable}
\left\lbrace
\begin{aligned}
& \deriv*{\ft}{t} = u\left[ \L*{\delta_N}\ft - \frac{V_1 s}{\alpha}F^{(\delta_N)}(\ft)\right], \\
& \ft{0} = w_0.
\end{aligned}
\right.
\end{equation}
\begin{thm}[Central Limit Theorem for the SLFVS with stable radii of reproduction events]\label{thm:clt_slfv_stable_radius}
Suppose that $\varepsilon_N = \littleO{\delta_N^{2\alpha}}$; then $\proc{\mathbf{q}^N}$ converges in $L^1$ and in probability (for the metric $d$ of Definition~\ref{def_metric_xi}) to the deterministic solution of the following PDE,
\begin{equation*}
\left\lbrace
\begin{aligned}
& \deriv*{f_t}{t} = u\left[ \D f_t - \frac{s V_1}{\alpha}F(f_t)\right], \\
& f_0 = w_0.
\end{aligned}
\right.
\end{equation*}
In addition, $$ \zt = \varepsilon_N^{-1/2}(\mathbf{q}^N_t - \ft) $$
defines a sequence of distribution-valued processes, converging in distribution in $\sko{\calS'(\R^d)}$ to the solution of the following \spde,
\begin{equation*}
\left\lbrace
\begin{aligned}
& \ld z_{t} = u\left[\D z_{t} - \frac{s V_1}{\alpha}F'(f_t) z_{t}\right]\ld t + u\,\ld W^\alpha_{t} \\
& z_{0} = 0,
\end{aligned}
\right.
\end{equation*}
where $W^\alpha$ is a coloured noise with covariation measure given by
\begin{equation}\label{stable_noise_correlations}
Q^\alpha(\ld z_1\ld z_2\ld s) = K_\alpha(z_1,z_2) \left( [f_s]_\alpha(z_1,z_2) (1-f_s(z_1)-f_s(z_2)) + f_s(z_1)f_s(z_2) \right)\ld z_1 \ld z_2 \ld s.
\end{equation}
\end{thm}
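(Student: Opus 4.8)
The plan is to deduce Theorem~\ref{thm:clt_slfv_stable_radius} from the martingale characterisation of the SLFVS set up in Section~\ref{sec:martingale_pbs}, reducing it to a statement about a sequence $\proc{\mathbf q^N}$ of solutions of the associated martingale problem — exactly as Theorem~\ref{thm:clt_slfv_fixed_radius} reduces to Theorem~\ref{thm:clt_brownian} — with the analysis carried out in Section~\ref{sec:proof_stable}. Throughout, $f$ denotes the solution of the limiting non-local Fisher--KPP equation and $f^N$ the function defined by \eqref{definition_centering_term_stable} (called $f$ in the statement of this subsection), so that $z^N_t:=\varepsilon_N^{-1/2}(\mathbf q^N_t-f^N_t)$. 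For $\phi\in\calS(\R^d)$ the martingale problem gives
\[
\dual{\mathbf q^N_t}{\phi}=\dual{q^N_0}{\phi}+u\int_0^t\!\Big(\dual{\mathbf q^N_s}{\mathcal{L}^{(\delta_N)}\phi}-\tfrac{sV_1}{\alpha}\dual{F^{(\delta_N)}(\mathbf q^N_s)}{\phi}\Big)\ld s+\mathcal E^N_t(\phi)+M^N_t(\phi),
\]
where $\mathcal E^N$ is a lower-order error (from the fact that each event averages $\phi$ over a ball and shifts mass by $u_N$, and possibly identically zero once $f^N$ is the exact mean-equation skeleton) and $M^N(\phi)$ is a \cadlag martingale whose predictable quadratic variation is, by construction, a double integral over pairs $(z_1,z_2)$ hit by a common neutral event, weighted by a $K_\alpha$-type rate kernel and by the conditional variance of the parental type sampled at that event. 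Subtracting \eqref{definition_centering_term_stable}, dividing by $\varepsilon_N^{1/2}$ and Taylor-expanding the smooth selection term around $f^N$ yields
\[
\dual{z^N_t}{\phi}=u\int_0^t\!\Big(\dual{z^N_s}{\mathcal{L}^{(\delta_N)}\phi}-\tfrac{sV_1}{\alpha}\dual{D^N_s z^N_s}{\phi}\Big)\ld s+R^N_t(\phi)+\varepsilon_N^{-1/2}M^N_t(\phi),
\]
where $D^N_s$ is the linearisation of $F^{(\delta_N)}$ at $f^N_s$ and $R^N_t(\phi)=\varepsilon_N^{-1/2}\mathcal E^N_t(\phi)+\varepsilon_N^{1/2}\int_0^t\dual{\bigO{(z^N_s)^2}}{\phi}\ld s$. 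The theorem amounts to showing that the drift operator converges to $u[\D\phi-\tfrac{sV_1}{\alpha}F'(f_s)\phi]$, that $R^N\to 0$, and that $\varepsilon_N^{-1/2}M^N(\phi)$ converges to a continuous martingale with quadratic variation $u^2\int_{[0,t]\times(\R^d)^2}\phi(z_1)\phi(z_2)\,Q^\alpha(\ld z_1\ld z_2\ld s)$.

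\textbf{Convergence of the centering term and moment bounds.} First I would prove the analogue of Proposition~\ref{prop:convergence_centering_term}: comparing the Picard iterations for \eqref{definition_centering_term_stable} and for the limiting PDE and using that $\mathcal{L}^{(\delta_N)}\psi\to\D\psi$ and $F^{(\delta_N)}(g)\to F(g)$ uniformly on $[0,T]\times\R^d$ for smooth bounded $g$ (from \eqref{definition_F_delta}, \eqref{definition_fractional_laplacian} and dominated convergence, the integrals being finite thanks to \eqref{slfv_existence}), gives $f^N\to f$ uniformly with uniformly bounded derivatives. Next, squaring the equation for $\dual{z^N_t}{\phi}$, taking expectations and applying Gronwall's lemma yields $\sup_N\sup_{t\le T}\E[\dual{z^N_t}{\phi}^2]<\infty$ for every $\phi$ (in particular for the $\phi_n$ of \eqref{d_defn_family}); here one uses that $\varepsilon_N^{-1}\E[\langle M^N(\phi)\rangle_t]$ is bounded uniformly in $N$ — the factor $\varepsilon_N^{-1}$ exactly compensating the $O(\varepsilon_N)$ size of the jumps against the $O(1/\varepsilon_N)$ event rate of the rescaled process — and that $\varepsilon_N^{1/2}\to 0$ controls the quadratic remainder (after a short bootstrap on a slightly stronger norm of $z^N$). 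Combining these bounds with Doob's inequality for the martingale part and with \eqref{d_defn_family} gives $\E[\sup_{t\le T}d_\Xi(\mathbf q^N_t,f^N_t)]=\varepsilon_N^{1/2}\,\E[\sup_{t\le T}\sum_n 2^{-n}|\dual{z^N_t}{\phi_n}|]\to 0$, which together with $f^N\to f$ establishes the first assertion: $\mathbf q^N\to f$ in $L^1$ and in probability for $d$.

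\textbf{Tightness.} Tightness of $\proc{z^N}$ in $\sko{\calS'(\R^d)}$ reduces, by a standard criterion (Mitoma), to tightness of each $\proc{\dual{z^N_t}{\phi}}{t\in[0,T]}$ in $\sko{\R}$. The moment bounds give compact containment, while Aldous' criterion follows from the same bounds applied to the drift over short intervals together with the increment estimate $\E[(\varepsilon_N^{-1/2}(M^N_{t+h}(\phi)-M^N_t(\phi)))^2]\le Ch$, again a consequence of the quadratic variation bound. Hence $\proc{z^N}$ is tight and admits subsequential limits.

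\textbf{Identification of the limit and the main obstacle.} Let $z$ be a subsequential limit. Passing to the limit in the equation for $\dual{z^N_t}{\phi}$: the drift converges since $\mathcal{L}^{(\delta_N)}\phi\to\D\phi$ and $D^N_s g\to F'(f_s)g$ uniformly (using $f^N\to f$) and $z^N\to z$; the remainder $R^N$ vanishes by the bounds of the second paragraph; and $\varepsilon_N^{-1}\langle M^N(\phi)\rangle_t\to u^2\int_{[0,t]\times(\R^d)^2}\phi(z_1)\phi(z_2)\,Q^\alpha(\ld z_1\ld z_2\ld s)$. This last point is where the structure of the noise appears: a neutral event of rescaled radius $r\ge\delta_N$ covering both $z_1$ and $z_2$ assigns them the \emph{same} sampled type, so the covariance contribution of such events factorises, as $\delta_N\to 0$, into the rate kernel $\int_{|z_1-z_2|/2}^\infty V_r(z_1,z_2)\,r^{-d-\alpha-1}\ld r=K_\alpha(z_1,z_2)$ of \eqref{def_K_alpha} and the conditional second moment $[f_s]_\alpha(z_1,z_2)(1-f_s(z_1)-f_s(z_2))+f_s(z_1)f_s(z_2)$ of $(\1{k=a}-f_s(z_1))(\1{k=a}-f_s(z_2))$, where $[f_s]_\alpha$ is exactly the weighted average \eqref{definition_alpha_average} of $\langle\mathbf q^N_s\rangle$ over these events and $\mathbf q^N\to f$; this is precisely the measure $Q^\alpha$ of \eqref{stable_noise_correlations}. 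One then checks that $z$ solves the martingale problem of the limiting linear \spde\ and concludes by uniqueness in law for that equation — a linear equation with additive noise that is white in time and coloured in space, whose solution is the Gaussian process given by the Duhamel formula against the fractional heat semigroup (Walsh's theory, \cite{walsh_introduction_1986}) — which upgrades subsequential convergence to convergence of the full sequence. The hard part will be the two uniform-in-$N$ estimates underlying the second and third paragraphs: bounding $\varepsilon_N^{-1}\E[\langle M^N(\phi)\rangle_t]$ and the nonlinear remainder $R^N$ in the power-law regime requires splitting the radius integral at scale $O(1)$ (equivalently $O(\delta_N)$ before rescaling), the large-event contribution being exactly what does \emph{not} vanish and generates the coloured noise with correlations $\sim|z_1-z_2|^{-\alpha}$, while controlling the small-radius part and showing $R^N\to 0$ is where the hypothesis $\varepsilon_N=\littleO{\delta_N^{2\alpha}}$ is used; one must also track the integrals defining $\Phi^{(\delta_N)}$, $K_\alpha$ and $[f]_\alpha$ carefully through \eqref{slfv_existence}. (In the fixed-radius case of Section~\ref{sec:proof_brownian} the analogous split instead makes the spatial correlations concentrate and produces space-time white noise; the two proofs run in parallel, but the estimates here are more delicate because the radii are unbounded.)
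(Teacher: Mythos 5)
Your plan has essentially the same architecture as the paper's proof: reduce to the rescaled martingale problem (the paper's (M2), via Proposition~\ref{prop:mart_pb_m2}), prove convergence of the centering term $f^N\to f$ with uniform derivative bounds (Proposition~\ref{prop:stable:convergence_centering_term}), linearise the selection term around $f^N$, obtain uniform moment bounds, get tightness from Mitoma plus Aldous, and identify the limit through the convergence of the rescaled martingale part to the coloured noise with covariance \eqref{stable_noise_correlations}. Your identification step is the one point where you genuinely deviate: you close by showing any subsequential limit solves the limiting martingale problem and invoking uniqueness in law for the linear SPDE, whereas the paper represents $\dual{\zt}{\phi}$ via time-dependent test functions $\varphi^N$ solving a backward equation \eqref{defn_varphi_n_stable}, proves convergence of the martingale measure $M^N$ to $W^\alpha$ by the Gaussian-martingale convergence theorem (Theorem~\ref{thm:convergence_gaussian_martingale}), and passes to the limit in the stochastic integrals by \cite[Proposition 7.12]{walsh_introduction_1986}. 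Your route is viable and roughly equivalent in labour; your description of how $K_\alpha$ and $[f_s]_\alpha$ arise from events covering both $z_1$ and $z_2$ is correct and matches the paper's computation of $\sigma^{(\alpha,\delta_N)}$.

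The genuine gap is in your second paragraph, and it sits at the technical core of the whole argument. You claim that squaring the equation for $\dual{z^N_t}{\phi}$ and applying Gronwall gives the needed uniform bounds, with the quadratic remainder absorbed "after a short bootstrap on a slightly stronger norm". That cannot work as stated: the remainder is $\varepsilon_N^{1/2}$ (note $\tau_N/\eta_N=\varepsilon_N$ here) times a time integral of pointwise squares of ball averages, $\alpha\int_1^\infty \dual{(\avg{\zt{s}}{\delta_N r})^{2}}{\,\cdot\,}\,r^{-\alpha-1}\ld r$, and the square of a local average of $Z^N$ is not controlled by $\E{\dual{\zt{s}}{\phi}^2}$ for any fixed finite family of test functions, so a Gronwall inequality on test-function pairings does not close. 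What is actually required is the uniform-in-space estimate $\int_1^\infty \sup_{x}\E{\avg{\zt}(x,\delta_N r)^{2}}\,r^{-\alpha-1}\ld r \lesssim \delta_N^{-\alpha}$ (the paper's Lemma~\ref{lemma_bound_avg_sr}), which is proved by a separate argument: write $\avg{\zt}(x,R)$ in mild form against the Green's function of the truncated fractional heat flow, bound the stochastic-integral contribution uniformly in $x$ by $\delta_N^{-\alpha}$ using the kernel bound $\sigma^{(\alpha,\delta_N)}_{z_1,z_2}\lesssim(\delta_N\vee|z_1-z_2|/2)^{-\alpha}$ (Lemma~\ref{lem_bound_qvar_sr}), and run Gronwall on the quantity $\sup_x\E{\avg{\zt}(x,\delta_N r)^2}$ integrated against the radius measure. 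Only with this estimate does the prefactor $\varepsilon_N^{1/2}$, under $\varepsilon_N=\littleO{\delta_N^{2\alpha}}$, kill the nonlinear term (and the same estimate is needed again, quantitatively, to justify replacing $\mathbf q^N_s$ by $f_s$ in the quadratic variation of $\varepsilon_N^{-1/2}M^N$ — convergence of $\mathbf q^N$ in the metric $d$ is too weak for that substitution). You correctly flag this as "the hard part" and correctly locate where the hypothesis enters, but the mechanism you propose to handle it would fail, and the missing lemma is not a minor bootstrap: it is the key estimate on which the moment bounds, the tightness bound on the quadratic term, and the identification of the noise all rest.
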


\begin{remark}
The fact that the correlations in the noise decay as $\abs{z_1-z_2}^{-\alpha}$ can be expected from the results in \cite{bierme_about_2006}. The authors prove that, if $N$ is a \ppp on $\R^d \times \R_+$ whose intensity measure is of the form $\ld x f(r) \ld r$ with $f(r) \sim \frac{C}{ r^{1 + \alpha + d} }$, one can define a generalized random field $X$ on the space of signed measures on $\R^d$ with finite total variation by
\begin{equation*}
\dual{X}{\mu} = \int_{\R^d \times \R_+} \mu( B(x,r) ) N(\ld x, \ld r).
\end{equation*}
Under a suitable scaling of the radius and of the intensity measure, it is shown that the fluctuations of $X$ converge (in the sense of finite dimensional distributions) to a centred Gaussian random linear functional $W^\alpha$ with
\begin{equation*}
\E{ W^\alpha(\mu) W^\alpha(\nu) } = \int_{\R^d} \int_{\R^d} \abs{ z_1 - z_2 }^{-\alpha} \mu(\ld z_1) \nu(\ld z_2).
\end{equation*}
(The notation has been changed so as to fit that of our setting; in \cite{bierme_about_2006}, $\beta = \alpha + d$.)
\end{remark}

We shall show in Section \ref{sec:martingale_pbs} that Theorem~\ref{thm:clt_slfv_stable_radius} is a consequence of Theorem~\ref{thm:clt_stable}. The latter is a result on sequences of solutions to a martingale problem and is proved in Section~\ref{sec:proof_stable}.

\subsection{Drift load for a spatially structured population}\label{subsec:driftload}

We shall illustrate the application of our results by studying the drift load in the SLFVS with overdominance as in Definition~\ref{definition_slfv_diploid}, in the case of bounded radii.

As in Section \ref{subsec:resu:fixed_radius}, fix $u$, $s_1$, $s_2$, $\nu_1$, $\nu_2$ in $(0,1]$ and $R>0$ such that $s_1 + s_2 + \nu_1 + \nu_2 < 1$, take two sequences $\proc{\varepsilon_{N}}{N\geq 1}$, $\proc{\delta_{N}}{N\geq 1}$ of positive real numbers in $(0,1]$ decreasing to zero, and set 
\begin{align} \label{driftload_params}
&& u_N=\varepsilon_{N}u, && r_N = \delta_N R &&
s_{i,N} = \delta_N^2s_i, && \nu_{i,N} = \delta_N^2\nu_i
\end{align}
for $i=1,2$. 
Then for $N\geq 1$, let $\proc{q^{N}}$ be the SLFVS following the dynamics of Definition~\ref{definition_slfv_diploid} with impact parameter $u_N$, radius of reproduction events $R$, selection parameters $s_{i,N}$ and mutation parameters $\nu_{i,N}$, started from some initial condition $q_{0}^{N}$.

One thing to note is that for our results to hold, we need to make sure that the allele frequencies do not get "stuck" - even locally - at the boundaries (\textit{i.e.} upon reaching 0 or 1), which could significantly slow down the convergence to the equilibrium frequency. 
For this reason we choose to assume that during some mutation reproduction events the type of the offspring can differ from that of its parent.
This will not affect the results in any other way provided that the mutation parameters are negligible compared to the selection parameters. 
In the remainder of this section, we assume that $\nu_1,\nu_2\ll s_1,s_2$.

Now let
\begin{equation} \label{definition_F_driftload}
F(w)=w(1-w)(w-\frac{s_2}{s_1 +s_2})+\frac{\nu_1}{s_1+s_2}w -\frac{\nu_2}{s_1+s_2}(1-w).
\end{equation}
We shall see in Section \ref{sec:martingale_pbs} that this function plays the same role as $F$ in the haploid case.

Note that $F$ satisfies the following conditions:
\begin{equation}\label{F_stable_equilibrium_1}
\exists \lambda\in[0,1] : F(\lambda)=0;
\end{equation}
furthermore there is only one such $\lambda$ and it satisfies
\begin{equation}\label{F_stable_equilibrium_2}
0<\lambda<1 \: \text{ and } \: F'(\lambda)>0.
\end{equation} 
For the function $F$ given in \eqref{definition_F_driftload}, $\lambda$ is given by
\begin{equation*}
\lambda = \frac{s_2}{s_1+s_2} + \bigO{\frac{\nu_1+\nu_2}{s_1+s_2}}.
\end{equation*}

Let us define $K^{N}(t,x)$, the local mean fitness at a point $x\in\R^{d}$, as the expected fitness of an individual formed by fusing two gametes chosen uniformly at random from $B(x,R)$ at time $t\geq 0$.
In other words, its two copies of the gene are sampled independently by selecting two parental locations $y_1$ and $y_2$ uniformly at random in $B(x,R)$ and then types according to $q_t(y_1)$ and $q_t(y_2)$. 
Then, for $\nu_{i}\ll s_{i}$, (see \cite{robertson_reduction_1970})
\begin{align*}
K^{N}(t,x) &= \E{ (1-s_1^N) \avg{q^N_t}(x,R)^2 + 2\avg{q^N_t}(x,R)(1-\avg{q^N_t}(x,R)) + (1-s^N_2)(1-\avg{q^N_t}(x,R))^2 } \\
&= 1 - \E{ s_1^N \avg{q^N_t}(x,R)^2 + s_2^N (1-\avg{q^N_t}(x,R))^2 } \\
&= 1-\frac{s_{1}^Ns_{2}^N}{s_{1}^N+s_{2}^N} - (s_{1}^N+s_{2}^N)\E{\left(\avg{q^N_t}(x,R)-\lambda\right)^{2}} + \bigO{(s_1^N+s_2^N)(\nu_1+\nu_2) }.
\end{align*}
The first term $\frac{s_{1}^Ns_{2}^N}{s_{1}^N+s_{2}^N}$ is the segregation load mentioned in the introduction, and it is of order $\delta_N^2$. 
The remaining term is then the local drift load, which we aim to estimate at large times for large $N$. 
Let us set 
\begin{equation} \label{defn_driftload}
\Delta^N(t,x) = (s_1^N+s_2^N)\E{\left(\avg{q^N_t}(x,R)-\lambda\right)^{2}}.
\end{equation}
The following theorem is proved in Section~\ref{sec:proof_drift_load} using some of the intermediate results used to prove Theorem \ref{thm:clt_slfv_fixed_radius}.

\begin{thm}\label{thm:driftload_slfv}
Suppose that $q^N_0(x)=\lambda$ for all $x$ and assume that $\varepsilon_N=\littleO{\delta_N^4}$. There exists a constant $C>0$, depending only on the dimension $d$, such that, for all $x\in\R^d$, as $N,t\to\infty$, if $t$ grows fast enough that  $\varepsilon _N t \to\infty$ if $d\geq 3$ and $\varepsilon _N\delta_N^2 t\to\infty$ if $d\leq 2$,
\begin{equation*}
\Delta^N(t,x) \underset{N,t\to\infty}{\sim} C\varepsilon_N \delta_N^2 c_N,
\end{equation*}
where 
\begin{equation}\label{definition_cn}
c_N = \left\lbrace
\begin{aligned}
& 1 && \mathrm{if\:} d\geq 3, \\
& \abs{\log \delta_N^2} && \mathrm{if\:} d=2, \\
& \delta_N^{-1} && \mathrm{if\:} d=1.
\end{aligned}
\right.
\end{equation}
\end{thm}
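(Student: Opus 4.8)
The plan is to bypass the central limit theorem itself — which controls $\mathbf q^N_\tau-\lambda$ only at a fixed time — and instead track directly the second moment in \eqref{defn_driftload}. First I would observe that, since $q_0^N\equiv\lambda$ and $F(\lambda)=0$ (by \eqref{F_stable_equilibrium_2} and \eqref{definition_F_driftload}), the centering function solving the analogue of \eqref{definition:centering_term_fixed_radius} is identically equal to $\lambda$, and that a change of variables turns the average of $q^N_t$ over $B(x,R)$ into the average of the rescaled field $\mathbf q^N$ (see \eqref{definition_rescaled_process}) over the macroscopic ball $B(\delta_N x,\delta_N R)$. Writing $\tau = \tau_N(t) := \varepsilon_N\delta_N^2 t$ and $\psi_{N,x} = V_{\delta_N R}^{-1}\mathbf 1_{B(\delta_N x,\delta_N R)}$, this gives
\begin{equation*}
\Delta^N(t,x) = \delta_N^2(s_1+s_2)\,\E{\bigl\langle \mathbf q^N_{\tau} - \lambda,\ \psi_{N,x}\bigr\rangle^2},
\end{equation*}
so everything reduces to the variance of the fluctuation field smeared against a bump that concentrates at scale $\delta_N$.

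Next I would derive, from the martingale problem of Section~\ref{sec:martingale_pbs}, a closed evolution equation for the rescaled two-point function $w^N_\tau := (\varepsilon_N\delta_N^{d-2})^{-1}\,\E{(\mathbf q^N_\tau-\lambda)\otimes(\mathbf q^N_\tau-\lambda)}$, viewed as a signed measure on $\R^{2d}$: Itô's formula for the measure-valued process yields $\partial_\tau w^N = (\mathcal A^N_{x}+\mathcal A^N_{y})w^N + Q^N_\tau$, where $\mathcal A^N$ is the rescaled drift generator linearised at $\lambda$ — a discretised heat flow (the operator $\L$ of \eqref{definition_calL} converging to a multiple of the Laplacian) together with a \emph{killing} term at a rate proportional to $F'(\lambda)>0$, so that $\mathcal A^N$ is a contraction — and $Q^N_\tau$ is the noise source coming from the jump part of the quadratic variation, carrying mass of order $\varepsilon_N\delta_N^{d-2}$ (times a constant proportional to $\lambda(1-\lambda)$ in $w^N$, so of order $1$) concentrated within $\bigO{\delta_N}$ of the diagonal. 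Here I would reuse from the proof of Theorem~\ref{thm:clt_slfv_fixed_radius} the estimates controlling (i) the discrepancy between $\L$ and the Laplacian, (ii) the nonlinear remainder $F(w)-F'(\lambda)(w-\lambda)=\bigO{(w-\lambda)^2}$, and (iii) the approximation of $Q^N$ by a constant times the diagonal measure. The contraction property then lets me solve for the stationary two-point function $w^N_\infty$ and, pairing it with $\psi_{N,x}\otimes\psi_{N,x}$ and passing to Fourier variables, obtain up to lower-order corrections
\begin{equation*}
\bigl\langle\psi_{N,x}\otimes\psi_{N,x},\,w^N_\infty\bigr\rangle \sim c_0\int_{\R^d}\frac{\bigl|\widehat{\psi_N}(\xi)\bigr|^2}{\abs{\xi}^2 + m^2}\,d\xi ,
\end{equation*}
where $c_0>0$, $m^2>0$ comes from $F'(\lambda)>0$, and $\widehat{\psi_N}(\xi)$ is essentially $1$ for $\abs{\xi}\ll\delta_N^{-1}$ and decays beyond. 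Evaluating this integral produces the three regimes: for $d\geq 3$ it is ultraviolet-dominated and $\sim c\,\delta_N^{-(d-2)}$; for $d=2$ it diverges logarithmically and $\sim c\,\abs{\log\delta_N}$; for $d=1$ it converges and is $\sim c$. Multiplying back by $\delta_N^2(s_1+s_2)\varepsilon_N\delta_N^{d-2}$ gives exactly $C\varepsilon_N\delta_N^2 c_N$ with $c_N$ as in \eqref{definition_cn}. The growth conditions on $t$ enter at the same point: the Fourier mode $\xi$ relaxes on $\tau$-time $(c'\abs{\xi}^2+m^2)^{-1}$, which is $\bigO{\delta_N^2}$ for the ultraviolet modes carrying the dominant mass when $d\geq3$ (so $w^N_{\tau_N(t)}$ has reached $w^N_\infty$ as soon as $\varepsilon_N t\to\infty$), whereas for $d\leq2$ the low modes contribute and one needs the full $\varepsilon_N\delta_N^2 t\to\infty$.

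The main obstacle will be uniformity in time together with the fact that the test function itself degenerates. One needs the $\bigO{\varepsilon_N\delta_N^{d-2}}$ bound on the second moment of $\mathbf q^N_\tau-\lambda$ to hold \emph{uniformly in $\tau\geq0$} (the excerpt's CLT only gives it for fixed $\tau$), and one must show that none of the error terms — the quadratic remainder in $F$, the $\L$-versus-Laplacian discrepancy, the near-diagonal (rather than exactly diagonal) structure of $Q^N$, and the smoothing error in $\widehat{\psi_N}$ — is promoted to leading order once everything is smeared at the critical scale $\delta_N$; this is delicate precisely because, in dimension $d\geq2$, the smeared variance is driven by the ultraviolet modes, so any error that is not genuinely subcritical in $\delta_N$ would corrupt the asymptotics. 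Establishing these uniform bounds, and pinning down the relaxation rate of $w^N_\tau$ so that it matches the stated conditions on $t$, is where the work lies; the final Fourier computation of the constant $C$ and of $c_N$ is then routine.
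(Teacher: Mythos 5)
Your strategy is sound and, at bottom, it runs on the same mechanism as the paper's proof, just organised differently. The paper likewise bypasses the fixed-time CLT: it reformulates the statement as Theorem~\ref{thm:driftload} about $\mathbb{E}\bigl[\langle Z^N_t,\phi_{r_N}\rangle^2\bigr]$ with a test function concentrating at scale $r_N=\delta_N R$, linearises the drift at $\lambda$ (with $F'(\lambda)>0$ acting as a mass/killing term), represents the smeared fluctuation by an explicit Duhamel formula with the time-dependent test function $\varphi^N(x,s,t)=e^{-F'(\lambda)(t-s)}$ composed with the exact discrete semigroup, proves uniform-in-time second and fourth moment bounds (Lemma~\ref{lemma:bound_avg_2}, via Lemmas~\ref{lemma:magic} and~\ref{lemma_differential_inequality}) to kill the quadratic remainder, shows the noise kernel can be replaced by its value at $\lambda$ (Lemma~\ref{lemma:estimate_variance}, which needs the bootstrap estimate of Lemma~\ref{lemma:bound_phi}), and then evaluates $\int_0^t e^{-2F'(\lambda)s}f(D_N s/r_N^2)\,ds$ in real space, the trichotomy in $d$ coming from $f(t)\sim(4\pi t)^{-d/2}\|\phi\|_1^2$ (Lemma~\ref{lem:estimate_f}), i.e.\ from transience versus recurrence. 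Your two-point-measure/Fourier bookkeeping is an equivalent way to run this computation rather than a genuinely different proof, and your heuristic for the time conditions (relaxation time of order $\delta_N^2$ for the UV modes dominating when $d\geq 3$, order one for $d\leq 2$) matches exactly where the paper's dominated-convergence arguments need $\varepsilon_N t\to\infty$, respectively $\varepsilon_N\delta_N^2 t\to\infty$.

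Two places where your sketch understates the work. First, the evolution equation for the two-point function is not closed: the remainder $F(w)-F(\lambda)-F'(\lambda)(w-\lambda)$ couples it to third moments, and the noise intensity itself depends on the state, so ``solving for the stationary two-point function'' presupposes precisely the uniform-in-time fourth-moment bound of Lemma~\ref{lemma:bound_avg_2} and the first-moment bound of Lemma~\ref{lemma:bound_phi}; you do flag this, but it is the bulk of the proof, not a preliminary. Second, for $d\geq 3$ the final step is not a routine continuum Fourier integral: the leading contribution lives at scale $r_N$, where the generator is the nonlocal ball-averaging operator rather than $\tfrac12\Delta$, the noise kernel has range $r_N$, and the observable is itself a ball average, so the constant is $\int_0^\infty f(s)\,ds$ computed from the exact unit-scale L\'evy semigroup; identifying it requires the scaling property of $\xi^{(r)}$ together with a local-limit type estimate (Lemmas~\ref{lem:density_green_fct} and~\ref{lem:estimate_f}). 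The idealised $(|\xi|^2+m^2)^{-1}$ kernel gives the correct order, hence the correct $c_N$, but not the correct constant; since the theorem only asserts existence of some $C(d)$ this is repairable by working with the $N$-independent unit-scale objects, but that step is where the analysis, not the algebra, sits.
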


Assumption \eqref{F_stable_equilibrium_1}-\eqref{F_stable_equilibrium_2} is crucial in \cite{norman_central_1974}, which serves as a basis for this result. 
In fact this condition ensures that $\lambda$ is the only equilibrium point for the allele frequency, and that it is stable. 
\begin{remark}
We chose to start the process from the equilibrium frequency $\lambda$ - \textit{i.e.} very near stationarity - but we need not do so. 
The same result can be obtained starting from an arbitrary initial condition, provided we let $t$ grow sufficiently fast that the process reaches stationarity quickly enough.
The corresponding centering term $f^N$ is then defined as in~\eqref{definition:centering_term_fixed_radius}, and~\eqref{F_stable_equilibrium_1}-\eqref{F_stable_equilibrium_2} ensures that it converges to $\lambda$ exponentially quickly. 
Starting from $\lambda$ simplifies the proof as in this case, for all $t\geq 0$, $f^N_t=\lambda$.
\end{remark}

In the non-spatial setting of the $\Lambda$-Fleming Viot process, a simplified version of the proof of Theorem \ref{thm:driftload_slfv} shows that the drift load is asymptotically proportional to $u_N$. 
We can see $u_N$ as being inversely proportional to the neighbourhood size, in other words the probability that two individuals had a common parent in the previous generation (see \cite{barton_modeling_2013} for details). 
This agrees with Robertson's estimate \cite{robertson_reduction_1970} of $(4N)^{-1}$, where $N$ is the total population size in a panmictic population. 
Note that this estimate is independent of the strength of selection.
This can be seen as the result of a trade off between selection and genetic drift: if selection is weak, the allele frequency can be far from the equilibrium whereas if selection is stronger, the allele frequency stays nearer to the equilibrium and in both cases the mean fitness of the population is the same.

For spatially structured populations, however, Theorem \ref{thm:driftload_slfv} shows that the local drift load is significantly smaller than in the non-spatial setting and does depend on the strength of natural selection.
For example, if a population lives in a geographical space of dimension 2, the corresponding drift load will be of order $u_N s_N \abs{\log s_N}$.
Moreover, we see a strong effect of dimension on this estimate.
Populations living in a space with a higher dimension have a reduced drift load compared to populations evolving in smaller dimensions.
This result illustrates the fact that, in a higher dimension, migration is more efficient at preventing the allele frequencies from being locally far from the equilibrium frequency.
It turns out from the proof that this is linked to the recurrence properties of Brownian motion.

\begin{remark}[Drift load in the stable case]
If one considers instead the SLFVS with stable radii of reproduction events, under similar conditions to those in Theorem~\ref{thm:clt_slfv_stable_radius}, one finds that for all $d\geq 1$ and $\alpha \in (0, 2 \wedge d)$, $\Delta^N(t,x)$ is asymptotically equivalent to a constant times $u_N s_N\abs{\log s_N}$.
\end{remark}

\section{Martingale problems for the SLFVS}\label{sec:martingale_pbs}

This section provides the basic ingredients for the proofs of Theorems~\ref{thm:clt_slfv_fixed_radius} and~\ref{thm:clt_slfv_stable_radius}. 
In Subsection~\ref{subsec:martingale_pb}, we prove that the SLFVS satisfies a martingale problem. In Subsections~\ref{subsec:mart_pb:fixed} and~\ref{subsec:mart_pb:stable}, we study the martingale problem for the rescaled version of this process, in the fixed radius case and in the stable radii case, and state general convergence results for processes satisfying these martingale problems. 
Theorems~\ref{thm:clt_slfv_fixed_radius} and~\ref{thm:clt_slfv_stable_radius} are a direct consequence of these results.

\subsection{The martingale problem for the SLFVS}\label{subsec:martingale_pb}

Let $\proc{q^N}$ be defined as in Sections~\ref{subsec:resu:fixed_radius} and~\ref{subsec:resu:stable_radii} as the SLFVS as in Definition~\ref{definition_slfv_selection_haploid} with impact parameter $u_N$, distribution of reproduction event radii given by $\mu(dr)$, selection parameter $s_N$ and selection function $F$. 
Let $\proc{\F}$ denote the natural filtration of this process.

\begin{proposition}\label{prop:martingale_pb_slfv}
Suppose that $\int_{0}^{\infty} V_r^2 \mu(\ld r) < \infty$. %% Otherwise you can't obtain the |\phi|_2 bound in the quadratic variation. This is not satisfied in the stable case but I explain in Subsection 3.3 what you do in this case.
For any $\phi:\R^{d}\to\R$ in $L^{1,\infty}(\R^d)$,
\begin{equation}\label{martingale_w}
\dual{q^N_{t}} - \dual{q^N_{0}} - \intg{0}{t} \intg{0}{\infty} u_N V_{r} \braced{ \dual{q^N_{s}}{\dbavg{\phi}{r}-\phi} - s_N \dual{\avg{F(\avg{q^N_{s}})}{r}} } \mu(\ld r) \ld s
\end{equation}
defines a (mean zero) square integrable $\F_{t}$-martingale with (predictable) variation process
\begin{equation}\label{variation_w}
\intg{0}{t} \intg{0}{\infty} u_N^{2} V_{r}^{2} \intrd{2} \phi(z_{1}) \phi(z_{2}) \sigma^{(r)}_{z_{1},z_{2}}(q^N_{s}) \ld z_{1} \ld z_{2} \mu(\ld r) \ld s +\bigO{ t u_N^2 s_N \norm[2]{\phi}^2 },
\end{equation}
where
\begin{multline} \label{def_sigma_r}
\sigma^{(r)}_{z_{1},z_{2}}(q) = \frac{1}{V_{r}^{2}}\int_{B(z_{1},r)\cap B(z_{2},r)}[\avg{q}(x,r)(1-q(z_{1}))(1-q(z_{2}))  +(1-\avg{q}(x,r))q(z_{1})q(z_{2})]\ld x.
\end{multline}
\end{proposition}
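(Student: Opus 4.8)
The plan is to apply the standard machinery for Markov processes driven by Poisson point processes: identify the generator of $\proc{q^N}$ acting on the linear test functionals $q \mapsto \dual{q}{\phi}$ (and on $q\mapsto\dual{q}{\phi}^2$), then read off the compensator and the predictable quadratic variation from Dynkin's formula. First I would record the effect of a single event $(t,x,r)$ on $\dual{q}{\phi}$. By the update rule~\eqref{update_w}, the jump is $\dual{q_t}{\phi}-\dual{q_{t^-}}{\phi} = u_N(\1{k=a}-\avg{q_{t^-}}(x,r))\int_{B(x,r)}\phi(z)\,\ld z$ (after replacing $q_{t^-}(z)$ in the $\1{|x-z|<r}$ term using $w=\avg{q_{t^-}}(x,r)$ where convenient — more precisely the increment is $u_N\int_{B(x,r)}\phi(z)(\1{k=a}-q_{t^-}(z))\,\ld z$). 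Taking the expectation over the parental type $k$: at a neutral event $\E{\1{k=a}\mid\F_{t^-}} = \avg{q_{t^-}}(x,r)$, and at a selective event it is $\avg{q_{t^-}}(x,r)-F(\avg{q_{t^-}}(x,r))$, by the remark following Definition~\ref{definition_slfv_selection_haploid}. Integrating against the Poisson intensities $(1-s_N)\,\ld t\,\ld x\,\mu(\ld r)$ and $s_N\,\ld t\,\ld x\,\mu(\ld r)$, and using that $\int_{\R^d}\left(\int_{B(x,r)}\phi(z)\avg{q_s}(x,r)\,\ld z - \int_{B(x,r)}\phi(z)q_s(z)\,\ld z\right)\ld x = V_r\dual{q_s}{\dbavg{\phi}{r}-\phi}$ by Fubini, one obtains that the drift of $\dual{q^N_t}{\phi}$ is $\int_0^\infty u_N V_r\left(\dual{q^N_s}{\dbavg{\phi}{r}-\phi} - s_N\dual{\avg{F(\avg{q^N_s})}{r}}{}\right)\mu(\ld r)$; the $(1-s_N)$ prefactor on $\Pi$ combines with the $s_N$ prefactor on $\Pi^S$ so that the ``migration'' term picks up coefficient $1$ exactly, which is why the stated compensator~\eqref{martingale_w} has no $(1-s_N)$ factor. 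This gives the martingale property of~\eqref{martingale_w}, modulo integrability.

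For the variation process, I would compute the generator on $q\mapsto\dual{q}{\phi}^2$. For a jump of size $J := u_N\int_{B(x,r)}\phi(z)(\1{k=a}-q_{t^-}(z))\,\ld z$ the increment of the square is $2\dual{q_{t^-}}{\phi}J + J^2$; the first term is already accounted for by the compensator of $\dual{q}{\phi}$, so the predictable variation comes from $\E{J^2\mid\F_{t^-}}$ integrated against the Poisson intensities. Writing $J = u_N\left(\1{k=a}\int_{B(x,r)}\phi - \int_{B(x,r)}\phi\,q_{t^-}\right)$ and using $\E{\1{k=a}\mid\F_{t^-}}=\E{\1{k=a}^2\mid\F_{t^-}} = \avg{q_{t^-}}(x,r)$ at a neutral event, a short computation gives, at a neutral event,
\begin{equation*}
\E{J^2\mid\F_{t^-}} = u_N^2\left(\avg{q_{t^-}}(x,r)\Big(\int_{B(x,r)}\phi\Big)^2 - 2\avg{q_{t^-}}(x,r)\int_{B(x,r)}\phi\int_{B(x,r)}\phi\,q_{t^-} + \Big(\int_{B(x,r)}\phi\,q_{t^-}\Big)^2\right),
\end{equation*}
which, upon expanding the squares as double integrals over $B(x,r)\times B(x,r)$ and integrating over $x\in\R^d$, is precisely $u_N^2 V_r^2\intrd{2}\phi(z_1)\phi(z_2)\sigma^{(r)}_{z_1,z_2}(q_{t^-})\,\ld z_1\,\ld z_2$ with $\sigma^{(r)}$ as in~\eqref{def_sigma_r} (note $\1{k=a}\int\phi\int\phi = \int_{B(x,r)^2}\phi(z_1)\phi(z_2)\1{k=a}$ and $\E{\1{k=a}(1-q(z_1))(1-q(z_2)) + \text{(cross terms)}}$ reorganises to the $\sigma^{(r)}$ integrand). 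At a selective event the same computation produces a term of the same shape but with $\avg{q}$ replaced by $\avg{q}-F(\avg{q})$; since this is weighted by $s_N$ in the intensity, and since $J^2 = \bigO{u_N^2(\int_{B(x,r)}\phi)^2}$ uniformly with $\int_{\R^d}(\int_{B(x,r)}\phi)^2\,\ld x \le V_r\int_0^\infty V_r\mu(\ld r)\,\norm[2]{\phi}^2$ type bounds, its total contribution is $\bigO{t u_N^2 s_N\norm[2]{\phi}^2}$ after integrating in $t$; combined with the fact that replacing the neutral intensity's $(1-s_N)$ by $1$ also costs only $\bigO{t u_N^2 s_N\norm[2]{\phi}^2}$, this yields the error term displayed in~\eqref{variation_w}.

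The integrability statements — that~\eqref{martingale_w} is genuinely a square-integrable martingale rather than merely a local martingale — follow from the moment bound $\norm[2]{\phi}\le\norm[1]{\phi}^{1/2}\norm[\infty]{\phi}^{1/2}<\infty$ for $\phi\in L^{1,\infty}(\R^d)$, the hypothesis $\int_0^\infty V_r^2\,\mu(\ld r)<\infty$ (which makes the right-hand side of~\eqref{variation_w} finite for each $t$, since $|\sigma^{(r)}_{z_1,z_2}(q)|\le V_r^{-2}V_r(z_1,z_2)$ and $\intrd{2}|\phi(z_1)\phi(z_2)|V_r(z_1,z_2)\,\ld z_1\,\ld z_2 \le V_r\norm[1]{\phi}\norm[\infty]{\phi}$), and the boundedness of $F$ on $[0,1]$. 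I expect the main obstacle to be bookkeeping rather than conceptual: one must be careful that the $\1{|x-z|<r}$ cutoff in~\eqref{update_w} is handled exactly (so that ``averages of averages'' $\dbavg{\phi}{r}$ and the intersection volumes $V_r(z_1,z_2)$ appear with the right constants), and that the selective-event and $(1-s_N)$-versus-$1$ discrepancies are uniformly controlled and genuinely of the claimed order $\bigO{t u_N^2 s_N\norm[2]{\phi}^2}$; the rest is a routine application of the Poisson-driven Dynkin formula, exactly as in the existence proof cited in Remark~\ref{exist_uniq}.
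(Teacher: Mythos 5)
Your proposal is correct and follows essentially the same route as the paper: compute the effect of a single event on $\dual{q}{\phi}$, take the conditional expectation of the parental type (giving $\avg{q}$ at neutral events and $\avg{q}-F(\avg{q})$ at selective ones), integrate over the event centre to produce $V_r\dbavg{\phi}{r}$ and the intersection volumes in $\sigma^{(r)}$, and control the selective/$(1-s_N)$ discrepancies in the quadratic variation via Jensen's inequality together with $\int_0^\infty V_r^2\,\mu(\ld r)<\infty$, exactly as in the paper's proof (which phrases the same computation as a $\delta t\downarrow 0$ limit and cites Ethier--Kurtz). The only blemish is the garbled intermediate bound $\int_{\R^d}\bigl(\int_{B(x,r)}\phi\bigr)^2\ld x \le V_r\int_0^\infty V_r\,\mu(\ld r)\,\norm[2]{\phi}^2$, whose correct form is $\int_{\R^d}\bigl(\int_{B(x,r)}\phi\bigr)^2\ld x \le V_r^2\norm[2]{\phi}^2$ followed by integration against $\mu(\ld r)$; this does not affect the argument.
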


Proposition~\ref{prop:martingale_pb_slfv} can be seen as a way to write $q_t$ as the sum of the effects of the different evolutionary forces at play in this model. 
The term $\dbavg{\phi}-\phi$ represents migration, while the term involving the function $F$ in~\eqref{martingale_w} accounts for the bias introduced during selective events. 
As for the martingale term, it corresponds to the stochasticity at each reproduction event, which is called \textit{genetic drift}.

\begin{proof}[Proof of Proposition~\ref{prop:martingale_pb_slfv}]
We drop the superscript $N$ from $q^N$ in this proof. Let $\mathbb{P}_{t,x,r}$ (resp. $\mathbb{P}^{S}_{t,x,r}$) denote the distribution of the parental type $k$ at a reproduction event $(t,x,r)\in\Pi$ (resp. in $\Pi^S$). Then, from the definition of $\proc{q}$,
\begin{multline}\label{first_moment}
\lim_{\delta t\downarrow 0}\frac{1}{\delta t}\E{\dual{q_{t+\delta t}}-\dual{q_{t}}}{q_{t}=q} =\\ \intrd\ld x\intg{0}{\infty} \mu(\ld r) \intrd \phi(z) u_N \inball{x}{z} \big\lbrace (1-s_N) \E[t,x,r]{\1{k=a}-q_{t}(z)}{q_{t}=q} \\+  s_N \,\E[t,x,r]{\1{k=a}-q_{t}(z)}{q_{t}=q}{S} \big\rbrace\ld z.
\end{multline}
Recall from Definition~\ref{definition_slfv_selection_haploid} that $\P[t,x,r]{k=a}{q_{t}=q} = \avg{q}(x,r)$ and 
$$\P[t,x,r]{k=a}{q_{t}=q}{S} = \avg{q}(x,r) - F( \avg{q}(x,r)).$$
Integrating with respect to the variable $x$ over $B(z,r)$ then yields
\begin{multline*}
\lim_{\delta t\downarrow 0}\frac{1}{\delta t}\E{\dual{q_{t+\delta t}}-\dual{q_{t}}}{q_{t}=q} \\ = \intg{0}{\infty}\mu(\ld r)u_N V_{r}\intrd\phi(z)\braced{(\dbavg{q}(z,r)-q(z)) - s_N \avg{F(\avg{q})}(z,r)}\ld z.
\end{multline*}
Thus \eqref{martingale_w} indeed defines a martingale - see for example \citep[Proposition 4.1.7]{ethier_markov_1986} (we can change the order of integration to do the averaging on $\phi$ instead of $q$ in the first term). To compute its variation process, write
\begin{multline} \label{var_proc_s_error}
\lim_{\delta t\downarrow 0}\frac{1}{\delta t}\E{\left(\dual{q_{t+\delta t}}-\dual{q_{t}}\right)^{2}}{q_{t}=q} \\ = \intrd \intg{0}{\infty} \intrd{2}\phi(z_{1})\phi(z_{2})u_{N}^{2}\inball{z_{1}}{x}{z_{2}}{x} \big\lbrace (1-s_N) \E[t,x,r]{(\1{k=a}-q_{t}(z_{1}))(\1{k=a}-q_{t}(z_{2}))}{q_{t}=q} \\+  s_N \,\E[t,x,r]{(\1{k=a}-q_{t}(z_{1}))(\1{k=a}-q_{t}(z_{2}))}{q_{t}=q}{S} \big\rbrace\ld z_{1}\ld z_{2} \mu(\ld r) \ld x.
\end{multline}
But
\begin{multline*}
\E[t,x,r]{(\1{k=a}-q_{t}(z_{1}))(\1{k=a}-q_{t}(z_{2}))}{q_{t}=q} \\ = \avg{q}(x,r)(1-q(z_{1}))(1-q(z_{2}))  + (1-\avg{q}(x,r))q(z_{1})q(z_{2}),
\end{multline*}
and the other term within the curly brackets is $\bigO{s_N}$.
Thus, integrating with respect to $x$ and using \eqref{def_sigma_r}, we recover
\begin{multline} \label{martingale_qvar}
\lim_{\delta t\downarrow 0}\frac{1}{\delta t}\E{\left(\dual{q_{t+\delta t}}-\dual{q_{t}}\right)^{2}}{q_{t}=q} = \intg{0}{\infty} \mu(\ld r) u_N^{2} V_{r}^{2} \intrd{2} \phi(z_{1}) \phi(z_{2}) \sigma^{(r)}_{z_{1},z_{2}}(q) \ld z_{1} \ld z_{2}\\
+\bigO{s_N}\intg{0}{\infty} \mu(\ld r) u_N^{2} V_{r}^{2} \intrd \left( \frac{1}{V_r}\int_{B(x,r)}\phi(z)dz \right)^2 dx .
\end{multline}
By Jensen's inequality, $\intrd \left( \frac{1}{V_r}\int_{B(x,r)}\phi(z)dz \right)^2 dx \leq \norm[2]{\phi}^2$ and the result follows from the assumption that $\int_{0}^{\infty} V_r^2 \mu(\ld r) < \infty$.
\end{proof}

Now let $\proc{q^N}$ denote the SLFVS with overdominance as defined as in Definition \ref{definition_slfv_diploid} with impact parameter $u_N$, radius of reproduction events $R$, selection parameters $s_{i,N}$ and mutation parameters $\nu_{i,N}$ defined in \eqref{driftload_params}. 
Recall the definition of $F$ in \eqref{definition_F_driftload} and let $\proc{\F}$ denote the natural filtration of this process.

\begin{proposition}\label{prop:martingale_pb_slfv_dr}
Let $s=s_1+s_2$ (and $s_N = s_{1,N} + s_{2,N}$). 
For any $\phi:\R^{d}\to\R$ in $L^{1,\infty}(\R^d)$,
\begin{equation}\label{martingale_q_dr}
\dual{q^N_{t}} - \dual{q^N_{0}} - \intg{0}{t} u_N V_{r} \braced{ \dual{q^N_{s}}{\dbavg{\phi}{R}-\phi} - s_N \dual{\avg{F(\avg{q^N_{s}})}{R}} } \ld s
\end{equation}
defines a (mean zero) square integrable $\F_{t}$-martingale with (predictable) variation process
\begin{equation}\label{variation_q_dr}
\intg{0}{t} u_N^{2} V_{R}^{2} \intrd{2} \phi(z_{1}) \phi(z_{2}) \rho^{(R)}_{z_{1},z_{2}}(q^N_{s}) \ld z_{1} \ld z_{2} \ld s  + \bigO{tu_N^2 \delta_N^2 \norm[2]{\phi}^2},
\end{equation}
where
\begin{multline} \label{def_tau_r}
\rho^{(r)}_{z_{1},z_{2}}(q) = \frac{1}{V_{r}^{2}}\int_{B(z_{1},r)\cap B(z_{2},r)}[\avg{q}(x,r)^2(1-q(z_{1}))(1-q(z_{2}))+2\avg{q}(x,r)(1-\avg{q}(x,r))(\tfrac{1}{2}-q(z_{1}))(\tfrac{1}{2}-q(z_{2})) \\ +(1-\avg{q}(x,r))^2q(z_{1})q(z_{2})]\ld x.
\end{multline}
\end{proposition}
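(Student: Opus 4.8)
\emph{Plan.} The proof follows the same pattern as that of Proposition~\ref{prop:martingale_pb_slfv}: I would compute the infinitesimal first and second moments of $t\mapsto\dual{q^N_t}{\phi}$ and then invoke the standard martingale-problem machinery (\citep[Proposition~4.1.7]{ethier_markov_1986}), just as in the haploid case. Two features are new. First, there are now five independent families of events. Second, at each event the value deposited at a site is the parental contribution $G := \tfrac{1}{2}(\1{k_1=A_1}+\1{k_2=A_1})$; equivalently, the parental pair $\{k_1,k_2\}$ is one of $\{A_1,A_1\}$, $\{A_1,A_2\}$, $\{A_2,A_2\}$, and correspondingly $G\in\{1,\tfrac{1}{2},0\}$.

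\emph{First moment.} Conditioning on $q^N_t=q$ and writing $w=\avg{q}(x,R)$ for the local density of $A_1$ around the centre $x$ of an event, I would compute $\E{G}$ for each event type: at a neutral event $\{k_1,k_2\}$ follows the Hardy--Weinberg law $(w^2,2w(1-w),(1-w)^2)$, so $\E{G}=w$; at a $\Pi^{\nu_1}$ event $G=0$ and at a $\Pi^{\nu_2}$ event $G=1$; and at a $\Pi^{S_i}$ event, enumerating the possibilities for the two sampled pairs and applying the rule ``keep the pair that is not $\{A_i,A_i\}$, otherwise choose one uniformly'' yields, after simplification, $\E{G}=w-w^2(1-w)$ for $i=1$ and $\E{G}=w+w(1-w)^2$ for $i=2$. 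Multiplying each deviation $\E{G}-w$ by the corresponding rate and summing, the total selective/mutational bias is $-s_{1,N}w^2(1-w)+s_{2,N}w(1-w)^2-\nu_{1,N}w+\nu_{2,N}(1-w)$, which equals $-s_N F(w)$ with $F$ as in \eqref{definition_F_driftload} and $s_N=s_{1,N}+s_{2,N}$, because $s_{i,N}/s_N=s_i/(s_1+s_2)$ and $\nu_{i,N}/s_N=\nu_i/(s_1+s_2)$ by the scaling \eqref{driftload_params}. Summing over all events, integrating over $x$ while transferring the averaging onto $\phi$ exactly as in Proposition~\ref{prop:martingale_pb_slfv}, and using $\mu=\delta_R$, produces precisely the compensator in \eqref{martingale_q_dr}; that the resulting process is a mean-zero square-integrable martingale follows as before from $q,\phi$ bounded and $\phi\in L^{1,\infty}(\R^d)$.

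\emph{Second moment.} The contribution of a neutral event to the infinitesimal second moment of $\dual{q^N_t}{\phi}$ involves $\E{(G-q(z_1))(G-q(z_2))}$; conditioning on $\{k_1,k_2\}$ and using the Hardy--Weinberg law with $G\in\{1,\tfrac{1}{2},0\}$ gives
\[
\E{(G-q(z_1))(G-q(z_2))}=w^2(1-q(z_1))(1-q(z_2))+2w(1-w)\big(\tfrac{1}{2}-q(z_1)\big)\big(\tfrac{1}{2}-q(z_2)\big)+(1-w)^2q(z_1)q(z_2),
\]
which is exactly the integrand defining $\rho^{(R)}_{z_1,z_2}(q)$ in \eqref{def_tau_r}; integrating over $x$ (so that $\inball{z_1}{x}{z_2}{x}$ restricts $x$ to $B(z_1,R)\cap B(z_2,R)$) and using $\mu=\delta_R$ gives the quadratic-variation density displayed in \eqref{variation_q_dr}. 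The contributions of the four other event types are each bounded in absolute value by $1$ (all quantities lie in $[0,1]$) and carry a prefactor $s_{i,N}$ or $\nu_{i,N}$ of order $\delta_N^2$; the same order-$\delta_N^2$ correction also accounts for the neutral rate $1-s_{1,N}-s_{2,N}-\nu_{1,N}-\nu_{2,N}$ differing from $1$. Bounding the relevant integral of $\phi$ by $\norm[2]{\phi}^2$ via Jensen's inequality as in \eqref{martingale_qvar}, these collect into the error term $\bigO{t u_N^2\delta_N^2\norm[2]{\phi}^2}$, and the predictable variation process is the time-integral of the density.

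\emph{Main obstacle.} The only step that is not a verbatim adaptation of the haploid argument is the evaluation of $\E{G}$ at the two kinds of selective event: one must implement the four-sample rejection rule correctly and verify that the resulting cubic polynomials in $w$, combined with the two mutation terms, reproduce $F$ from \eqref{definition_F_driftload} \emph{exactly} (not merely to leading order in the $s_i$ and $\nu_i$) --- this is precisely what pins down $F$ and hence identifies the limiting PDE and SPDE in Theorem~\ref{thm:driftload_slfv}. Everything else transcribes the computations in the proof of Proposition~\ref{prop:martingale_pb_slfv}.
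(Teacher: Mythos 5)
Your proposal is correct and follows essentially the same route as the paper: compute the conditional first and second moments of the update at each event and feed them into the same martingale-problem machinery as in Proposition~\ref{prop:martingale_pb_slfv}, with the selective/mutational bias identified exactly as $-s_N F(w)$ and the neutral events producing the $\rho^{(R)}$ density, all non-neutral contributions to the variation being absorbed into the $\bigO{t u_N^2\delta_N^2\norm[2]{\phi}^2}$ term. The only difference is bookkeeping — you compute $\E{G}$ event type by event type (and your values, e.g.\ $w-w^2(1-w)$ for a $\Pi^{S_1}$ event, agree with the paper's aggregated probabilities for $\{k_1,k_2\}$) whereas the paper first merges the five Poisson processes into a single law for the parental pair — which is an equivalent presentation of the same calculation.
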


\begin{proof}
Suppose a reproduction event hits the ball $B(x,r)$ at time $t$, and let $w = \avg{q^N_{t^-}}(x,r)$. Then,
\begin{align*}
\P{ \left\lbrace k_1, k_2 \right\rbrace = \left\lbrace A_1, A_1 \right\rbrace} &= (1 - s_{1,N} - s_{2,N} - \nu_{1,N} - \nu_{2,N}) w^2 + s_{1,N} w^4 + s_{2,N} w^2 (1 + (1-w)^2) \\ &\hspace{10pt} + \nu_{2,N}, \\
\P{ \left\lbrace k_1, k_2 \right\rbrace = \left\lbrace A_1, A_2 \right\rbrace} &= (1 - s_{1,N} - s_{2,N} - \nu_{1,N} - \nu_{2,N}) 2w(1-w) + s_{1,N} 2w(1-w)(1+w^2) \\ &\hspace{10pt} + s_{2,N} 2w(1-w) (1 + (1-w)^2), \\
\P{ \left\lbrace k_1, k_2 \right\rbrace = \left\lbrace A_2, A_2 \right\rbrace} &= (1 - s_{1,N} - s_{2,N} - \nu_{1,N} - \nu_{2,N}) (1-w)^2 + s_{1,N} (1-w)^2 (1+w^2) \\ &\hspace{10pt} + s_{2,N} (1-w)^4 + \nu_{1,N}.
\end{align*}
Note that this corresponds to the first order approximation of \eqref{wright-fisher-selection}, modified to take mutations into account.
It is straightforward to check that
\begin{equation*}
\E[t,x,r]{\tfrac{1}{2} \left( \1{k_1=A_1} + \1{k_2=A_1} \right) - q_t(z)}{q_t = q} = \avg{q}(x,r) - q(z) - s_N F(\avg{q}(x,r))
\end{equation*}
where $F$ is given by \eqref{definition_F_driftload}.
It follows as in the proof of Proposition \ref{prop:martingale_pb_slfv} that \eqref{martingale_q_dr} is a martingale.
The result for the variation process also follows as in the proof of Proposition \ref{prop:martingale_pb_slfv}.
(Note that $\sigma^{(r)}$ is replaced by $\rho^{(r)}$ in order to account for the fact that \eqref{update_w} is replaced by \eqref{eq:overdom_update}.)
\end{proof}

\begin{remark}
If $q$ were continuous then as $r\rightarrow 0$, $\sigma^{(r)}_{z_{1},z_{2}}(q)\rightarrow \delta_{z_1=z_2} q(z_1)(1-q(z_1))$ and $\rho^{(r)}_{z_{1},z_{2}}(q)\rightarrow \tfrac{1}{2} \delta_{z_1=z_2} q(z_1)(1-q(z_1))$. The factor of $1/2$ represents the doubling of effective population size for a diploid population compared to a haploid one.
\end{remark}

\subsection{The rescaled martingale problem - Fixed radius case}\label{subsec:mart_pb:fixed}

As at the start of Subsection \ref{subsec:resu:fixed_radius}, let $\proc{\varepsilon_{N}}{N\geq 1}$, $\proc{\delta_{N}}{N\geq 1}$ be sequences in $(0,1]$ decreasing towards zero, and let $F:\R\to \R$.
\begin{definition} [Martingale Problem (M1)] \label{formal_def_M1}
Given $\proc{\varepsilon_{N}}{N\geq 1}$, $\proc{\delta_{N}}{N\geq 1}$ and $F$, let $\eta_N = \varepsilon_N \delta_N^2$, $\tau_N = \varepsilon_N^2 \delta_N^d$ and $r_N=\delta_N R$. Then for $N\geq 1$, we say that a $\Xi$-valued process $(w_t^N)_{t\geq 0}$ satisfies the martingale problem (M1) if 
for all $\phi$ in $L^{1,\infty}(\R^d)$,
\begin{equation} \label{martingale_w_brown}
\dual{w^{N}_{t}} - \dual{w_{0}} - \eta_{N} u V_R \intg{0}{t} \braced{ \frac{2R^2}{d+2} \dual{w^{N}_{s}}{\L{r_{N}}\phi} - s\dual{ \avg{F(\avg{w^{N}_{s}})}{r_{N}} }} \ld s
\end{equation}
defines a (mean zero) square-integrable martingale with (predictable) variation process
\begin{equation} \label{variation_w_brown}
\tau_{N} u^2 V_R^2 \intg{0}{t} \intrd{2} \phi(z_{1}) \phi(z_{2}) \sigma^{(r_{N})}_{z_{1},z_{2}}(w^{N}_{s}) \ld z_{1} \ld z_{2} \ld s + \bigO{t \tau_N \delta_N^2 \norm[2]{\phi}^2}.
\end{equation}
\end{definition}
\begin{remark}
Of course, one cannot expect uniqueness to hold for this martingale problem, due to the unspecified error term in \eqref{variation_w_brown}. In the limit when $N \to \infty$, however, the error terms will vanish.
\end{remark}

Let $\proc{q^N}$ be defined as at the start of Section \ref{subsec:resu:fixed_radius}. 
Set $w^N_t(x) = q^N_t(x/\delta_N)$.
\begin{proposition}\label{prop:mart_pb_m1}
For each $N$, the process $\proc{w^N}$ satisfies the martingale problem (M1).
\end{proposition}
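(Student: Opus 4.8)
The plan is to start from the martingale problem satisfied by $\proc{q^N}$ (Proposition~\ref{prop:martingale_pb_slfv}) with the parameters of Subsection~\ref{subsec:resu:fixed_radius}, namely $u_N = \varepsilon_N u$, $s_N = \delta_N^2 s$, $\mu(\ld r) = \delta_R$, and to track how the statement transforms under the spatial rescaling $w^N_t(x) = q^N_t(x/\delta_N)$. First I would fix $\phi \in L^{1,\infty}(\R^d)$ and observe that $\dual{w^N_t}{\phi} = \delta_N^{-d}\dual{q^N_t}{\phi(\delta_N \cdot)}$; applying Proposition~\ref{prop:martingale_pb_slfv} with the test function $\psi := \phi(\delta_N \cdot)$ (which is again in $L^{1,\infty}(\R^d)$) shows that
\begin{equation*}
\dual{q^N_t}{\psi} - \dual{q^N_0}{\psi} - \intg{0}{t} u_N V_R \braced{ \dual{q^N_s}{\dbavg{\psi}{R} - \psi} - s_N \dual{\avg{F(\avg{q^N_s})}{R}}{\psi} } \ld s
\end{equation*}
is a mean-zero square-integrable martingale with variation process given by \eqref{variation_w}. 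Dividing through by $\delta_N^d$ preserves the martingale property (a deterministic constant multiple of a martingale is a martingale), so the central task is to rewrite each of the four pieces — the drift's migration term, the drift's selection term, the martingale itself, and the variation process — in terms of $w^N$ and the rescaled test function $\phi$.

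The key computations are changes of variables. For the migration term, $\delta_N^{-d}\dual{q^N_s}{\dbavg{\psi}{R} - \psi}$ becomes $\dual{w^N_s}{(\dbavg{\cdot}{R} - \mathrm{Id})^* \text{-type operator}}$; substituting $x \mapsto \delta_N x$ inside the ball averages turns averaging over $B(x,R)$ into averaging over $B(x/\delta_N, R)$ at the $w^N$ level, i.e. over $B(\cdot, r_N)$ with $r_N = \delta_N R$ after a further change of variables, so that $\delta_N^{-d}\dual{q^N_s}{\dbavg{\psi}{R} - \psi} = \dual{w^N_s}{\dbavg{\phi}{r_N} - \phi}$. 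Recalling the definition \eqref{definition_calL} of $\L{r_N}$, we have $\dbavg{\phi}{r_N} - \phi = \frac{2r_N^2}{d+2}\L{r_N}\phi = \frac{2\delta_N^2 R^2}{d+2}\L{r_N}\phi$, which produces the factor $\frac{2R^2}{d+2}$ together with an extra $\delta_N^2$; combined with $u_N V_R = \varepsilon_N u V_R$ this yields exactly the prefactor $\eta_N u V_R \frac{2R^2}{d+2}$ with $\eta_N = \varepsilon_N \delta_N^2$. Similarly $\delta_N^{-d}\dual{q^N_s}{\avg{F(\avg{q^N_s})}{R}\psi} = \dual{w^N_s}{\avg{F(\avg{w^N_s})}{r_N}\phi}$ after the same substitution, and the coefficient $u_N s_N = \varepsilon_N \delta_N^2 s \, u = \eta_N u s$ (up to $V_R$) matches the selection term in \eqref{martingale_w_brown}. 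For the variation process, $\sigma^{(R)}_{z_1,z_2}(q)$ rescales to $\sigma^{(r_N)}_{z_1,z_2}(w^N)$ under $z_i \mapsto \delta_N z_i$ (this is where the Jacobian factors $\delta_N^{2d}$ from the two $z$-integrals, one $\delta_N^{-d}$ from squaring the $\delta_N^{-d}$ in front, combine to give $\delta_N^d$), and the prefactor $u_N^2 V_R^2 = \varepsilon_N^2 u^2 V_R^2$ multiplied by this $\delta_N^d$ gives $\tau_N u^2 V_R^2$ with $\tau_N = \varepsilon_N^2 \delta_N^d$. The error term $\bigO{t u_N^2 s_N \norm[2]{\phi(\delta_N\cdot)}^2}$ must be re-expressed: $\norm[2]{\phi(\delta_N\cdot)}^2 = \delta_N^{-d}\norm[2]{\phi}^2$, and after dividing by $\delta_N^{2d}$ (the square of the $\delta_N^{-d}$ scaling of the martingale) one gets $\bigO{t \varepsilon_N^2 \delta_N^2 s \, \delta_N^{-d} \delta_N^{-d} \cdot \delta_N^{2d} \norm[2]{\phi}^2} = \bigO{t \tau_N \delta_N^2 \norm[2]{\phi}^2}$, matching \eqref{variation_w_brown}.

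The only genuinely delicate point is bookkeeping the powers of $\delta_N$ consistently across all terms — in particular making sure the $\delta_N^{-d}$ from the definition of $w^N$ interacts correctly with the Jacobians in the change of variables and with the squaring in the variation process, and that the extra $\delta_N^2$ generated by rewriting $\dbavg{\phi}{r_N}-\phi$ via $\L{r_N}$ lands in $\eta_N$ rather than being double-counted. There is no analytic obstacle here: the existence of $\proc{q^N}$ as a $\Xi$-valued process and all the martingale claims are already supplied by Proposition~\ref{prop:martingale_pb_slfv} and Remark~\ref{exist_uniq}, the hypothesis $\int_0^\infty V_r^2 \mu(\ld r) < \infty$ is trivially satisfied since $\mu = \delta_R$, and square-integrability and the mean-zero property transfer immediately under multiplication by the deterministic constant $\delta_N^{-d}$ and substitution of test functions. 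So the proposition follows by assembling these elementary identities; I would present it as a short verification rather than a separate lemma.
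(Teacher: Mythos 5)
Your proposal is correct and follows essentially the same route as the paper: apply Proposition~\ref{prop:martingale_pb_slfv} with the test function $\phi(\delta_N\cdot)$, change variables to rewrite the migration, selection and variation terms in terms of $w^N$, $\L{r_N}$ and $\sigma^{(r_N)}$, and read off the prefactors $\eta_N uV_R\tfrac{2R^2}{d+2}$, $\eta_N uV_Rs$ and $\tau_N u^2V_R^2$. One bookkeeping slip to fix when writing it out: the correct relation is $\dual{w^N_t}{\phi}=\delta_N^{d}\dual{q^N_t}{\phi(\delta_N\cdot)}$ (not $\delta_N^{-d}$), so the variation process is \emph{multiplied} by $\delta_N^{2d}$ rather than divided by it, and the error term then scales as $\delta_N^{2d}\cdot u_N^2 s_N\,\delta_N^{-d}\norm[2]{\phi}^2=\bigO{\tau_N\delta_N^2\norm[2]{\phi}^2}$, which is the count that actually yields the claimed bound (your displayed power count nets to $\varepsilon_N^2\delta_N^2$ instead).
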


\begin{proof}
From Proposition~\ref{prop:martingale_pb_slfv}, we know that, for $\phi\in L^{1,\infty}(\R^d)$,
\begin{equation*}
\dual{q^{N}_{t}} = \dual{q_{0}^{N}} + u_N V_{R}\intg{0}{t}\braced{ \dual{q_{s}^{N}}{\dbavg{\phi}{R}-\phi} - s_N \dual{\avg{F(\avg{q^{N}_{s}})}{R}}}\ld s + \calM^{N}_{t}(\phi),
\end{equation*}
where $\calM^{N}_{t}(\phi)$ is a martingale. 
By a change of variables,
\begin{equation*}
\dual{w_{t}^{N}} = \delta_{N}^{d}\dual{q^{N}_{t}}{\phi^{(\delta_{N})}},
\end{equation*}
with $\phi^{(\delta)}(x) = \phi(\delta x)$. Also,
\begin{align*}
\delta_{N}^{d}\dual{q^{N}_{s}}{\dbavg{\phi^{(\delta_{N})}}{R}} = \dual{w^{N}_{s}}{\dbavg{\phi}{\delta_{N}R}} &&\mathrm{and}&& \delta_{N}^{d}\dual{\avg{F(\avg{q^{N}_{s}})}{R}}{\phi^{(\delta_{N})}} = \dual{\avg{F(\avg{w^{N}_{s}})}{\delta_{N}R}}.
\end{align*}
Thus, recalling the definition of the operator $\L$ in \eqref{definition_calL} and the initial condition $q_0^N(x)=w_0(\delta_N x)$, we have
\begin{multline*}
\dual{w_{t}^{N}} = \dual{w_{0}} + \varepsilon_{N} \delta_{N}^{2} uV_{R} \intg{0}{t} \braced{ \frac{2R^{2}}{d+2} \dual{w^{N}_{s}}{\L{\delta_{N}R}\phi} - s \dual{\avg{F(\avg{w^{N}_{s}})}{\delta_{N}R}} } \ld s \\ + \delta_{N}^{d} \calM^{N}_{t}(\phi^{(\delta_{N})}).
\end{multline*}
Moreover, by a change of variables in the variation process given in~\eqref{variation_w},
\begin{align}\label{rescaling_covariation}
\delta_{N}^{2d}\qvar{\calM^{N}(\phi^{(\delta_{N})})} &= \varepsilon_{N}^2 u^2 V_R^2 \intg{0}{t}\intrd{2}\phi(z_1)\phi(z_2)\sigma^{(R)}_{z_1/\delta_{N},z_2/\delta_{N}}(q^N_s)\ld z_1\ld z_2\ld s +\bigO{t\varepsilon_N^2 \delta_N^2 \delta_N^d \norm[2]{\phi}^2}, 
\end{align}
and
\begin{equation*}
\sigma^{(R)}_{z_1/\delta_{N},z_2/\delta_{N}}(q^N_s) = \delta_{N}^d \sigma^{(\delta_{N} R)}_{z_1,z_2}(w^N_s).
\end{equation*}
Hence $w^N$ satisfies the martingale problem (M1).
\end{proof}

Proposition~\ref{prop:mart_pb_m1} is the main ingredient in the proof of Theorem~\ref{thm:clt_slfv_fixed_radius}. 
In fact we shall now see that under suitable conditions on the parameters $\proc{\varepsilon_{N}}{N\geq 1}$ and $\proc{\delta_{N}}{N\geq 1}$, the function $F$ and the initial condition $w_0$, any sequence of processes $\proc{w^N}$ satisfying the martingale problem (M1) in Definition \ref{formal_def_M1} will also satisfy a result analogous to Theorem \ref{thm:clt_slfv_fixed_radius}. 
If $\tau_N$ is of a smaller order than $\eta_N$, $w^N$ can be expected to be asymptotically deterministic (on a suitable time-scale), and we can study its fluctuations around a deterministic centering term.
Define $f^{N}:\R_{+}\times\R^{d}\to\R$ as in \eqref{definition:centering_term_fixed_radius}.
Quite naturally, this corresponds to equating \eqref{martingale_w_brown} to zero and making its time-scale fit that of the limiting process. 

Since the operator $\L$ approximates the Laplacian as $r\to 0$ (see Proposition~\ref{prop:average} in the appendix), $\ft$ converges to $f:\R_{+}\times\R^{d}\to\R$ as $N\to\infty$, where $f_t$ is the solution of the following equation,
\begin{equation} \label{fisher_kpp}
\left\lbrace
\begin{aligned}
& \deriv*{f_{t}}{t} = uV_R \left(\frac{R^2}{d+2}\Delta f_{t} -  s F(f_{t})\right), \\
& f_{0} = w_{0}.
\end{aligned}
\right.
\end{equation}
The following result is proved in Section~\ref{sec:proof_brownian}.

\begin{thm}\label{thm:clt_brownian}
Suppose that $\proc{w^{N}}$ is a $\Xi$-valued process which satisfies the martingale problem (M1) in Definition \ref{formal_def_M1} for some smooth, bounded $F:\R\to\R$ with bounded first and second derivatives and $\proc{\delta_{N}}{N}$, $\proc{\varepsilon_{N}}{N}$ converging to zero as $N\to\infty$. 
Moreover, suppose 
\begin{equation}\label{parameters_regime_brownian}
\tau_{N}/\eta_{N}=\littleO{\delta_{N}^{2d}}.
\end{equation}
Suppose also that $w_{0}$ has uniformly bounded derivatives of up to the fourth order and that there exists $\alpha_N$ such that the jumps of $\proc{w^{N}}$ are (almost surely) dominated by
\begin{equation}\label{bound_jumps}
\sup_{t\geq 0}\abs{\dual{w^{N}_{t}}-\dual{w^{N}_{t^-}}}\leq \alpha_{N}\norm[1]{\phi}
\end{equation}
for every $\phi\in L^{1,\infty}(\R^d)$, with $\alpha_{N}^{2}=\littleO{\tau_{N}/\eta_{N}}$. Then
\begin{equation} \label{convergence_w}
\proc{\wtN}{t\geq 0} \cvgas[L^{1},\,P]{N} \proc{f}
\end{equation}
in $\left( \sko{\Xi}, d \right)$ for every $T>0$ with $d$ given by Definition~\ref{def_metric_xi}. In addition, $$ \zt = (\eta_{N}/\tau_{N})^{1/2}(\wtN-\ft) $$ defines a sequence of distribution-valued processes which converges in distribution in $\sko{\calS'(\R^{d})}$ to the solution of the following \spde,
\begin{equation} \label{limiting_fluctuations}
\left\lbrace
\begin{aligned}
& \ld z_{t} = uV_R [\frac{R^2}{d+2}\Delta z_{t} -  s F'(f_{t})z_{t}]\ld t + u V_R\sqrt{f_{t}(1-f_{t})}\cdot\ld W_{t}, \\
& z_{0} = 0,
\end{aligned}
\right.
\end{equation}
$W$ being a space-time white noise.
\end{thm}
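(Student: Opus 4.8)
The plan is to prove Theorem~\ref{thm:clt_brownian} in two stages: first the law of large numbers \eqref{convergence_w}, then the central limit theorem for the fluctuations $\proc{Z^N}$. For the first stage I would work with a dense countable family of test functions $\phi \in \calS(\R^d)$ and use the martingale decomposition \eqref{martingale_w_brown}--\eqref{variation_w_brown} to control $\E{(\dual{w^N_t - f^N_t}{\phi})^2}$. Writing the difference of the evolution equations for $w^N$ and $f^N$, the drift terms combine into a Gronwall-type bound because $\L{r_N}$ is a bounded operator on $L^\infty$ (uniformly in $N$, see Proposition~\ref{prop:average}) and $F$ is Lipschitz; the martingale part contributes a variance of order $\tau_N/\eta_N$ on the rescaled time scale (plus the $\alpha_N^2$ jump contribution and the $\delta_N^2$ error, both $\littleO{\tau_N/\eta_N}$ by hypothesis). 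Hence $\E{(\dual{w^N_t - f^N_t}{\phi})^2} = \bigO{\tau_N/\eta_N} \to 0$, and since $f^N_t \to f_t$ uniformly (a deterministic estimate from Picard iteration, Proposition~\ref{prop:convergence_centering_term}), we get convergence in $L^1$ and in probability for fixed $t$ and fixed $\phi$. Upgrading to convergence in $\sko{\Xi}$ requires tightness: I would establish this via a Kolmogorov-type moment bound on the increments $\dual{w^N_t - w^N_s}{\phi_n}$ for each $\phi_n$ in the separating family, using the martingale problem again, and then the deterministic continuity of the limit makes the $J_1$ topology collapse to uniform convergence.

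For the fluctuation result, set $Z^N_t = (\eta_N/\tau_N)^{1/2}(w^N_t - f^N_t)$. Subtracting the equation \eqref{definition:centering_term_fixed_radius} for $f^N$ from the rescaled martingale problem for $w^N$ and multiplying by $(\eta_N/\tau_N)^{1/2}$ yields, for each $\phi$,
\begin{equation*}
\dual{Z^N_t}{\phi} = \intg{0}{t} uV_R\left[\tfrac{2R^2}{d+2}\dual{Z^N_s}{\L{r_N}\phi} - s\dual{Z^N_s}{G^N_s \phi}\right]\ld s + \tilde{\calM}^N_t(\phi) + \calE^N_t(\phi),
\end{equation*}
where $G^N_s$ is the (bounded, converging to $F'(f_s)$) averaging operator arising from linearising $\avg{F(\avg{\cdot})}$ around $f^N_s$ — here I would use a first-order Taylor expansion of $F$, the nonlinear remainder being $O((\eta_N/\tau_N)^{1/2}(w^N-f^N)^2) = O((\tau_N/\eta_N)^{1/2})$ in $L^1$ and hence negligible — $\tilde{\calM}^N_t(\phi) = (\eta_N/\tau_N)^{1/2}\calM^N_t(\phi)$ is a martingale, and $\calE^N$ collects the $\bigO{\cdot}$ errors, which vanish in probability under \eqref{parameters_regime_brownian} and the jump bound. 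The rescaled martingale has predictable quadratic variation
\begin{equation*}
\qvar{\tilde{\calM}^N(\phi)}_t = u^2V_R^2\intg{0}{t}\intrd{2}\phi(z_1)\phi(z_2)\sigma^{(r_N)}_{z_1,z_2}(w^N_s)\,\ld z_1\ld z_2\ld s + \littleO{1}.
\end{equation*}

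The heart of the argument is then a martingale CLT / tightness-and-identification scheme in $\sko{\calS'(\R^d)}$, in the spirit of \cite{walsh_introduction_1986} and Chapter~11 of \cite{ethier_markov_1986}. I would: (i) prove tightness of $\proc{Z^N}$ in $\sko{\calS'(\R^d)}$ by the Mitoma/Aldous criterion, i.e. tightness of $\proc{\dual{Z^N}{\phi}}$ in $\sko{\R}$ for each Schwartz $\phi$, which follows from Aldous's condition using the uniform bounds on the drift operators, the quadratic-variation estimate, and the jump bound $\alpha_N(\eta_N/\tau_N)^{1/2} \to 0$; (ii) identify the limit of any convergent subsequence — the drift term converges because $\L{r_N}\phi \to \tfrac{1}{2}\Delta\phi$ (so $\tfrac{2R^2}{d+2}\L{r_N}\phi \to \tfrac{R^2}{d+2}\Delta\phi$) uniformly and $G^N_s \to F'(f_s)$, using the already-established convergence $w^N \to f$ to pass to the limit inside $\sigma^{(r_N)}$; (iii) show that as $r_N \to 0$, $\sigma^{(r_N)}_{z_1,z_2}(w^N_s) \to f_s(z_1)(1-f_s(z_1))\,\delta(z_1-z_2)$ in the distributional sense (using continuity of $f$ and the concentration of $B(z_1,r_N)\cap B(z_2,r_N)$ near the diagonal), so that the limiting quadratic variation is $u^2V_R^2\intg{0}{t}\int\phi(z)^2 f_s(z)(1-f_s(z))\,\ld z\,\ld s$ — exactly the quadratic variation of $uV_R\intg{0}{t}\int\sqrt{f_s(z)(1-f_s(z))}\,\phi(z)\,W(\ld z,\ld s)$; (iv) conclude by the martingale representation / Lévy characterisation that every subsequential limit solves the linear SPDE \eqref{limiting_fluctuations}, whose solution is unique (it is an Ornstein--Uhlenbeck-type equation with bounded time-dependent coefficients, so uniqueness in $\sko{\calS'(\R^d)}$ is standard via the mild formulation and the heat semigroup), giving convergence of the full sequence.

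I expect the main obstacle to be the diagonal-concentration step (iii): the quadratic variation density $\sigma^{(r_N)}_{z_1,z_2}(w^N_s)$ is a genuinely nonlinear functional of $w^N_s$ (it involves $\avg{w^N_s}(x,r_N)$ times products $w^N_s(z_1)w^N_s(z_2)$ integrated over the lens $B(z_1,r_N)\cap B(z_2,r_N)$), and $w^N_s$ is only known to converge \emph{vaguely} to the continuous $f_s$. Showing that $\intg{0}{t}\intrd{2}\phi(z_1)\phi(z_2)\sigma^{(r_N)}_{z_1,z_2}(w^N_s)\,\ld z_1\ld z_2\ld s$ converges in probability to the correct deterministic limit therefore requires quantitative control: a second-moment computation bounding $\E{(\dual{w^N_s}{\psi} - \dual{f_s}{\psi})^2}$ for suitable test functions $\psi$ (obtained by iterating the LLN estimate, and relying on \eqref{parameters_regime_brownian}), combined with the Lipschitz regularity of $f$ to replace $w^N_s$-averages over $B(\cdot,r_N)$ by $f_s$ pointwise up to an $O(r_N)$ error. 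Carefully choosing the test-function pairings and tracking the $\delta_N$-dependence of all error terms is where the real work lies; the rest is a fairly standard assembly of tightness and martingale-identification arguments.
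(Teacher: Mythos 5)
Your overall architecture (linearise around $f^N$, treat the Taylor remainder as an error, prove tightness via Mitoma/Aldous, convergence of the martingale part via its quadratic variation and a Gaussian-martingale CLT, then identify the limit through the mild formulation) is essentially the paper's scheme, and your step (iii) worry is well placed. But there is a genuine gap at the point you dismiss in one line: the claim that the nonlinear remainder after linearising $F$ is $\bigO{(\eta_N/\tau_N)^{1/2}(w^N-f^N)^2}=\bigO{(\tau_N/\eta_N)^{1/2}}$ in $L^1$. That estimate does not follow from an $L^2$ law-of-large-numbers bound for fixed Schwartz test functions, because the remainder enters paired with ball averages at scale $r_N$: the relevant quantity is $\E{\avg{Z^N_s}(x,r_N)^{2}}$, i.e. the second moment of $Z^N_s$ tested against normalised indicators whose $L^2$ norms blow up like $V_{r_N}^{-1/2}$. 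The a priori bound $\abs{w^N-f^N}\leq 1$ only gives that the remainder is $\bigO{\abs{Z^N}}$, which is not small, and the correct size of the remainder term is $(\tau_N/\eta_N)^{1/2}\,\bigO{1/V_{r_N}}$, which tends to zero precisely because of hypothesis \eqref{parameters_regime_brownian} — this is exactly why that hypothesis is imposed. Supplying the bound $\sup_{x}\E{\avg{Z^N_t}(x,r_N)^{2}}\leq K/V_{r_N}$ (the paper's Lemma~\ref{lemma_bound_avg}) is the technical heart of the proof: it is obtained by writing $\avg{Z^N}$ in mild form against the semigroup of the jump process generated by $\calL$, using the $L^2$ estimate for stochastic integrals against $M^N$ (which is where the factor $1/V_{r_N}$ comes from, Lemma~\ref{lemma:bound_qvar}) and Gronwall. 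The same estimate is what justifies replacing $w^N_s$ by $f^N_s$ inside $\sigma^{(r_N)}$ in your step (iii), it controls the remainder term uniformly in time for the Aldous/tightness argument, and in the paper it even yields the law of large numbers \eqref{convergence_w} as a by-product (via $\E{\sup_t\abs{\dual{Z^N_t}{\phi}}}\leq K$), rather than the LLN being proved first. Without this lemma, or an equivalent quantitative estimate at scale $r_N$, both your remainder bound and your identification of the limiting quadratic variation are unsupported.

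A secondary problem is your stage-1 Gronwall argument as stated: $\calL$ (that is, $\L{r_N}$) is \emph{not} uniformly bounded on $L^{\infty}$ — its operator norm there is of order $r_N^{-2}$; it is uniformly bounded in $N$ only on functions with bounded second derivatives (Proposition~\ref{prop:average}). Moreover the drift pairs $w^N_s-f^N_s$ with $\calL\phi$, a different test function, so the inequality for $\E{\dual{w^N_t-f^N_t}{\phi}^{2}}$ does not close for a fixed $\phi$; one must pass to the Duhamel/mild form using the semigroup of $\calL$ (equivalently, the paper's backward time-dependent test functions $\varphi^N$ solving \eqref{definition_varphi_n}), which is again the route that leads to the key regularity estimate above. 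Once that lemma is in place, the rest of your outline (Mitoma tightness, convergence of $M^N$ to $\sqrt{f(1-f)}\cdot W$ via jump and quadratic-variation control, uniqueness for the limiting linear SPDE) matches the paper's proof.
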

Theorem \ref{thm:clt_slfv_fixed_radius} is now a direct consequence.
\begin{proof}[Proof of Theorem \ref{thm:clt_slfv_fixed_radius}]
Recall that $(\mathbf q^N_t)_{t\geq 0}$ is defined in \eqref{definition_rescaled_process} as a rescaling of $\proc{q^N}$, and by Proposition \ref{prop:mart_pb_m1}, letting $w_t^N(x)=q_t^N(x/\delta_N)$, $\proc{w^N}$ satisfies the martingale problem (M1). 
Also $\tau_{N}/\eta_{N}=\littleO{\delta_{N}^{2d}}$ follows from $\varepsilon_N = o(\delta _N^{d+2})$, and the bound on the jumps \eqref{bound_jumps} holds with $\alpha_N=\varepsilon_N u$ by \eqref{update_w}. 
Hence Theorem~\ref{thm:clt_brownian} applies and the result follows by noting that $\wtN = \mathbf{q}^N_t$.
\end{proof}

The proof of Theorem~\ref{thm:clt_brownian} can be found in full detail in Section~\ref{sec:proof_brownian}, but, in order to shed some light on the limiting equations that we obtain and to identify the difficulties in proving this result, let us outline the first calculations involved in the proof.
As in \cite{kurtz_limit_1971}, we use bounds on the martingale \eqref{martingale_w_brown} to show the convergence of $\proc{\wtN}{t\geq 0}$. 
When properly rescaled, this martingale converges to a continuous Gaussian martingale, implying the convergence of the fluctuation process $\proc{Z^N}$. 

For ease of notation, we shall set the constants $uV_R$, $2R^2/(d+2)$ and $s$ to $1$ in the definition of (M1).
Let $\mathbb M^N_t(\phi)$ denote $\tau_N^{-1/2}$ times the martingale defined in \eqref{martingale_w_brown}. 
Formally, we can then write (M1) as
\begin{equation*}
\ld w^N_t = \eta_N\left[ \L{r_N}w^N_t -  \avg{F(\avg{w^N_t})}{r_N} \right]\ld t + \tau_N^{1/2}\ld \mathbb M^N_t.
\end{equation*}
Now set
\begin{equation*}
M^N_t(\phi) = \eta_N^{1/2}\mathbb M^N_{t/\eta_N}(\phi).
\end{equation*}
(This Brownian scaling is not surprising since in the SLFVS case $\mathbb M^N$ is essentially an integral against a compensated Poisson process, and we expect $M^N$ to converge to an integral against white noise.) 
Replacing $t$ by $t/\eta_N$ above, we have
\begin{equation*}
\ld \wtN = \left[ \L{r_N}\wtN -  \avg{F(\avg{\wtN})}{r_N} \right]\ld t + (\tau_N/\eta_N)^{1/2}\ld M^N_t.
\end{equation*}
Subtracting the equation
\begin{equation*}
\ld \ft = \left[ \L{\rn}\ft -  \avg{F(\avg{\ft})}{\rn} \right] \ld t,
\end{equation*}
and multiplying by $(\eta_N/\tau_N)^{1/2}$ on both sides, we obtain
\begin{equation} \label{zt_before_linearising}
\ld \zt = \left[ \L{\rn}\zt - (\eta_N/\tau_N)^{1/2} \avg{\left(F(\avg{\wtN})-F(\avg{\ft})\right)}{\rn} \right]\ld t + \ld M^N_t.
\end{equation}
Since the function $F:\R\to\R$ is smooth, for $k\in\lbrace 1,2\rbrace$ and $x,y\in[0,1]$, we can define the following:
\begin{equation*}
R_{k}(x,y) = \intg{0}{1}\frac{t^{k-1}}{(k-1)!}F^{(k)}(x+t(y-x))\ld t.
\end{equation*}
Then $R_k$ is continuous and bounded by $\frac{1}{k!}\norm[\infty]{F^{(k)}}$. In addition, by Taylor's formula,
\begin{align}
F(x) &= F(y) + (x-y)R_1(x,y), \label{taylor_F_1} \\
F(x) &= F(y) + (x-y)F'(y) + (x-y)^2 R_2(x,y). \label{taylor_F_2}
\end{align}
Substituting the second relation into \eqref{zt_before_linearising} yields
\begin{equation*}
\ld \zt = \left[ \L{\rn}\zt - \avg{\avg{\zt} F'(\avg{\ft})}{\rn} - (\tau_N/\eta_N)^{1/2} \avg{(\avg{\zt})^2 R_2(\avg{\wtN},\avg{\ft})}{\rn} \right]\ld t + \ld M^N_t.
\end{equation*}
In fact, this equality holds in mild form,
\begin{multline}\label{martingale_z}
\dual{\zt} = \intg{0}{t}\dual{\zt{s}}{ \L{\rn}\phi -  \avg{F'(\avg{\ft{s}})\avg{\phi}}{\rn}}\ld s \\ - (\tau_N/\eta_N)^{1/2} \intg{0}{t}\dual{(\avg{\zt{s}})^{2}}{R_{2}(\avg{\wtN{s}},\avg{\ft{s}})\avg{\phi}{\rn}}\ld s + M^N_t(\phi).
\end{multline}
(In other words, every step above can be done using the integral form, yielding \eqref{martingale_z}.) 
We can see $M^N$ as a martingale measure and, from a change of variables in \eqref{variation_w_brown}, it can be seen that its covariation measure is given by
\begin{equation}\label{covariation_MN}
Q^N(\ld z_1\ld z_2\ld s) = \sigma^{(\rn)}_{z_1,z_2}(\wtN{s})\ld z_1\ld z_2\ld s + \bigO{\delta_N^2} \delta_{z_1=z_2} (\ld z_1 \ld z_2) \ld s.
\end{equation}
Accordingly, we will sometimes write $M^N_t(\phi)$ as a stochastic integral (as defined in \citep[Chapter~2]{walsh_introduction_1986}),
\begin{equation*}
M^N_t(\phi) = \intg{0}{t}\intrd\phi(x)M^N(\ld x\ld s).
\end{equation*}

Note that we have linearised the drift term in \eqref{martingale_w_brown} around the deterministic centering term, and that the remaining term (where $R_2$ appears) is the error due to this linearisation. 
The main difficulty in proving the convergence of $Z^N$ is to control this error. 
At first sight, it would seem that the factor $(\tau_N/\eta_N)^{1/2}$ in front of it is enough to make it vanish in the limit. 
However, some care is needed in dealing with the quadratic term in the spatial integral. 
Since $Z^N$ is going to converge as a distribution-valued process, its square does not make sense in the limit. 
The control of this term is achieved through Lemma~\ref{lemma_bound_avg}, where we bound the square of the average of $\zt$ over a ball of radius $\rn$. 
It is for this purpose that we require that $\tau_N/\eta_N = \littleO{r_N^{2d}}$.

Once this is done, we will be in a good position to prove the convergence of $Z^N$. 
Indeed, as $\rn$ tends to zero, $\L{\rn}\phi - \avg{F'(\avg{\ft{s}})\avg{\phi}}(\rn)$ is well approximated by $\frac{1}{2}\Delta\phi - F'(f_s)\phi$ (see Proposition~\ref{prop:average}). 
We also prove that $M^N$ converges to $\sqrt{f_t(1-f_t)}\cdot W_t$ using the expression \eqref{covariation_MN} for its covariance. 

The proof of convergence of $Z^N$ follows the classical strategy of proving that the sequence is tight before uniquely characterising its possible limit points. 
We are outside the safe borders of real-valued processes, but the theory presented in \cite{walsh_introduction_1986} provides the main tools needed for the proof of our result. 
In particular, the argument relies heavily on Mitoma's Theorem (Theorem 6.13 in \cite{walsh_introduction_1986}), which states that a sequence of processes $\proc{X^n}$, $n\geq 1$ with sample paths in $\sko{\calS'(\R^d)}$ a.s. is tight if and only if, for each $\phi\in\calS(\R^d)$, the sequence of real-valued processes $\proc{\dual{X^n}}{n\geq 1}$ is tight in $\sko{\R}$ (see also Theorem~\ref{thm:mitoma}).

\subsection{The rescaled martingale problem - Stable radii case}\label{subsec:mart_pb:stable}

For $\phi \in L^{1,\infty}(\R^d)$, and $\alpha \in (0, d\wedge 2)$, define the following norm
\begin{equation} \label{def:norm_alpha}
\norm[(\alpha)]{\phi}^2 = \intrd{2} \phi(z_1) \phi(z_2) \abs{ z_1 - z_2 }^{-\alpha} \ld z_1 \ld z_2.
\end{equation}

Let $\proc{\varepsilon_{N}}{N\geq 1}$, $\proc{\delta_{N}}{N\geq 1}$ be sequences in $(0,1]$ decreasing towards zero, and let $F:\R\to \R$.
\begin{definition} [Martingale Problem (M2)] \label{formal_def_M2}
Given $\proc{\varepsilon_{N}}{N\geq 1}$, $\proc{\delta_{N}}{N\geq 1}$ and $F$, let $\eta_N = \varepsilon_N\delta_N^\alpha$ and $\tau_N = \varepsilon_N^2\delta_N^\alpha$. Then for $N\geq 1$, we say that a $\Xi$-valued process $(w_t^N)_{t\geq 0}$ satisfies the martingale problem (M2) if 
for all $\phi$ in $L^{1,\infty}(\R^d)$,
\begin{equation} \label{martingale_w_stable}
\dual{w^{N}_{t}} - \dual{w_{0}} - \eta_{N} u \intg{0}{t} \braced{ \dual{w^{N}_{s}}{\L*{\delta_{N}}\phi} - \frac{sV_1}{\alpha} \dual{ F^{(\delta_N)}(w^{N}_{s})}}\ld s
\end{equation}
defines a (mean zero) square-integrable martingale with (predictable) variation process
\begin{equation} \label{variation_w_stable}
\tau_{N} u^2 \intg{0}{t} \intrd{2} \phi(z_{1}) \phi(z_{2}) \sigma^{(\alpha,\delta_N)}_{z_{1},z_{2}}(w^{N}_{s}) \ld z_{1} \ld z_{2} \ld s + \bigO{ t \tau_N \delta_N^\alpha \norm[(\alpha)]{\phi}^2 },
\end{equation}
where
\begin{equation} \label{definition_sigma_stable}
\sigma^{(\alpha, \delta)}_{z_1,z_2}(w) = \intg{\frac{\abs{z_1-z_2}}{2}\vee \delta}{\infty}V_r^2\sigma^{(r)}_{z_1,z_2}(w)\frac{\ld r}{r^{d+\alpha+1}}.
\end{equation} 
\end{definition}
(Note that the remark about uniqueness made after Definition~\ref{formal_def_M1} also applies to the martingale problem (M2).)

Let $\proc{q^N}$ be defined as at the start of Section \ref{subsec:resu:stable_radii}.
Set
$w^N_t(x) = q^N_t(x/\delta_N)$.

\begin{proposition}\label{prop:mart_pb_m2}
For each $N$ the process $\proc{w^N}$ satisfies the martingale problem (M2).
\end{proposition}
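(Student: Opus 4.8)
The plan is to derive the martingale problem (M2) for $\proc{w^N}$ from the unrescaled martingale problem for $\proc{q^N}$ in Proposition~\ref{prop:martingale_pb_slfv}, exactly as was done in the fixed radius case (Proposition~\ref{prop:mart_pb_m1}), but now keeping track of the stable radii measure $\mu(\ld r) = \1{r\geq 1}r^{-d-\alpha-1}\ld r$. First I would record that by the change of variables $\phi^{(\delta)}(x)=\phi(\delta x)$ one has $\dual{w^N_t}{\phi} = \delta_N^d\dual{q^N_t}{\phi^{(\delta_N)}}$, and that the same substitution turns $\avg{\cdot}(x,R)$-type averages at scale $R$ for $q^N$ into averages at scale $\delta_N R$ for $w^N$. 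Then I would take the martingale identity~\eqref{martingale_w} from Proposition~\ref{prop:martingale_pb_slfv} applied to $\phi^{(\delta_N)}$ (valid since $\int_0^\infty V_r^2\mu(\ld r)<\infty$ for the stable measure, because $V_r^2\sim r^{2d}$ and $2d < d+\alpha+1$ fails in general — so I should double-check: actually $\int_1^\infty r^{2d}r^{-d-\alpha-1}\ld r = \int_1^\infty r^{d-\alpha-1}\ld r$ converges only if $d<\alpha$, which contradicts $\alpha<d$; the finiteness must instead come from the truncation combined with some other bound, so I will be careful here and rely on whatever integrability is actually invoked).

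The key computational step is to perform the substitution $r = \delta_N \rho$ in the $\mu(\ld r)$-integrals. Under this substitution $\mu(\ld r)=\1{r\geq 1}r^{-d-\alpha-1}\ld r$ becomes $\1{\rho\geq 1/\delta_N}\delta_N^{-d-\alpha}\rho^{-d-\alpha-1}\ld\rho$; however, because the averages for $w^N$ naturally live at radii $\geq \delta_N$ (reproduction events of radius $\geq 1$ for $q^N$ become radius $\geq \delta_N$ for $w^N$), the relevant truncation is at $\rho \geq 1$, and the extra $\delta_N^{-d-\alpha}$ factor combines with the $\delta_N^d$ from the spatial change of variables and the $s_N=\delta_N^\alpha s$ scaling. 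Matching the drift term: $u_N V_r (\dbavg{\cdot}-\cdot) $ integrated against $\mu(\ld r)$ should reproduce $\eta_N u \dual{w^N_s}{\L*{\delta_N}\phi}$ where $\L*{\delta}$ is defined in~\eqref{definition_fractional_laplacian} via $\Phi^{(\delta)}$; I would verify this by writing out $\intg{0}{\infty}V_r(\dbavg{q}(z,r)-q(z))\mu(\ld r)$, changing variables, and recognizing the kernel $\Phi^{(\delta_N)}$. Similarly the selection term $s_N\avg{F(\avg{q})}(z,r)$ integrated against $u_N V_r\mu(\ld r)$ must reproduce $\eta_N u \frac{sV_1}{\alpha}F^{(\delta_N)}(w^N_s)$, using the definition~\eqref{definition_F_delta} of $H^{(\delta)}$ and the factor $V_1$; the $1/\alpha$ appears from $\int_1^\infty \rho^{-\alpha-1}\ld\rho = 1/\alpha$ once the $\alpha$ in~\eqref{definition_F_delta} is accounted for.

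For the variation process I would start from~\eqref{variation_w}, apply the same changes of variables, and use the scaling identity $\sigma^{(R)}_{z_1/\delta,z_2/\delta}(q) = \delta^d\sigma^{(\delta R)}_{z_1,z_2}(w)$ analogous to the one in the proof of Proposition~\ref{prop:mart_pb_m1}, combined with the $r=\delta_N\rho$ substitution, to collapse the $\mu(\ld r)$-integral of $V_r^2\sigma^{(r)}$ into $\sigma^{(\alpha,\delta_N)}$ as defined in~\eqref{definition_sigma_stable} (the truncation $\frac{|z_1-z_2|}{2}\vee\delta$ in that definition arises because $V_r(z_1,z_2)=0$ for $r<|z_1-z_2|/2$ and the lower truncation $\delta$ comes from the $\rho\geq 1$ cutoff). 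The prefactor should work out to $\tau_N u^2 = \varepsilon_N^2\delta_N^\alpha u^2$. The error term $\bigO{tu_N^2 s_N\norm[2]{\phi}^2}$ in~\eqref{variation_w} needs to be re-expressed after rescaling as $\bigO{t\tau_N\delta_N^\alpha\norm[(\alpha)]{\phi}^2}$; here I expect the main obstacle: I must control $\delta_N^d\intrd(V_r^{-1}\int_{B(x,r)}\phi^{(\delta_N)})^2\ld x$ integrated against $\mu(\ld r)$ not merely by $\norm[2]{\phi}^2$ (which would give the wrong $\delta_N$-power and the wrong norm) but by $\norm[(\alpha)]{\phi}^2$, which requires estimating $\int_1^\infty V_r(x,y)r^{-d-\alpha-1}\ld r$ by a constant times $|x-y|^{-\alpha}$ as in~\eqref{def_K_alpha} — essentially re-deriving the $K_\alpha$ bound. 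Apart from bookkeeping of the $\delta_N$-powers and that norm estimate, the argument is a direct translation of the fixed-radius proof, so I would present it concisely, citing Proposition~\ref{prop:martingale_pb_slfv} and mirroring Proposition~\ref{prop:mart_pb_m1}.
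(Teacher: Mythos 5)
Your proposal is correct and follows essentially the same route as the paper: apply the change of variables of Proposition~\ref{prop:mart_pb_m1} with the stable measure, identify the rescaled drift and variation with $\L*{\delta_N}$, $F^{(\delta_N)}$ and $\sigma^{(\alpha,\delta_N)}$, and note that since $\int_0^\infty V_r^2\,\mu(\ld r)=\infty$ the error term in \eqref{variation_w} must instead be bounded through the kernel $V_r(z_1,z_2)\leq r^d\1{r\geq \frac{1}{2}|z_1-z_2|}$, which yields the $\norm[(\alpha)]{\phi}^2$ bound exactly as in the paper's proof. Your early hedge about ``whatever integrability is actually invoked'' and the slightly garbled direction of the radius substitution are only expository wrinkles; the scalings you state ($\eta_N u$, $sV_1/\alpha$, $\tau_N u^2$, $\tau_N\delta_N^\alpha\norm[(\alpha)]{\phi}^2$) are the right ones.
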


\begin{proof}
This is proved in a similar way to Proposition~\ref{prop:mart_pb_m1}, using change of variables and the definitions of $\calD^{(\alpha,\delta)}$, $F^{(\delta)}$ and $\sigma^{(\alpha, r)}$ in \eqref{definition_fractional_laplacian}, \eqref{definition_F_delta} and \eqref{definition_sigma_stable} respectively.

Note that we cannot apply Proposition~\ref{prop:martingale_pb_slfv} directly, since in the stable case, $\int_{0}^{\infty} V_r^2 \mu(\ld r) = \infty$, but the term from the second line of \eqref{var_proc_s_error} in the proof of Proposition~\ref{prop:martingale_pb_slfv} can be bounded by
\begin{equation*}
\bigO{s_N} u_N^2 \intrd{2} \int_{0}^{\infty} \phi(z_1) \phi(z_2) V_r(z_1, z_2) \frac{\ld r}{r^{1+d+\alpha}} \ld z_1 \ld z_2.
\end{equation*}
We recover \eqref{variation_w_stable} since $V_r(z_1,z_2)\leq r^d \1{r\geq \frac{1}{2}|z_1-z_2|}$.
\end{proof}

As in Subsection~\ref{subsec:mart_pb:fixed}, we can now state a general result for a sequence of processes satisfying (M2) which will imply Theorem~\ref{thm:clt_slfv_stable_radius}.
Let $f^N$ be defined as in \eqref{definition_centering_term_stable}
and define $f$ as the solution to
\begin{equation}\label{definition_limit_stable}
\left\lbrace
\begin{aligned}
& \deriv*{f_t}{t} = u(\D f_t - \tfrac{sV_1}{\alpha} F(f_t)), \\
& f_{0} = w_0.
\end{aligned}
\right.
\end{equation}
The following result is proved in Section~\ref{sec:proof_stable}.

\begin{thm}\label{thm:clt_stable}
Suppose that $\proc{w^{N}}$ satisfies the martingale problem (M2) in Definition \ref{formal_def_M2} for some smooth, bounded function $F:\R\to\R$ with bounded first and second derivatives and $\proc{\delta_{N}}{N}$, $\proc{\varepsilon_{N}}{N}$ converging to zero as $N\to\infty$. 
Moreover, suppose 
\begin{equation}\label{parameters_regime_stable}
\tau_{N}/\eta_{N}=\littleO{\delta_{N}^{2\alpha}}.
\end{equation}
Suppose also that $w_{0}$ has uniformly bounded derivatives of up to the second order and that there exists $\alpha_N$ such that the jumps of $\proc{w^{N}}$ are dominated by
\begin{equation*}
\sup_{t\geq 0}\abs{\dual{w^{N}_{t}}-\dual{w^{N}_{t^-}}}\leq \alpha_{N}\norm[1]{\phi}
\end{equation*}
for every $\phi\in L^{1,\infty}(\R^d)$, with $\alpha_{N}^{2}=\littleO{\tau_{N}/\eta_{N}}$. Then
\begin{equation*}
\proc{\wtN}{t\geq 0} \cvgas[L^{1},\,P]{N} \proc{f}
\end{equation*}
in $\left( \sko{\Xi}, d \right)$. In addition, $$ \zt = (\eta_{N}/\tau_{N})^{1/2}(\wtN-\ft) $$ defines a sequence of distribution-valued processes which converges in distribution in $\sko{\calS'(\R^{d})}$ to the solution of the following \spde,
\begin{equation} \label{limiting_fluctuations_stable}
\left\lbrace
\begin{aligned}
& \ld z_{t} = u[\D z_{t} - \tfrac{sV_1}{\alpha} F'(f_{t})z_{t}]\ld t + u \cdot \ld W^\alpha_{t} \\
& z_{0} = 0,
\end{aligned}
\right.
\end{equation}
where $W^\alpha$ is a coloured noise with covariation measure given by \eqref{stable_noise_correlations}.
\end{thm}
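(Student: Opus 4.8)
The plan is to follow the strategy used for Theorem~\ref{thm:clt_brownian}, outlined at the end of Subsection~\ref{subsec:mart_pb:fixed}; only the estimates need to be adapted to the $\alpha$-stable setting. As there, I set the constants $u$ and $sV_1/\alpha$ to $1$ throughout, to lighten the notation. First I would recast the martingale problem (M2): dividing the martingale in~\eqref{martingale_w_stable} by $\tau_N^{1/2}$ and replacing $t$ by $t/\eta_N$ produces, in the sense of~\cite{walsh_introduction_1986}, a worthy martingale measure $M^N$ on $\R_+\times\R^d$ whose covariation measure, by~\eqref{variation_w_stable} and a change of variables, is
\[
Q^N(\ld z_1\ld z_2\ld s)=\sigma^{(\alpha,\delta_N)}_{z_1,z_2}(\wtN{s})\,\ld z_1\ld z_2\ld s+\bigO{\delta_N^\alpha}\,\abs{z_1-z_2}^{-\alpha}\,\ld z_1\ld z_2\ld s .
\]
Writing the time-rescaled form of (M2) as a mild equation, subtracting the mild form of~\eqref{definition_centering_term_stable} (with time rescaled by $1/\eta_N$), multiplying by $(\eta_N/\tau_N)^{1/2}$ and applying the Taylor expansion~\eqref{taylor_F_2} of $F$ around $f^N_s$, one obtains a mild equation for $\zt$ with the same structure as~\eqref{martingale_z}: the operator $\L{r_N}$ is replaced by $\L*{\delta_N}$, the local average $\avg{\cdot}(\cdot,r_N)$ by the $\delta_N$-dependent averaging of~\eqref{definition_F_delta}, the linearised drift involves $F'(f^N_s)$, the quadratic remainder carries the prefactor $(\tau_N/\eta_N)^{1/2}$ and $R_2(\avg{\wtN{s}},\avg{f^N_s})$, and the noise term is $M^N_t(\phi)$.

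Next I would prove $\wtN\to f$ in $L^1$ and in probability. Since $\avg{g}(x,\delta r)\to g(x)$ for continuous $g$ and $\alpha\intg{1}{\infty}\frac{\ld r}{r^{\alpha+1}}=1$, one has $F^{(\delta_N)}(g)\to F(g)$ and $\L*{\delta_N}g\to\D g$ locally uniformly, so the solution of~\eqref{definition_centering_term_stable} converges to the solution of~\eqref{definition_limit_stable} by a Gronwall argument; a first-moment bound on $\dual{\wtN-\ft}{\phi}$ deduced from~\eqref{variation_w_stable} together with $\tau_N/\eta_N\to 0$ then controls the stochastic part, again by Gronwall, and also upgrades the mode of convergence.

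For the fluctuations, by Mitoma's theorem (Theorem~\ref{thm:mitoma}) it is enough to prove tightness in $\sko{\R}$ of $\dual{\zt}{\phi}$ for each $\phi\in\calS(\R^d)$, which I would do with the Aldous--Rebolledo criterion. The quadratic variation of $t\mapsto M^N_t(\phi)$ equals $\intg{0}{t}\intrd{2}\phi(z_1)\phi(z_2)\,Q^N(\ld z_1\ld z_2\ld s)$, uniformly controlled using $V_r(z_1,z_2)\leq r^d\1{r\geq\abs{z_1-z_2}/2}$ and the norm $\norm[(\alpha)]{\phi}$; the drift part is handled by second-moment bounds on $\dual{\zt}{\psi}$ for a suitable family of test functions $\psi$, obtained by iterating the mild equation; and the jump bound with $\alpha_N^2=\littleO{\tau_N/\eta_N}$ forces continuity of every limit point. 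The delicate term is the quadratic remainder from the linearisation of $F$: one has to bound a quantity of the form $\E{\dual{(\avg{\zt{s}})^2}{\psi}}$, with $\psi$ built from $R_2$ and $\phi$ --- the $\alpha$-stable analogue of the control of the square of the $r_N$-average of $Z^N$ provided by Lemma~\ref{lemma_bound_avg} in the Brownian case. This is exactly where the hypothesis $\tau_N/\eta_N=\littleO{\delta_N^{2\alpha}}$ is needed, so that $(\tau_N/\eta_N)^{1/2}$ times this term vanishes in the limit; I expect this to be the main obstacle, compounded by the non-local structure and by the fact that $\int_0^\infty V_r^2\mu(\ld r)=\infty$, which forces every estimate to be carried against the singular kernel $\abs{z_1-z_2}^{-\alpha}$ and the norm $\norm[(\alpha)]{\cdot}$.

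Finally I would identify the limit. Every weak limit point $z$ of $\proc{Z^N}$ should solve the martingale problem associated with~\eqref{limiting_fluctuations_stable}: the drift converges because $\L*{\delta_N}\phi\to\D\phi$ and, for continuous $g$, the $\delta_N$-averaging of $g\phi$ tends to $g\phi$, using $f^N_s\to f_s$; and $M^N$ converges to $W^\alpha$ because $Q^N\to Q^\alpha$. This last point reduces to $\sigma^{(\alpha,\delta_N)}_{z_1,z_2}(\wtN{s})\to\sigma^{(\alpha,0)}_{z_1,z_2}(f_s)$, where $\sigma^{(\alpha,0)}$ denotes the $\delta\downarrow0$ limit in~\eqref{definition_sigma_stable}; expanding $\sigma^{(r)}$ in~\eqref{def_sigma_r} gives $V_r^2\sigma^{(r)}_{z_1,z_2}(q)=\int_{B(z_1,r)\cap B(z_2,r)}[\avg{q}(x,r)(1-q(z_1)-q(z_2))+q(z_1)q(z_2)]\ld x$, so by~\eqref{def_K_alpha} and~\eqref{definition_alpha_average} one finds $\sigma^{(\alpha,0)}_{z_1,z_2}(f_s)=K_\alpha(z_1,z_2)\big([f_s]_\alpha(z_1,z_2)(1-f_s(z_1)-f_s(z_2))+f_s(z_1)f_s(z_2)\big)$, which matches the density of $Q^\alpha$ in~\eqref{stable_noise_correlations}. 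Since~\eqref{limiting_fluctuations_stable} is a linear \spde with bounded coefficients driven by the Gaussian coloured noise $W^\alpha$, existence and uniqueness in law of its mild solution follow from the stochastic integral of~\cite{walsh_introduction_1986} and the Green's function of $\partial_t-\D$; hence the limit point is unique and $\proc{Z^N}$ converges in distribution in $\sko{\calS'(\R^d)}$, which completes the proof.
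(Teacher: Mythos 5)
Your plan reproduces the paper's proof of Theorem~\ref{thm:clt_stable} essentially step for step: the same rescaled martingale measure $M^N$ with covariation \eqref{covariation_stable}, the same mild equation for $Z^N$ obtained by linearising $F$ around $f^N$ with the Taylor remainder $R_2$, control of that quadratic term by the $\alpha$-stable analogue of Lemma~\ref{lemma_bound_avg} (the paper's Lemma~\ref{lemma_bound_avg_sr}, which is indeed exactly where $\tau_N/\eta_N=\littleO{\delta_N^{2\alpha}}$ enters), Mitoma's theorem combined with Aldous' criterion for tightness, convergence of $M^N$ to the coloured noise $W^\alpha$ through the convergence of its covariation to \eqref{stable_noise_correlations} (your algebraic identification of $\sigma^{(\alpha,0)}_{z_1,z_2}(f_s)$ with the density in \eqref{stable_noise_correlations} is correct), and identification of the limit as the solution of \eqref{limiting_fluctuations_stable}. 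The only deviations are cosmetic (you characterise the limit via the martingale problem and uniqueness in law for the linear SPDE rather than via the time-dependent test functions $\varphi^N$ and the stochastic-integral representation used in Section~\ref{sec:proof_stable}), so the proposal is correct and follows the same approach.
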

Theorem \ref{thm:clt_slfv_stable_radius} is now a direct consequence.
\begin{proof}[Proof of Theorem \ref{thm:clt_slfv_stable_radius}]
Recall that $(\mathbf q^N_t)_{t\geq 0}$ is defined in \eqref{definition_rescaled_stable} as a rescaling of $\proc{q^N}$, and by Proposition \ref{prop:mart_pb_m2}, letting $w_t^N(x)=q_t^N(x/\delta_N)$, $\proc{w^N}$ satisfies the martingale problem (M2). 
Also $\tau_{N}/\eta_{N}=\littleO{\delta_{N}^{2\alpha }}$ follows from $\varepsilon_N = o(\delta _N^{2 \alpha })$, and the bound on the jumps \eqref{bound_jumps} holds with $\alpha_N=\varepsilon_N u$ by \eqref{update_w}. 
We conclude by applying Theorem~\ref{thm:clt_stable} to $\wtN = \mathbf{q}^N_t$.
\end{proof}

The proof of Theorem \ref{thm:clt_stable} will make use of the same ideas as in the proof of Theorem \ref{thm:clt_brownian} and, to improve readability, the steps of the proof which are most similar to those in the Brownian case will be dealt with more quickly, going into details only when the two arguments differ.

\section{The Brownian case - proof of Theorem~\ref{thm:clt_brownian}}\label{sec:proof_brownian}

As in the sketch of the proof in Subsection \ref{subsec:mart_pb:fixed}, for ease of notation, we shall set the constants $uV_R$, $2R^2/(d+2)$ and $s$ to $1$ in the definition of (M1). 
Recall the expression for $\dual{\zt}$ in \eqref{martingale_z}; the next subsection shows how time-dependent test functions can be used to write $\dual{\zt}$ as the sum of a stochastic integral against a martingale measure and a non-linear term. 
Subsection~\ref{subsec:brownian:regularity_estimate} will provide a bound on this quadratic term using a Gronwall estimate. 
We can then prove the convergence of the process $\proc{\wtN}{t\geq 0}$ to $\proc{f}$ in Subsection~\ref{subsec:brownian:convergence_w}.

The following result is used to reduce the convergence of distribution-valued processes to the convergence of a family of real-valued processes; it is a direct corollary of Mitoma's theorem \citep[Theorem 6.13]{walsh_introduction_1986}.
\begin{thm}[{\cite[Theorem 6.15]{walsh_introduction_1986}}]\label{thm:mitoma}
 Let $\proc{X^n}{n\geq 1}$ be a sequence of processes with sample paths in $\sko{\calS'(\R^d)}$. Suppose
\begin{enumerate}[label=\roman*)]
\item for each $\phi\in\calS(\R^d)$, $\proc{\dual{X^n}}{n\geq 1}$ is tight,
\item for each $\phi_1,\ldots,\phi_k$ in $\calS(\R^d)$ and $t_1,\ldots,t_k$ in $[0,T]$, the distribution of $(\dual{X^n_{t_1}}{\phi_1},\ldots,\dual{X^n_{t_k}}{\phi_k})$ converges weakly on $\R^k$.
\end{enumerate}
Then there exists a process $\proc{X}$ with sample paths in $\sko{\calS'(\R^d)}$ such that $X^n$ converges in distribution to $X$.
\end{thm}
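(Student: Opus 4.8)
The plan is to deduce this from Mitoma's theorem itself, Theorem~6.13 of \cite{walsh_introduction_1986}, which I take as available: the sequence $\proc{X^n}{n\geq 1}$ is tight in $\sko{\calS'(\R^d)}$ if and only if, for every $\phi\in\calS(\R^d)$, the sequence of real-valued processes $\proc{\dual{X^n}{\phi}}{n\geq 1}$ is tight in $\sko{\R}$. Hypothesis (i) is exactly the right-hand side of this equivalence, so Mitoma's theorem immediately gives tightness of $\proc{X^n}{n\geq 1}$ in $\sko{\calS'(\R^d)}$. By Prohorov's theorem (in the setting of Walsh's Chapter~6) the sequence is then relatively compact in distribution: every subsequence of $\proc{X^n}{n\geq 1}$ admits a further subsequence converging in distribution to a limit with sample paths in $\sko{\calS'(\R^d)}$. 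It then remains only to show that all such subsequential limits have the same law, so that the full sequence converges.

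To that end I would take two subsequential limits $X$ and $Y$, say $X^{n_j}\Rightarrow X$ and $X^{m_j}\Rightarrow Y$, and compare their finite-dimensional distributions. For fixed $k$, $\phi_1,\dots,\phi_k\in\calS(\R^d)$ and $t_1,\dots,t_k\in[0,T]$, the evaluation map $\pi:\omega\mapsto\big(\dual{\omega_{t_1}}{\phi_1},\dots,\dual{\omega_{t_k}}{\phi_k}\big)$ from $\sko{\calS'(\R^d)}$ to $\R^k$ is continuous at every path which is continuous at each of $t_1,\dots,t_k$. Since a c\`adl\`ag $\calS'(\R^d)$-valued process has positive probability of being discontinuous at only countably many deterministic times, the set $D\subset[0,T]$ of times at which \emph{both} $X$ and $Y$ are almost surely continuous is co-countable, hence dense. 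For $t_1,\dots,t_k\in D$ the map $\pi$ is a.s.\ continuous with respect to the laws of both $X$ and $Y$, so the continuous mapping theorem applies along each subsequence; combined with hypothesis (ii) (which forces the laws of $\pi(X^n)$ to converge to a single limiting measure on $\R^k$, necessarily shared by every subsequential limit), this yields
\begin{equation*}
\big(\dual{X_{t_1}}{\phi_1},\dots,\dual{X_{t_k}}{\phi_k}\big)\ \stackrel{d}{=}\ \big(\dual{Y_{t_1}}{\phi_1},\dots,\dual{Y_{t_k}}{\phi_k}\big).
\end{equation*}

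Because this holds for all finite families of test functions and all times in the dense set $D$, and because $X$ and $Y$ have right-continuous $\calS'(\R^d)$-valued paths, their finite-dimensional distributions agree at every time in $[0,T]$, and hence $X\stackrel{d}{=}Y$ on $\sko{\calS'(\R^d)}$. Together with relative compactness this gives convergence in distribution of $\proc{X^n}{n\geq 1}$ to a process $\proc{X}$ with paths in $\sko{\calS'(\R^d)}$. The substantive ingredient is Mitoma's tightness criterion (Theorem~6.13), which reduces the infinite-dimensional tightness problem to a one-parameter family of one-dimensional ones and which I am allowed to assume; once it is in hand, the only point needing real care is that evaluation at a fixed time is not continuous on Skorokhod space, so one must restrict to the co-countable set $D$ of common continuity points before invoking the continuous mapping theorem, and then upgrade agreement of finite-dimensional distributions on $D$ to equality of laws via right-continuity of the paths.
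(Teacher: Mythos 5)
The paper does not actually prove this statement — it quotes it as Theorem 6.15 of Walsh and notes only that it is a direct corollary of Mitoma's tightness criterion (Theorem 6.13) — and your argument is precisely that corollary's standard proof: Mitoma plus Prohorov (in Walsh's setting) for tightness, then identification of every subsequential limit via hypothesis (ii) evaluated at common almost-sure continuity times. The one small inaccuracy is at the terminal time: right-continuity cannot recover the finite-dimensional distributions at $t=T$ from the dense set $D$ (a limit could have a fixed jump exactly at $T$), but this is harmless because evaluation at the endpoint is continuous in the Skorokhod topology (admissible time changes fix $T$), so the continuous mapping theorem applies there directly and hypothesis (ii) with $t_i=T$ closes the argument.
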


In order to apply this result to the sequence of distribution-valued processes $\proc{Z^N}{N\geq 1}$, we need to check that the two conditions (\textit{i}) and (\textit{ii}) are satisfied. 
The first one is proved in Subsection~\ref{subsec:brownian:tightness}, thus implying the tightness of the sequence by Mitoma's theorem. 
Subsection~\ref{subsec:brownian:convergence_M} deals with the convergence of the martingale measure $M^N$ (again as a distribution valued process, so this subsection will use Theorem~\ref{thm:mitoma}). 
Finally condition (\textit{ii}) is checked in Subsection~\ref{subsec:brownian:conclusion}.

In this section, in order to simplify the notation we often drop the sub- and superscripts $N$ when there is no ambiguity; for instance, $\calL$ should always be read $\L$, with $r=r_N$. 

\subsection{Time dependent test functions}\label{subsec:brownian:time_dependent_tf}

Fix $\phi\in\calS(\R^{d})$. We consider time-dependent test functions $\varphi:\R^{d}\times\braced{(s,t):0\leq s\leq t\leq T}\to\R$ such that (with a slight abuse of notation) $\varphi(s,t)\in\calS(\R^{d})$ for all $0\leq s\leq t$ and $\varphi$ is continuously differentiable with respect to the time variables. The following is proved by adapting Exercise 5.1 of \cite{walsh_introduction_1986}.

\begin{proposition}\label{prop:time_dependent_tf}
 Let $M$ be a worthy martingale measure and suppose that $V_t$ is a mild solution to the following equation:
\begin{equation*}
\ld V_t = A_t(V_t)\ld t + \ld M_t.
\end{equation*}
Suppose that $A_t(V_t)$ is adapted and that this equation is well posed. Then if $\varphi$ is a time dependent test function,
\begin{equation*}
 \dual{V_t}{\varphi(t,t)} = \dual{V_0}{\varphi(0,t)} + \intg{0}{t}\braced{\dual{V_s}{\partial_s\varphi(s,t)} + \dual{A_s(V_s)}{\varphi(s,t)}}\ld s + \intg{0}{t}\intrd\varphi(x,s,t)M(\ld x\ld s).
\end{equation*}
\end{proposition}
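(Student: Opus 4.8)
The plan is to deduce the identity from the weak (integral) form of the equation by approximating the time variable of the test function by step functions. Unwinding the definition of a (mild) solution, $V$ satisfies, for every fixed $\psi\in\calS(\R^d)$ and all $0\le a\le b\le T$, the integral identity
\[ \dual{V_b}{\psi} - \dual{V_a}{\psi} = \intg{a}{b}\dual{A_s(V_s)}{\psi}\ld s + \intg{a}{b}\intrd \psi(x)\,M(\ld x\,\ld s), \]
where the last term is $M_b(\psi)-M_a(\psi)$. Now fix $t\in[0,T]$, take a partition $0 = s_0 < s_1 < \dots < s_n = t$ of $[0,t]$, apply this with $a=s_i$, $b=s_{i+1}$ and $\psi=\varphi(s_{i+1},t)\in\calS(\R^d)$, and sum over $i=0,\dots,n-1$. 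The drift contributions sum to $\sum_i\intg{s_i}{s_{i+1}}\dual{A_s(V_s)}{\varphi(s_{i+1},t)}\ld s$, and the stochastic contributions sum, by definition of the Walsh integral of an elementary integrand, to $\intg{0}{t}\intrd\varphi^{(n)}(x,s,t)\,M(\ld x\,\ld s)$, where $\varphi^{(n)}(\cdot,s,t):=\varphi(\cdot,s_{i+1},t)$ for $s\in(s_i,s_{i+1}]$ is the step approximation of $\varphi(\cdot,\cdot,t)$.

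For the left-hand side, a summation by parts gives
\[ \sum_{i=0}^{n-1}\big(\dual{V_{s_{i+1}}}{\varphi(s_{i+1},t)} - \dual{V_{s_i}}{\varphi(s_{i+1},t)}\big) = \dual{V_t}{\varphi(t,t)} - \dual{V_0}{\varphi(s_1,t)} - \sum_{i=1}^{n-1}\dual{V_{s_i}}{\varphi(s_{i+1},t)-\varphi(s_i,t)}. \]
Writing $\varphi(s_{i+1},t)-\varphi(s_i,t)=\intg{s_i}{s_{i+1}}\partial_s\varphi(s,t)\ld s$ and letting the mesh tend to $0$: the boundary terms converge to $\dual{V_t}{\varphi(t,t)} - \dual{V_0}{\varphi(0,t)}$ by continuity of $\varphi$ in its first time argument, and the last sum converges to $\intg{0}{t}\dual{V_s}{\partial_s\varphi(s,t)}\ld s$, using that $s\mapsto\dual{V_s}{\cdot}$ is c\`adl\`ag (hence bounded on $[0,t]$ and continuous off a countable set) while $\partial_s\varphi$ is jointly continuous, so the Riemann sums converge by dominated convergence. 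For the drift term, the approximation error $\sum_i\intg{s_i}{s_{i+1}}\dual{A_s(V_s)}{\varphi(s_{i+1},t)-\varphi(s,t)}\ld s$ vanishes in the limit: since $\varphi$ is $C^1$ in time, the relevant Schwartz seminorms of $\varphi(s_{i+1},t)-\varphi(s,t)$ are of order the mesh, uniformly in $s$, and $s\mapsto A_s(V_s)$ is integrable on $[0,t]$ in the appropriate sense (implicit in the well-posedness hypothesis), whence $\sum_i\intg{s_i}{s_{i+1}}\dual{A_s(V_s)}{\varphi(s_{i+1},t)}\ld s\to\intg{0}{t}\dual{A_s(V_s)}{\varphi(s,t)}\ld s$.

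It remains to pass to the limit in the stochastic term, and this is the step requiring the most care. Since $\varphi$ is continuous in $s$, $\varphi^{(n)}(\cdot,s,t)\to\varphi(\cdot,s,t)$ boundedly and pointwise as the mesh goes to $0$; because $M$ is a worthy martingale measure, the $L^2$ domination inequality of \cite[Chapter~2]{walsh_introduction_1986} bounds $\E\big[\big(\intg{0}{t}\intrd(\varphi^{(n)}-\varphi)(x,s,t)\,M(\ld x\,\ld s)\big)^2\big]$ by the integral of $|\varphi^{(n)}-\varphi|\otimes|\varphi^{(n)}-\varphi|$ against the dominating measure $K$, which tends to $0$ by dominated convergence. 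Hence the stochastic term converges in $L^2$, and along a subsequence of partitions it also converges almost surely, so that all the terms above converge simultaneously and the claimed identity follows. The boundary and drift terms, by contrast, reduce to elementary deterministic estimates together with properties of c\`adl\`ag paths; the only genuinely stochastic input is recognising the partial sums as Walsh integrals of elementary approximations of the deterministic, measurable integrand $\varphi(x,s,t)$ and then invoking the worthy-martingale-measure theory.
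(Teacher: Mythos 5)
Your proof is correct and follows exactly the route the paper intends: the paper gives no detailed argument but defers to Exercise 5.1 of \cite{walsh_introduction_1986}, whose standard solution is precisely your partition/summation-by-parts scheme, with the drift and $\partial_s\varphi$ terms handled by pathwise Riemann-sum convergence and the stochastic term by recognising the partial sums as Walsh integrals of elementary (deterministic, hence predictable) step approximations and invoking the $L^2$ domination inequality for worthy martingale measures. Your reading of ``mild solution'' as the weak integral identity for fixed test functions is also the one the paper itself uses when applying the proposition (e.g.\ passing from \eqref{martingale_z} to \eqref{stochastic_integral_zn}), so no gap arises at the paper's level of rigour.
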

Returning to \eqref{martingale_z}, we define a time dependent test function $\varphi^N$ as the solution to
\begin{equation} \label{definition_varphi_n}
\left\lbrace
\begin{aligned}
& \partial_{s}\varphi^{N}(x,s,t) + \L{r_N}\varphi^{N}(x,s,t) - \avg{F'(\avg{\ft{s}})\avg{\varphi^{N}(s,t)}}(x,r_N) = 0, \\
& \varphi^{N}(x,t,t) = \phi(x).
\end{aligned}
\right.
\end{equation}
It is straightforward to check that $\varphi^N(s,t)\in\calS(\R^d)$ for all $0\leq s\leq t$.
%and hence that both $\varphi^N(s,t)$ and $\partial_s\varphi(s,t)$ are in $L^{1,\infty}(\R^d)$.
Proposition~\ref{prop:time_dependent_tf} and \eqref{martingale_z} then yield
\begin{equation} \label{stochastic_integral_zn}
\dual{\zt} = -(\tau_{N}/\eta_{N})^{1/2}\intg{0}{t}\dual{(\avg{\zt{s}})^{2}}{R_{2}(\avg{\wt{s}},\avg{\ft{s}})\avg{\varphi^{N}(s,t)}}\ld s + \intg{0}{t}\intrd\varphi^{N}(x,s,t)M^{N}(\ld x\ld s).
\end{equation}
Here we see that in the special case where $F$ is linear, $R_2 = 0$ and it remains to prove the convergence of the stochastic integral of $\varphi^N$ against the martingale measure $M^N$. 
Using \cite[Theorem 7.13]{walsh_introduction_1986} we need only prove the convergence of $M^N$ and that of $\varphi^N$ to $\varphi$, where $\varphi$ solves
\begin{equation} \label{definition_varphi}
\left\lbrace
\begin{aligned}
& \partial_{s}\varphi(x,s,t) + \frac{1}{2}\Delta\varphi(x,s,t) - F'(f_{s}(x))\varphi(x,s,t) = 0, \\
& \varphi(x,t,t) = \phi(x).
\end{aligned}
\right.
\end{equation}
The following lemma, whose proof is given in Appendix \ref{append:test_functions}, provides the convergence of $\varphi^N$ to $\varphi$.
\begin{lemma}\label{lemma:convergence_varphi}
 For $T>0$, there exists a constant $K_1$ such that, for all $N\geq 1$ and for $q\in\left\lbrace 1,2 \right\rbrace$,
\begin{equation*}
 \sup_{0\leq s\leq t\leq T}\norm[q]{\varphi^{N}(s,t)-\varphi(s,t)} \leq K_{1}r_{N}^{2}.
\end{equation*}
In addition, there exist constants $K_2$ and $K_3$ such that, for $0<\abs{\beta}\leq 4$,
\begin{align*}
\sup_{0\leq s\leq t\leq T}\norm[q]{\varphi^N(s,t)} \leq K_2 \norm[q]{\phi}, && and && \sup_{0\leq s\leq t\leq T}\norm[q]{\partial_{\beta}\varphi^{N}(s,t)}  \leq K_{3}.
\end{align*}
and $K_2$ does not depend on $\phi$.
\end{lemma}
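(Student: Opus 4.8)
The plan is to put both backward equations into mild (Duhamel) form and compare their solutions by a Gronwall estimate. Fix $t\in[0,T]$ and reverse time: for $\sigma\in[0,t]$ set $u^N_\sigma=\varphi^N(\cdot,t-\sigma,t)$ and $u_\sigma=\varphi(\cdot,t-\sigma,t)$, so that by \eqref{definition_varphi_n} and \eqref{definition_varphi}, $u^N_\sigma$ solves a forward equation with generator $\L{r_N}$, bounded zeroth-order term $\psi\mapsto\avg{F'(\avg{f^N_{t-\sigma}}{r_N})\,\avg{\psi}{r_N}}{r_N}$, and initial datum $\phi$, while $u_\sigma$ solves the analogous equation with $\tfrac12\Delta$ and $\psi\mapsto F'(f_{t-\sigma})\psi$. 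Let $(P^N_\sigma)_{\sigma\ge 0}$ be the semigroup generated by $\L{r_N}$; since $\L{r_N}=\frac{d+2}{2r_N^2}(K_N-I)$ with $K_N$ the symmetric, mass-preserving double ball-averaging operator, $P^N_\sigma$ is positivity preserving, is an $L^q(\R^d)$-contraction for every $q\in[1,\infty]$, and commutes with $\partial_\beta$; the heat semigroup has the same properties, and the ball averages $\avg{\cdot}{r_N}$ are also $L^q$-contractions. Writing Duhamel's formula for $u^N$ and using that $F'$ is bounded, a Gronwall estimate gives $\sup_{0\le s\le t\le T}\norm[q]{\varphi^N(s,t)}\le e^{\norm[\infty]{F'}T}\norm[q]{\phi}$, which is the second bound with $K_2=e^{\norm[\infty]{F'}T}$, independent of $\phi$. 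For the third bound, differentiate the equation for $u^N$: $\L{r_N}$ and the averagings commute with $\partial_\beta$, and by the Leibniz rule $\partial_\beta$ applied to the zeroth-order term produces only sums of (a bounded function built from $F',\dots,F^{(|\beta|+1)}$ and the spatial derivatives of $f^N_{t-\sigma}$ up to order $|\beta|$) times (a ball average of $\partial_\gamma u^N$ with $|\gamma|\le|\beta|$). The uniform-in-$N$ bounds on the derivatives of $f^N$ up to order four follow from the hypothesis that $w_0$ has bounded spatial derivatives up to order four, by the same Duhamel/Gronwall scheme applied to \eqref{definition:centering_term_fixed_radius}; induction on $|\beta|$ then yields $\sup_{0\le s\le t\le T}\norm[q]{\partial_\beta\varphi^N(s,t)}\le K_3$ for $0<|\beta|\le 4$.

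For the first and main estimate, set $e^N_\sigma=u^N_\sigma-u_\sigma$, which solves
\begin{equation*}
\partial_\sigma e^N_\sigma=\L{r_N}e^N_\sigma+\big(\L{r_N}-\tfrac12\Delta\big)u_\sigma-\Big(\avg{F'(\avg{f^N_{t-\sigma}}{r_N})\,\avg{u^N_\sigma}{r_N}}{r_N}-F'(f_{t-\sigma})u_\sigma\Big),\qquad e^N_0=0.
\end{equation*}
Write the last bracket as $\avg{F'(\avg{f^N_{t-\sigma}}{r_N})\,\avg{e^N_\sigma}{r_N}}{r_N}$ plus a remainder involving only the functions $f^N$, $f$ and $u=\varphi$. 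The first piece is bounded in $L^q$ by $\norm[\infty]{F'}\norm[q]{e^N_\sigma}$. For the remainder one inserts and subtracts $\avg{F'(f_{t-\sigma})u_\sigma}{r_N}$ and $\avg{u_\sigma}{r_N}$ and uses: $\norm[q]{\avg{\psi}{r_N}-\psi}=\bigO{r_N^2}$ when $\psi$ has bounded derivatives up to order two, and $\norm[q]{\L{r_N}\psi-\tfrac12\Delta\psi}=\bigO{r_N^2}$ when $\psi$ has bounded derivatives up to order four (Proposition~\ref{prop:average}); the Lipschitz bound on $F'$; $\norm[\infty]{f^N_s-f_s}=\bigO{\delta_N^2}$ (Proposition~\ref{prop:convergence_centering_term}); $r_N=\delta_N R$; and the a priori regularity of $\varphi$ and $f^N$ from the first paragraph. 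Hence the whole remainder is $\bigO{r_N^2}$ in $L^q$, uniformly in $\sigma\in[0,T]$. Duhamel's formula and the $L^q$-contractivity of $P^N$ then give
\begin{equation*}
\norm[q]{e^N_\sigma}\le\intg{0}{\sigma}\Big(\bigO{r_N^2}+\norm[\infty]{F'}\,\norm[q]{e^N_\rho}\Big)\ld\rho,
\end{equation*}
and Gronwall's lemma yields $\norm[q]{e^N_\sigma}\le C\,T\,e^{\norm[\infty]{F'}T}r_N^2$ uniformly in $\sigma\in[0,t]$ and $t\in[0,T]$, which is the claim with $K_1=C\,T\,e^{\norm[\infty]{F'}T}$.

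The delicate point is not the Gronwall machinery but the \emph{a priori regularity} that feeds it: one must show that $\varphi$, $\varphi^N$ and $f^N$ all have spatial derivatives up to order four bounded uniformly over $0\le s\le t\le T$ and over $N$ --- the $\varphi$'s in $L^1(\R^d)\cap L^\infty(\R^d)$, so in $L^2$, and $f^N$ in $L^\infty(\R^d)$ --- so that all the consistency errors above are genuinely of order $r_N^2$. This is exactly where the assumptions $\phi\in\calS(\R^d)$ and that $w_0$ has bounded derivatives up to order four are used, together with the facts that $\L{r_N}$ is nonlocal but commutes with differentiation and generates an $L^q$-contraction. Each such bound is itself produced by the Duhamel/Gronwall argument, differentiating the defining equations and controlling the Leibniz terms; the only real work is bookkeeping: keeping every constant uniform in $N$, so that they depend only on $T$, on $\norm[\infty]{F^{(k)}}$ for $k\le 5$ on a fixed bounded interval containing the ranges of $f$ and $f^N$, and on finitely many Schwartz seminorms of $\phi$ together with the derivative bounds of $w_0$.
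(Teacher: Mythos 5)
Your proof is correct and follows essentially the same route as the paper: mild (Duhamel) formulation of the two backward equations, $L^q$-contractivity of the relevant semigroups and of the ball averages, the consistency estimates of Proposition~\ref{prop:average} together with Proposition~\ref{prop:convergence_centering_term}, a Gronwall argument, and derivative bounds obtained by differentiating the equation and inducting on $\abs{\beta}$. The only (harmless) variation is that you compare the two solutions through the semigroup generated by $\calL$ and charge the $\bigO{r_N^2}$ Laplacian-approximation error to $\varphi$, whereas the paper uses the heat semigroup as the common reference and charges that error to $\varphi^N$; your choice requires fourth-order bounds on $\varphi$ rather than on $\varphi^N$, which you correctly note must be supplied by the same Duhamel/Gronwall scheme.
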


\begin{remark}
Recall the definition of $R_1$ in \eqref{taylor_F_1};
it is tempting to try to define $\varphi^N$ as the solution to
\begin{equation*}
\left\lbrace
\begin{aligned}
& \partial_{s}\varphi^{N}(x,s,t) + \L{r_N}\varphi^{N}(x,s,t) - \avg{R_{1}(\avg{\wt{s}},\avg{\ft{s}})\avg{\varphi^{N}(s,t)}}(x,r_N) = 0, \\
& \varphi^{N}(x,t,t) = \phi(x).
\end{aligned}
\right.
\end{equation*}
In this way, according to our previous calculations in \eqref{martingale_z} and using Proposition \ref{prop:time_dependent_tf}, we would get rid of the first integral in \eqref{stochastic_integral_zn}. 
However, in this case, $s\mapsto\varphi^{N}(\cdot,s,\cdot)$ is not adapted to the canonical filtration of our process and the stochastic integral with respect to the martingale measure $M^{N}$ is not well defined.
\end{remark}

\subsection{Regularity estimate}\label{subsec:brownian:regularity_estimate}

The following result is an easy consequence of the definition of $M^N$.
\begin{lemma}\label{lemma:bound_qvar}
There exists a constant $K_4$ such that if
for all $0\leq t\leq T$, $\phi_{t}:\R^{d}\to\R$ is in $L^{2}(\R^{d})$, then
\begin{equation*}
\E{\left(\intg{0}{t}\intrd\phi_{s}(x)M^{N}(\ld x\ld s)\right)^{2}} \leq K_{4}\intg{0}{t}\norm[2]{\phi_{s}}^{2}\ld s.
\end{equation*}
\end{lemma}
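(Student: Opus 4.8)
The plan is to express the left-hand side as the second moment of a stochastic integral against the martingale measure $M^N$, read off its covariation measure from \eqref{covariation_MN}, and bound the resulting double integral by a multiple of $\intg{0}{t}\norm[2]{\phi_s}^2\ld s$ with a constant independent of $N$. Since $(s,x)\mapsto\phi_s(x)$ is deterministic, hence predictable, Walsh's $L^2$ theory of stochastic integration against worthy martingale measures \cite[Chapter~2]{walsh_introduction_1986}, together with \eqref{covariation_MN}, gives
\begin{equation*}
\E{\left(\intg{0}{t}\intrd\phi_s(x)M^N(\ld x\ld s)\right)^2} \leq \E{\intg{0}{t}\intrd{2}\abs{\phi_s(z_1)}\abs{\phi_s(z_2)}\,\sigma^{(r_N)}_{z_1,z_2}(\wtN{s})\ld z_1\ld z_2\ld s} + C\delta_N^2\intg{0}{t}\norm[2]{\phi_s}^2\ld s,
\end{equation*}
the last term accounting for the $\bigO{\delta_N^2}\delta_{z_1=z_2}(\ld z_1\ld z_2)\ld s$ error in \eqref{covariation_MN}, against which $\phi_s(z_1)\phi_s(z_2)$ integrates to $\norm[2]{\phi_s}^2$. (If $\intg{0}{t}\norm[2]{\phi_s}^2\ld s=\infty$ the claim is trivial, so we may assume it finite, which also legitimises the use of Walsh's estimate.)

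Next I would bound $\sigma^{(r)}$ pointwise. For any $q\in\Xi$ the integrand in \eqref{def_sigma_r} is a convex combination of $(1-q(z_1))(1-q(z_2))$ and $q(z_1)q(z_2)$ with weights $\avg{q}(x,r)$ and $1-\avg{q}(x,r)$, hence lies in $[0,1]$; therefore
\begin{equation*}
0\leq\sigma^{(r)}_{z_1,z_2}(q)\leq\frac{V_r(z_1,z_2)}{V_r^2}\leq\frac{1}{V_r}\1{\abs{z_1-z_2}<2r},
\end{equation*}
using the bound $V_r(z_1,z_2)\leq V_r\1{\abs{z_1-z_2}<2r}$ already exploited several times in the paper. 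Taking $r=r_N$ lets us drop the expectation above; then $2\abs{ab}\leq a^2+b^2$ and $\intrd\1{\abs{z_1-z_2}<2r_N}\ld z_2=V_{2r_N}=2^dV_{r_N}$ bound the spatial double integral by $2^d\norm[2]{\phi_s}^2$. Crucially the factors $V_{r_N}$ cancel, so the bound is uniform in $N$, and combining with $\delta_N\leq1$ yields the lemma with, say, $K_4=2^d+C$.

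The only mildly delicate point is making the informal notation $\bigO{\delta_N^2}\delta_{z_1=z_2}(\ld z_1\ld z_2)\ld s$ in \eqref{covariation_MN} precise: it should be read as a signed measure of total variation $\bigO{\delta_N^2}$ per unit time supported on the diagonal, inherited from the $\bigO{t\tau_N\delta_N^2\norm[2]{\phi}^2}$ error term in \eqref{variation_w_brown} after the rescaling $M^N_t(\phi)=\eta_N^{1/2}\mathbb M^N_{t/\eta_N}(\phi)$ and the attendant change of variables; once this is unwound the displayed estimates follow immediately, and everything else is routine.
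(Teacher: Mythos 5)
Your proof is correct and follows essentially the same route as the paper: both start from the covariation measure \eqref{covariation_MN}, bound the bracket in \eqref{def_sigma_r} by $1$ so that $\sigma^{(r_N)}_{z_1,z_2}$ is controlled by the overlap kernel $V_{r_N}(z_1,z_2)/V_{r_N}^2$, and then reduce to $\norm[2]{\phi_s}^2$ by an elementary inequality. The only (inessential) difference is the last step, where the paper rewrites the triple integral as $\intrd\bigl(\avg{\abs{\phi_s}}(x,r_N)\bigr)^2\ld x$ and applies Jensen, while you use $2\abs{ab}\leq a^2+b^2$ together with $V_{2r_N}=2^dV_{r_N}$, which yields the slightly larger but equally valid constant $2^d$.
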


\begin{proof}
 From the definition of $Q^N$ in \eqref{covariation_MN} and the definition of $\sigma^{(r)}_{z_1,z_2}$ in \eqref{def_sigma_r},
\begin{align*}
&\E{\left(\intg{0}{t}\intrd\phi_{s}(x)M^{N}(\ld x\ld s)\right)^{2}} \\
&\hspace{1cm}= \E{\intg{0}{t}\intrd{2}\phi_{s}(z_{1})\phi_{s}(z_{2})\sigma_{z_{1},z_{2}}(\wt{s})\ld z_{1}\ld z_{2}\ld s}+\bigO{\delta_N^2 \int_0^t \norm[2]{\phi_s}^2 ds} \\
&\hspace{1cm}\leq \intg{0}{t}\intrd{3}\frac{1}{V_{r}^{2}}\inball{x}{z_{1}}{x}{z_{2}}\abs{\phi_{s}(z_{1})}\abs{\phi_{s}(z_{2})}\ld x\ld z_{1}\ld z_{2}\ld s +\bigO{\delta_N^2 \int_0^t \norm[2]{\phi_s}^2 ds}\\
&\hspace{1cm}= \intg{0}{t}\intrd \left(\frac{1}{V_{r}}\intbr{x}\abs{\phi_{s}(z)}\ld z\right)^{2}\ld x\ld s +\bigO{\delta_N^2 \int_0^t \norm[2]{\phi_s}^2 ds}\\
&\hspace{1cm}\leq K_{4}\intg{0}{t}\norm[2]{\phi_{s}}^{2}\ld s.
\end{align*}
(We have used Jensen's inequality in the last line.)
\end{proof}

For $t>0$ and $x\in\R^{d}$, let $$ G_{t}(x) = (2\pi t)^{-d/2}\exp\left(-\frac{\abs{x}^{2}}{2t}\right) $$ be the fundamental solution to the heat equation on $\R^d$; $\phi\mapsto G_{t}\ast\phi$ is then the semigroup of standard Brownian motion. 
Then $f_t$ as defined in \eqref{fisher_kpp} satisfies
\begin{equation*}
 f_t(x) = G_{t}\ast w_0(x) - \intg{0}{t}G_{t-s}\ast F(f_s)(x)\ld s.
\end{equation*}
Likewise, for $r>0$, let $\proc{\xi^{(r)}}$ be a symmetric L\'evy process on $\R^{d}$ with generator $\phi\mapsto\L\phi$ and
let $\G$ be the corresponding semigroup.
Note that since $\xi^{(r)}_t=0$ with positive probability, $\G$ is not a well-defined function, but we do have $\avg{\G_{t}}\in L^{1,\infty}$. 
Then $f^N$ as defined in \eqref{definition:centering_term_fixed_radius} satisfies
\begin{equation} \label{centering_term_green_function}
f^{N}_{t}(x) = \G_{t}\ast w_{0}(x) - \intg{0}{t}\G_{t-s}\ast\avg{F(\avg{\ft{s}})}(x)\ld s.
\end{equation}

The following provides a bound on the second moment of $\avg{\zt}$, which allows us to control the quadratic term in \eqref{stochastic_integral_zn}. Note that $x\mapsto\avg{\zt}(x)$ is a well defined function (despite the fact that $\wt$ is only defined up to a Lebesgue-null set) and that for each $N\geq 1$, it is uniformly bounded on $\R^{d}$ (by $(1+\norm[\infty]{f^N_t})(\eta_N/\tau_N)^{1/2}$).
\begin{lemma} \label{lemma_bound_avg}
For $T>0$, there exists a constant $K_{5}>0$, independent of $N$, such that for $0\leq t\leq T,$
\begin{equation*}
\sup_{x\in\R^{d}}\E{\avg{\zt}(x,r_N)^{2}} \leq \frac{K_{5}}{V_{r_N}}.
\end{equation*}
\end{lemma}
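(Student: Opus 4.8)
The plan is to represent $\avg{\zt}(x,r_N)$ via the mild (Duhamel) form of the equation for $\zt$ relative to the semigroup $\G$ of $\L{r_N}$, to bound the stochastic part using Lemma~\ref{lemma:bound_qvar}, and then to close a Gronwall inequality for $h(t):=\sup_{x\in\R^d}\E{\avg{\zt}(x,r_N)^2}$, which is finite on $[0,T]$ because $\abs{\avg{\zt}(x,r_N)}\le(1+\norm[\infty]{f^N})(\eta_N/\tau_N)^{1/2}$.

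The key preliminary step is to use the first-order Taylor formula \eqref{taylor_F_1} in \eqref{zt_before_linearising} rather than the second-order one, so that the drift of $\zt$ at time $s$ reads $\L{r_N}\zt{s}-\avg{R_1(\avg{\wt{s}},\avg{\ft{s}})\,\avg{\zt{s}}}{r_N}$: the coefficient $R_1(\avg{\wt{s}},\avg{\ft{s}})$ is bounded in modulus by $\norm[\infty]{F'}$, and this term is \emph{linear} in $\zt$, so --- in contrast with the $F'$-linearisation used later in the section --- there is no quadratic term to control and no condition relating $\tau_N$ and $\eta_N$ is needed for this lemma. Testing the corresponding mild equation against $\psi_x(y):=V_{r_N}^{-1}\1{\abs{y-x}<r_N}$, for which $\dual{\zt}{\psi_x}=\avg{\zt}(x,r_N)$, Duhamel's formula then expresses $\avg{\zt}(x,r_N)$ as the stochastic convolution $\intg{0}{t}\intrd\avg{\G_{t-s}}(x-y,r_N)\,M^N(\ld y\,\ld s)$ plus a drift term $-\intg{0}{t}\intrd\avg{\zt{s}}(y,r_N)\,R_1(\avg{\wt{s}}(y,r_N),\avg{\ft{s}}(y,r_N))\,\kappa_{t-s}(x-y)\,\ld y\,\ld s$, where $\kappa_u$ is $\avg{\G_u}(\cdot,r_N)$ averaged once more over a ball of radius $r_N$.

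The elementary but decisive input is the size of these kernels. Since $\G_u$ is a probability measure, $\avg{\G_u}(z,r_N)=V_{r_N}^{-1}\P{\xi^{(r_N)}_u\in B(z,r_N)}$, hence $\avg{\G_u}(\cdot,r_N)$ --- and likewise $\kappa_u$ --- is non-negative, has $L^1$-norm equal to $1$ and $L^\infty$-norm at most $V_{r_N}^{-1}$, uniformly in $u>0$; in particular $\norm[2]{\avg{\G_u}(\cdot,r_N)}^2\le V_{r_N}^{-1}$. (The atom of $\G_u$ at the origin becomes harmless after smoothing by the ball, so these are genuine $L^2$ functions, the stochastic convolution is well defined and Lemma~\ref{lemma:bound_qvar} applies; testing against the non-Schwartz $\psi_x$ is justified by a routine approximation.) With these bounds, Lemma~\ref{lemma:bound_qvar} applied with $\phi_s=\avg{\G_{t-s}}(x-\cdot,r_N)$ bounds the second moment of the stochastic term by $K_4\intg{0}{t}\norm[2]{\avg{\G_{t-s}}(x-\cdot,r_N)}^2\,\ld s\le K_4 T/V_{r_N}$; and for the drift term, bounding $\abs{R_1}\le\norm[\infty]{F'}$ and applying the Cauchy--Schwarz inequality in $s$ and then in $y$ against the probability density $\kappa_{t-s}(x-\cdot)$ bounds its second moment by $\norm[\infty]{F'}^2\,T\intg{0}{t}h(s)\,\ld s$. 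Summing the two contributions (via $(a+b)^2\le 2a^2+2b^2$) and taking the supremum over $x$ gives $h(t)\le 2K_4 T/V_{r_N}+2\norm[\infty]{F'}^2 T\intg{0}{t}h(s)\,\ld s$ for $t\in[0,T]$, and Gronwall's inequality yields $h(t)\le(2K_4 T/V_{r_N})\,e^{2\norm[\infty]{F'}^2 T^2}$, so the claim holds with $K_5:=2K_4 T\,e^{2\norm[\infty]{F'}^2 T^2}$, which does not depend on $N$.

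The main obstacle is the bookkeeping around the averaging operator $\avg{\cdot}{r_N}$: commuting it with the stochastic integral (a deterministic stochastic-Fubini step), legitimising the non-Schwartz test function $\psi_x$, and, most of all, correctly identifying the convolution kernel $\avg{\G_{t-s}}(\cdot,r_N)$ in the Duhamel formula and proving its uniform $L^1$ and $L^\infty$ bounds. It is precisely the $V_{r_N}^{-1}$ in the $L^\infty$ bound that produces the $V_{r_N}^{-1}$ on the right-hand side of the lemma; the rest is a routine linear Gronwall estimate.
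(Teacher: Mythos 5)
Your proof is correct and follows essentially the same route as the paper: linearise with the first-order remainder $R_1$, use the time-dependent test function $\G_{t-s}\ast\phi$ with $\phi=V_{r_N}^{-1}\1{B(x,r_N)}$ to get the Duhamel representation of $\avg{\zt}(x,r_N)$, bound the stochastic convolution through Lemma~\ref{lemma:bound_qvar} and the bound $\norm[2]{\avg{\G_u}(\cdot,r_N)}^2\le V_{r_N}^{-1}$, and close with Jensen/Cauchy--Schwarz plus Gronwall. The only cosmetic difference is that the paper obtains the $V_{r_N}^{-1}$ bound by applying Jensen inside the expectation over the L\'evy process $\xi^{(r_N)}$ rather than via the $L^1$--$L^\infty$ bounds on the averaged kernel, which is the same estimate.
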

The proof of this result mirrors that of Theorem 1 in \cite{norman_approximation_1975}, although it is more technical because of the Laplacian and the various spatial averages.

\begin{proof}
Coming back to equation \eqref{martingale_z}, and using \eqref{taylor_F_1} instead of \eqref{taylor_F_2}, we write
\begin{align} \label{z_average}
\dual{\zt} &= \intg{0}{t}\braced{\dual{\calL\zt{s}}-\dual{\avg{\avg{\zt{s}}R_{1}(\avg{\wt{s}},\avg{\ft{s}})}}}\ld s + \intg{0}{t}\intrd\phi(y)M^{N}(\ld y\ld s).
\end{align}
To get rid of the operator $\calL$, we use Proposition~\ref{prop:time_dependent_tf} with the time-dependent test function $\G_{t-s}\ast\phi$, yielding
\begin{align*}
\dual{\zt} &= -\intg{0}{t}\dual{\G_{t-s}\ast\left(\avg{\avg{\zt{s}}R_{1}(\avg{\wt{s}},\avg{\ft{s}})}\right)}\ld s + \intg{0}{t}\intrd \G_{t-s}\ast\phi(y)M^{N}(\ld y\ld s).
\end{align*}
Now we take $\phi(y)=\frac{1}{V_{r}}\inball{x}{y}$, and we obtain
\begin{align*}
\avg{\zt}(x) &= -\intg{0}{t}\G_{t-s}\ast\left(\dbavg{\avg{\zt{s}}R_{1}(\avg{\wt{s}},\avg{\ft{s}})}\right)(x)\ld s + \intg{0}{t}\intrd\avg{\G_{t-s}}(x-y)M^{N}(\ld y\ld s) \\
&= -\intg{0}{t}\intrd\dbavg{\G_{t-s}}(x-y)\avg{\zt{s}}(y)R_{1}(\avg{\wt{s}}(y),\avg{\ft{s}}(y))\ld y \ld s + \intg{0}{t}\intrd \avg{\G_{t-s}}(x-y)M^{N}(\ld y\ld s).
\end{align*}
We now want to apply Gronwall's lemma, but the last term must be controlled carefully. Taking the square of both sides and using $(a+b)^{2}\leq 2(a^{2}+b^{2})$, we have
\begin{multline*}
\avg{\zt}(x)^{2} \leq 2\left(\intg{0}{t}\intrd\dbavg{\G_{t-s}}(x-y)\avg{\zt{s}}(y)R_{1}(\avg{\wt{s}}(y),\avg{\ft{s}}(y))\ld y\ld s\right)^{2} \\ + 2\left(\intg{0}{t}\intrd \avg{\G_{t-s}}(x-y)M^{N}(\ld y\ld s)\right)^{2}.
\end{multline*}
By Jensen's inequality (and noting that $\intrd \avg {\G_{t}}(x)\ld x = 1$), we have
\begin{align*}
\avg{\zt}(x)^{2} &\leq 2t\intg{0}{t}\intrd\dbavg{\G_{t-s}}(x-y)\norm[\infty]{F'}^{2}\avg{\zt{s}}(y)^{2}\ld y\ld s + 2\left(\intg{0}{t}\intrd \avg{\G_{t-s}}(x-y)M^{N}(\ld y\ld s)\right)^{2}.
\end{align*}
Taking expectations on both sides and using Fubini's theorem, we obtain
\begin{multline*}
\E{\avg{\zt}(x)^{2}} \leq 2t\norm[\infty]{F'}^{2}\intg{0}{t}\intrd\dbavg{\G_{t-s}}(x-y)\E{\avg{\zt{s}}(y)^{2}}\ld y\ld s \\ + 2\E{\left(\intg{0}{t}\intrd \avg{\G_{t-s}}(x-y)M^{N}(\ld y\ld s)\right)^{2}}.
\end{multline*}
From Lemma~\ref{lemma:bound_qvar}, we have
\begin{align*}
\E{\left(\intg{0}{t}\intrd \avg{\G_{t-s}}(x-y)M^{N}(\ld y\ld s)\right)^{2}} &\leq K_{4}\intg{0}{t}\norm[2]{\avg{\G_{t-s}}(x-\cdot)}^{2}\ld s \\
&= K_{4}\intg{0}{t}\intrd \E[0]{\frac{1}{V_r}\inball{\xi^{(r)}_{t-s}}{(x-y)}}^2\ld y\ld s\\
&\leq K_{4}\intg{0}{t} \E[0]{\intrd \left(\frac{1}{V_r}\inball{\xi^{(r)}_{t-s}}{(x-y)}\right)^2\ld y}\ld s\\
&= \frac{K_4}{V_{r}}t. \numberthis \label{1_over_Vr_bound}
\end{align*}
($\E[0]{\cdot}$ denotes the expectation with respect to the law of $\proc{\xi^{(r)}}$ started from the origin.)
In addition, we note that $\E{\avg{\zt{s}}(y)^{2}}\leq\sup_{x\in\R^{d}}\E{\avg{\zt{s}}(x)^{2}}$ and, combined with the fact that $\intrd \avg{\G_{t}}(x)\ld x = 1$, this yields
\begin{align*}
\E{\avg{\zt}(x)^{2}} &\leq 2t\norm[\infty]{F'}^{2}\intg{0}{t}\sup_{y\in\R^{d}}\E{\avg{\zt{s}}(y)^{2}}\ld s + 2\frac{K_4}{V_{r}}t.
\intertext{The right hand side does not depend on $x$, so we can take the supremum over $x\in\R^{d}$ on the left and write for $0\leq t\leq T$}
\sup_{x\in\R^{d}}\E{\avg{\zt}(x)^{2}} &\leq 2T\norm[\infty]{F'}^{2}\intg{0}{t}\sup_{x\in\R^{d}}\E{\avg{\zt{s}}(x)^{2}}\ld s + 2\frac{K_4}{V_{r}}T.
\end{align*}
Finally, we can apply Gronwall's lemma to deduce that
\begin{equation*}
\sup_{x\in\R^{d}}\E{\avg{\zt}(x)^{2}} \leq 2\frac{K_4}{V_{r}}Te^{2Tt\norm{F'}^{2}} \leq \frac{K_{5}}{V_{r}}.
\end{equation*}
\end{proof}

\subsection{Convergence to the deterministic limit}\label{subsec:brownian:convergence_w}
The following result, proved in Appendix~\ref{append:centering_term}, shows that $f^N$ converges to $f$. 

\begin{proposition} \label{prop:convergence_centering_term}
For $T>0$, there exist constants $K_6$ and $K_7$ such that, for all $N\geq 1$,
\begin{equation*}
\sup_{0\leq t\leq T}\norm[\infty]{\ft-f_t} \leq K_6 r_{N}^{2},
\end{equation*}
and, for all $0\leq \abs{\beta}\leq 4$, 
\begin{equation*}
\sup_{0\leq t\leq T}\norm[\infty]{\partial_{\beta}\ft} \leq K_{7},
\end{equation*}
where $\partial_\beta f$ is the spatial derivative with respect to the multi-index $\beta$.
\end{proposition}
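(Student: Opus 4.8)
The plan is to argue throughout with the mild (variation-of-constants) formulations of both $f^N$ and $f$, using that the semigroup $\mathcal{G}^{(r)}_t$ generated by $\mathcal{L}_r$, the heat semigroup $\phi\mapsto G_t\ast\phi$, and the averaging operators $\langle\cdot\rangle_r$ and $\langle\langle\cdot\rangle\rangle_r$ are all $L^\infty(\R^d)$-contractions commuting with every spatial derivative $\partial_\beta$, while $\mathcal{L}_r$ is a bounded operator on $L^\infty$. The one external input is the comparison estimate of Proposition~\ref{prop:average}: for $g$ with bounded derivatives of order up to $4$, $\|\mathcal{L}_r g-\tfrac12\Delta g\|_\infty\le Cr^2\max_{|\beta|=4}\|\partial_\beta g\|_\infty$, and for $g$ with bounded derivatives of order up to $2$, $\|\langle g\rangle_r-g\|_\infty\le Cr^2\max_{|\beta|=2}\|\partial_\beta g\|_\infty$. (All averages below are at radius $r_N$.)

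First I would prove the derivative bounds, since the convergence estimate relies on them. The crude inequality $\|F(f^N_s)\|_\infty\le\|F\|_\infty$, together with contractivity of $\mathcal{G}^{(r_N)}$ applied to the mild equation \eqref{centering_term_green_function}, gives $\sup_{t\le T}\|f^N_t\|_\infty\le\|w_0\|_\infty+T\|F\|_\infty=:M_T$, uniformly in $N$ (and the same bound for $f$); since $F$ is $C^\infty$, all its derivatives are bounded on $[-M_T,M_T]$, hence along the trajectories. Differentiating \eqref{centering_term_green_function} by a multi-index $\beta$, commuting $\partial_\beta$ through $\mathcal{G}^{(r_N)}$ and through the averages, and expanding $\partial_\beta\big(F(\langle f^N_s\rangle)\big)$ by the Leibniz/chain rule — its top-order term being $F'(\langle f^N_s\rangle)\langle\partial_\beta f^N_s\rangle$ and all remaining terms being products of lower-order derivatives of $f^N_s$ times the now-bounded higher derivatives of $F$ — one obtains $\|\partial_\beta f^N_t\|_\infty\le\|\partial_\beta w_0\|_\infty+\int_0^t(\|F'\|_\infty\|\partial_\beta f^N_s\|_\infty+C)\,ds$. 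By induction on $|\beta|$ from $0$ to $4$ and Gronwall's lemma this gives $\sup_{t\le T}\|\partial_\beta f^N_t\|_\infty\le K_7$, uniformly in $N$; the identical argument (with $G_t$ in place of $\mathcal{G}^{(r_N)}_t$ and the averages removed) yields $\sup_{t\le T}\|\partial_\beta f_t\|_\infty<\infty$ for $|\beta|\le4$ as well.

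For the convergence estimate, set $g^N_t:=f^N_t-f_t$. Writing \eqref{fisher_kpp} as $\partial_t f_t=\mathcal{L}_{r_N}f_t+\big[(\tfrac12\Delta-\mathcal{L}_{r_N})f_t-F(f_t)\big]$ and treating the bracket as a source (legitimate since $\mathcal{L}_{r_N}$ is bounded and $s\mapsto f_s$ is continuous into $C^2$), Duhamel's principle re-expresses $f$ relative to $\mathcal{G}^{(r_N)}$; subtracting this from \eqref{centering_term_green_function} gives
\[ g^N_t=\int_0^t\mathcal{G}^{(r_N)}_{t-s}\ast\Big[(\mathcal{L}_{r_N}-\tfrac12\Delta)f_s-\big(\langle F(\langle f^N_s\rangle)\rangle-F(f^N_s)\big)-\big(F(f^N_s)-F(f_s)\big)\Big]\,ds. \]
Now bound the three contributions using $\|\mathcal{G}^{(r_N)}_{t-s}\ast\,\cdot\,\|_\infty\le\|\cdot\|_\infty$: the first is $O(r_N^2)$ by Proposition~\ref{prop:average} and the uniform $C^4$-bound on $f$; the second is $O(r_N^2)$ via the splitting $\langle F(\langle f^N_s\rangle)\rangle-F(f^N_s)=\langle F(\langle f^N_s\rangle)-F(f^N_s)\rangle+\big(\langle F(f^N_s)\rangle-F(f^N_s)\big)$, whose first piece is $\le\|F'\|_\infty\|\langle f^N_s\rangle-f^N_s\|_\infty=O(r_N^2)$ and whose second piece is $\le Cr_N^2\|F(f^N_s)\|_{C^2}=O(r_N^2)$, both controlled by the uniform $C^2$-bound on $f^N$; and the third is $\le\|F'\|_\infty\|g^N_s\|_\infty$. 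Inserting these into the displayed identity and applying Gronwall's lemma gives $\sup_{t\le T}\|g^N_t\|_\infty\le K_6 r_N^2$, with $K_6$ (like $K_7$) depending only on $T$, $F$, $w_0$, $R$ and $d$.

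I expect the main obstacle to be the uniform-in-$N$ propagation of regularity of $f^N$: because $\|\mathcal{L}_{r_N}\|_{L^\infty\to L^\infty}$ blows up like $r_N^{-2}$, one cannot expand in powers of $\mathcal{L}_{r_N}$ and must work throughout at the level of the contraction semigroup $\mathcal{G}^{(r_N)}$ and the (likewise non-smoothing) averaging operators, so derivatives have to be carried from the initial datum $w_0$ — which is exactly why boundedness of the derivatives of $w_0$ up to fourth order is assumed, and why that order is precisely what Proposition~\ref{prop:average} needs in order to upgrade the naive $O(r_N)$ comparison to the $O(r_N^2)$ rate. Once these regularity bounds and Proposition~\ref{prop:average} are in place, the two Gronwall arguments and the Taylor/averaging bookkeeping are routine.
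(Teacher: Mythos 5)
Your proposal is correct and follows essentially the same route as the paper: uniform bounds on $\partial_\beta f^N$ (and $f$) by induction on $|\beta|$ plus Gronwall on the mild equation, then a Duhamel comparison of $f^N$ and $f$ with the same three error terms, each controlled by Proposition~\ref{prop:average} and the Lipschitz bound on $F$, followed by Gronwall. The only (harmless) difference is that you expand $f$ in the semigroup $\mathcal{G}^{(r_N)}$ of $\L{r_N}$ and apply the $\L{r_N}$-versus-$\tfrac12\Delta$ comparison to $f$ (hence needing its $C^4$ bound, which you establish by the same argument), whereas the paper expands both functions against the heat semigroup $G_t$ and applies that comparison to $f^N$, using the $C^4$ bound already proved for $f^N$.
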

We are now in a position to prove the first statement of Theorem~\ref{thm:clt_brownian}, namely the convergence of the process $\proc{w^{N}}$. 
We are going to prove the following lemma.
\begin{lemma} \label{lem_sup_zt_bound}
There exists a constant $K_{8}$ such that for all $N\geq 1$ and for any function $\phi$ satisfying $\norm[q]{\phi}\leq 1$ and $\max_{\abs{\beta}=2}\norm[q]{\partial_{\beta}\phi}\leq 1$ for $q\in\braced{1,2}$,
\begin{equation} \label{bound_sup_z}
\E{\sup_{0\leq t\leq T}\abs{\dual{\zt}}} \leq K_{8}.
\end{equation}
\end{lemma}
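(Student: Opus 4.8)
The plan is to bound the supremum over $t\in[0,T]$ of the two contributions to $\dual{\zt}$ in the representation \eqref{stochastic_integral_zn} separately: the quadratic error term carrying the prefactor $(\tau_N/\eta_N)^{1/2}$, and the stochastic integral $\intg{0}{t}\intrd\varphi^N(x,s,t)M^N(\ld x\ld s)$ of the time-dependent test function $\varphi^N$ from \eqref{definition_varphi_n}. The three main inputs are Lemma~\ref{lemma_bound_avg} (the bound $\E{\avg{\zt{s}}(x,r_N)^2}\le K_5/V_{r_N}$), Lemma~\ref{lemma:bound_qvar} (the $L^2$-estimate for stochastic integrals against $M^N$), and Lemma~\ref{lemma:convergence_varphi} (uniform $L^1$- and $L^2$-bounds on $\varphi^N$ and its spatial derivatives), supplemented by a uniform bound on the time derivative $\partial_t\varphi^N$ established at the end. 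Throughout, $\phi$ satisfies the hypotheses of the lemma.

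For the quadratic term I would use $\abs{R_2(\cdot,\cdot)}\le\tfrac12\norm[\infty]{F''}$ together with Lemma~\ref{lemma_bound_avg}, and bound the supremum over $t$ crudely by pulling it through the spatial and time integrals. This bounds the expectation of the supremum of the first term by a constant times
\begin{equation*}
(\tau_N/\eta_N)^{1/2}\,\frac{1}{V_{r_N}}\intg{0}{T}\norm[1]{\sup_{s\le t\le T}\abs{\varphi^N(s,t)}}\ld s .
\end{equation*}
Since $\varphi^N(\cdot,s,s)=\phi$, writing $\varphi^N(\cdot,s,t)=\phi+\intg{s}{t}\partial_u\varphi^N(\cdot,s,u)\ld u$ bounds the inner $L^1$-norm by $\norm[1]{\phi}+\intg{s}{T}\norm[1]{\partial_u\varphi^N(\cdot,s,u)}\ld u$, which is uniformly bounded by the $\partial_t$-estimate below. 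As $V_{r_N}$ is comparable to $\delta_N^d$ and $\tau_N/\eta_N=\littleO{\delta_N^{2d}}$ by \eqref{parameters_regime_brownian}, this whole contribution is $\littleO{1}$, hence bounded uniformly in $N$.

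For the stochastic integral the difficulty is that the integrand depends on the upper limit $t$, so it is not a martingale in $t$. Using $\varphi^N(\cdot,s,t)=\phi+\intg{s}{t}\partial_u\varphi^N(\cdot,s,u)\ld u$ I would split it as $M^N_t(\phi)+J_t$ with $M^N_t(\phi)=\intg{0}{t}\intrd\phi(x)M^N(\ld x\ld s)$. The first piece is a genuine square-integrable martingale, so Doob's maximal inequality and Lemma~\ref{lemma:bound_qvar} give $\E{\sup_{t\le T}\abs{M^N_t(\phi)}}\le 2(K_4T)^{1/2}\norm[2]{\phi}\le 2(K_4T)^{1/2}$. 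For the remainder $J_t=\intg{0}{t}\intrd\bigl(\intg{s}{t}\partial_u\varphi^N(x,s,u)\ld u\bigr)M^N(\ld x\ld s)$, a stochastic Fubini argument (see \cite{walsh_introduction_1986}) rewrites it as $\intg{0}{t}\bigl(\intg{0}{u}\intrd\partial_u\varphi^N(x,s,u)M^N(\ld x\ld s)\bigr)\ld u$, so that $\sup_{t\le T}\abs{J_t}\le\intg{0}{T}\abs{\intg{0}{u}\intrd\partial_u\varphi^N(x,s,u)M^N(\ld x\ld s)}\ld u$; applying Cauchy--Schwarz, Lemma~\ref{lemma:bound_qvar} and the uniform bound on $\norm[2]{\partial_u\varphi^N(\cdot,s,u)}$ bounds its expectation by a constant depending only on $T$ and $K_4$.

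It remains to produce the uniform (in $N$ and in $0\le s\le t\le T$) bounds on $\partial_t\varphi^N$ in $L^1$ and $L^2$, which I expect to be the only genuinely delicate point. Viewing \eqref{definition_varphi_n} as a backward linear evolution equation $\partial_s\varphi^N(\cdot,s,t)=-\mathcal A^N_s\varphi^N(\cdot,s,t)$, $\varphi^N(\cdot,t,t)=\phi$, with $\mathcal A^N_s\psi=\L{r_N}\psi-\avg{F'(\avg{\ft{s}})\avg{\psi}}$ a bounded operator on $L^q(\R^d)$ (indeed $\L{r_N}$ is a fixed multiple of $\dbavg{\cdot}-(\cdot)$, hence bounded on $L^q$ since averaging is a contraction), the associated propagator $P^N_{s,t}$ satisfies $\varphi^N(\cdot,s,t)=P^N_{s,t}\phi$ and, by the forward equation, $\partial_t\varphi^N(\cdot,s,t)=P^N_{s,t}\mathcal A^N_t\phi$. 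By Lemma~\ref{lemma:convergence_varphi}, $P^N_{s,t}$ has $L^q$-operator norm at most $K_2$ for $q\in\{1,2\}$, so it suffices to bound $\norm[q]{\mathcal A^N_t\phi}$: the zeroth-order term has $L^q$-norm at most $\norm[\infty]{F'}\norm[q]{\phi}$, while for $\L{r_N}\phi$ a second-order Taylor expansion of the ball averages (Proposition~\ref{prop:average}) gives $\norm[q]{\dbavg{\phi}(\cdot,r_N)-\phi}\le C r_N^2\max_{\abs{\beta}=2}\norm[q]{\partial_\beta\phi}$, so the factor $r_N^2$ cancels and $\norm[q]{\L{r_N}\phi}\le C'\max_{\abs{\beta}=2}\norm[q]{\partial_\beta\phi}$. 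This is exactly where the hypotheses $\norm[q]{\phi}\le 1$ and $\max_{\abs{\beta}=2}\norm[q]{\partial_\beta\phi}\le 1$ enter, and where I expect the main obstacle to lie: the operator $\L{r_N}$ degenerates as $N\to\infty$, and its degeneration must be absorbed into the smoothness of $\phi$ rather than estimated in operator norm. Collecting the three bounds yields \eqref{bound_sup_z}, with $K_8$ depending only on $T$, $\norm[\infty]{F'}$, $\norm[\infty]{F''}$ and the constants $K_2,K_4,K_5$.
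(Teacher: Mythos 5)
Your argument is correct, but it takes a genuinely different route from the paper's. The paper does not try to control the supremum directly from the time-dependent representation \eqref{stochastic_integral_zn}; instead it first extracts from that representation a fixed-time estimate, \eqref{L1_bound_z}, namely $\E{\abs{\dual{\zt}}}\leq K\norm[2]{\phi}+K'(\tau_N/\eta_N)^{1/2}V_{r_N}^{-1}\norm[1]{\phi}$, and then returns to the mild form \eqref{martingale_z} with the \emph{fixed} test function $\phi$: there the stochastic term $M^N_t(\phi)$ is a true martingale (Doob plus Lemma~\ref{lemma:bound_qvar}), the quadratic term is handled by Lemma~\ref{lemma_bound_avg} exactly as you do, and the drift integral is bounded by integrating \eqref{L1_bound_z} applied with $\phi$ replaced by $\calL\phi-\avg{F'(\avg{\ft{s}})\avg{\phi}}$, whose $L^1$ and $L^2$ norms are controlled by Proposition~\ref{prop:average} under the stated hypotheses on $\phi$ — the same cancellation of $r_N^{-2}$ against $r_N^2$ that you exploit. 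You instead stay with \eqref{stochastic_integral_zn}, write $\varphi^N(\cdot,s,t)=\phi+\int_s^t\partial_u\varphi^N(\cdot,s,u)\,\ld u$, use Doob for $M^N_t(\phi)$ and a stochastic Fubini interchange for the remainder, and derive the uniform bound $\norm[1]{\sup_{t\in[s,T]}\abs{\varphi^N(s,t)}}\lesssim 1$ from a bound on $\partial_t\varphi^N$; in effect you re-prove the paper's Lemmas~\ref{lemma:continuity_varphi} and~\ref{lemma_varphi_uniform_bound} (which the paper only needs later, for tightness) via the propagator identity $\partial_t P^N_{s,t}\phi=P^N_{s,t}\mathcal A^N_t\phi$, and your reading of Lemma~\ref{lemma:convergence_varphi} as an $L^q$ operator-norm bound on $P^N_{s,t}$ is legitimate. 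The trade-off: the paper's detour through \eqref{L1_bound_z} needs no differentiability of $\varphi^N$ in the terminal time and no Fubini for martingale measures, while your version keeps everything in one representation but must justify the $t$-differentiability of the propagator (continuity of $s\mapsto f^N_s$ and Proposition~\ref{prop:convergence_centering_term} make this harmless, or one can replace it by the Lipschitz-in-$t$ estimate of Lemma~\ref{lemma:continuity_varphi}) and the stochastic Fubini step, which is covered by \cite{walsh_introduction_1986} since $\varphi^N$ is deterministic. Both proofs use the same three essential inputs and the same mechanism $(\tau_N/\eta_N)^{1/2}V_{r_N}^{-1}\to 0$ from \eqref{parameters_regime_brownian}, so your constant $K_8$ has the same dependencies as the paper's.
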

Before we prove Lemma \ref{lem_sup_zt_bound}, we show that it implies the convergence of $\proc{w^N}$.
We can choose a separating family $\proc{\phi_{n}}{n\geq 1}$ of compactly supported smooth functions satisfying $\norm[q]{\phi}\leq 1$ and $\max_{\abs{\beta}=2}\norm[q]{\partial_{\beta}\phi}\leq 1$ for $q\in\braced{1,2}$, and define $d$ as in \eqref{d_defn_family} using this family. Then 
\begin{align*}
\E{\sup_{0\leq t\leq T}d(\wt,f_{t})} &\leq \sum_{n\geq 1}\frac{1}{2^{n}}\braced{\E{\sup_{0\leq t\leq T}\abs{\dual{\wt}{\phi_{n}}-\dual{\ft}{\phi_{n}}}} + \sup_{0\leq t\leq T}\abs{\dual{\ft}{\phi_{n}}-\dual{f_{t}}{\phi_{n}}}} \\
&\leq \sum_{n\geq 1}\frac{1}{2^{n}}\braced{(\tau_{N}/\eta_{N})^{1/2}\E{\sup_{0\leq t\leq T}\abs{\dual{\zt}{\phi_{n}}}} + \sup_{0\leq t\leq T}\norm[\infty]{\ft-f_{t}}\norm[1]{\phi_{n}}}\\
&\leq \sum_{n\geq 1}\frac{1}{2^{n}}\braced{K_{8}(\tau_{N}/\eta_{N})^{1/2} + K_{6}r_{N}^{2}} = K_{8}(\tau_{N}/\eta_{N})^{1/2} + K_{6}r_{N}^{2},
\end{align*}
where the last line follows by Proposition~\ref{prop:convergence_centering_term} and Lemma \ref{lem_sup_zt_bound}.
The right-hand-side converges to zero as $N\to\infty$, yielding the uniform convergence (on compact time intervals) of $\proc{w^N}$ to $\proc{f}$, the solution of equation \eqref{fisher_kpp}. Note that, as soon as $d\geq 2$, $r_N^2$ is the leading order on the right-hand-side (see \eqref{parameters_regime_brownian}).

\begin{proof}[Proof of Lemma \ref{lem_sup_zt_bound}]
We are going to make use of \eqref{martingale_z} and apply Doob's maximal inequality to the martingale part. Let us first show that there exist two constants $K$ and $K'$ such that, for $t\in[0,T]$,
\begin{equation} \label{L1_bound_z}
\E{\abs{\dual{\zt}}} \leq K\norm[2]{\phi} + K'\frac{(\tau/\eta)^{1/2}}{V_{r}}\norm[1]{\phi}.
\end{equation}
%This is a direct consequence of \eqref{stochastic_integral_zn} and Lemma~\ref{lemma_bound_avg}. 
Indeed, taking the expectation of the absolute value of both sides of \eqref{stochastic_integral_zn} and using Lemma~\ref{lemma:bound_qvar}, we have
\begin{align*}
\E{\abs{\dual{\zt}}} & \leq (\tau/\eta)^{1/2}\frac{1}{2}\norm[\infty]{F''}\intg{0}{t}\dual{\E{(\avg{\zt{s}})^{2}}}{\abs{\avg{\varphi^{N}(s,t)}}}\ld s + \left(K_{4}\intg{0}{t}\norm[2]{\varphi^{N}(s,t)}^{2}\ld s\right)^{1/2}. \\
& \leq \frac{1}{2}\norm[\infty]{F''}K_{5}T\frac{(\tau/\eta)^{1/2}}{V_{r}} K_2 \norm[1]{\phi} + K_{4}^{1/2}T^{1/2} K_2 \norm[2]{\phi},
\end{align*}
where we used Lemmas~\ref{lemma_bound_avg} and~\ref{lemma:convergence_varphi} in the last line.
We have thus proved \eqref{L1_bound_z}. Recalling \eqref{martingale_z} and the notation $M^{N}_{t}(\phi) = \intg{0}{t}\intrd\phi(x)M^{N}(\ld x\ld s)$, we write
\begin{multline*}
\sup_{t\in[0,T]}\abs{\dual{\zt}} \leq \intg{0}{T}\abs{\dual{\zt{s}}{\calL\phi - \avg{F'(\avg{\ft{s}})\avg{\phi}}}}\ld s + \frac{1}{2}\norm[\infty]{F''}(\tau/\eta)^{1/2}\intg{0}{T}\dual{(\avg{\zt{s}})^{2}}{\abs{\avg{\phi}}}\ld s \\ + \sup_{t\in[0,T]}\abs{M^{N}_{t}(\phi)}.
\end{multline*}
Taking expectations on both sides, we use Lemma~\ref{lemma_bound_avg} and apply \eqref{L1_bound_z} with $\phi$ replaced by $\calL\phi - \avg{F'(\avg{\ft{s}})\avg{\phi}}$ to write
\begin{multline} \label{sup_zt_expr}
\E{\sup_{t\in[0,T]}\abs{\dual{\zt}}} \leq \intg{0}{T}\braced{K(\norm[2]{\calL\phi} + \norm[\infty]{F'}\norm[2]{\phi}) + K'\frac{(\tau/\eta)^{1/2}}{V_{r}}(\norm[1]{\calL\phi} + \norm[\infty]{F'}\norm[1]{\phi})}\ld s \\ + \frac{1}{2}\norm[\infty]{F''}K_{5}T\frac{(\tau/\eta)^{1/2}}{V_{r}}\norm[1]{\phi} + \E{\sup_{t\in[0,T]}\abs{M^{N}_{t}(\phi)}^{2}}^{1/2}.
\end{multline}
By Doob's inequality and Lemma \ref{lemma:bound_qvar},
\begin{equation*}
\E{\sup_{t\in[0,T]}\abs{M^{N}_{t}(\phi)}^{2}} \leq 4K_{4}T\norm[2]{\phi}^{2}.
\end{equation*}
Furthermore, $\norm[q]{\calL\phi} \leq \frac{d(d+2)}{2}\max_{\abs{\beta}=2}\norm[q]{\partial_{\beta}\phi}$ by Proposition~\ref{prop:average}.i in Appendix~\ref{append:average}, and $\frac{(\tau/\eta)^{1/2}}{V_{r}}$ tends to zero as $N\to\infty$ due to assumption \eqref{parameters_regime_brownian}. Hence, if $\norm[q]{\phi}\leq 1$ and $\max_{\abs{\beta}=2}\norm[q]{\partial_{\beta}\phi}\leq 1$ for $q\in\braced{1,2}$, the right-hand-side of \eqref{sup_zt_expr} is bounded by some constant independent of $N$ and $\phi$.
\end{proof}

\subsection{Tightness}\label{subsec:brownian:tightness}

To prove that the sequence $\proc{\zt{}}{N\geq 1}$ is tight in $\sko{\calS'(\R^{d})}$, we adapt the argument from the proof of Theorem 7.13 in \cite{walsh_introduction_1986}. 
\begin{proposition} \label{tightness_prop}
For any $\phi\in\calS(\R^d)$, for any arbitrary sequence $\proc{T_{N},\rho_{N}}{N\geq 1}$ such that $T_{N}$ is a stopping time (with respect to the natural filtration of the process $\proc{Z^{N}}$) with values in $[0,T]$ for all $N$ and $\rho_{N}$ is a deterministic sequence of positive numbers decreasing to zero as $N\to\infty$,
\begin{equation} \label{aldous_criterion_zn}
\dual{\zt{T_{N}+\rho_{N}}}-\dual{\zt{T_{N}}} \to 0
\end{equation}
in probability as $N\to \infty$.
\end{proposition}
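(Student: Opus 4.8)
The plan is to insert the mild identity \eqref{martingale_z} and split the increment $\dual{\zt{T_N+\rho_N}}-\dual{\zt{T_N}}$ into three pieces: the \emph{drift part} $A_N=\intg{T_N}{T_N+\rho_N}\dual{\zt{s}}{\L{r_N}\phi-\avg{F'(\avg{\ft{s}})\avg{\phi}{r_N}}{r_N}}\ld s$, the \emph{quadratic remainder} $B_N=(\tau_N/\eta_N)^{1/2}\intg{T_N}{T_N+\rho_N}\dual{(\avg{\zt{s}})^2}{R_2(\avg{\wtN{s}},\avg{\ft{s}})\avg{\phi}{r_N}}\ld s$, and the \emph{martingale increment} $C_N=M^N_{T_N+\rho_N}(\phi)-M^N_{T_N}(\phi)$; it then suffices to show each of $A_N$, $B_N$, $C_N$ tends to $0$ in $L^1$, which gives \eqref{aldous_criterion_zn}.

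For $C_N$ I would argue as follows. The process $\proc{M^N(\phi)}$ is a square-integrable martingale for the natural filtration of $\proc{w^N}$, to which $T_N$ is also a stopping time, and $T_N+\rho_N$ is bounded; by optional sampling and orthogonality of increments, $\E{C_N^2}=\E{\qvar{M^N(\phi)}_{T_N+\rho_N}-\qvar{M^N(\phi)}_{T_N}}$. By \eqref{covariation_MN} and the pathwise Jensen bound used in the proof of Lemma~\ref{lemma:bound_qvar}, the density of $\qvar{M^N(\phi)}$ with respect to $\ld s$ is at most $K_4\norm[2]{\phi}^2$, hence $\E{C_N^2}\leq K_4\norm[2]{\phi}^2\rho_N\to 0$. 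For $B_N$: since $0\leq R_2\leq\tfrac12\norm[\infty]{F''}$, enlarging the integration domain to $[0,T+\rho_N]$ and applying Lemma~\ref{lemma_bound_avg} yields $\E{\abs{B_N}}=\bigO{(\tau_N/\eta_N)^{1/2}V_{r_N}^{-1}\norm[1]{\phi}}$, which tends to $0$ by assumption \eqref{parameters_regime_brownian}. Only the martingale increment uses $\rho_N\to 0$; the other two are already $\littleO{1}$ by the parameter assumption alone.

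The main obstacle is $A_N$. For fixed $s\leq T$ the function $\psi^N_s:=\L{r_N}\phi-\avg{F'(\avg{\ft{s}})\avg{\phi}{r_N}}{r_N}$ is Schwartz, with $L^1$, $L^2$ and second-derivative norms bounded uniformly in $N$ and in $s\leq T$ (by Propositions~\ref{prop:average} and~\ref{prop:convergence_centering_term} and the boundedness of $F'$, $F''$), so \eqref{L1_bound_z} gives $\sup_{N,\,s\leq T}\E{\abs{\dual{\zt{s}}{\psi^N_s}}}<\infty$; but this only yields $\E{\abs{A_N}}=\bigO{1}$, since the crude bound $\E{\abs{A_N}}\leq\int_0^{T+\rho_N}\E{\abs{\dual{\zt{s}}{\psi^N_s}}}\ld s$ does not see $\rho_N\to 0$. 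To remedy this I would apply \eqref{stochastic_integral_zn} with terminal condition $\psi^N_s$ (and its associated time-dependent test function $\varphi^N$) to write $\dual{\zt{s}}{\psi^N_s}=X^{(1)}_{N,s}+X^{(2)}_{N,s}$, where $X^{(2)}_{N,s}=M^N_s(\varphi^N(\cdot,s))$ is bounded in $L^2$ uniformly in $N$ and $s\leq T$ (by Lemmas~\ref{lemma:bound_qvar} and~\ref{lemma:convergence_varphi}, since $\norm[2]{\psi^N_s}$ is uniformly bounded), and $X^{(1)}_{N,s}$ is the quadratic term, which as for $B_N$ satisfies $\varepsilon_N:=\sup_{s\leq T}\E{\abs{X^{(1)}_{N,s}}}=\bigO{(\tau_N/\eta_N)^{1/2}V_{r_N}^{-1}}\to 0$. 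Truncating $\abs{\dual{\zt{s}}{\psi^N_s}}$ at a level $B>0$ and using the $L^2$-boundedness of $X^{(2)}$ together with the $L^1$-smallness of $X^{(1)}$ (Markov's inequality for the cross terms) leads to a bound of the form $\E{\abs{A_N}}\leq B\rho_N+C(T+1)\bigl(\varepsilon_N+\sqrt{\varepsilon_N/B}+1/B\bigr)$; letting first $N\to\infty$ and then $B\to\infty$ forces $\E{\abs{A_N}}\to 0$.

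The recurring delicate point is that $\zt{s}$ is only bounded, not in $L^2$, so throughout one must perform the spatial integrations in the order that first exposes $\E{\avg{\zt{s}}(\cdot,r_N)^2}$, and then let Lemma~\ref{lemma_bound_avg} together with \eqref{parameters_regime_brownian} absorb the resulting factor $V_{r_N}^{-1}$; the $L^2$-estimate on the martingale parts and the identification of the ``small'' quadratic remainder are exactly what make the truncation argument close. The same scheme carries over to the stable regime of Section~\ref{sec:proof_stable}, with $\L{r_N}$ replaced by $\L*{\delta_N}$, $\norm[2]{\cdot}$ by $\norm[(\alpha)]{\cdot}$, and \eqref{parameters_regime_brownian} by \eqref{parameters_regime_stable}.
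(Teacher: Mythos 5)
Your proposal is correct, but it takes a genuinely different route from the paper. The paper never isolates a drift term: it works directly from the time-dependent test-function representation \eqref{stochastic_integral_zn}, so the increment $\dual{\zt{T_{N}+\rho_{N}}}-\dual{\zt{T_{N}}}$ splits (see \eqref{tightness_expr}) into two quadratic remainders (handled by Lemma~\ref{lemma_bound_avg} together with the uniform-in-$t$ bound of Lemma~\ref{lemma_varphi_uniform_bound}, since the kernel $\varphi^N(s,T_N)$ is random through $T_N$), the increment of $V^N_t=\int_0^T\int\varphi^N(x,s,t)M^N(\ld x\ld s)$ in its terminal-time variable, and the pure martingale increment $\int_{T_N}^{T_N+\rho_N}\int\phi\,\ld M^N$. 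The delicate dependence on the terminal time is dealt with by Lemma~\ref{lemma:continuity_varphi} plus Kolmogorov's continuity criterion (Lemma~\ref{holder_vn_lemma}), which gives a H\"older modulus $|V^N_{T_N+\rho_N}-V^N_{T_N}|\leq Y_N\rho_N^{1/4}$ with $\E{Y_N^2}$ bounded. You instead start from the fixed-test-function form \eqref{martingale_z}, so you must face the drift integral $A_N$ over the random interval $[T_N,T_N+\rho_N]$, and you resolve it with a pathwise occupation-time bound (the interval has length $\rho_N$) combined with a truncation/uniform-integrability argument, the required uniform integrability coming from re-applying \eqref{stochastic_integral_zn} at each fixed time $s$ to split $\dual{Z^N_s}{\psi^N_s}$ into an $L^1$-small quadratic piece and an $L^2$-bounded stochastic integral; your treatment of $B_N$ and $C_N$ coincides with the paper's (and your $B_N$ is even simpler, since the kernel is the deterministic $\phi$). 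What each approach buys: the paper's choice of $\varphi^N$ eliminates the drift entirely and replaces your truncation step by a one-line Kolmogorov estimate, at the cost of needing Lemmas~\ref{lemma:continuity_varphi} and~\ref{lemma_varphi_uniform_bound}; your argument dispenses with those two lemmas but needs a two-parameter family of auxiliary test functions and a more hands-on limiting argument ($N\to\infty$ then $B\to\infty$). Two small points you should make explicit: the terminal condition $\psi^N_s=\L{r_N}\phi-\avg{F'(\avg{\ft{s}})\avg{\phi}}{r_N}$ is in $L^{1,\infty}(\R^d)$ but not Schwartz (only as smooth as $f^N$), so you must note that \eqref{martingale_z}, Proposition~\ref{prop:time_dependent_tf} and the Gronwall bound $\norm[q]{\varphi^N(u,s)}\leq K_2\norm[q]{\psi^N_s}$ from Lemma~\ref{lemma:convergence_varphi} hold for such terminal data (they do, since (M1) is stated for all $L^{1,\infty}$ test functions and that particular bound does not use derivatives of the terminal condition); and since $T_N+\rho_N$ may exceed $T$, Lemma~\ref{lemma_bound_avg} should be invoked with horizon $T+1$, which only changes constants.
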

By Aldous' criterion (\cite{aldous_stopping_1978} and \citep[Theorem 6.8]{walsh_introduction_1986}), Proposition \ref{tightness_prop} together with Lemma~\ref{lem_sup_zt_bound}  imply that the sequence of real-valued processes $\proc{\dual{\zt{}}}{N\geq 1}$ is tight in $\sko{\R}$ for any $\phi\in\calS(\R^{d})$. In turn, Mitoma's theorem \citep[Theorem 6.13]{walsh_introduction_1986} implies the tightness of $\proc{\zt{}}{N\geq 1}$ in $\sko{\calS'(\R^{d})}$.

\begin{proof}[Proof of Proposition \ref{tightness_prop}] 
We are going to treat each term in \eqref{stochastic_integral_zn} separately. The first one converges to zero in $L^{1}$, uniformly on $[0,T]$, as a consequence of Lemma~\ref{lemma_bound_avg}. The second one is dealt with as in \citep[Theorem 7.13]{walsh_introduction_1986}. 

The proof requires three auxiliary lemmas as follows (the first two are proved in Appendix~\ref{append:test_functions}).
We extend $\varphi^{N}$ to $\R^d \times [0,T]^{2}$ by setting, for $s,t\in[0,T]$,
\begin{equation}\label{varphi_extension}
\varphi^{N}(x,s,t) := \varphi^{N}(x,s\wedge t,t).
\end{equation}
In other words, for $s>t$, $\varphi^{N}(s,t)$ equals $\phi$. 
\begin{lemma}\label{lemma:continuity_varphi}
 For $T>0$, there exists a constant $K_9$ such that, for all $N\geq 1$ and for $q\in\left\lbrace 1,2 \right\rbrace$,
\begin{equation*}
 \forall s,t,t'\in[0,T], \quad \norm[q]{\varphi^{N}(s,t')-\varphi^{N}(s,t)} \leq K_{9}\abs{t'-t}.
\end{equation*}
\end{lemma}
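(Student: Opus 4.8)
The plan is to compare the two time‑dependent test functions $\varphi^N(\cdot,\cdot,t)$ and $\varphi^N(\cdot,\cdot,t')$ directly, exploiting the key observation that on any interval $[s,t]$ with $s\le t\le t'$ they solve the \emph{same} linear backward equation: the coefficients appearing in \eqref{definition_varphi_n} on $[s,t]$ involve only $(f^N_u)_{u\in[s,t]}$ and the fixed operator $\L{r_N}$, neither of which depends on the terminal time. Hence the difference $D_u:=\varphi^N(u,t')-\varphi^N(u,t)$ solves that linear equation on $[0,t]$ with terminal datum $D_t=\varphi^N(t,t')-\varphi^N(t,t)=\varphi^N(t,t')-\phi$. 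By symmetry it suffices to treat $t\le t'$, and the cases arising from the extension \eqref{varphi_extension} are harmless ($t\le t'<s$ gives $\varphi^N(s,t')=\varphi^N(s,t)=\phi$, while $t<s\le t'$ reduces to the short‑time estimate below applied on $[s,t']$), so I will assume $s\le t\le t'$. The argument then splits into two steps: (a) propagate a bound backwards from $\norm[q]{D_t}$ to $\norm[q]{D_s}$; (b) show $\norm[q]{D_t}=\norm[q]{\varphi^N(t,t')-\phi}=\bigO{\abs{t'-t}}$, uniformly in $N$.

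For step (a), I would write $D$ in mild form against the semigroup $\G$ of $\xi^{(r_N)}$,
\[
D_\sigma \;=\; \G_{t-\sigma}\ast D_t \;-\; \intg{\sigma}{t}\G_{u-\sigma}\ast\avg{F'(\avg{f^N_u})\avg{D_u}}(\cdot,r_N)\,\ld u,\qquad 0\le\sigma\le t.
\]
Since convolution with $\G_\tau$ and the spatial averaging operators are $L^q$‑contractions for $q\in\{1,2\}$ (the former because $\xi^{(r_N)}$ is a Markov process, the latter by Jensen's inequality) and $F'$ is bounded, this yields $\norm[q]{D_\sigma}\le\norm[q]{D_t}+\norm[\infty]{F'}\intg{\sigma}{t}\norm[q]{D_u}\,\ld u$, and Gronwall's lemma then gives $\norm[q]{D_\sigma}\le e^{\norm[\infty]{F'}T}\norm[q]{D_t}$ for all $\sigma\in[0,t]$, in particular for $\sigma=s$.

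For step (b), the mild form of \eqref{definition_varphi_n} on $[t,t']$ gives
\[
\varphi^N(t,t')-\phi \;=\; \bigl(\G_{t'-t}\ast\phi-\phi\bigr) \;-\; \intg{t}{t'}\G_{u-t}\ast\avg{F'(\avg{f^N_u})\avg{\varphi^N(u,t')}}(\cdot,r_N)\,\ld u.
\]
The integral is bounded in $\norm[q]{\cdot}$ by $(t'-t)\,\norm[\infty]{F'}K_2\norm[q]{\phi}$ using the uniform bound $\norm[q]{\varphi^N(u,t')}\le K_2\norm[q]{\phi}$ from Lemma~\ref{lemma:convergence_varphi}. For the first term I would use $\G_{t'-t}\ast\phi-\phi=\intg{0}{t'-t}\G_\tau\ast\L{r_N}\phi\,\ld\tau$ to bound its $L^q$‑norm by $(t'-t)\norm[q]{\L{r_N}\phi}\le(t'-t)\,\frac{d(d+2)}{2}\max_{\abs{\beta}=2}\norm[q]{\partial_\beta\phi}$, invoking Proposition~\ref{prop:average}.i; the crucial point is that this last bound does not depend on $r_N$, hence not on $N$. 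Combining (a) and (b) gives $\norm[q]{\varphi^N(s,t')-\varphi^N(s,t)}\le K_9\abs{t'-t}$ with $K_9=e^{\norm[\infty]{F'}T}\bigl(\tfrac{d(d+2)}{2}\max_{\abs{\beta}=2}\norm[q]{\partial_\beta\phi}+\norm[\infty]{F'}K_2\norm[q]{\phi}\bigr)$, which depends only on $\phi$, $d$, $T$ and $F$.

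I do not expect a genuine obstacle: the whole lemma is essentially the remark that the two test functions obey one common linear evolution, combined with standard semigroup‑contraction and Gronwall estimates. The step most likely to need care is (b) together with keeping $K_9$ uniform in $N$ — this rests entirely on the $r$‑independence of the estimate $\norm[q]{\L{r}\phi}\le\tfrac{d(d+2)}{2}\max_{\abs{\beta}=2}\norm[q]{\partial_\beta\phi}$ and on the $N$‑uniform constant $K_2$ — and on the bookkeeping around the extension \eqref{varphi_extension} in the boundary cases. (For $q=2$ one could replace the mild‑form Gronwall argument in step (a) by a direct energy estimate for $\norm[2]{D_\sigma}^2$, using that $-\L{r_N}$ is a nonnegative operator so that its contribution has a favourable sign; but the mild‑form version handles $q=1$ and $q=2$ simultaneously and is preferable.)
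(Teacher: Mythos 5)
Your proof is correct and follows essentially the same route as the paper's: both rest on the mild formulation of \eqref{definition_varphi_n}, the $L^q$-contractivity of the semigroup $\G$ and of the averaging operators, the $N$-uniform bound $\norm[q]{\calL\phi}\leq\frac{d(d+2)}{2}\max_{\abs{\beta}=2}\norm[q]{\partial_\beta\phi}$ coming from Proposition~\ref{prop:average}, and a Gronwall argument, with the same bookkeeping for the extension \eqref{varphi_extension}. The only cosmetic difference is that you split the estimate into a short-time bound on $\varphi^N(t,t')-\phi$ followed by propagation of this terminal difference by the common linear flow (which is why you need the uniform bound $K_2$ from Lemma~\ref{lemma:convergence_varphi}, indeed available), whereas the paper writes one combined identity for $\varphi^N(s,t')-\varphi^N(s,t)$ and applies Gronwall to it directly.
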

\begin{lemma} \label{lemma_varphi_uniform_bound}
There exists a constant $K_{10}$ such that, for all $s\in[0,T]$,
\begin{equation*}
\norm[1]{\sup_{t\in[s,T]}\abs{\varphi^{N}(s,t)}} \leq K_{10}.
\end{equation*}
\end{lemma}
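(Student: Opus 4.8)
The plan is to obtain the bound as an essentially immediate consequence of the Lipschitz estimate of Lemma~\ref{lemma:continuity_varphi}, by applying the one–dimensional fundamental theorem of calculus in the \emph{terminal}–time variable $t$ and then integrating in $x$ with Tonelli's theorem. Recall that $\varphi^N(s,t)$ solves the backward equation \eqref{definition_varphi_n}, that $F$, $f^N$ and $\phi$ are smooth, and that the semigroup $\G=\G^{(r_N)}$ associated with $\L{r_N}$ is smoothing; hence $\varphi^N$ is jointly smooth in $(x,s,t)$ on $\braced{0\le s\le t\le T}$, and $\varphi^N(x,s,s)=\phi(x)$.

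The first ingredient I would set up is a uniform $L^1$–bound on the $t$–derivative. Differentiating \eqref{definition_varphi_n} with respect to the terminal time $t$ shows that $\partial_t\varphi^N(\cdot,\cdot,t)$ solves the same homogeneous equation as $\varphi^N(\cdot,\cdot,t)$, with terminal datum $\partial_t\varphi^N(t,t)=\L{r_N}\phi-\avg{F'(\avg{\ft{t}})\avg{\phi}}(\cdot,r_N)$ (using $\tfrac{\ld}{\ld t}\phi=0$ at $s=t$). A Grönwall argument on the corresponding mild formula, together with $\norm[1]{\L{r_N}\phi}\le\tfrac{d(d+2)}{2}\max_{\abs{\beta}=2}\norm[1]{\partial_\beta\phi}$ from Proposition~\ref{prop:average}.i, then yields
\[
\sup_{0\le s\le t\le T}\norm[1]{\partial_t\varphi^N(s,t)}\le K_9 ,
\]
with $K_9$ independent of $N$; this is exactly the estimate underlying Lemma~\ref{lemma:continuity_varphi}, so one may equally just invoke that lemma and read off this bound from the $L^1$–Lipschitz continuity of $t\mapsto\varphi^N(s,t)$.

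Next, for each fixed $x$ and $s$, the map $t\mapsto\varphi^N(x,s,t)$ is $C^1$ on $[s,T]$ with value $\phi(x)$ at $t=s$, so
\[
\sup_{t\in[s,T]}\abs{\varphi^N(x,s,t)}\le\abs{\phi(x)}+\intg{s}{T}\abs{\partial_t\varphi^N(x,s,t)}\,\ld t .
\]
The right–hand side is a measurable function of $x$ that dominates $\sup_{t\in[s,T]}\abs{\varphi^N(x,s,t)}$, so integrating over $\R^d$ and applying Tonelli's theorem gives
\[
\norm[1]{\sup_{t\in[s,T]}\abs{\varphi^N(s,t)}}\le\norm[1]{\phi}+\intg{s}{T}\norm[1]{\partial_t\varphi^N(s,t)}\,\ld t\le\norm[1]{\phi}+K_9 T=:K_{10},
\]
uniformly in $N$ and $s\in[0,T]$, which is the claim.

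The argument is short once Lemma~\ref{lemma:continuity_varphi} is available, so there is no substantial obstacle here; the only point that needs a little care is the passage from the pointwise (in $x$) fundamental theorem of calculus to a bound valid \emph{simultaneously for all} $t\in[s,T]$ before integrating in $x$ — this is why one must pass through the $t$–independent majorant $\abs{\phi(x)}+\int_s^T\abs{\partial_t\varphi^N(x,s,t)}\,\ld t$ rather than trying to control $\sup_{t}\abs{\varphi^N(s,t)}$ in $L^1$ directly. All the genuinely quantitative, $N$–uniform input (the value of $K_9$) is already packaged in Lemma~\ref{lemma:continuity_varphi}, and ultimately rests on the heat–kernel–type estimates of Proposition~\ref{prop:average}.
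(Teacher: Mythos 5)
Your proof is correct in substance, but it takes a genuinely different route from the paper's. The paper never touches the terminal-time derivative: starting from the mild formula \eqref{varphi_green} it rewrites the semigroup term as $\phi$ plus a time integral of the semigroup applied to $\calL\phi$, bounds $\abs{\varphi^{N}(u,t)}$ inside the remaining integral by $\sup_{t'\in[u,T]}\abs{\varphi^{N}(u,t')}$, integrates in $x$, and closes a \emph{backward Gronwall} inequality for $s\mapsto\norm[1]{\sup_{t\in[s,T]}\abs{\varphi^{N}(s,t)}}$, with Proposition~\ref{prop:average} used only to bound $\norm[1]{\calL\phi}$. You instead control the variation in the terminal time: $\partial_t\varphi^{N}(\cdot,\cdot,t)$ solves the same homogeneous backward equation with terminal datum $\calL\phi$ minus the averaged linear term applied to $\phi$, whose $L^1$ norm is bounded uniformly in $N$ by Proposition~\ref{prop:average}, so the Gronwall bound behind Lemma~\ref{lemma:convergence_varphi} gives $\norm[1]{\partial_t\varphi^{N}(s,t)}\lesssim 1$, and then the fundamental theorem of calculus in $t$ plus Tonelli yields the constant $\norm[1]{\phi}+K_9T$. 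What your route buys is brevity and reuse of Lemma~\ref{lemma:continuity_varphi}; what it costs is an extra regularity step the paper's argument does not need, and here your justification is off: the semigroup generated by $\calL$ is \emph{not} smoothing (it is the law of a compound-Poisson-type process with an atom at the origin, as the paper itself notes), so "joint smoothness" cannot be argued that way. What actually saves you is that for fixed $N$ the operator $\calL$ is bounded on $L^{1}\cap L^{\infty}$, so the backward equation is a linear Banach-space ODE depending smoothly on the terminal time, which supplies the pointwise-in-$x$ absolute continuity of $t\mapsto\varphi^{N}(x,s,t)$ needed for your FTC step (or the Fatou argument reading $\norm[1]{\partial_t\varphi^{N}(s,t)}\le K_9$ off the $L^1$-Lipschitz bound). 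Alternatively you can bypass differentiability altogether: for any finite set of times $s=t_0<t_1<\dots<t_m\le T$, telescoping gives $\max_i\abs{\varphi^{N}(x,s,t_i)}\le\abs{\phi(x)}+\sum_{i}\abs{\varphi^{N}(x,s,t_{i+1})-\varphi^{N}(x,s,t_i)}$, so Lemma~\ref{lemma:continuity_varphi}, monotone convergence along a countable dense set of times, and continuity of $t\mapsto\varphi^{N}(x,s,t)$ give $\norm[1]{\sup_{t\in[s,T]}\abs{\varphi^{N}(s,t)}}\le\norm[1]{\phi}+K_9(T-s)$ directly, which is exactly your bound with no derivative required.
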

Now define
\begin{equation*}
V^{N}_{t} = \intg{0}{T}\intrd\varphi^{N}(x,s,t)M^{N}(\ld x\ld s).
\end{equation*}
\begin{lemma} \label{holder_vn_lemma}
For any $0<\beta<1/2$, there exists a random variable $Y_{N}$ such that
\begin{equation} \label{holder_vn}
\forall t,t'\in [0,T], \quad \abs{V^{N}_{t'}-V^{N}_{t}} \leq Y_{N}\abs{t'-t}^{\beta},
\end{equation}
almost surely, and $\E{Y_{N}^{2}} \leq C'$ for all $N\geq 1$. 
\end{lemma}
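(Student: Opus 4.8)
The plan is to run a Kolmogorov--Chentsov continuity argument, but quantitatively, via the Garsia--Rodemich--Rumsey (GRR) lemma, so that the Hölder seminorm of $V^N$ is itself controlled in $L^2$ with a constant that does not depend on $N$. The starting point is that, since $\varphi^N$ has been extended to $\R^d\times[0,T]^2$ by \eqref{varphi_extension}, for any $t,t'\in[0,T]$ one has
\begin{equation*}
V^N_{t'}-V^N_t = \int_0^T\int_{\R^d}(\varphi^N(x,s,t')-\varphi^N(x,s,t))\,M^N(\ld x\,\ld s).
\end{equation*}
Applying Lemma~\ref{lemma:bound_qvar} with $\phi_s=\varphi^N(s,t')-\varphi^N(s,t)$ and then Lemma~\ref{lemma:continuity_varphi} at once gives the second-moment modulus bound $\E{(V^N_{t'}-V^N_t)^2}\leq K_4 T K_9^2\,\abs{t'-t}^2$, with a constant independent of $N$.

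Next I would fix $\theta\in(1,3/2)$ and set $B_N=\int_0^T\int_0^T\abs{t-t'}^{-2\theta}(V^N_t-V^N_{t'})^2\,\ld t\,\ld t'$. Taking expectations, using Fubini and the modulus bound yields $\E{B_N}\leq K_4 T K_9^2\int_0^T\int_0^T\abs{t-t'}^{2-2\theta}\,\ld t\,\ld t'$, which is finite because $2-2\theta>-1$ and, importantly, uniform in $N$. The GRR lemma applied pathwise with $\Psi(x)=x^2$ and $p(u)=u^\theta$ then produces a deterministic constant $c_\theta$ such that, almost surely, $\abs{V^N_t-V^N_{t'}}\leq c_\theta\sqrt{B_N}\,\abs{t-t'}^{\theta-1}$ for all $t,t'\in[0,T]$. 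Given the target exponent $\beta\in(0,1/2)$, I would take $\theta=1+\beta\in(1,3/2)$ and put $Y_N=c_\theta\sqrt{B_N}$; then \eqref{holder_vn} holds, and $\E{Y_N^2}=c_\theta^2\,\E{B_N}\leq C'$ with $C'$ independent of $N$.

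I do not expect a genuine obstacle here; the statement is essentially bookkeeping once Lemmas~\ref{lemma:bound_qvar} and~\ref{lemma:continuity_varphi} are available. The one point that must be handled with care --- and the reason for using the quantitative GRR estimate rather than the abstract Kolmogorov continuity theorem --- is the uniformity of $C'$ in $N$: this works precisely because the constants $K_4$ and $K_9$ in those two lemmas are themselves $N$-independent, and the GRR constant $c_\theta$ depends only on $\theta$ (hence on $\beta$) and $T$. A minor secondary point is that the estimate should be read as holding for the almost surely continuous modification of $t\mapsto V^N_t$ that it produces, which is all that is needed for the tightness argument of Section~\ref{subsec:brownian:tightness} in which Lemma~\ref{holder_vn_lemma} is invoked.
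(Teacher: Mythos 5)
Your proposal is correct and follows essentially the same route as the paper: the identical moment bound $\E{(V^N_{t'}-V^N_t)^2}\leq K_4TK_9^2\abs{t'-t}^2$ obtained from Lemma~\ref{lemma:bound_qvar} applied to $\varphi^N(s,t')-\varphi^N(s,t)$ together with Lemma~\ref{lemma:continuity_varphi}, followed by a quantitative continuity criterion. The paper simply cites Kolmogorov's continuity theorem in the quantitative form of \citep[Corollary 1.2]{walsh_introduction_1986}, of which your Garsia--Rodemich--Rumsey argument (with $\theta=1+\beta<3/2$) is the standard proof, so the two are interchangeable here.
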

\begin{proof}
By Lemma~\ref{lemma:bound_qvar} and then Lemma \ref{lemma:continuity_varphi},
\begin{align*}
\E{\abs{V^{N}_{t'}-V^{N}_{t}}^{2}} &= \E{\left(\intg{0}{T}\intrd(\varphi^{N}(x,s,t')-\varphi^{N}(x,s,t))M^{N}(\ld x\ld s)\right)^{2}}.\\
&\leq K_{4}\intg{0}{T}\norm[2]{\varphi^{N}(s,t')-\varphi^{N}(s,t)}^{2}\ld s \\
&\leq (K_{9})^{2}TK_{4}\abs{t'-t}^{2}.
\end{align*}
The result follows by Kolmogorov's continuity theorem \citep[Corollary 1.2]{walsh_introduction_1986}.
\end{proof}
Returning to the process $\dual{\zt}$, by \eqref{stochastic_integral_zn}, we can write
\begin{multline} \label{tightness_expr}
\dual{\zt{T_{N}+\rho_{N}}}-\dual{\zt{T_{N}}} = (\tau/\eta)^{1/2}\intg{0}{T_{N}}\dual{(\avg{\zt{s}})^{2}}{R_{2}(\avg{\wt{s}},\avg{\ft{s}})\avg{\varphi^{N}(s,T_{N})}}\ld s \\ \hspace{3.5cm} - (\tau/\eta)^{1/2}\intg{0}{T_{N}+\rho_{N}}\dual{(\avg{\zt{s}})^{2}}{R_{2}(\avg{\wt{s}},\avg{\ft{s}})\avg{\varphi^{N}(s,T_{N}+\rho_{N})}}\ld s \\ + \left(V^{N}_{T_{N}+\rho_{N}}-V^{N}_{T_{N}}\right) + \intg{T_{N}}{T_{N}+\rho_{N}}\intrd\phi(x)M^{N}(\ld x\ld s).
\end{multline}
Let us deal with each term separately. The first two are similar so we need only consider the first one. Since inside the integral $s\leq T_{N}\leq T$, $\abs{\varphi^N(s,T_N)}\leq \sup _{t\in[s,T]}\abs{\varphi^N(s,t)}$ and we have
\begin{equation*}
\abs{\intg{0}{T_{N}}\dual{(\avg{\zt{s}})^{2}}{R_{2}(\avg{\wt{s}},\avg{\ft{s}})\avg{\varphi^{N}(s,T_{N})}}\ld s} \leq \frac{1}{2}\norm[\infty]{F''}\intg{0}{T}\dual{(\avg{\zt{s}})^{2}}{\avg{\sup_{t\in[s,T]}\abs{\varphi^{N}(s,t)}}}\ld s.
\end{equation*}
Taking the expectation on both sides, we get
\begin{align} \label{bound_error_term}
\E{\abs{\intg{0}{T_{N}}\dual{(\avg{\zt{s}})^{2}}{R_{2}(\avg{\wt{s}},\avg{\ft{s}})\avg{\varphi^{N}(s,T_{N})}}\ld s}} &\leq \frac{1}{2}\norm[\infty]{F''}\intg{0}{T}\dual{\E{(\avg{\zt{s}})^{2}}}{\avg{\sup_{t\in[s,T]}\abs{\varphi^{N}(s,t)}}}\ld s \notag \\
&\leq \frac{1}{2}\norm[\infty]{F''}\frac{K_{5}}{V_{r}}\intg{0}{T}\norm[1]{\sup_{t\in[s,T]}\abs{\varphi^{N}(s,t)}}\ld s \notag \\
&\leq \frac{1}{2}\norm[\infty]{F''}\frac{1}{V_{r}}T K_{5} K_{10}.
\end{align}
where the second line follows by Lemma~\ref{lemma_bound_avg} and the third line follows by Lemma \ref{lemma_varphi_uniform_bound}.
Recall that we assumed in \eqref{parameters_regime_brownian} that $\tau_N/\eta_N = \littleO{r_N^{2d}}$; hence the first term on the right-hand-side of \eqref{tightness_expr} converges to zero in $L^1$. By Lemma \ref{holder_vn_lemma}, we have, almost surely,
\begin{equation*}
\abs{V^{N}_{T_{N}+\rho_{N}}-V^{N}_{T_{N}}} \leq Y_{N}\rho_{N}^{1/4}.
\end{equation*}
Taking the expectation of the square of both sides, we write
\begin{equation*}
\E{\abs{V^{N}_{T_{N}+\rho_{N}}-V^{N}_{T_{N}}}^{2}} \leq C'\rho_{N}^{1/2}.
\end{equation*}
Hence the third term converges to zero in $L^{2}$ and in probability as $N\to\infty$. Finally, since $T_{N}$ is a stopping time, we can apply Lemma~\ref{lemma:bound_qvar} to the fourth term,
\begin{align*}
\E{\left(\intg{T_{N}}{T_{N}+\rho_{N}}\intrd\phi(x)M^{N}(\ld x\ld s)\right)^{2}} &\leq K_{4}\E{\intg{T_{N}}{T_{N}+\rho_{N}}\norm[2]{\phi}^{2}\ld s} \\
&\leq K_{4}\norm[2]{\phi}^{2}\rho_{N}.
\end{align*}
This concludes the proof of Proposition \ref{tightness_prop}.
\end{proof}

\subsection{Convergence of the martingale measure $M^N$}\label{subsec:brownian:convergence_M}

The next step is to show that the martingale measure $M^{N}$ converges weakly in $\sko{\calS'(\R^{d})}$ as $N\to\infty$ to $M$, where $M_{t} = \sqrt{f_{t}(1-f_{t})}\cdot W_{t}$ is a stochastic integral against the space-time white noise $W$ and $f$ is the solution of \eqref{fisher_kpp}. We will naturally use Theorem~\ref{thm:mitoma}, along with the following result on convergence to Gaussian martingales (which is a consequence of L\'evy's characterisation of Brownian motion). For any $\R^d$-valued process $\proc{Y}$, define $\Delta Y_t = Y_t-Y_{t^-}$.

\begin{thm}[{\cite[Theorem VIII 3.11]{jacod_limit_1987}}]\label{thm:convergence_gaussian_martingale}
Suppose $\proc{X}=(X^1_t,\ldots,X^d_t)_{t\geq 0}$ is a continuous $d$-dimensional Gaussian martingale and for each $n\geq 1$, $\proc{X^n}=(X^{n,1}_t,\ldots,X^{n,d}_t)_{t\geq 0}$ is a local martingale such that
\begin{enumerate}[label=(\roman*)]
\item $\abs{\Delta X^n_t}$ is bounded uniformly in $n$ for all $t$, and $\sup_{t\leq T}|\Delta X_t^n|\cvgas[P]{n} 0$.
\item For each $t\in \Q\cap [0,T]$, $\dual{X^{n,i}}{X^{n,j}}_t \cvgas[P]{n} \dual{X^{i}}{X^{j}}_t$.
\end{enumerate}
Then $X^n$ converges in distribution to $X$ in $D([0,T],\R^d)$.
\end{thm}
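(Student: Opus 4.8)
This is a functional martingale central limit theorem, so the plan is to follow the classical two-step weak-convergence scheme: first prove that the sequence $\proc{X^n}$ is tight in $D([0,T],\R^d)$, and then show that every subsequential limit coincides in law with the continuous Gaussian martingale $X$, which forces $X^n\Rightarrow X$.

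For the tightness step, I would first note that hypothesis (ii), combined with the fact that $t\mapsto\dual{X^i}{X^j}_t$ is deterministic and continuous, implies that the predictable quadratic covariation processes $\proc{\dual{X^{n,i}}{X^{n,j}}}$ converge in probability in $D([0,T],\R)$ to a continuous deterministic limit, hence are $C$-tight; and since hypothesis (i) gives $\sup_{t\leq T}\abs{\Delta X^n_t}\cvgas[P]{n} 0$, the optional and predictable quadratic variations of $X^n$ differ by a process with asymptotically negligible jumps and are both $C$-tight. I would then invoke the Rebolledo--Aldous tightness criterion for local martingales (as in \cite{aldous_stopping_1978} and \cite{jacod_limit_1987}): a sequence of $d$-dimensional local martingales whose quadratic variations are $C$-tight and whose jumps are asymptotically negligible is tight in $D([0,T],\R^d)$, and every limit point has continuous paths. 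Concretely, to verify Aldous' condition one bounds, for a stopping time $S\leq T$ and $\theta>0$, the quantity $\E{\abs{X^n_{S+\theta}-X^n_S}^2\wedge 1\mid\F_S}$ by a truncated version of $\E{\dual{X^n}{X^n}_{S+\theta}-\dual{X^n}{X^n}_S\mid\F_S}$ via the $L^2$-isometry, after a preliminary localisation at $T^n_m=\inf\{t:\dual{X^n}{X^n}_t\geq m\}$; because the limiting bracket is bounded and continuous, $\P{T^n_m\leq T}$ is small uniformly in $n$ for large $m$, so this localisation is harmless.

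For the identification step, take a subsequence along which $X^{n_k}\Rightarrow Y$; by the previous paragraph $Y$ has continuous paths almost surely. I would check that $Y$ is a martingale by passing to the limit in the martingale identity $\E{(X^n_t-X^n_s)\,g(X^n_{r_1},\dots,X^n_{r_\ell})}=0$ for bounded continuous $g$ and $r_1\leq\cdots\leq r_\ell\leq s\leq t$, the uniform integrability of $\{X^n_t\}_n$ required here following from the bound $\E{\abs{X^n_{t\wedge T^n_m}}^2}=\E{\dual{X^n}{X^n}_{t\wedge T^n_m}}\leq m$ together with the negligibility of the jumps at the times $T^n_m$. Passing to the limit in the martingale property of $X^{n,i}X^{n,j}-\dual{X^{n,i}}{X^{n,j}}$ in the same way and using (ii) shows that $Y^iY^j-\dual{X^i}{X^j}_{\cdot}$ is a martingale, i.e.\ the quadratic covariation of $Y$ is the deterministic continuous matrix $(\dual{X^i}{X^j}_t)_{i,j}$. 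Finally, a continuous local martingale with deterministic quadratic covariation is a centred Gaussian process with independent increments and $\operatorname{Cov}(Y^i_t,Y^j_s)=\dual{X^i}{X^j}_{s\wedge t}$ --- the usual extension of L\'evy's characterisation, obtained by applying It\^o's formula to $\exp(\mathrm{i}\sum_j\theta_j Y^j_t)$ and solving the resulting deterministic ODE for the conditional characteristic function; hence $Y\overset{d}{=}X$. Since the subsequential limit law is thus unique, $X^n\Rightarrow X$ in $D([0,T],\R^d)$.

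The main obstacle is the \emph{local} (rather than genuine) martingale hypothesis: without a uniform bound on $\dual{X^n}{X^n}_T$ one cannot directly obtain $L^2$-bounds or uniform integrability, and the fix is precisely to exploit the continuity and determinism of the limiting bracket to choose the truncation levels $m$ so that the stopped processes are uniformly controlled and the mass lost at $T^n_m$ vanishes uniformly in $n$. Once this localisation is in place, the remaining steps are routine weak-convergence manipulations together with the classical characterisation of continuous Gaussian martingales.
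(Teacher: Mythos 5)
You should first note that the paper does not prove this statement at all: it is quoted verbatim, with attribution, as Theorem VIII.3.11 of Jacod and Shiryaev \cite{jacod_limit_1987}, and is used only as an imported tool in Sections~\ref{subsec:brownian:convergence_M} and~\ref{subsec:stable:convergence_M}. So there is no internal proof to compare against; what you have written is a standalone proof sketch of the cited result. On its own merits, your outline is the classical one (tightness via the Rebolledo--Aldous criterion, identification of continuous limit points through the martingale problem and the L\'evy-type characterisation of continuous local martingales with deterministic bracket), which is closer in spirit to the Ethier--Kurtz/Rebolledo treatment than to Jacod--Shiryaev's own proof, which runs through predictable characteristics and convergence of the associated exponential (characteristic-function) martingales; the characteristic-function route avoids all moment issues, which is precisely what it buys.

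Two points in your sketch need more care than you give them. First, convergence of the brackets at rational times only upgrades to the functional ($C$-tight, locally uniform) convergence you invoke because the diagonal entries are nondecreasing and the limit is continuous and deterministic (a Dini/P\'olya argument, extended to off-diagonal entries by polarisation); this should be said, since it is where hypothesis (ii) plus continuity of $t\mapsto\langle X^i,X^j\rangle_t$ is really used. Second, and more substantively, the uniform integrability you establish, namely an $L^2$ bound on the stopped processes, suffices for passing to the limit in the linear martingale identity but not in the identity for $X^{n,i}X^{n,j}-\langle X^{n,i},X^{n,j}\rangle$, where you need uniform integrability of the products $X^{n,i}_tX^{n,j}_t$ (and of the brackets). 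This gap is fillable with standard arguments — after the localisation $T^n_m$, the uniformly bounded jumps give $\Delta\langle X^n\rangle\leq b^2$, so Burkholder--Davis--Gundy yields fourth-moment bounds and hence the required UI, or one can instead identify $[Y]$ directly via joint convergence of $(X^n,[X^n])$ — but as written the justification is incomplete. Finally, the statement is silent about initial values; your identification fixes the law of $Y$ only up to $Y_0$, so you should either assume $X^n_0=X_0=0$ (as in the application in this paper) or add a word about convergence of the initial laws.
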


In our setting, the limiting process $\proc{M_t(\phi)}{t\geq 0}$ is a continuous martingale with quadratic variation
\begin{equation*}
\qvar{M(\phi)} = \intg{0}{t}\intrd\phi(x)^{2}f_{s}(x)(1-f_{s}(x))\ld x\ld s.
\end{equation*}
(See \cite[Theorem 2.5]{walsh_introduction_1986}.) Since this quantity is deterministic, $\proc{M_t(\phi)}{t\geq 0}$ is Gaussian, and we can apply the result above. The following lemma is then enough to conclude that $M^N$ converges to $M$.
\begin{lemma}\label{lemma:convergence_MN}
 For any $\phi\in\calS(\R^d)$,
\begin{enumerate}[label=\roman*)]
 \item For all $t\geq 0$, $\abs{\Delta M^N_t(\phi)}\leq K$ for some constant $K$, and $\sup_{0\leq t\leq T}\abs{\Delta M^N_t(\phi)} \cvgas[P]{N} 0$.
\item For each $t\in[0,T]$, $\qvar{M^N(\phi)} \cvgas[P]{N} \qvar{M(\phi)}$.
\end{enumerate}
\end{lemma}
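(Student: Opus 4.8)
\emph{Part (i)} is immediate. The drift term in~\eqref{martingale_w_brown} is absolutely continuous in $t$, so the jumps of the martingale in~\eqref{martingale_w_brown} coincide with those of $s\mapsto\dual{w^N_s}{\phi}$ and are bounded by $\alpha_N\norm[1]{\phi}$ thanks to~\eqref{bound_jumps}. The martingale measure $M^N(\phi)$ is obtained from that martingale by multiplying by $\tau_N^{-1/2}$, applying the deterministic time change $t\mapsto t/\eta_N$, and multiplying by $\eta_N^{1/2}$; since none of these operations changes jump sizes other than through the scalar factors, $\sup_t\abs{\Delta M^N_t(\phi)}\le(\eta_N/\tau_N)^{1/2}\alpha_N\norm[1]{\phi}$, which tends to $0$ because $\alpha_N^2=\littleO{\tau_N/\eta_N}$; in particular the jumps are uniformly bounded.

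For \emph{Part (ii)}, the plan is to write $\qvar{M^N(\phi)}_t$ out explicitly and send the averaging radius $r_N$ to $0$. After the time change, and with the constants set to $1$, $\qvar{M^N(\phi)}_t$ equals $\intg{0}{t}\intrd{2}\phi(z_1)\phi(z_2)\,\sigma^{(r_N)}_{z_1,z_2}(\wt{s})\,\ld z_1\ld z_2\ld s$ plus an error $\bigO{\delta_N^2\,t\,\norm[2]{\phi}^2}$, which vanishes. Using Fubini and the identity $\{x\in B(z_1,r)\cap B(z_2,r)\}=\{z_1,z_2\in B(x,r)\}$, the main term's inner double integral equals $\intrd\bigl[\avg{\wt{s}}(x,r_N)\avg{\phi(1-\wt{s})}(x,r_N)^2+(1-\avg{\wt{s}}(x,r_N))\avg{\phi\wt{s}}(x,r_N)^2\bigr]\ld x$. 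Since $\phi$ is smooth and $0\le\wt{s}\le1$, one has $\avg{\phi\wt{s}}(x,r_N)=\phi(x)\avg{\wt{s}}(x,r_N)+\bigO{r_N}$ and $\avg{\phi(1-\wt{s})}(x,r_N)=\phi(x)(1-\avg{\wt{s}}(x,r_N))+\bigO{r_N}$ uniformly in $x$ and $\omega$; as the surviving coefficients are dominated by integrable functions of $x$ (Schwartz decay of $\phi$), this reduces the main term of $\qvar{M^N(\phi)}_t$ to $\intg{0}{t}\intrd\phi(x)^2\avg{\wt{s}}(x,r_N)(1-\avg{\wt{s}}(x,r_N))\,\ld x\ld s$ up to an $\bigO{r_N}$ error. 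Performing the same computation with $f_s$ in place of $\wt{s}$ --- valid because $f$ is continuous and bounded uniformly on $[0,T]$, so $\avg{f_s}(\cdot,r_N)\to f_s$ uniformly --- identifies the candidate limit as $\qvar{M(\phi)}_t=\intg{0}{t}\intrd\phi(x)^2f_s(x)(1-f_s(x))\,\ld x\ld s$, in agreement with the stated quadratic variation of $M(\phi)$.

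It then remains to compare $\avg{\wt{s}}(x,r_N)$ with $\avg{\ft{s}}(x,r_N)$: I would write $\avg{\wt{s}}(x,r_N)=\avg{\ft{s}}(x,r_N)+(\tau_N/\eta_N)^{1/2}\avg{\zt{s}}(x,r_N)$ and expand $p(1-p)$. The difference of integrands then splits into a deterministic part controlled by Proposition~\ref{prop:convergence_centering_term} (which yields $\norm[\infty]{\avg{\ft{s}}(\cdot,r_N)-f_s}=\bigO{r_N}$ uniformly in $s\le T$), a linear term $(\tau_N/\eta_N)^{1/2}\intrd\phi(x)^2(1-2\avg{\ft{s}}(x,r_N))\avg{\zt{s}}(x,r_N)\,\ld x$, and a quadratic term $(\tau_N/\eta_N)\intrd\phi(x)^2\avg{\zt{s}}(x,r_N)^2\,\ld x$. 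Taking expectations and invoking Lemma~\ref{lemma_bound_avg}, $\E{\avg{\zt{s}}(x,r_N)^2}\le K_5/V_{r_N}$, bounds the last two by constant multiples of $(\tau_N/\eta_N)^{1/2}V_{r_N}^{-1/2}\norm[2]{\phi}^2$ and $(\tau_N/\eta_N)V_{r_N}^{-1}\norm[2]{\phi}^2$, both of which go to $0$ by the scaling assumption $\tau_N/\eta_N=\littleO{\delta_N^{2d}}$ (equivalently $\littleO{V_{r_N}}$) in~\eqref{parameters_regime_brownian}. Integrating over $s\in[0,t]$ and assembling the pieces gives $\qvar{M^N(\phi)}_t\to\qvar{M(\phi)}_t$ in $L^1$, hence in probability. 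The main obstacle is exactly this last comparison: because $Z^N$ converges only as a distribution-valued process, the quadratic term $\avg{\zt{s}}(x,r_N)^2$ has no limit in isolation, and it is precisely the $L^2$-estimate of Lemma~\ref{lemma_bound_avg} on the average over $r_N$-balls, together with $\tau_N/\eta_N=\littleO{\delta_N^{2d}}$, that makes its contribution vanish; the remaining work is routine uniform-continuity and decay bookkeeping, plus the time-change accounting for $M^N$.
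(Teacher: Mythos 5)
Your proof is correct and follows essentially the same route as the paper: part (i) via the jump bound \eqref{bound_jumps} and $\alpha_N^2=\littleO{\tau_N/\eta_N}$, and part (ii) via the Fubini identity \eqref{quad_var_expr}, pulling $\phi$ out of the averages at an $\bigO{r_N}$ cost, and then comparing $\avg{\wt{s}}$ with $f_s$ through Lemma~\ref{lemma_bound_avg}, Proposition~\ref{prop:convergence_centering_term} and assumption \eqref{parameters_regime_brownian} (your explicit linear-plus-quadratic expansion in $\avg{\zt{s}}$ is just a rephrasing of the paper's triangle-inequality step). One harmless imprecision: $\littleO{\delta_N^{2d}}$ is strictly stronger than, not equivalent to, $\littleO{V_{r_N}}$, but the stronger assumption only helps you here.
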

Indeed, by polarisation, we can recover $\dual{M^N(\phi_i)}{M^N(\phi_j)}_t$ from $\qvar{M^N(\phi_i+\phi_j)}$ and $\langle M^N(\phi_i$ $-\phi_j)\rangle _{t}$, and (ii) of Theorem~\ref{thm:convergence_gaussian_martingale} is satisfied by vectors of the form $\proc{M^N_t(\phi_1),\ldots,M^N_t(\phi_p)}{t\geq 0}$. As a result, for any $\phi_1,\ldots,\phi_p$ in $\calS(\R^d)$, $\proc{M^N_t(\phi_1),\ldots,M^N_t(\phi_p)}{t\geq 0}$ converges in distribution to $\proc{M_t(\phi_1),\ldots,M_t(\phi_p)}{t\geq 0}$ in $\sko{\R^d}$. In particular, for any $\phi\in\calS(\R^d)$, $\proc{M^N(\phi)}{N\geq 1}$ is tight, and $M^N$ satisfies the assumptions of Theorem~\ref{thm:mitoma}, hence $M^N$ converges in distribution to $M$ as $N\to\infty$ in $\sko{\calS'(\R^d)}$.

\begin{proof}[Proof of Lemma~\ref{lemma:convergence_MN}]
By the definition of $M^{N}(\phi)$,
\begin{align*}
M^{N}_{t}(\phi)-M^{N}_{t^{-}}(\phi) &= \dual{\zt}-\dual{\zt{t^{-}}} \\
&= (\eta_{N}/\tau_{N})^{1/2}\left(\dual{\wt}-\dual{\wt{t^{-}}}\right).
\end{align*}
The bound on the jumps of $\dual{w_t}$ in \eqref{bound_jumps} implies
\begin{align*}
\sup_{t\geq 0}\abs{\Delta M^{N}_t(\phi)} &\leq (\eta_{N}/\tau_{N})^{1/2}\sup_{t\geq 0}\abs{\dual{w^{N}_{t}-w^{N}_{t^{-}}}{\phi}} \\
&\leq \alpha_N(\eta_N/\tau_N)^{1/2}\norm[1]{\phi}.
\end{align*}
But we have assumed that $\alpha_N^2=\littleO{\tau_N/\eta_N}$, so (i) is satisfied. To prove (ii), recall from \eqref{covariation_MN} that
\begin{equation*}
\qvar{M^{N}(\phi)} = \intg{0}{t}\intrd{2}\phi(z_{1})\phi(z_{2})\sigma^{(r_N)}_{z_{1},z_{2}}(\wt{s})\ld z_{1}\ld z_{2}\ld s+\bigO{\delta_N^2 t\norm[2]{\phi}^2}.
\end{equation*}
The rationale here is to show that the main contribution to this term comes from the diagonal $\{(z_1,z_2):z_1=z_2\}$ when $r\to 0$. 
From the definition of $\sigma^{(r_N)}$ in \eqref{def_sigma_r},
\begin{multline*}
\intrd{2}\phi(z_{1})\phi(z_{2})\sigma^{(r_N)}_{z_{1},z_{2}}(\wt{s})\ld z_{1}\ld z_{2} = \frac{1}{V_{r}^{2}}\intrd{3}\phi(z_{1})\phi(z_{2})[\avg{\wt{s}}(x)(1-\wt{s}(z_{1}))(1-\wt{s}(z_{2})) \\ + (1-\avg{\wt{s}}(x))\wt{s}(z_{1})\wt{s}(z_{2})]\inball{z_{1}}{x}{z_{2}}{x}\ld x\ld z_{1}\ld z_{2}.
\end{multline*}
Changing the order of integration gives
\begin{equation} \label{quad_var_expr}
\intrd{2}\phi(z_{1})\phi(z_{2})\sigma^{(r_N)}_{z_{1},z_{2}}(\wt{s})\ld z_{1}\ld z_{2} = \dual{\avg{\wt{s}}}{\left(\avg{\phi(1-\wt{s})}\right)^{2}} + \dual{1-\avg{\wt{s}}}{\left(\avg{\phi\,\wt{s}}\right)^{2}} .
\end{equation}
We are left with showing that the right-hand-side of \eqref{quad_var_expr} converges in probability to
\begin{equation*}
\dual{f_{s}}{(\phi(1-f_{s}))^{2}} + \dual{1-f_{s}}{(\phi f_{s})^{2}} = \dual{f_{s}(1-f_{s})}{\phi^{2}}.
\end{equation*}
To do this, we first justify that $\phi$ can be let out of the average, we use Lemma~\ref{lemma_bound_avg} to argue that we can replace $\wt{s}$ by $\ft{s}$, then the regularity of $\ft{}$ allows us to remove the averages and finally we know from Proposition~\ref{prop:convergence_centering_term} that $\ft{}$ converges to $f$. 
First note that
\begin{equation*}
\avg{\phi w}(x)-\phi(x)\avg{w}(x) = \frac{1}{V_{r}}\intbr{x}w(y)(\phi(y)-\phi(x))\ld y.
\end{equation*}
Since $0\leq w(y)\leq 1$ a.e., we have
\begin{align*}
\abs{\avg{\phi w}(x)-\phi(x)\avg{w}(x)} &\leq \frac{1}{V_{r}}\intbr{x}\abs{\phi(y)-\phi(x)}\ld y \leq \frac{1}{V_r}\intbr{x}\sum_{i=1}^{d}\norm[\infty]{\partial_i\phi}\abs{y-x}_i \ld y.
\end{align*}
Hence
\begin{equation*}
\norm[\infty]{\avg{\phi w}-\phi\avg{w}} \leq d\,r_{N}\max_{i}\norm[\infty]{\partial_{i}\phi}.
\end{equation*}
As a consequence,
\begin{align*}
\dual{1-\avg{w}}{(\avg{\phi w})^{2}}-\dual{1-\avg{w}}{\phi^{2}\avg{w}^{2}} &= \dual{1-\avg{w}}{(\avg{\phi w}-\phi\avg{w})(\avg{\phi w}+\phi\avg{w})} \\
\abs{\dual{1-\avg{w}}{(\avg{\phi w})^{2}}-\dual{1-\avg{w}}{\phi^{2}\avg{w}^{2}}} &\leq 2\norm[1]{\phi}\norm[\infty]{\avg{\phi w}-\phi\avg{w}} \\
&\leq 2d\norm[1]{\phi}r_{N}\max_{i}\norm[\infty]{\partial_{i}\phi}.
\end{align*}
By the same argument (replacing $w$ by $1-w$), we can also let $\phi$ out of the average in the first term on the right-hand-side of \eqref{quad_var_expr}, and the problem reduces to showing the convergence of
\begin{equation*}
\dual{\avg{\wt{s}}}{\phi^{2}(1-\avg{\wt{s}})^{2}} + \dual{1-\avg{\wt{s}}}{\phi^{2}\avg{\wt{s}}^{2}} = \dual{\avg{\wt{s}}(1-\avg{\wt{s}})}{\phi^{2}}.
\end{equation*}
We now see that it is enough to show
\begin{equation*}
\sup_{x\in\R^{d}}\E{\abs{\avg{\wt{s}}(x)-f_{s}(x)}} \cvgas{N} 0.
\end{equation*}
But, by the triangle inequality,
\begin{align*}
\E{\abs{\avg{\wt{s}}(x)-f_{s}(x)}} &\leq (\tau_{N}/\eta_{N})^{1/2}\E{\avg{\zt{s}}(x)^{2}}^{1/2} + \norm[\infty]{\avg{\ft{s}}-\ft{s}} + \norm[\infty]{\ft{s}-f_{s}} \\
&\leq \frac{(\tau_{N}/\eta_{N})^{1/2}}{V_{r}^{1/2}}K_{5}^{1/2} + \frac{d}{2}r_{N}^{2}K_{7} + K_{6}r_{N}^{2}.
\end{align*}
(We have used Lemma~\ref{lemma_bound_avg}, Proposition~\ref{prop:average} in Appendix~\ref{append:average} and Proposition~\ref{prop:convergence_centering_term}.) The right-hand-side converges to zero as $N\to\infty$ (due to assumption \eqref{parameters_regime_brownian}), providing the desired result. 
From all this we conclude
\begin{equation*}
\intrd{2}\phi(z_{1})\phi(z_{2})\sigma^{(r_N)}_{z_{1},z_{2}}(\wt{s})\ld z_{1}\ld z_{2} \cvgas[L^1]{N} \intrd\phi(x)^{2}f_{s}(x)(1-f_{s}(x))\ld x,
\end{equation*}
uniformly for $s\in[0,T]$, which gives us (ii).
\end{proof}

\subsection{Conclusion of the proof}\label{subsec:brownian:conclusion}

We are almost done. We have proved that the sequence of processes $\proc{\zt{}}{N\geq 1}$ is tight, and we need only characterise its potential limit points. Recall the following expression for $\dual{\zt}$ from \eqref{stochastic_integral_zn}:
\begin{equation*}
 \dual{\zt} = -(\tau_{N}/\eta_{N})^{1/2}\intg{0}{t}\dual{(\avg{\zt{s}})^{2}}{R_{2}(\avg{\wt{s}},\avg{\ft{s}})\avg{\varphi^{N}(s,t)}}\ld s + \intg{0}{t}\intrd\varphi^{N}(x,s,t)M^{N}(\ld x\ld s).
\end{equation*}
The first term converges to zero in $L^1$ from \eqref{bound_error_term}. Also,
\begin{equation*}
\intg{0}{t}\intrd\varphi^{N}(x,s,t)M^{N}(\ld x\ld s) - \intg{0}{t}\intrd\varphi(x,s,t)M^{N}(\ld x\ld s) \cvgas[L^{2}]{N} 0,
\end{equation*}
since, from Lemma~\ref{lemma:bound_qvar} and Lemma~\ref{lemma:convergence_varphi},
\begin{align*}
\E{\left(\intg{0}{t}\intrd(\varphi^{N}(x,s,t)-\varphi(x,s,t))M^{N}(\ld x\ld s)\right)^{2}} &\leq K_{4}\intg{0}{t}\norm[2]{\varphi^{N}(s,t)-\varphi(s,t)}^{2}\ld s \\
&\leq K_{1}^{2}TK_{4}r_{N}^{4}.
\end{align*}
For $\phi_1,\ldots,\phi_p$ in $\calS(\R^d)$, let $\varphi_1,\ldots,\varphi_p$ be the corresponding solutions of \eqref{definition_varphi} with $\phi=\phi_i$. 
Since we showed in Section \ref{subsec:brownian:convergence_M} that $M^{N}$ converges weakly to $M$, by \citep[Proposition 7.12]{walsh_introduction_1986}, for $t_1,\ldots,t_p\in[0,T]$,
\begin{multline*}
 \left(\intg{0}{t_1}\intrd\varphi_1(x,s,t_1)M^{N}(\ld x\ld s),\ldots,\intg{0}{t_k}\intrd\varphi_k(x,s,t_k)M^{N}(\ld x\ld s)\right) \\ \cvgas[d]{N} \left(\intg{0}{t_1}\intrd\varphi_1(x,s,t_1)M(\ld x\ld s),\ldots,\intg{0}{t_k}\intrd\varphi_k(x,s,t_k)M(\ld x\ld s)\right).
\end{multline*}
This uniquely characterises the potential limit points of $\proc{Z^{N}}{N\geq 1}$. 
By Theorem~\ref{thm:mitoma}, $\proc{Z^{N}}$ converges in distribution to a distribution-valued process $\proc{z}$ given by
\begin{equation}\label{limiting_stochastic_integral}
\dual{z_{t}} = \intg{0}{t}\intrd\varphi(x,s,t)M(\ld x\ld s),
\end{equation}
where $\varphi$ satisfies the backwards heat equation \eqref{definition_varphi} with terminal condition $\phi$,
\begin{equation*}
\left\lbrace
\begin{aligned}
& \partial_{s}\varphi(x,s,t) + \frac{1}{2}\Delta\varphi(x,s,t) - F'(f_{s}(x))\varphi(x,s,t) = 0, \\
& \varphi(x,t,t) = \phi(x).
\end{aligned}
\right.
\end{equation*}
It is an easy exercise to prove that $z_{t}$ satisfies
\begin{equation} \label{mild_solution}
\dual{z_{t}} = \intg{0}{t}\dual{z_{s}}{\frac{1}{2}\Delta\phi - F'(f_{s})\phi}\ld s + \intg{0}{t}\intrd\phi(x)M(\ld x\ld s).
\end{equation}
(See the proof of \citep[Theorem 5.2]{walsh_introduction_1986}.) In other words, $\proc{z}$ is the (mild) solution of \eqref{limiting_fluctuations} (recall that $M_{t}=\sqrt{f_{t}(1-f_{t})}\cdot W_{t}$) and Theorem~\ref{thm:clt_brownian} is proved.

\section{The stable case - proof of Theorem~\ref{thm:clt_stable}}\label{sec:proof_stable}

Turning to the proof of the central limit theorem in the stable case, we warn that its overall structure is the same as that in the Brownian case. 
Some steps need a different treatment however, and we explain those in more detail. 
Whenever the details of the argument are exactly the same as previously, we simply mention intermediate results without detailing their proof. To simplify our formulae, we use the following notation:
\begin{equation}\label{definition_lesssim}
a_n \lesssim b_n \Leftrightarrow \exists K>0 : \forall n\geq 1, \; 0 \leq a_n \leq K \,b_n.
\end{equation}
The specific constants can always be retrieved from Section~\ref{sec:proof_brownian} or from a trivial calculation. 
Also as in Section \ref{sec:proof_brownian} we set the constants $u$ and $(sV_1)/\alpha $ to $1$ in the martingale problem (M2) defined in Definition~\ref{formal_def_M2}.
Let us write (M2) as
\begin{equation*}
\ld w_t^N = \eta_N \left[ \L*{\delta_N}w_t^N - F^{(\delta_N)}(w_t^N)\right]\ld t + \tau_N^{1/2}\ld \mathbb M_t^N.
\end{equation*}
Setting $M_t^N(\phi) = \eta_N^{1/2}\mathbb M^N_{t/\eta_N}(\phi)$ and using the definition of $F^{(\delta_N)}$ in~\eqref{definition_F_delta}, we have, by the same argument as for \eqref{martingale_z},
\begin{multline}\label{good_mart_z_sr}
\dual{\zt} = \intg{0}{t}\dual{\zt{s}}{\L*{\delta_N}\phi-\alpha\intg{1}{\infty}\avg{F'(\avg{\ft{s}})\avg{\phi}}{\delta_N r}\frac{\ld r}{r^{\alpha+1}}}\ld s \\ - \left( \frac{\tau_N}{\eta_N}\right)^{\tfrac{1}{2}}\alpha\intg{0}{t}\intg{1}{\infty}\dual{(\avg{\zt{s}}{\delta_N r})^{2}}{R_{2}(\avg{\wt{s}},\avg{\ft{s}})\avg{\phi}{\delta_N r}}\frac{\ld r}{r^{\alpha+1}}\ld s + M^N_t(\phi),
\end{multline}
and the covariation measure of $M^N$ is given by
\begin{equation} \label{covariation_stable}
Q^N(\ld z_1\ld z_2\ld s) = (\sigma^{(\alpha,\delta_N)}_{z_1,z_2}(\wt{s})+ \abs{ z_1-z_2 }^{-\alpha} \bigO{\delta_N^\alpha})\ld z_1\ld z_2\ld s.
\end{equation}

\subsection{Time dependent test functions}\label{subsec:stable:time_dependent_tf}

Recall Proposition~\ref{prop:time_dependent_tf} and how we used it in the previous proof.
Define a time dependent test function $\varphi^N$ as the solution to the following.
\begin{equation} \label{defn_varphi_n_stable}
\left\lbrace
\begin{aligned}
& \partial_{s}\varphi^{N}(x,s,t) + \L*{\delta_N}\varphi^{N}(x,s,t) -\alpha\intg{1}{\infty} \avg{F'(\avg{\ft{s}})\avg{\varphi^{N}(s,t)}}(x,\delta_N r)\frac{\ld r}{r^{\alpha +1}} = 0 \\
& \varphi^{N}(x,t,t) = \phi(x).
\end{aligned}
\right.
\end{equation}
By Proposition~\ref{prop:time_dependent_tf}, we have
\begin{multline}\label{stochastic_integral_zn_sr}
\dual{\zt} = -\left(\frac{\tau_{N}}{\eta_{N}}\right)^{1/2}\alpha\intg{0}{t}\intg{1}{\infty}\dual{(\avg{\zt{s}}{\delta_N r})^{2}}{R_{2}(\avg{\wt{s}},\avg{\ft{s}})\avg{\varphi^{N}(s,t)}{\delta_N r}}\frac{\ld r}{r^{\alpha+1}}\ld s \\ + \intg{0}{t}\intrd\varphi^{N}(x,s,t)M^{N}(\ld x\ld s).
\end{multline}
We are thus left with finding a suitable way to bound the first term above and showing the convergence of the stochastic integral against $M^N$. 
The convergence of the martingale measure $M^N$ is going to involve slightly different calculations compared to the previous case as the limiting noise is not a space-time white noise. 
The convergence of $\varphi^N$, however, is proved in a similar way to before. 
Define $\varphi$ as the solution to the following.
\begin{equation} \label{definition_varphi_sr}
\left\lbrace
\begin{aligned}
& \partial_{s}\varphi(x,s,t) + \D\varphi(x,s,t) - F'(f_{s}(x))\varphi(x,s,t) = 0 \\
& \varphi(x,t,t) = \phi(x).
\end{aligned}
\right.
\end{equation}
The following lemma, whose proof is given in Appendix \ref{append:test_functions}, provides the convergence of $\varphi^N$ to $\varphi$.
\begin{lemma}\label{lemma:stable:convergence_varphi}
For $T>0$ and for $q\in\braced{1,\infty},$
\begin{equation*}
\sup_{0\leq s\leq t\leq T}\norm[q]{\varphi^N(s,t)-\varphi(s,t)} \lesssim \delta_N^{\alpha\wedge(2-\alpha)}.
\end{equation*}
In addition, for $0<\abs{\beta} \leq 2$,
\begin{align*}
\sup_{0\leq s\leq t\leq T}\norm[q]{\varphi^N(s,t)} \lesssim \norm[q]{\phi} && and && \sup_{0\leq s\leq t\leq T}\norm[q]{\partial_\beta\varphi^N(s,t)} \lesssim 1.
\end{align*}
\end{lemma}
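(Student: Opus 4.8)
Reversing time, for $0\le s\le t\le T$ set $\sigma=t-s$ and regard $\varphi^N(\cdot,t-\sigma,t)$, $\varphi(\cdot,t-\sigma,t)$ as the time-$\sigma$ values of forward evolutions started from $\phi$. The plan is to mimic the proof of Lemma~\ref{lemma:convergence_varphi} (Appendix~\ref{append:test_functions}), working with the mild (Duhamel) form of \eqref{defn_varphi_n_stable} and \eqref{definition_varphi_sr}. Let $(\mathcal P^N_\sigma)_{\sigma\ge0}$ be the Markov semigroup generated by $\L*{\delta_N}$ and $(\mathcal P_\sigma)_{\sigma\ge0}$ the one generated by $\D$; each is a contraction on $L^1(\R^d)$ and on $L^\infty(\R^d)$ and commutes with translations, hence with every $\partial_\beta$. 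Writing $g^N_\tau$ for the zeroth-order term $\alpha\intg{1}{\infty}\avg{F'(\avg{f^N_{t-\tau}})\avg{\varphi^N(t-\tau,t)}}(\cdot,\delta_N r)\,\ld r/r^{\alpha+1}$ of \eqref{defn_varphi_n_stable},
\[
\varphi^N(\cdot,t-\sigma,t)=\mathcal P^N_\sigma\phi-\intg{0}{\sigma}\mathcal P^N_{\sigma-\tau}g^N_\tau\,\ld\tau,
\]
while $\varphi(\cdot,t-\sigma,t)=\mathcal P_\sigma\phi-\intg{0}{\sigma}\mathcal P_{\sigma-\tau}g_\tau\,\ld\tau$ with $g_\tau:=F'(f_{t-\tau})\varphi(t-\tau,t)$. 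Subtracting and using $\mathcal P^N_{\sigma-\tau}g^N_\tau-\mathcal P_{\sigma-\tau}g_\tau=\mathcal P^N_{\sigma-\tau}(g^N_\tau-g_\tau)+(\mathcal P^N_{\sigma-\tau}-\mathcal P_{\sigma-\tau})g_\tau$, I would bound $\norm[q]{\varphi^N(s,t)-\varphi(s,t)}$, $q\in\{1,\infty\}$, by a term of order $\delta_N^{\alpha\wedge(2-\alpha)}$ plus $\intg{0}{\sigma}\norm[q]{\varphi^N(t-\tau,t)-\varphi(t-\tau,t)}\,\ld\tau$, and conclude by Gronwall's lemma.

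The order-$\delta_N^{\alpha\wedge(2-\alpha)}$ contribution is where the work lies. For the free term $(\mathcal P^N_\sigma-\mathcal P_\sigma)\phi$ and for $(\mathcal P^N_{\sigma-\tau}-\mathcal P_{\sigma-\tau})g_\tau$ I would use the interpolation identity $\mathcal P_\sigma-\mathcal P^N_\sigma=\intg{0}{\sigma}\mathcal P^N_{\sigma-\tau}(\D-\L*{\delta_N})\mathcal P_\tau\,\ld\tau$, the $L^q$-contractivity of $\mathcal P^N$, and the estimate $\norm[q]{(\D-\L*{\delta_N})\psi}\lesssim\delta_N^{2-\alpha}\norm[q]{D^2\psi}$ --- the stable-radii analogue of Proposition~\ref{prop:average}, which follows from \eqref{definition_fractional_laplacian} because $\D-\L*{\delta_N}$ has a symmetric kernel supported on $\{\abs{x-y}<2\delta_N\}$ and bounded there by $C\abs{x-y}^{-d-\alpha}$, so the second-order cancellation $\psi(x+z)+\psi(x-z)-2\psi(x)=\bigO{\abs{z}^2}$ gives $\int_{\abs{z}<2\delta_N}\abs{z}^{2-d-\alpha}\,\ld z\lesssim\delta_N^{2-\alpha}$; this is applied with $\psi=\mathcal P_\tau\phi$ and $\psi=g_\tau$, whose second derivatives are bounded uniformly in $\tau$ (for $g_\tau$ using the regularity of $f$, the stable analogue of Proposition~\ref{prop:convergence_centering_term}, and of $\varphi$). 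For $\mathcal P^N_{\sigma-\tau}(g^N_\tau-g_\tau)$ I would write $g^N_\tau-g_\tau$ as $\big[g^N_\tau-F'(f^N_{t-\tau})\varphi^N(t-\tau,t)\big]+\big(F'(f^N_{t-\tau})-F'(f_{t-\tau})\big)\varphi^N(t-\tau,t)+F'(f_{t-\tau})\big(\varphi^N(t-\tau,t)-\varphi(t-\tau,t)\big)$: the last summand feeds Gronwall, the middle one is $\lesssim\norm[\infty]{f^N_{t-\tau}-f_{t-\tau}}\lesssim\delta_N^{\alpha\wedge(2-\alpha)}$ by $\norm[\infty]{F''}<\infty$ and the analogue of Proposition~\ref{prop:convergence_centering_term}, and in the first one the nested averages at scale $\rho=\delta_N r$ differ from plain evaluation by $\bigO{\rho^2}$, so splitting the $r$-integral at scale $\delta_N^{-1}$ gives $\delta_N^2\intg{1}{1/\delta_N}r^{1-\alpha}\,\ld r+\intg{1/\delta_N}{\infty}r^{-\alpha-1}\,\ld r\lesssim\delta_N^{\alpha}$. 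Thus the exponent $\alpha\wedge(2-\alpha)$ is the slower of a near-diagonal second-order error $\delta_N^{2-\alpha}$ (approximating $\D$ by $\L*{\delta_N}$) and a tail-truncation error $\delta_N^{\alpha}$ (from cutting the outer $r$-integral in $F^{(\delta_N)}$, see \eqref{definition_F_delta}, at scale $\delta_N^{-1}$).

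The a priori bounds are read off from the mild formula. Since $\norm[q]{g^N_\tau}\le\norm[\infty]{F'}\,\norm[q]{\varphi^N(t-\tau,t)}$ (the weight $\alpha\intg{1}{\infty}r^{-\alpha-1}\,\ld r$ equals $1$ and every ball average is an $L^q$-contraction), Gronwall gives $\norm[q]{\varphi^N(s,t)}\le e^{\norm[\infty]{F'}T}\norm[q]{\phi}$, and the identical argument for $\varphi$ supplies the regularity of $\varphi$ used above. For $1\le\abs{\beta}\le2$, since $\L*{\delta_N}$ and the ball averages commute with $\partial_\beta$,
\[
\partial_\beta\varphi^N(\cdot,t-\sigma,t)=\mathcal P^N_\sigma(\partial_\beta\phi)-\intg{0}{\sigma}\mathcal P^N_{\sigma-\tau}\Big(\alpha\intg{1}{\infty}\avg{\partial_\beta\big[F'(\avg{f^N_{t-\tau}})\avg{\varphi^N(t-\tau,t)}\big]}(\cdot,\delta_N r)\,\tfrac{\ld r}{r^{\alpha+1}}\Big)\ld\tau,
\]
and by Leibniz the forcing is a sum of terms $\partial_\gamma\big(F'(\avg{f^N_{t-\tau}})\big)\,\partial_{\beta-\gamma}\avg{\varphi^N(t-\tau,t)}$; each factor $\partial_\gamma(F'(\avg{f^N}))$ with $\abs{\gamma}\le2$ is bounded, since $F$ is smooth (so every $F^{(k)}$ is bounded on the bounded range of $f^N$) and $f^N$ has bounded spatial derivatives up to order $2$ (the stable analogue of Proposition~\ref{prop:convergence_centering_term}, using that $w_0$ has bounded derivatives up to order $2$). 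Inducting on $\abs{\beta}$ from $1$ to $2$, the forcing is controlled in $L^q$ by a constant plus $\norm[\infty]{F'}\,\norm[q]{\partial_\beta\varphi^N(t-\tau,t)}$, and Gronwall yields $\sup_{0\le s\le t\le T}\norm[q]{\partial_\beta\varphi^N(s,t)}\lesssim1$.

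The main obstacle will be the approximation estimate of the second paragraph, i.e.\ the stable analogue of Proposition~\ref{prop:average}. In the Brownian case the compactly supported averaging kernel lets a two-term Taylor expansion produce an $\bigO{r^2}$ error directly; here the heavy tail $r^{-d-\alpha-1}$ forces one to split every $r$-integral at scale $\delta_N^{-1}$, tracking separately a near-diagonal second-order contribution ($\delta_N^{2-\alpha}$, finite only because $\alpha<2$) and a far-field/tail contribution ($\delta_N^{\alpha}$), and to do this uniformly in $L^1$ and $L^\infty$ and through the nested averages appearing in $g^N_\tau$. Carrying out this bookkeeping cleanly is the one genuinely technical point; the remainder is a transcription of the Brownian estimates with $\tfrac{1}{2}\Delta$ replaced by $\D$, $\L{r_N}$ by $\L*{\delta_N}$, and the rate $r_N^2$ by $\delta_N^{\alpha\wedge(2-\alpha)}$.
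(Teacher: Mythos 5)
Your proposal is correct, and its skeleton coincides with the paper's proof in Appendix~\ref{append:test_functions}: the mild (Duhamel) formulation of \eqref{defn_varphi_n_stable} and \eqref{definition_varphi_sr}, the generator-difference estimate $\norm[q]{\L*\psi-\D\psi}\lesssim\delta_N^{2-\alpha}\max_{\abs{\beta}=2}\norm[q]{\partial_\beta\psi}$ (this is exactly Proposition~\ref{prop:stable_average}, which the paper deduces from Proposition~\ref{prop:average} applied to $\dbavg{\psi}(r)-\psi$ on $r\in(0,\delta_N)$, rather than by your kernel computation --- both give the same bound), the truncation of the outer $r$-integral at $\delta_N^{-1}$ producing the $\delta_N^{\alpha}$ error exactly as in \eqref{estimate_avg_F(phi)} integrated against $r^{-\alpha-1}\,\ld r$, the comparison $\norm[\infty]{\ft{}-f}\lesssim\delta_N^{\alpha\wedge(2-\alpha)}$ from Proposition~\ref{prop:stable:convergence_centering_term}, a Gronwall closure, and the a priori bounds on $\partial_\beta\varphi^N$ by Leibniz, induction on $\abs{\beta}$ and Gronwall, as in the Brownian Lemma~\ref{lemma:convergence_varphi}. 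The one structural difference is where you place the operator error: the paper writes both $\varphi^N$ and $\varphi$ in mild form against the single limiting propagator, so that the difference of generators acts on $\varphi^N$ and is controlled by the a priori bounds on $\partial_\beta\varphi^N$ (which the lemma needs anyway); you instead keep each evolution with its own semigroup and compare the two semigroups through the interpolation identity, so the operator error falls on the smooth limit objects $\mathcal{P}_\tau\phi$ and $g_\tau$. Your variant is equally valid but additionally requires uniform second-derivative bounds on $\varphi$ and on $f$ (not only on $\varphi^N$ and $\ft{}$); as you note, these follow by running the same Gronwall/Leibniz arguments once more for the limiting objects, so nothing breaks, but it is an extra routine layer that the paper's choice of propagator avoids. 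The rates, and the way the exponent $\alpha\wedge(2-\alpha)$ emerges as the minimum of the near-diagonal error $\delta_N^{2-\alpha}$ and the tail error $\delta_N^{\alpha}$, are identical in both treatments.
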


\subsection{Regularity estimate}\label{subsec:stable:regularity_estimate}

Let us first state the following $L^2$ bound for the stochastic integral.

\begin{lemma} \label{lem_bound_qvar_sr}
For $0\leq t\leq T$ and $\alpha<d$, suppose $\phi_{t}:\R^{d}\to\R$ is in $L^{1,\infty}(\R^{d})$; then
\begin{align*}
\E{\left(\intg{0}{t}\intrd\phi_{s}(x)M^{N}(\ld x\ld s)\right)^{2}} &\lesssim \intg{0}{t}\intrd{2}\abs{\phi_{s}(z_{1})}\abs{\phi_{s}(z_{2})}(\delta_N\vee \tfrac{|z_1-z_2|}{2})^{-\alpha}\ld z_{1}\ld z_{2}\ld s\\
&\lesssim \intg{0}{t}\| \phi _s \|_1 (\| \phi _s \|_\infty +\|\phi_s \|_1)\ld s.
\end{align*}
\end{lemma}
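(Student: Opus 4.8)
The plan is to reduce the bound to a $w$‑uniform pointwise estimate on the density of the covariation measure of $M^N$. First I would apply the standard second‑moment estimate for stochastic integrals against a worthy martingale measure (\cite[Chapter~2]{walsh_introduction_1986}), exactly as in the proof of Lemma~\ref{lemma:bound_qvar}: starting from the expression \eqref{covariation_stable} for the covariation measure of $M^N$ and replacing $\phi_s(z_1)\phi_s(z_2)$ by $\abs{\phi_s(z_1)}\abs{\phi_s(z_2)}$, one bounds the left‑hand side by
\[ \intg{0}{t}\int_{\R^d}\int_{\R^d}\abs{\phi_s(z_1)}\abs{\phi_s(z_2)}\,\kappa_N(z_1,z_2)\,\ld z_1\ld z_2\ld s, \]
where $\kappa_N(z_1,z_2)$ is an upper bound, uniform over $w\in\Xi$, for the density of the covariation measure with respect to $\ld z_1\ld z_2\ld s$. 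It then suffices to show $\kappa_N(z_1,z_2)\lesssim(\delta_N\vee\tfrac{\abs{z_1-z_2}}{2})^{-\alpha}$ with a constant independent of $N$, and then to integrate out this kernel.

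For the main part of $\kappa_N$ (coming from $\sigma^{(\alpha,\delta_N)}_{z_1,z_2}$), I would observe that the bracketed integrand in \eqref{def_sigma_r} has the form $a(1-b)(1-c)+(1-a)bc$ with $a,b,c\in[0,1]$, hence is $\leq1$, so $\sigma^{(r)}_{z_1,z_2}(w)\leq V_r(z_1,z_2)/V_r^2$ for every $w\in\Xi$. Substituting this into \eqref{definition_sigma_stable}, and using $V_r(z_1,z_2)\leq V_r=V_1r^d$ together with the fact that $V_r(z_1,z_2)=0$ for $r<\tfrac{\abs{z_1-z_2}}{2}$, gives
\[ \sigma^{(\alpha,\delta_N)}_{z_1,z_2}(w) \;\leq\; \intg{\frac{\abs{z_1-z_2}}{2}\vee\delta_N}{\infty}\frac{V_1r^d}{r^{d+\alpha+1}}\,\ld r \;=\; \frac{V_1}{\alpha}\left(\delta_N\vee\tfrac{\abs{z_1-z_2}}{2}\right)^{-\alpha}. \]
For the error part of $\kappa_N$, rather than using the loose form $\abs{z_1-z_2}^{-\alpha}\bigO{\delta_N^\alpha}$ written in \eqref{covariation_stable} (which is not pointwise dominated by $(\delta_N\vee\tfrac{\abs{z_1-z_2}}{2})^{-\alpha}$ near the diagonal), I would reread the proof of Proposition~\ref{prop:mart_pb_m2}, where the radial integral carries the lower cut‑off $\delta_N$ and $V_r(z_1,z_2)\leq r^d\1{r\geq\tfrac{\abs{z_1-z_2}}{2}}$; the same computation then shows that this part of $\kappa_N$ is $\lesssim\delta_N^\alpha(\delta_N\vee\tfrac{\abs{z_1-z_2}}{2})^{-\alpha}\leq(\delta_N\vee\tfrac{\abs{z_1-z_2}}{2})^{-\alpha}$, since $\delta_N\leq1$. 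This establishes the first inequality.

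The second inequality would follow from the first by Fubini. Fixing $z_1$ and integrating in $z_2$, I would split $\R^d$ at $\abs{z_1-z_2}=2$: on $\{\abs{z_1-z_2}>2\}$ the kernel equals $(\tfrac{\abs{z_1-z_2}}{2})^{-\alpha}\leq1$, so its contribution is at most $\norm[1]{\phi_s}$; on $\{\abs{z_1-z_2}\leq2\}$ one has $(\delta_N\vee\tfrac{\abs{z_1-z_2}}{2})^{-\alpha}\leq(\tfrac{\abs{z_1-z_2}}{2})^{-\alpha}$, whose integral over this ball is finite and bounded uniformly in $N$ precisely because $\alpha<d$, contributing $\lesssim\norm[\infty]{\phi_s}$. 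Integrating the resulting bound against $\abs{\phi_s(z_1)}\,\ld z_1$ gives $\lesssim\norm[1]{\phi_s}(\norm[1]{\phi_s}+\norm[\infty]{\phi_s})$, and then integrating in $s$ yields the claim. The main obstacle I anticipate is essentially bookkeeping: keeping all constants independent of $N$, and in particular handling the error term through its sharp form rather than the weaker statement of \eqref{covariation_stable}; beyond $\alpha<d$ and the estimates already used in the Brownian case, no genuinely new idea should be required.
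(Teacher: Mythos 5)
Your proposal is correct and follows essentially the same route as the paper: the paper's proof bounds $\abs{\sigma^{(\alpha,\delta_N)}_{z_1,z_2}(w)}\leq\int_{\delta_N\vee\frac{|z_1-z_2|}{2}}^{\infty}V_r(z_1,z_2)\,\frac{\ld r}{r^{d+\alpha+1}}\lesssim\left(\delta_N\vee\tfrac{|z_1-z_2|}{2}\right)^{-\alpha}$ exactly as you do, and then gets the second inequality from Lemma~\ref{lem:alpha_int_bound}, whose proof is precisely your split into $|z_1-z_2|$ small and large (using $\alpha<d$). Your extra care with the $\bigO{\delta_N^\alpha}\,|z_1-z_2|^{-\alpha}$ error term is a legitimate observation the paper glosses over, though within the abstract martingale problem (M2) the simplest repair is to bound that term directly by the second displayed expression via Lemma~\ref{lem:alpha_int_bound} (using $\delta_N\leq 1$), rather than re-deriving a sharper error form from Proposition~\ref{prop:mart_pb_m2}.
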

The proof uses the following lemma, which is proved in Appendix \ref{append:test_functions}.
\begin{lemma} \label{lem:alpha_int_bound}
For $\alpha<d$, then for $f,g\in L^{1,\infty}(\R ^d)$
$$ \abs{\intrd{2}f(z_1)g(z_2)\abs{z_1-z_2}^{-\alpha}\ld z_1 \ld z_2}\leq \norm[1]{f}(\frac{dV_1}{d-\alpha}\norm[\infty]{g}+\norm[1]{g}).$$
\end{lemma}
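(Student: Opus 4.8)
\textbf{Proof proposal for Lemma~\ref{lem:alpha_int_bound}.} The plan is to bound the double integral by first taking absolute values inside, applying Tonelli's theorem (everything is now nonnegative, and we will see the iterated integral is finite), and then controlling the inner integral over $z_2$ uniformly in $z_1$ by splitting $\R^d$ into a unit ball around $z_1$ and its complement. On the ball the singular kernel is integrable because $\alpha<d$, and we use the $L^\infty$ bound on $g$; outside the ball the kernel is at most $1$, and we use the $L^1$ bound on $g$.

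Concretely, for fixed $z_1$ I would write
\begin{align*}
\intrd \abs{g(z_2)}\abs{z_1-z_2}^{-\alpha}\ld z_2 &= \int_{B(z_1,1)}\abs{g(z_2)}\abs{z_1-z_2}^{-\alpha}\ld z_2 + \int_{\R^d\setminus B(z_1,1)}\abs{g(z_2)}\abs{z_1-z_2}^{-\alpha}\ld z_2 \\
&\leq \norm[\infty]{g}\int_{B(0,1)}\abs{y}^{-\alpha}\ld y + \norm[1]{g},
\end{align*}
where on the first region I bounded $\abs{g(z_2)}\leq\norm[\infty]{g}$ and translated, and on the second region I used $\abs{z_1-z_2}^{-\alpha}\leq 1$. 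Passing to polar coordinates, $\int_{B(0,1)}\abs{y}^{-\alpha}\ld y = dV_1\int_0^1 r^{d-1-\alpha}\ld r = \frac{dV_1}{d-\alpha}$, which is finite precisely because $\alpha<d$. Hence the inner integral is at most $\frac{dV_1}{d-\alpha}\norm[\infty]{g}+\norm[1]{g}$, a bound independent of $z_1$.

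To conclude I would multiply by $\abs{f(z_1)}$ and integrate over $z_1\in\R^d$, using $\abs{\intrd{2}f(z_1)g(z_2)\abs{z_1-z_2}^{-\alpha}\ld z_1\ld z_2}\leq \intrd{2}\abs{f(z_1)}\abs{g(z_2)}\abs{z_1-z_2}^{-\alpha}\ld z_1\ld z_2$ and Tonelli to split the iterated integral; this gives $\norm[1]{f}\big(\frac{dV_1}{d-\alpha}\norm[\infty]{g}+\norm[1]{g}\big)$, as claimed. There is no real obstacle here — the only points needing a word of care are the use of $\alpha<d$ to make the near-diagonal kernel integrable and the justification (via nonnegativity) that the application of Tonelli and the splitting of the inner integral are legitimate.
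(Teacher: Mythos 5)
Your proof is correct and follows essentially the same route as the paper: split the inner $z_2$-integral into $B(z_1,1)$, where $\abs{g}\leq\norm[\infty]{g}$ and $\int_{B(0,1)}\abs{y}^{-\alpha}\ld y=\frac{dV_1}{d-\alpha}$ by $\alpha<d$, and its complement, where $\abs{z_1-z_2}^{-\alpha}\leq 1$ and the $L^1$ norm of $g$ is used, then integrate against $\abs{f(z_1)}$. No issues.
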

\begin{proof}[Proof of Lemma~\ref{lem_bound_qvar_sr}]
From the expression for the covariation measure in \eqref{covariation_stable},
\begin{align*}
\E{\left(\intg{0}{t}\intrd\phi_{s}(x)M^{N}(\ld x\ld s)\right)^{2}} &= \E{\intg{0}{t}\intrd{2}\phi_{s}(z_{1})\phi_{s}(z_{2})\sigma_{z_{1},z_{2}}^{\alpha, \delta_N}(\wt{s})\ld z_{1}\ld z_{2}\ld s}\\
&\hspace{2cm}+\bigO{\delta_N^\alpha}\intg{0}{t}\intrd{2}\phi_{s}(z_{1})\phi_{s}(z_{2})|z_1-z_2|^{-\alpha}\ld z_{1}\ld z_{2}\ld s.
\end{align*}
But, by the definition of $\sigma^{(\alpha,\delta)}$ in \eqref{definition_sigma_stable},
\begin{align*}
\abs{\sigma_{z_{1},z_{2}}^{\alpha, \delta}(w)} &\leq \int_{\delta\vee \tfrac{|z_1-z_2|}{2}}^\infty V_r(z_1,z_2)\frac{\ld r}{r^{d+\alpha +1}} \\
&\lesssim V_1\int_{\delta\vee \tfrac{|z_1-z_2|}{2}}^\infty \frac{\ld r}{r^{\alpha +1}} \\
&\lesssim \left(\delta \vee \frac{\abs{z_1-z_2}}{2}\right)^{-\alpha}.
\end{align*}
The second inequality is obtained from the first one and Lemma~\ref{lem:alpha_int_bound}.
\end{proof}

Let $\mathcal{G}^{(\alpha)}$ (resp. $\G*$) denote the fundamental solution to the fractional heat equation with the operator $\D$ (resp. the fractional heat equation with the truncated operator $\L*$). Then the centering term $\ft{}$ as defined in \eqref{definition_centering_term_stable} can be written as
\begin{equation}\label{stable_centering_term_green_function}
\ft(x) = \G*{\delta_N}_t\ast w_0(x) - \intg{0}{t}\G*{\delta_N}_{t-s}\ast F^{(\delta_N)}(\ft{s})(x)\ld s.
\end{equation}
Likewise, using the definition of $f_t$ in \eqref{definition_limit_stable},
\begin{equation*}
f_t(x) = \mathcal{G}^{(\alpha)}_t\ast w_0(x) - \intg{0}{t}\mathcal{G}^{(\alpha)}_{t-s}\ast F(f_s)(x)\ld s.
\end{equation*}

We can now prove the following counterpart of the regularity estimate (Lemma~\ref{lemma_bound_avg}), which allows us to bound the quadratic error term in \eqref{stochastic_integral_zn_sr}.
\begin{lemma} \label{lemma_bound_avg_sr}
Fix $T>0$; for $0\leq t\leq T$,
\begin{equation*}
\intg{1}{\infty}\sup_{x\in\R^d} \E{\avg{\zt}(x,\delta_N r)^{2}}\frac{\ld r}{r^{\alpha +1}} \lesssim \frac{1}{\delta_N^\alpha}.
\end{equation*}
\end{lemma}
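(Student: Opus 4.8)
The plan is to mirror the proof of Lemma~\ref{lemma_bound_avg}, replacing the Brownian heat semigroup by the truncated fractional one and carrying along the extra integral over the outer radius. Write
\begin{equation*}
h_N(t) = \intg{1}{\infty}\sup_{x\in\R^d}\E{\avg{\zt}(x,\delta_N r)^2}\frac{\ld r}{r^{\alpha+1}}.
\end{equation*}
Since $\wtN=\mathbf q^N_t$ takes values in $[0,1]$ and $\ft$ is bounded uniformly in $N$ and $t\in[0,T]$, we have $\abs{\avg{\zt}(x,\delta_N r)}\leq(\eta_N/\tau_N)^{1/2}(1+\norm[\infty]{\ft})$ uniformly in $r\geq1$, so $h_N(t)<\infty$ on $[0,T]$; the goal is to establish a Gronwall inequality $h_N(t)\lesssim\delta_N^{-\alpha}+\intg{0}{t}h_N(s)\ld s$ there.

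First I would re-derive a mild identity for $\avg{\zt}(\cdot,\delta_N r)$ as in Lemma~\ref{lemma_bound_avg}: in \eqref{good_mart_z_sr} replace the second-order Taylor expansion \eqref{taylor_F_2} by the first-order one \eqref{taylor_F_1}, so that (as in \eqref{z_average}) the factor $(\tau_N/\eta_N)^{1/2}$ in front of the quadratic term cancels against the $(\tau_N/\eta_N)^{1/2}$ coming from $\wtN-\ft=(\tau_N/\eta_N)^{1/2}\zt$, leaving the drift of $\dual{\zt}{\phi}$ equal to $\dual{\L*{\delta_N}\zt{s}}{\phi}$ minus $\alpha\intg{1}{\infty}\dual{\avg{\avg{\zt{s}}R_1(\avg{\wt{s}},\avg{\ft{s}})}}{\phi}\frac{\ld\varrho}{\varrho^{\alpha+1}}$, with both averages in the last term taken at scale $\delta_N\varrho$. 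Then I would apply Proposition~\ref{prop:time_dependent_tf} with the time-dependent test function $\G*{\delta_N}_{t-s}\ast\phi$ to cancel $\L*{\delta_N}$ (since $\G*{\delta_N}$ is the symmetric semigroup generated by $\L*{\delta_N}$), and specialise to $\phi(y)=V_{\delta_N r}^{-1}\1{|y-x|<\delta_N r}$, obtaining for all $r\geq1$ and $x\in\R^d$ a decomposition $\avg{\zt}(x,\delta_N r)=-D_N(x,r)+I_N(x,r)$, where $I_N(x,r)=\intg{0}{t}\intrd\avg{\G*{\delta_N}_{t-s}}(x-y,\delta_N r)M^N(\ld y\ld s)$ and $D_N(x,r)$ is obtained by averaging $\avg{\zt{s}}R_1(\avg{\wt{s}},\avg{\ft{s}})$ over the relevant balls (at scale $\delta_N\varrho$, then convolving with $\avg{\G*{\delta_N}_{t-s}}$, then averaging at scale $\delta_N r$) and integrating in $s$ and against $\alpha\varrho^{-\alpha-1}\1{\varrho\geq1}\ld\varrho$.

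For the martingale part I would apply the \emph{first} bound of Lemma~\ref{lem_bound_qvar_sr} (valid since $\alpha<d$): with $g_\tau(y)=\avg{\G*{\delta_N}_\tau}(x-y,\delta_N r)\geq0$, and noting that $\L*{\delta_N}$ generates a conservative pure-jump L\'evy process (with finite jump intensity) so that $\intrd\avg{\G*{\delta_N}_\tau}(y)\ld y=1$ and hence $\norm[1]{g_\tau}=1$, the crude estimate $(\delta_N\vee\tfrac{|z_1-z_2|}{2})^{-\alpha}\leq\delta_N^{-\alpha}$ gives
\begin{equation*}
\E{I_N(x,r)^2}\lesssim\intg{0}{t}\intrd{2}g_{t-s}(z_1)g_{t-s}(z_2)\left(\delta_N\vee\tfrac{|z_1-z_2|}{2}\right)^{-\alpha}\ld z_1\ld z_2\ld s\leq\delta_N^{-\alpha}\intg{0}{t}\norm[1]{g_{t-s}}^2\ld s\leq\frac{t}{\delta_N^\alpha},
\end{equation*}
uniformly in $x$ and $r\geq1$, so that $\intg{1}{\infty}\sup_x\E{I_N(x,r)^2}\frac{\ld r}{r^{\alpha+1}}\lesssim\delta_N^{-\alpha}$ on $[0,T]$; note that one must keep the correlation kernel in the first form of Lemma~\ref{lem_bound_qvar_sr} and bound it by $\delta_N^{-\alpha}$, as the second form would only produce the worse factor $\delta_N^{-d}$. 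For the drift part I would bound $\abs{R_1}\leq\norm[\infty]{F'}$ and push the square inside every averaging step by Jensen/Cauchy--Schwarz, using that the two nested spherical averages at scale $\delta_N\varrho$, the convolution by $\avg{\G*{\delta_N}_{t-s}}$ (an $L^1$-norm-one kernel), the spherical average at scale $\delta_N r$, the probability measure $\alpha\varrho^{-\alpha-1}\1{\varrho\geq1}\ld\varrho$ and the normalised time average $\tfrac1t\intg{0}{t}(\cdot)\ld s$ are all (sub)probability kernels; taking expectations throughout (moving $\E$ inside the averages by Fubini) this yields
\begin{equation*}
\E{D_N(x,r)^2}\lesssim t\intg{0}{t}\alpha\intg{1}{\infty}\sup_{w\in\R^d}\E{\avg{\zt{s}}(w,\delta_N\varrho)^2}\frac{\ld\varrho}{\varrho^{\alpha+1}}\ld s,
\end{equation*}
and hence $\intg{1}{\infty}\sup_x\E{D_N(x,r)^2}\frac{\ld r}{r^{\alpha+1}}\lesssim\intg{0}{t}h_N(s)\ld s$ on $[0,T]$ (the extra $\alpha$'s cancelling). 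Combining the two parts via $(a+b)^2\leq2a^2+2b^2$ gives $h_N(t)\lesssim\delta_N^{-\alpha}+\intg{0}{t}h_N(s)\ld s$, and Gronwall's lemma concludes.

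The step I expect to be the main obstacle is the bookkeeping in the drift estimate: one must verify that \emph{every} operator standing between the square and $\avg{\zt{s}}(\cdot,\delta_N\varrho)^2$ is a genuine (sub)probability kernel so that Jensen applies at each stage. In particular this requires establishing that $\avg{\G*{\delta_N}_\tau}\in L^{1,\infty}$ with $\intrd\avg{\G*{\delta_N}_\tau}(y)\ld y=1$, which is also exactly what makes Lemma~\ref{lem_bound_qvar_sr} applicable to $g_\tau$; this is the one genuinely new ingredient compared with Lemma~\ref{lemma_bound_avg}, where the analogous fact was obtained through the L\'evy process $\xi^{(r)}$. Everything else is a transcription of the Brownian argument with the factor $V_{r_N}^{-1}\sim\delta_N^{-d}$ replaced throughout by $\delta_N^{-\alpha}$.
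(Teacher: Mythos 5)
Your proposal is correct and follows essentially the same route as the paper: first-order Taylor expansion via $R_1$, Proposition~\ref{prop:time_dependent_tf} with the test function $\G*{\delta_N}_{t-s}\ast\phi$ specialised to a normalised indicator of a ball, the first bound of Lemma~\ref{lem_bound_qvar_sr} with the kernel crudely bounded by $\delta_N^{-\alpha}$ for the martingale part, Jensen through the (sub)probability averaging kernels (including the mass-one kernel $\avg{\G*{\delta_N}}$) for the drift part, and Gronwall after integrating against $r^{-\alpha-1}\ld r$ on $[1,\infty)$. The only cosmetic difference is that the paper works with an arbitrary averaging radius $R$ and integrates over $R=\delta_N r$ at the end, whereas you specialise to $\delta_N r$ from the start; this changes nothing in substance.
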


\begin{proof}
From \eqref{good_mart_z_sr} and the definition of $R_1$ in~\eqref{taylor_F_1}, we have
\begin{equation*}
\dual{\zt} = \intg{0}{t}\dual{\zt{s}}{\L*{\delta_N}\phi-\alpha\intg{1}{\infty}\avg{R_1(\avg{\wt{s}},\avg{\ft{s}})\avg{\phi}}{\delta_N r}\frac{\ld r}{r^{\alpha+1}}}\ld s + \intg{0}{t}\intrd\phi(y)M^N(\ld y\ld s).
\end{equation*}
Using Proposition~\ref{prop:time_dependent_tf} with $\varphi(x,s,t) = \G*{\delta_N}_{t-s}\ast\phi(x)$ yields
\begin{multline*}
\dual{\zt} = -\alpha\intg{0}{t}\intg{1}{\infty}\dual{\G*{\delta_N}_{t-s}\ast\avg{\avg{\zt{s}}R_1(\avg{\wt{s}},\avg{\ft{s}})}{\delta_N r}}{\phi}\frac{\ld r}{r^{\alpha+1}}\ld s \\ + \intg{0}{t}\intrd\G*{\delta_N}_{t-s}\ast\phi(y)M^N(\ld y\ld s).
\end{multline*}
Now we take $\phi(y)=\frac{1}{V_{R}}\1{\abs{x-y}<R}$ to obtain
\begin{multline*}
\avg{\zt}(x,R) = -\alpha\intg{0}{t}\intg{1}{\infty}\left(\avg{\G*{\delta_N}_{t-s}}{R}\ast\avg{\avg{\zt{s}}R_1(\avg{\wt{s}},\avg{\ft{s}})}{\delta_N r}\right)(x)\frac{\ld r}{r^{\alpha+1}}\ld s \\ + \intg{0}{t}\intrd\avg{\G*{\delta_N}_{t-s}}(x-y,R)M^N(\ld y\ld s).
\end{multline*}
Repeating the same steps as in the proof of Lemma~\ref{lemma_bound_avg} and using Jensen's inequality, we get
\begin{multline*}
\left(\avg{\zt}(x,R)\right)^2 \lesssim \alpha\intg{0}{t}\intg{1}{\infty}\intrd\dbavg{\G*{\delta}_{t-s}}(x-y,R,\delta_N r)\left(\avg{\zt{s}}(y,\delta_N r)\right)^2\ld y\frac{\ld r}{r^{\alpha+1}}\ld s \\ + \left(\intg{0}{t}\intrd\avg{\G*{\delta}_{t-s}}(x-y,R)M^N(\ld y\ld s)\right)^2.
\end{multline*}
Using the first inequality of Lemma~\ref{lem_bound_qvar_sr} and bounding $\left( \delta_N\vee \frac{\abs{z_1-z_2}}{2} \right)^{-\alpha}$ by $\delta_N^{-\alpha}$, we have, for $0\leq t\leq T$,
\begin{align*}
\E{\left(\intg{0}{t}\intrd\avg{\G*{\delta}_{t-s}}(x-y,R)M^{N}(\ld y\ld s)\right)^{2}} &\lesssim \delta_N^{-\alpha}\intg{0}{t}\left(\intrd\avg{\G*{\delta}_{t-s}}(x-y,R)\ld y\right)^2\ld s \\
&\lesssim \delta_N^{-\alpha}.
\end{align*}
As a result
\begin{equation*}
\E{\left(\avg{\zt}(x,R)\right)^2} \lesssim \intg{0}{t}\intg{1}{\infty}\intrd\dbavg{\G*{\delta}_{t-s}}(x-y,R,\delta_N r)\E{\left(\avg{\zt{s}}(y,\delta_N r)\right)^2}\ld y\frac{\ld r}{r^{\alpha+1}}\ld s + \delta_N^{-\alpha}.
\end{equation*}
Taking the supremum of $\E{\avg{\zt{s}}(y,\delta_N r)^2}$ over $y$ inside the integral on the right-hand-side, the function $\avg{\G*{\delta}}$ integrates to 1, yielding
\begin{equation*}
\sup_{x\in\R^d}\E{\left(\avg{\zt}(x,R)\right)^2} \lesssim \intg{0}{t}\intg{1}{\infty}\sup_{x\in\R^d}\E{\left(\avg{\zt{s}}(x,\delta_N r)\right)^2}\frac{\ld r}{r^{\alpha+1}}\ld s + \delta_N^{-\alpha}.
\end{equation*}
Integrating over $R$, we get
\begin{equation*}
\intg{1}{\infty}\sup_{x\in\R^d}\E{\left(\avg{\zt}(x,\delta_N r)\right)^2}\frac{\ld r}{r^{\alpha+1}} \lesssim \intg{0}{t}\intg{1}{\infty}\sup_{x\in\R^d}\E{\left(\avg{\zt{s}}(x,\delta_N r)\right)^2}\frac{\ld r}{r^{\alpha+1}}\ld s + \delta_N^{-\alpha}.
\end{equation*}
Hence, by Gronwall's inequality, for $0\leq t\leq T$,
\begin{equation*}
\intg{1}{\infty}\sup_{x\in\R^d}\E{\left(\avg{\zt}(x,\delta_N r)\right)^2}\frac{\ld r}{r^{\alpha+1}} \lesssim \delta_N^{-\alpha}.
\end{equation*}
\end{proof}

\subsection{Convergence to the deterministic limit}\label{subsec:stable:convergence_w}

The following result is proved in Appendix~\ref{append:centering_term}.
\begin{proposition}\label{prop:stable:convergence_centering_term}
For $T>0$,
\begin{equation*}
\sup_{0\leq t\leq T}\norm[\infty]{\ft-f_t} \lesssim \delta_N^{\alpha\wedge(2-\alpha)},
\end{equation*}
and for $0\leq \abs{\beta} \leq 2$,
\begin{equation*}
\sup_{0\leq t\leq T}\norm[\infty]{\partial_\beta \ft} \lesssim 1.
\end{equation*}
\end{proposition}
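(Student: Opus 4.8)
The plan is to work from the mild formulations: $\ft$ is given by \eqref{stable_centering_term_green_function}, and the limit $f_t$ satisfies $f_t = \mathcal{G}^{(\alpha)}_t\ast w_0 - \intg{0}{t}\mathcal{G}^{(\alpha)}_{t-s}\ast F(f_s)\ld s$, so the comparison is carried out by a Gronwall argument, exactly as in the Brownian case behind Proposition~\ref{prop:convergence_centering_term}. I would prove the regularity bound (ii) first, since the $C^2$ control is used in the proof of (i). The semigroup $\mathcal{G}^{(\delta_N)}$ and all the ball averages commute with spatial translations, hence with $\partial_\beta$, and convolution with $\mathcal{G}^{(\delta_N)}_t$ is an $L^\infty$-contraction; differentiating \eqref{stable_centering_term_green_function} and using that $F^{(\delta_N)}$ is Lipschitz with constant $\leq\norm[\infty]{F'}$ (it is an average of values of $F$) gives, for $|\beta|=1$, $\norm[\infty]{\partial_\beta\ft}\leq\norm[\infty]{\partial_\beta w_0}+\norm[\infty]{F'}\intg{0}{t}\norm[\infty]{\partial_\beta\ft{s}}\ld s$, so Gronwall yields $\lesssim 1$; for $|\beta|=2$ the chain rule adds one further term, a bounded coefficient times products of first derivatives already controlled, and a second Gronwall closes the bound. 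The same computation bounds $\partial_\beta f_t$ for $|\beta|\leq 2$, which I will also need. The base case $\norm[\infty]{\ft}\lesssim 1$ follows from $w_0\in[0,1]$, boundedness of $F$, and one more Gronwall estimate.

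For part (i), I would write $\ft-f_t$ through the two mild formulations and, inserting $\mathcal{G}^{(\delta_N)}_{t-s}\ast F(f_s)$ and $\mathcal{G}^{(\alpha)}_t\ast w_0$ as intermediates, split it into: (1) a Lipschitz term $\mathcal{G}^{(\delta_N)}_{t-s}\ast\bigl(F^{(\delta_N)}(\ft{s})-F^{(\delta_N)}(f_s)\bigr)$, bounded in $L^\infty$ by $\norm[\infty]{F'}\,\norm[\infty]{\ft{s}-f_s}$, which feeds into Gronwall; (2) a discretisation term $\mathcal{G}^{(\delta_N)}_{t-s}\ast\bigl(F^{(\delta_N)}(f_s)-F(f_s)\bigr)$; and (3) two semigroup-comparison terms $\bigl(\mathcal{G}^{(\delta_N)}_\tau-\mathcal{G}^{(\alpha)}_\tau\bigr)\ast g$ with $g\in\{w_0,\,F(f_s)\}$. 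Everything then reduces to three estimates, valid for $h$ with bounded derivatives up to order $2$: \textbf{(a)} $\norm[\infty]{F^{(\delta_N)}(h)-F(h)}\lesssim\delta_N^{\alpha}$; \textbf{(b)} $\norm[\infty]{(\D-\L*{\delta_N})h}\lesssim\delta_N^{2-\alpha}\norm[\infty]{D^2 h}$; \textbf{(c)} $\norm[\infty]{(\mathcal{G}^{(\delta_N)}_\tau-\mathcal{G}^{(\alpha)}_\tau)\ast h}\lesssim\tau\,\delta_N^{2-\alpha}\norm[\infty]{D^2 h}$ for $\tau\leq T$. For (a): averaging a $C^2$ function over a ball of radius $\rho$ perturbs it by $O(\rho^2)$ because the first-order term cancels by symmetry of the ball, hence $\avg{F(\avg{h})}(x,\delta_N r)-F(h(x))=O\bigl((\delta_N r)^2\bigr)$ when $\delta_N r\leq 1$ and is $O(1)$ in general; inserting this into $F^{(\delta_N)}(h)-F(h)=\alpha\intg{1}{\infty}\bigl(\avg{F(\avg{h})}(x,\delta_N r)-F(h(x))\bigr)\frac{\ld r}{r^{\alpha+1}}$ (using $\alpha\intg{1}{\infty}\frac{\ld r}{r^{\alpha+1}}=1$) and splitting at $r=1/\delta_N$ gives $\delta_N^{2}\intg{1}{1/\delta_N}r^{1-\alpha}\ld r+\delta_N^{\alpha}\lesssim\delta_N^{\alpha}$ since $\alpha<2$. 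For (b): $\Phi-\Phi^{(\delta_N)}$ is supported on $\{|z|<2\delta_N\}$ and bounded there by $\lesssim|z|^{-(d+\alpha)}$, and by radial symmetry of this kernel one may replace $h(x+z)-h(x)$ by the symmetrised second difference $\tfrac12\bigl(h(x+z)+h(x-z)-2h(x)\bigr)=O(|z|^2\norm[\infty]{D^2 h})$, after which $\int_{|z|<2\delta_N}|z|^{2-d-\alpha}\ld z\lesssim\delta_N^{2-\alpha}$ since $\alpha<2$. For (c): Duhamel's formula gives $\bigl(\mathcal{G}^{(\delta_N)}_\tau-\mathcal{G}^{(\alpha)}_\tau\bigr)\ast h=\intg{0}{\tau}\mathcal{G}^{(\delta_N)}_{\tau-\sigma}\ast\bigl(\L*{\delta_N}-\D\bigr)\mathcal{G}^{(\alpha)}_\sigma\ast h\,\ld\sigma$; since convolution with $\mathcal{G}^{(\delta_N)}_{\tau-\sigma}$ is an $L^\infty$-contraction and $\mathcal{G}^{(\alpha)}$ both commutes with $D^2$ and is an $L^\infty$-contraction, estimate (b) applied to $\mathcal{G}^{(\alpha)}_\sigma\ast h$ bounds the integrand by $\lesssim\delta_N^{2-\alpha}\norm[\infty]{D^2 h}$, and integrating in $\sigma$ gives the claim.

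Assembling (1)--(3), and using the a priori bounds $\norm[\infty]{D^2 w_0},\norm[\infty]{D^2 f_s}\lesssim 1$ from (ii) and its analogue for $f$, one obtains $\norm[\infty]{\ft-f_t}\lesssim\delta_N^{\alpha}+\delta_N^{2-\alpha}+\intg{0}{t}\norm[\infty]{\ft{s}-f_s}\ld s$, and Gronwall's lemma yields $\sup_{0\leq t\leq T}\norm[\infty]{\ft-f_t}\lesssim\delta_N^{\alpha\wedge(2-\alpha)}$. I expect estimates (b) and (c) to be the main obstacle: $\L*{\delta_N}$ is \emph{not} uniformly bounded on $L^\infty$ as $N\to\infty$ (its operator norm is of order $\delta_N^{-\alpha}$), so $\norm[\infty]{(\D-\L*{\delta_N})h}$ cannot be estimated crudely, and the whole argument hinges on exploiting the cancellation in $h(y)-h(x)$ together with the radial symmetry of the kernels; on the $F^{(\delta_N)}$ side the delicate point is the $r$-integral truncation, which is precisely what turns the naive $\delta_N^{2}$ into the actual rate $\delta_N^{\alpha}$. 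The remaining ingredients --- the Lipschitz bound, the a priori $C^2$ estimates, and the final Gronwall --- are a routine adaptation of the Brownian-case arguments underlying Proposition~\ref{prop:convergence_centering_term}.
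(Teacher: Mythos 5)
Your argument is correct, but it is organised differently from the paper's. The paper (Appendix B.2) runs the stable case exactly as the Brownian case of Proposition~\ref{prop:convergence_centering_term}: it proves the derivative bounds (ii) first by differentiating the mild formulation and applying Gronwall (as you do), and then rewrites $\ft$ as a mild solution \emph{with respect to the limiting semigroup} $\mathcal{G}^{(\alpha)}$, treating $\L*{\delta_N}\ft{s}-\D\ft{s}$ and $F^{(\delta_N)}(\ft{s})-F(\ft{s})$ as forcing terms; these are then bounded in $L^\infty$ by $\delta_N^{2-\alpha}$ and $\delta_N^{\alpha}$ via Proposition~\ref{prop:stable_average} (applied to $\ft{s}$, which is why the uniform $C^2$ bounds on $\ft{}$ from (ii) are what feed the argument), and Gronwall closes the estimate --- no comparison of the two semigroups is ever needed. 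You instead keep each of $\ft$ and $f$ mild with respect to its own semigroup and insert a genuine semigroup-perturbation lemma, your estimate (c), proved by Duhamel from your operator estimate (b); your (a) and (b) are precisely parts (iii) and (ii) of the paper's Proposition~\ref{prop:stable_average}, with (b) proved by kernel symmetrisation (second differences against the radial kernel $\Phi-\Phi^{(\delta_N)}$ supported in $\lbrace\abs{z}<2\delta_N\rbrace$) rather than by the ball-averaging bound of Proposition~\ref{prop:average}, and both proofs of (a) are equivalent up to where one truncates the $r$-integral. The trade-off: the paper's rearrangement is shorter and needs only pointwise operator estimates applied to $\ft{}$, while your route requires the extra Duhamel step but, as a by-product, only uses $C^2$ control of $w_0$ and of the limit $f$ in part (i) (the bounds on $\partial_\beta\ft$ are still needed for (ii) and elsewhere in the paper, and you prove them the same way). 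Your diagnosis of the key difficulty --- that $\L*{\delta_N}$ has $L^\infty$ operator norm of order $\delta_N^{-\alpha}$, so one must exploit the cancellation in $h(x+z)-h(x)$, and that the truncation of the $r$-integral is what produces the rate $\delta_N^{\alpha}$ rather than $\delta_N^2$ --- is accurate and consistent with how the paper's Proposition~\ref{prop:stable_average} is used.
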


The convergence of $\wt$ to $f_t$ in $L^1$ will follow from the next lemma. 

\begin{lemma} \label{bound_sup_z_sr}
For any function $\phi$ satisfying $\norm[q]{\phi}\leq 1$ and $\max_{\abs{\beta}=2}\norm[q]{\partial_{\beta}\phi}\leq 1$ for $q\in\braced{1,\infty}$,
\begin{equation*}
\E{\sup_{0\leq t\leq T}\abs{\dual{\zt}}} \lesssim 1.
\end{equation*}
\end{lemma}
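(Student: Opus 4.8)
The plan is to follow the proof of Lemma~\ref{lem_sup_zt_bound} in the Brownian case essentially verbatim, replacing each ingredient by its stable counterpart: the $L^2$-bound Lemma~\ref{lemma:bound_qvar} by Lemma~\ref{lem_bound_qvar_sr}, the regularity estimate Lemma~\ref{lemma_bound_avg} by Lemma~\ref{lemma_bound_avg_sr}, and the bounds on the time-dependent test function from Lemma~\ref{lemma:convergence_varphi} by Lemma~\ref{lemma:stable:convergence_varphi}. The role played by $(\tau_N/\eta_N)^{1/2}/V_{r_N}\to 0$ in the Brownian case is now played by $(\tau_N/\eta_N)^{1/2}\delta_N^{-\alpha}\to 0$, which is exactly what assumption \eqref{parameters_regime_stable} provides.

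First I would establish an $L^1$ estimate: taking $\E{\abs{\cdot}}$ in the representation \eqref{stochastic_integral_zn_sr}, Cauchy--Schwarz together with Lemma~\ref{lem_bound_qvar_sr} and Lemma~\ref{lemma:stable:convergence_varphi} control the stochastic integral against $M^N$ by a constant times $(\norm[1]{\phi}(\norm[\infty]{\phi}+\norm[1]{\phi}))^{1/2}$, while the quadratic term is handled by bounding $R_2$ by $\tfrac{1}{2}\norm[\infty]{F''}$, extracting $\sup_{x}\E{\avg{\zt{s}}(x,\delta_N r)^2}$ from the spatial pairing (the remaining $L^1$ mass being $\lesssim\norm[1]{\phi}$ since ball-averaging is a contraction on $L^1$), and then integrating against $\tfrac{\ld r}{r^{\alpha+1}}$ using Lemma~\ref{lemma_bound_avg_sr}, which outputs a factor $\delta_N^{-\alpha}$. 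This yields $\E{\abs{\dual{\zt}}}\lesssim(\norm[1]{\phi}(\norm[\infty]{\phi}+\norm[1]{\phi}))^{1/2}+(\tau_N/\eta_N)^{1/2}\delta_N^{-\alpha}\norm[1]{\phi}$, valid for every test function $\phi$ and every $t\in[0,T]$.

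Next I would use the mild form \eqref{good_mart_z_sr}, in which $\phi$ appears directly rather than through $\varphi^N$. Setting $\psi^N_s:=\L*{\delta_N}\phi-\alpha\intg{1}{\infty}\avg{F'(\avg{\ft{s}})\avg{\phi}}{\delta_N r}\tfrac{\ld r}{r^{\alpha+1}}$, one bounds $\sup_{t\in[0,T]}\abs{\dual{\zt}}$ by $\intg{0}{T}\abs{\dual{\zt{s}}{\psi^N_s}}\ld s$, plus $\tfrac{1}{2}\norm[\infty]{F''}(\tau_N/\eta_N)^{1/2}$ times a quadratic integral of the same shape as in the first step, plus $\sup_{t\in[0,T]}\abs{M^N_t(\phi)}$. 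Taking expectations: to the first term one applies the $L^1$ estimate above with $\phi$ replaced by $\psi^N_s$, which requires $\norm[q]{\psi^N_s}$ bounded uniformly in $N$ and $s$ for $q\in\braced{1,\infty}$ --- this follows from $\alpha\intg{1}{\infty}r^{-\alpha-1}\ld r=1$, $\norm[\infty]{F'}<\infty$, and the estimate $\norm[q]{\L*{\delta_N}\phi}\lesssim\max_{0<\abs{\beta}\leq 2}\norm[q]{\partial_\beta\phi}+\norm[q]{\phi}$ (the analogue of Proposition~\ref{prop:average} for $\L*{\delta_N}$, proved in the appendix); the second term is again bounded via Lemma~\ref{lemma_bound_avg_sr}; and the martingale term is handled by Doob's $L^2$ maximal inequality and Lemma~\ref{lem_bound_qvar_sr}. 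Restricting to $\phi$ with $\norm[q]{\phi}\leq 1$ and $\max_{\abs{\beta}=2}\norm[q]{\partial_\beta\phi}\leq 1$ for $q\in\braced{1,\infty}$, every resulting term is a constant depending only on $\norm[\infty]{F'}$, $\norm[\infty]{F''}$, $T$, $d$ and $\alpha$, possibly multiplied by the prefactor $(\tau_N/\eta_N)^{1/2}\delta_N^{-\alpha}$, which is $\littleO{1}$ by \eqref{parameters_regime_stable}; hence $\E{\sup_{0\leq t\leq T}\abs{\dual{\zt}}}\lesssim 1$.

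The main obstacle, exactly as in the Brownian case, is controlling the quadratic error term coming from linearising $F$ about $\ft$: the limiting $z$ is only a distribution, so its square is meaningful only through the ball-average estimate Lemma~\ref{lemma_bound_avg_sr}, and the whole scheme closes precisely because the $\delta_N^{-\alpha}$ it produces is absorbed by the prefactor $(\tau_N/\eta_N)^{1/2}$ under \eqref{parameters_regime_stable}. The stable-specific complications are, first, that every such term carries an extra integral over radii $r\in[1,\infty)$ against $\tfrac{\ld r}{r^{\alpha+1}}$, so all estimates must be performed uniformly in $r$ before this integration; and, second, that the operator bound on $\L*{\delta_N}$ cannot be obtained (unlike for $\L$ in the Brownian case) by a single global second-order Taylor expansion, since the kernel $\Phi^{(\delta_N)}$ has infinite second moment when $\alpha<2$; instead one splits the defining integral at a fixed radius, Taylor-expanding $\phi$ near the diagonal (the first-order term vanishing by radial symmetry of the kernel) and using boundedness of $\phi$ far from it.
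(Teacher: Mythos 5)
Your proposal is correct and follows essentially the same route as the paper: an intermediate $L^1$ bound $\E{\abs{\dual{\zt}}}\lesssim\norm[1]{\phi}+\norm[\infty]{\phi}$ obtained from the $\varphi^N$-representation \eqref{stochastic_integral_zn_sr} via Lemmas~\ref{lem_bound_qvar_sr}, \ref{lemma_bound_avg_sr} and \ref{lemma:stable:convergence_varphi} (this is exactly Lemma~\ref{lem_L1_bound_z_sr}), followed by the sup bound from the mild form \eqref{good_mart_z_sr} with $\psi_s$, Doob's inequality for $M^N_t(\phi)$, the operator estimate of Proposition~\ref{prop:stable_average}, and absorption of the $\delta_N^{-\alpha}$ factor by $(\tau_N/\eta_N)^{1/2}$ under \eqref{parameters_regime_stable}. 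No gaps.
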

Indeed, by the same argument as in Section~\ref{subsec:brownian:convergence_w}, choosing a separating family $\proc{\phi_n}{n\geq 1}$ of compactly supported smooth functions satisfying this condition and using the corresponding metric $d$ on $\Xi$, one has
\begin{align*}
\E{\sup_{0\leq t\leq T}d(\wt,\ft)} & \leq \sum_{n\geq 1}\frac{1}{2^n}\left\lbrace (\tau_N/\eta_N)^{1/2}\E{\sup _{0\leq t\leq T}\abs{\dual{\zt}{\phi_n}}} + \sup _{0\leq t\leq T}\norm[\infty]{\ft-f_t}\norm[1]{\phi_n} \right\rbrace \\
& \lesssim (\tau_N/\eta_N)^{1/2} + \delta_N^{\alpha\wedge(2-\alpha)},
\end{align*}
by Proposition~\ref{prop:stable:convergence_centering_term} and Lemma~\ref{bound_sup_z_sr}.
From \eqref{parameters_regime_stable}, it can be seen that the leading term on the right-hand-side is $\delta_N^{\alpha\wedge(2-\alpha)}$, which goes to zero as $N\to\infty$, yielding the convergence of $\wt$. The following lemma is needed for the proof of Lemma~\ref{bound_sup_z_sr} and is proved in the same manner as \eqref{L1_bound_z} in Section~\ref{subsec:brownian:convergence_w}.
\begin{lemma} \label{lem_L1_bound_z_sr}
For $\phi\in L^{1,\infty}(\R^d)$ and $t\in[0,T]$,
\begin{equation*}
\E{\abs{\dual{\zt}}} \lesssim \norm[1]{\phi}+\norm[\infty]{\phi}.
\end{equation*}
\end{lemma}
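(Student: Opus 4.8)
\begin{proof}[Proof of Lemma~\ref{lem_L1_bound_z_sr}]
The plan is to mirror the proof of \eqref{L1_bound_z} in the Brownian case, starting from the mild expression \eqref{stochastic_integral_zn_sr} for $\dual{\zt}$, which writes $\dual{\zt}$ as a quadratic error term carrying the prefactor $(\tau_N/\eta_N)^{1/2}$ plus a stochastic integral of $\varphi^N$ against $M^N$. Taking the expectation of the absolute value and using the triangle inequality, we bound the two contributions separately.

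For the quadratic term we first bound $\abs{R_{2}(\avg{\wt{s}},\avg{\ft{s}})} \leq \tfrac{1}{2}\norm[\infty]{F''}$, so that
\begin{equation*}
\dual{(\avg{\zt{s}}{\delta_N r})^{2}}{\abs{R_{2}(\avg{\wt{s}},\avg{\ft{s}})\avg{\varphi^{N}(s,t)}{\delta_N r}}} \leq \tfrac{1}{2}\norm[\infty]{F''}\intrd (\avg{\zt{s}}(y,\delta_N r))^{2}\,\abs{\avg{\varphi^{N}(s,t)}(y,\delta_N r)}\ld y.
\end{equation*}
Taking expectations and bounding $\E{(\avg{\zt{s}}(y,\delta_N r))^{2}}$ by its supremum over $y$, the inner pairing is at most $\tfrac{1}{2}\norm[\infty]{F''}\sup_{x}\E{(\avg{\zt{s}}(x,\delta_N r))^{2}}\,\norm[1]{\avg{\abs{\varphi^{N}(s,t)}}{\delta_N r}}$; since the averaging operator is an $L^1$-contraction and by Lemma~\ref{lemma:stable:convergence_varphi} this last norm is $\lesssim \norm[1]{\phi}$. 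Integrating against $\ld r/r^{\alpha+1}$ and invoking the regularity estimate of Lemma~\ref{lemma_bound_avg_sr} produces a factor $\delta_N^{-\alpha}$. The crucial point is that the prefactor $(\tau_N/\eta_N)^{1/2}$ absorbs this: by assumption~\eqref{parameters_regime_stable} we have $(\tau_N/\eta_N)^{1/2}\delta_N^{-\alpha} = \littleO{1}$, so the whole quadratic contribution is $\lesssim \norm[1]{\phi}$.

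For the stochastic integral, we pass to the second moment by Jensen's inequality and apply the $L^2$ bound of Lemma~\ref{lem_bound_qvar_sr}, yielding a control by $\intg{0}{t}\norm[1]{\varphi^{N}(s,t)}(\norm[\infty]{\varphi^{N}(s,t)}+\norm[1]{\varphi^{N}(s,t)})\ld s$. Using $\norm[q]{\varphi^{N}(s,t)} \lesssim \norm[q]{\phi}$ for $q\in\braced{1,\infty}$ from Lemma~\ref{lemma:stable:convergence_varphi}, this is $\lesssim \norm[1]{\phi}(\norm[\infty]{\phi}+\norm[1]{\phi})$; taking the square root and using $(\norm[1]{\phi}\norm[\infty]{\phi})^{1/2} \leq \tfrac{1}{2}(\norm[1]{\phi}+\norm[\infty]{\phi})$ by the arithmetic--geometric mean inequality gives $\lesssim \norm[1]{\phi}+\norm[\infty]{\phi}$. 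Combining the two contributions yields the claim.
\end{proof}

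\emph{Remarks on the argument.} The main obstacle is twofold. First, one must correctly couple the regularity estimate of Lemma~\ref{lemma_bound_avg_sr} with the parameter scaling~\eqref{parameters_regime_stable}: the quadratic error is genuinely bounded only because $(\tau_N/\eta_N)^{1/2}$ dominates $\delta_N^{\alpha}$, and without this assumption the $\delta_N^{-\alpha}$ blow-up of the variance of the ball-averages would not be controlled. Second, in contrast to the Brownian case, Lemma~\ref{lem_bound_qvar_sr} produces a mixed $L^1$/$L^\infty$ bound rather than a clean $L^2$ bound, which is what forces the mixed-norm right-hand side $\norm[1]{\phi}+\norm[\infty]{\phi}$ and the use of the arithmetic--geometric mean inequality at the final step.
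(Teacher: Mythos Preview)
Your proof is correct and follows essentially the same approach as the paper: starting from \eqref{stochastic_integral_zn_sr}, bounding the quadratic term via Lemma~\ref{lemma_bound_avg_sr} combined with the scaling assumption~\eqref{parameters_regime_stable}, and bounding the stochastic integral via Lemma~\ref{lem_bound_qvar_sr} together with the $L^q$ bounds on $\varphi^N$ from Lemma~\ref{lemma:stable:convergence_varphi}. The only cosmetic difference is that you make the final arithmetic--geometric mean step explicit, whereas the paper absorbs it into the $\lesssim$.
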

\begin{proof}
Taking expectations on both sides of \eqref{stochastic_integral_zn_sr},
\begin{multline} \label{eq_stable_conv_determ}
\E{\abs{\dual{\zt}}} \lesssim (\tau/\eta)^{1/2}\intg{0}{t}\intg{1}{\infty}\dual{\E{\left( \avg{\zt{s}}{\delta_N r} \right)^2}}{\avg{\abs{\varphi^N(s,t)}}{\delta_N r}}\frac{\ld r}{r^{\alpha+1}}\ld s \\ + \E{\left( \intg{0}{t}\intrd\varphi^N(x,s,t)M^N(\ld x\ld s) \right)^2}^{1/2}.
\end{multline}
In the first integral, we have
\begin{equation*}
\dual{\E{\left( \avg{\zt{s}}{\delta_N r} \right)^2}}{\avg{\abs{\varphi^N(s,t)}}{\delta_N r}} \leq \norm[1]{\varphi^N(s,t)}\sup _{x\in\R^d}\E{\left( \avg{\zt{s}}(x,\delta_N r) \right)^2}.
\end{equation*}
Hence, applying Lemma~\ref{lemma_bound_avg_sr} to the first term and Lemma~\ref{lem_bound_qvar_sr} to the second term on the right-hand-side of \eqref{eq_stable_conv_determ} yields
\begin{align*}
\E{\abs{\dual{\zt}}} & \lesssim \frac{(\tau/\eta)^{1/2}}{\delta_N^{\alpha}}\intg{0}{t}\norm[1]{\varphi^N(s,t)}\ld s + \left( \intg{0}{t}\norm[1]{\varphi^N(s,t)}(\norm[\infty]{\varphi^N(s,t)} + \norm[1]{\varphi^N(s,t)})\ld s \right)^{1/2} \\
& \lesssim \frac{(\tau/\eta)^{1/2}}{\delta_N^{\alpha}}\norm[1]{\phi} + \left( \norm[1]{\phi}(\norm[\infty]{\phi}+\norm[1]{\phi}) \right)^{1/2} \\
& \lesssim \norm[1]{\phi} + \norm[\infty]{\phi}.
\end{align*}
We have used the fact that (by Lemma~\ref{lemma:stable:convergence_varphi}) $\norm[q]{\varphi^N(s,t)} \lesssim \norm[q]{\phi}$ to pass from the first line to the second. The third line follows since $\tau_N/\eta_N=\littleO{\delta_N^{2\alpha}}$ by~\eqref{parameters_regime_stable}.
\end{proof}

\begin{proof}[Proof of Lemma~\ref{bound_sup_z_sr}]
The proof of Lemma~\ref{bound_sup_z_sr} is similar to the proof of Lemma~\ref{lem_sup_zt_bound}. Setting $\psi_s = \L*{\delta_N}\phi-\alpha\intg{1}{\infty}\avg{F'(\avg{\ft{s}})\avg{\phi}}{\delta_N r}\tfrac{\ld r}{r^{\alpha+1}}$ and using \eqref{good_mart_z_sr},
\begin{multline*}
\sup _{0\leq t\leq T}\abs{\dual{\zt}} \lesssim \intg{0}{T}\abs{\dual{\zt{s}}{\psi_s}}\ld s + (\tau/\eta)^{1/2}\norm[\infty]{F''}\intg{0}{T}\intg{1}{\infty}\dual{\left( \avg{\zt{s}}{\delta_N r} \right)^2}{\avg{\abs{\phi}}{\delta_N r}}\frac{\ld r}{r^{\alpha+1}}\ld s \\ + \sup _{0\leq t \leq T}\abs{M^N_t(\phi)}.
\end{multline*}
Taking the expectation on both sides, Lemma~\ref{lem_L1_bound_z_sr} can be used in the first term, and Lemma~\ref{lemma_bound_avg_sr} in the second one, to yield
\begin{equation*}
\E{\sup _{0\leq t\leq T}\abs{\dual{\zt}}} \lesssim \intg{0}{T}(\norm[1]{\psi_s} + \norm[\infty]{\psi_s})\ld s + \frac{(\tau/\eta)^{1/2}}{\delta_N^\alpha}\norm[\infty]{F''}\norm[1]{\phi} + \E{\sup _{0\leq t \leq T}\abs{M^N_t(\phi)}^2}^{1/2}.
\end{equation*}
But $\norm[q]{\psi_s} \lesssim \norm[q]{\L*\phi} + \norm[\infty]{F'}\norm[q]{\phi}$ and, by Proposition~\ref{prop:stable_average}.i in Appendix~\ref{append:average}, $\norm[q]{\L*\phi} \lesssim \norm[q]{\phi} + \max_{\abs{\beta}=2}\norm[q]{\partial_\beta\phi}$. 
In addition, by Doob's inequality, and using Lemma~\ref{lem_bound_qvar_sr},
\begin{align*}
\E{\sup _{0\leq t\leq T}\abs{M^N_t(\phi)}^2} &\lesssim \E{M^N_T(\phi)^2}\\
& \lesssim \norm[1]{\phi}(\norm[1]{\phi} + \norm[\infty]{\phi}).
\end{align*}
As a result, if $\norm[q]{\phi}\leq 1$ and $\max_{\abs{\beta}=2}\norm[q]{\partial_\beta\phi}\leq 1$ for $q\in\braced{1,\infty}$,
\begin{equation*}
\E{\sup _{0\leq t\leq T}\abs{\dual{\zt}}} \lesssim 1.
\end{equation*}
\end{proof}

\subsection{Tightness}\label{subsec:stable:tightness}

The overall argument for the tightness of the sequence $\proc{Z^N}$ is the same as in Section~\ref{subsec:brownian:tightness}.

\begin{proposition} \label{aldous_criterion_zn_sr}
For any $\phi\in\calS(\R^d)$ and for any sequence $\proc{T_N,\rho_N}{N\geq 1}$ such that $T_N$ is a stopping time with values in $[0,T]$ for every $N\geq 1$ and $\rho_N\downarrow 0$ as $N\to\infty$,
\begin{equation} \label{aldous_convergence}
\dual{\zt{T_{N}+\rho_{N}}}-\dual{\zt{T_{N}}} \cvgas[P]{N} 0.
\end{equation}
\end{proposition}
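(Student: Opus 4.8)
The plan is to follow the proof of Proposition~\ref{tightness_prop} line by line, substituting each Brownian ingredient by its stable counterpart. Starting from the representation \eqref{stochastic_integral_zn_sr} of $\dual{\zt}$, and extending $\varphi^N$ to $s>t$ by setting $\varphi^N(\cdot,s,t):=\phi$ exactly as in \eqref{varphi_extension}, I would decompose $\dual{\zt{T_N+\rho_N}}-\dual{\zt{T_N}}$, just as in \eqref{tightness_expr}, into four terms: two quadratic error terms coming from the $R_2$-integral in \eqref{stochastic_integral_zn_sr} (one with terminal time $T_N$, one with $T_N+\rho_N$), the increment $V^N_{T_N+\rho_N}-V^N_{T_N}$ of the process $V^N_t:=\intg{0}{T}\intrd\varphi^N(x,s,t)M^N(\ld x\ld s)$, and the small-window increment $\intg{T_N}{T_N+\rho_N}\intrd\phi(x)M^N(\ld x\ld s)$. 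It then suffices to show that each of the four terms tends to $0$ in probability.

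For the two quadratic error terms, inside the spatial pairing I would bound $\abs{\varphi^N(s,t)}$ by $\sup_{t\in[s,T]}\abs{\varphi^N(s,t)}$, pull this ($r$-independent) quantity out of the $\tfrac{\ld r}{r^{\alpha+1}}$-integral over radii, and then apply the stable regularity estimate Lemma~\ref{lemma_bound_avg_sr} together with a stable analogue of Lemma~\ref{lemma_varphi_uniform_bound} bounding $\intg{0}{T}\norm[1]{\sup_{t\in[s,T]}\abs{\varphi^N(s,t)}}\ld s$ uniformly in $N$ (proved in the appendix by the same Duhamel argument as in the Brownian case, with the fractional heat semigroup in place of $G$). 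This produces a bound of order $(\tau_N/\eta_N)^{1/2}\,\delta_N^{-\alpha}$, which vanishes by the hypothesis $\tau_N/\eta_N=\littleO{\delta_N^{2\alpha}}$ of \eqref{parameters_regime_stable}; in particular these two terms converge to $0$ in $L^1$, uniformly on $[0,T]$.

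For the increment of $V^N$ I would prove a stable analogue of Lemma~\ref{holder_vn_lemma}: combining Lemma~\ref{lem_bound_qvar_sr} with a stable analogue of the time-continuity estimate Lemma~\ref{lemma:continuity_varphi} for $\varphi^N$, one gets $\E{\abs{V^N_{t'}-V^N_t}^2}\lesssim\abs{t'-t}^2$, so Kolmogorov's continuity theorem \citep[Corollary 1.2]{walsh_introduction_1986} yields a random variable $Y_N$ with $\E{Y_N^2}\lesssim 1$ and $\abs{V^N_{t'}-V^N_t}\leq Y_N\abs{t'-t}^{1/4}$ almost surely; hence $\E{\abs{V^N_{T_N+\rho_N}-V^N_{T_N}}^2}\lesssim\rho_N^{1/2}\to 0$. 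For the last term, since $T_N$ is a stopping time, optional stopping and Lemma~\ref{lem_bound_qvar_sr} give an $L^2$ bound of order $\rho_N\,\norm[1]{\phi}(\norm[\infty]{\phi}+\norm[1]{\phi})\to 0$. Combining the four estimates proves \eqref{aldous_convergence}. Then, together with the uniform bound of Lemma~\ref{bound_sup_z_sr}, Aldous' criterion (\cite{aldous_stopping_1978}, \citep[Theorem 6.8]{walsh_introduction_1986}) gives tightness of $\proc{\dual{\zt{}}}{N\geq 1}$ in $\sko{\R}$, and (as $\phi\in\calS(\R^d)$ was arbitrary) Mitoma's theorem \citep[Theorem 6.13]{walsh_introduction_1986} upgrades this to tightness of $\proc{\zt{}}{N\geq 1}$ in $\sko{\calS'(\R^d)}$.

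The step I expect to require the most care is the control of the quadratic error terms. Unlike in the Brownian case, the linearisation error carries an extra integration over the radii of reproduction events, so its size is governed by $\intg{1}{\infty}\sup_{x}\E{\avg{\zt{s}}(x,\delta_N r)^2}\tfrac{\ld r}{r^{\alpha+1}}$ rather than by a single second moment; obtaining the sharp bound $\lesssim\delta_N^{-\alpha}$ of Lemma~\ref{lemma_bound_avg_sr}, and hence verifying that $(\tau_N/\eta_N)^{1/2}\delta_N^{-\alpha}\to 0$ under \eqref{parameters_regime_stable}, is the crux of the argument. The auxiliary facts about $\varphi^N$ (time-continuity and the uniform $\sup$-$L^1$ bound) require replacing the Gaussian heat-kernel estimates by fractional heat-kernel estimates but present no new conceptual difficulty.
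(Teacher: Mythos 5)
Your proposal is correct and follows essentially the same route as the paper: decompose the increment as in the Brownian case, control the quadratic linearisation error by pulling out $\sup_{t\in[s,T]}|\varphi^N(s,t)|$ and combining Lemma~\ref{lemma_bound_avg_sr} with the stable $\sup$-$L^1$ and time-continuity bounds on $\varphi^N$ (Lemma~\ref{lemma:continuity_unif_bound_varphi_sr}) to get $(\tau_N/\eta_N)^{1/2}\delta_N^{-\alpha}\to 0$ under \eqref{parameters_regime_stable}, and treat the $V^N$-increment and the small-window stochastic integral exactly as in Proposition~\ref{tightness_prop}, now using Lemma~\ref{lem_bound_qvar_sr}. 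This matches the paper's argument, which likewise only details the quadratic term and refers back to the Brownian proof for the remaining pieces.
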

Tightness of $\proc{Z^N}{N\geq 1}$ in $\sko{\calS'(\R^d)}$ then follows from Aldous' criterion \cite{aldous_stopping_1978} and Mitoma's theorem \citep[Theorem 6.13]{walsh_introduction_1986}. 

\begin{proof}[Proof of Proposition~\ref{aldous_criterion_zn_sr}]
Extend $\varphi^N$ to $\R^d \times [0,T]^2$ as in \eqref{varphi_extension}; we need estimates on $\varphi^N$ as in Lemmas~\ref{lemma:continuity_varphi} and~\ref{lemma_varphi_uniform_bound}. 
The proof of the following lemma is in Appendix \ref{append:test_functions}.
\begin{lemma}\label{lemma:continuity_unif_bound_varphi_sr}
For $T>0$, $q\in\braced{1,\infty}$ and for all $s,t,t'\in[0,T]$,
\begin{equation*}
\norm[q]{\varphi^N(s,t')-\varphi^N(s,t)} \lesssim \abs{t-t'}.
\end{equation*}
In addition, for all $s\in[0,T]$,
\begin{equation*}
\norm[1]{\sup_{t\in[s,T]}\abs{\varphi^N(s,t)}} \lesssim 1.
\end{equation*}
\end{lemma}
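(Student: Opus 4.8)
The plan is to derive both estimates from the mild (Duhamel) formulation of \eqref{defn_varphi_n_stable}. Writing, for $\theta\in[0,T]$ and $\psi:\R^d\to\R$, $B_\theta\psi := \alpha\intg{1}{\infty}\avg{F'(\avg{\ft{\theta}})\avg{\psi}}(\cdot,\delta_N r)\tfrac{\ld r}{r^{\alpha+1}}$, one checks exactly as in Subsection~\ref{subsec:stable:time_dependent_tf} that $\varphi^N(s,t)$ solves
\begin{equation*}
\varphi^N(s,t) = \G*{\delta_N}_{t-s}\ast\phi - \intg{s}{t}\G*{\delta_N}_{\theta-s}\ast B_\theta\varphi^N(\theta,t)\,\ld\theta .
\end{equation*}
For $q\in\braced{1,\infty}$, the operators $\psi\mapsto\G*{\delta_N}_h\ast\psi$ and $B_\theta$ are $L^q$-contractions (the first because $\G*{\delta_N}_h$ is a probability measure, the second because it is built from spatial averages, $\norm[\infty]{F'}<\infty$ and $\alpha\intg{1}{\infty}r^{-\alpha-1}\ld r=1$), and $\abs{B_\theta\psi}\leq\norm[\infty]{F'}$ times an average of $\abs{\psi}$. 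I would also use the identity $\G*{\delta_N}_h\ast\psi-\psi=\intg{0}{h}\G*{\delta_N}_u\ast\L*{\delta_N}\psi\,\ld u$, which gives $\norm[q]{\G*{\delta_N}_h\ast\psi-\psi}\leq h\,\norm[q]{\L*{\delta_N}\psi}$, together with $\norm[q]{\L*{\delta_N}\phi}\lesssim\norm[q]{\phi}+\max_{\abs{\beta}=2}\norm[q]{\partial_\beta\phi}\lesssim 1$ (Proposition~\ref{prop:stable_average}.i, $\phi$ being a fixed Schwartz function) and the uniform bound $\norm[q]{\varphi^N(\theta,t)}\lesssim\norm[q]{\phi}$ from Lemma~\ref{lemma:stable:convergence_varphi}.

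For the continuity estimate, by the extension \eqref{varphi_extension} it suffices to treat $s\leq t\leq t'$ (if $s$ lies above $t$ or $t'$ one of the two functions is $\phi$ and a direct one-line estimate of $\norm[q]{\varphi^N(s,t')-\phi}$ suffices). Subtracting the two mild formulas and regrouping,
\begin{equation*}
\varphi^N(s,t')-\varphi^N(s,t) = \G*{\delta_N}_{t-s}\ast\bigl(\G*{\delta_N}_{t'-t}\ast\phi-\phi\bigr) - \intg{t}{t'}\G*{\delta_N}_{\theta-s}\ast B_\theta\varphi^N(\theta,t')\,\ld\theta - \intg{s}{t}\G*{\delta_N}_{\theta-s}\ast B_\theta\bigl(\varphi^N(\theta,t')-\varphi^N(\theta,t)\bigr)\ld\theta .
\end{equation*}
The first term has $L^q$-norm $\lesssim(t'-t)\,\norm[q]{\L*{\delta_N}\phi}\lesssim t'-t$, and the second $\lesssim\norm[\infty]{F'}\intg{t}{t'}\norm[q]{\varphi^N(\theta,t')}\ld\theta\lesssim t'-t$. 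Setting $h(\sigma)=\norm[q]{\varphi^N(\sigma,t')-\varphi^N(\sigma,t)}$ for $\sigma\in[0,t]$, we obtain $h(s)\lesssim(t'-t)+\intg{s}{t}h(\theta)\,\ld\theta$, and Gronwall's lemma gives $h(s)\lesssim t'-t$ uniformly in $s\in[0,t]$ and $t\leq t'\leq T$, which is the first claim.

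For the uniform bound, fix $s$; differentiating the mild formula in $t$ shows that $\partial_t\varphi^N(x,s,t)$ exists pointwise and solves a Volterra equation of the same type, so the $q=1$ case of the continuity estimate (which is uniform in $s$) together with Fatou's lemma yields $\norm[1]{\partial_t\varphi^N(s,\theta)}\lesssim 1$ uniformly for $0\leq s\leq\theta\leq T$. Since $\varphi^N(x,s,s)=\phi(x)$, we then have $\sup_{t\in[s,T]}\abs{\varphi^N(x,s,t)}\leq\abs{\phi(x)}+\intg{s}{T}\abs{\partial_t\varphi^N(x,s,\theta)}\,\ld\theta$, and integrating in $x$ and using Tonelli's theorem gives $\norm[1]{\sup_{t\in[s,T]}\abs{\varphi^N(s,t)}}\leq\norm[1]{\phi}+\intg{s}{T}\norm[1]{\partial_t\varphi^N(s,\theta)}\,\ld\theta\lesssim 1$. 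I expect the delicate point to be exactly this last step: bounding $\sup_t\abs{\varphi^N(s,t)}$ by a semigroup maximal function of $\abs{\phi}$ would not be integrable in dimension $d$, so instead one must control the $L^1$-modulus of continuity in the \emph{terminal} variable uniformly in $s$ and reduce everything to the Gronwall estimate above; the rest — the contraction and commutation properties of $\G*{\delta_N}$ and of the averaging operators in $B_\theta$, and the interchange of $\partial_t$ with the various convolutions — is routine bookkeeping of the kind already carried out in Section~\ref{sec:proof_brownian}.
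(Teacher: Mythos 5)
Your proof of the Lipschitz estimate in the terminal variable is essentially the paper's own argument: the paper proves it (Lemma~\ref{lemma:continuity_varphi}, transplanted to the stable case via Proposition~\ref{prop:stable_average}) by the same subtraction of the two Duhamel formulas, the same identity expressing the semigroup increment as a time integral of the semigroup applied to $\L*{\delta_N}\phi$, the same uniform bound $\norm[q]{\L*{\delta_N}\phi}\lesssim 1$ from Proposition~\ref{prop:stable_average}.i, and the same backward Gronwall argument; your reduction to $s\leq t\leq t'$ via the extension \eqref{varphi_extension} is also the paper's. Where you genuinely diverge is the $L^1$ bound on $\sup_{t\in[s,T]}\abs{\varphi^N(s,t)}$. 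The paper (mimicking Lemma~\ref{lemma_varphi_uniform_bound}) rewrites the mild formula as $\varphi^N(x,s,t)=\phi(x)+\intg{s}{t}\G*{\delta_N}_{u-s}\ast\L*{\delta_N}\phi(x)\ld u$ minus the selection term, bounds $\abs{\varphi^N(u,t)}$ inside the integral by $\sup_{t'\in[u,T]}\abs{\varphi^N(u,t')}$, integrates in $x$ and runs Gronwall directly on $s\mapsto\norm[1]{\sup_{t\in[s,T]}\abs{\varphi^N(s,t)}}$; no differentiation in the terminal variable is ever needed. You instead differentiate in $t$, bound $\norm[1]{\partial_t\varphi^N(s,t)}\lesssim 1$ by Fatou from the Lipschitz estimate, and conclude by the fundamental theorem of calculus plus Tonelli. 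This works and makes the second claim an almost immediate corollary of the first; the price is the qualitative input that for a.e.\ $x$ the map $t\mapsto\varphi^N(x,s,t)$ is absolutely continuous with a jointly measurable derivative. That is true here for each fixed $N$, because $\L*{\delta_N}$ and your operator $B_\theta$ are bounded on $L^1\cap L^\infty$ (the truncation at $\delta_N$ makes the kernel integrable) and $\theta\mapsto f^N_\theta$ is regular, so the backward evolution is given by a convergent Picard series that can be differentiated in $t$ pointwise; but this step should be stated rather than folded into ``routine bookkeeping''. (Alternatively, one can observe that $\partial_t\varphi^N(\cdot,s,t)$ solves the same backward problem with terminal datum $\L*{\delta_N}\phi-B_t\phi$, so the a priori bound of Lemma~\ref{lemma:stable:convergence_varphi} gives the derivative bound without Fatou.) The paper's sup-inside-the-integral Gronwall avoids this regularity discussion altogether, which is what it buys over your route.
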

We shall only detail how the quadratic part of \eqref{stochastic_integral_zn_sr} can be bounded using Lemma~\ref{lemma_bound_avg_sr}, and refer to Section~\ref{subsec:brownian:tightness} for the rest of the proof of Proposition~\ref{aldous_criterion_zn_sr}. 
For $T_N$ a stopping time with values in $[0,T]$, write
\begin{multline*}
\abs{\intg{0}{T_N}\intg{1}{\infty}\dual{(\avg{\zt{s}}{\delta_N r})^{2}}{R_{2}(\avg{\wt{s}},\avg{\ft{s}})\avg{\varphi^{N}(s,T_N)}{\delta_N r}}\frac{\ld r}{r^{\alpha+1}}\ld s} \\ \lesssim \norm[\infty]{F''}\intg{0}{T}\intg{1}{\infty}\dual{(\avg{\zt{s}}{\delta_N r})^{2}}{\avg{\sup_{t\in[s,T]}\abs{\varphi^{N}(s,t)}}{\delta_N r}}\frac{\ld r}{r^{\alpha+1}}\ld s.
\end{multline*}
Taking the expectation on both sides and the supremum inside the spatial integral against $\varphi^N$, we get
\begin{multline*}
\E{\abs{\intg{0}{T_N}\intg{1}{\infty}\dual{(\avg{\zt{s}}{\delta_N r})^{2}}{R_{2}(\avg{\wt{s}},\avg{\ft{s}})\avg{\varphi^{N}(s,T_N)}{\delta_N r}}\frac{\ld r}{r^{\alpha+1}}\ld s}} \\ 
\begin{aligned}
&\lesssim \norm[\infty]{F''} \intg{0}{T}\intg{1}{\infty}\sup_{x\in\R^d}\E{\avg{\zt{s}}(x,\delta_N r)^{2}}\norm[1]{\sup_{t\in[s,T]}\abs{\varphi^{N}(s,t)}}\frac{\ld r}{r^{\alpha+1}}\ld s \\
&\lesssim \delta_N^{-\alpha},
\end{aligned}
\end{multline*}
by Lemma~\ref{lemma_bound_avg_sr} and Lemma~\ref{lemma:continuity_unif_bound_varphi_sr}. The other terms in \eqref{aldous_convergence} are bounded as in the proof of Proposition~\ref{tightness_prop} in Section~\ref{subsec:brownian:tightness}, using Lemmas~\ref{lemma:continuity_unif_bound_varphi_sr} and~\ref{lem_bound_qvar_sr}.
\end{proof}

\subsection{Convergence of the martingale measure $M^N$}\label{subsec:stable:convergence_M}

The convergence of $M^N$ relies on applying Theorem~\ref{thm:convergence_gaussian_martingale} to vectors of the form $\left( M^N_t(\phi_1),\ldots,\right.$ $\left. M^N_t(\phi_p)\right)_{t\geq 0}$, although the details differ from the proof in the Brownian case (in Section~\ref{subsec:brownian:convergence_M}). 
Indeed, $M^N$ no longer converges to a stochastic integral against a space-time white noise, but to $W^{\alpha}$, a coloured Gaussian noise such that
\begin{equation*}
\qvar{W^{\alpha}(\phi)} = \intg{0}{t}\intrd{2}\phi(z_1)\phi(z_2)\sigma^{\alpha}_{z_1,z_2}(f_s)\ld z_1\ld z_2\ld s
\end{equation*}
with
\begin{equation*}
\sigma^\alpha_{z_1,z_2}(f) = \intg{\tfrac{\abs{z_1-z_2}}{2}}{\infty}\frac{\ld r}{r^{d+\alpha+1}}\int_{B(z_1,r)\cap B(z_2,r)}\left[ \avg{f}(x,r)(1-f(z_1)-f(z_2)) + f(z_1)f(z_2) \right]\ld x.
\end{equation*}
Hence the weak convergence of $M^N$ to $W^{\alpha}$ in $\sko{\calS'(\R^{d})}$ will follow (as in Section \ref{subsec:brownian:convergence_M}) from the following lemma.
\begin{lemma} \label{lemma:stable:convergence_MN}
For any $\phi\in\calS(\R^d)$,
\begin{enumerate}[label=\roman*)]
\item For all $t\geq 0$, $\abs{\Delta M^N_t(\phi)}\lesssim 1$, and $\sup_{0\leq t\leq T}\abs{\Delta M^N_t(\phi)} \cvgas[P]{N} 0$.
\item For each $t\in[0,T]$, $\qvar{M^N(\phi)} \cvgas[P]{N} \qvar{W^{\alpha}(\phi)}$.
\end{enumerate}
\end{lemma}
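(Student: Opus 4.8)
The plan is to transpose the proof of Lemma~\ref{lemma:convergence_MN} (the Brownian case) to the stable regime, step by step. Part (i) I would handle exactly as before: at a jump time $M^N_t(\phi)-M^N_{t^-}(\phi)=(\eta_N/\tau_N)^{1/2}(\dual{\wt{t}}{\phi}-\dual{\wt{t^-}}{\phi})$, so the jump hypothesis gives $\sup_{t\geq 0}\abs{\Delta M^N_t(\phi)}\leq(\eta_N/\tau_N)^{1/2}\alpha_N\norm[1]{\phi}$, which is bounded (hence $\lesssim 1$) and tends to $0$ because $\alpha_N^2=\littleO{\tau_N/\eta_N}$.

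For part (ii), first note the error term in the covariation measure \eqref{covariation_stable} contributes $\bigO{\delta_N^\alpha}\,t\,\norm[(\alpha)]{\phi}^2\to0$, so only the $\sigma^{(\alpha,\delta_N)}$-term matters. I would then expand $\sigma^{(\alpha,\delta)}$ and $\sigma^{(r)}$ and exchange the $(z_1,z_2)$- and $x$-integrations exactly as in the derivation of \eqref{quad_var_expr} (using the identity $V_r^2\sigma^{(r)}_{z_1,z_2}(g)=\int_{B(z_1,r)\cap B(z_2,r)}[\avg{g}(x,r)(1-g(z_1)-g(z_2))+g(z_1)g(z_2)]\ld x$), which rewrites the inner double integral as $\intg{\delta_N}{\infty}\frac{V_r^2}{r^{d+\alpha+1}}\Gamma_r(\wt{s})\ld r$ with
\[\Gamma_r(g):=\dual{\avg{g}(\cdot,r)}{\avg{\phi(1-g)}(\cdot,r)^2}+\dual{1-\avg{g}(\cdot,r)}{\avg{\phi g}(\cdot,r)^2}.\]
The same computation applied to \eqref{stable_noise_correlations} gives $\qvar{W^\alpha(\phi)}=\intg{0}{t}\intg{0}{\infty}\frac{V_r^2}{r^{d+\alpha+1}}\Gamma_r(f_s)\,\ld r\,\ld s$. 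Since $0\leq\Gamma_r(g)\leq C\min(\norm[2]{\phi}^2,\norm[1]{\phi}^2/V_r)$ and $\alpha<d$, this bound is $\frac{V_r^2}{r^{d+\alpha+1}}\ld r$-integrable on $(0,\infty)$; hence $\intg{0}{\delta_N}\frac{V_r^2}{r^{d+\alpha+1}}\abs{\Gamma_r(f_s)}\ld r=\bigO{\delta_N^{d-\alpha}}$, and by dominated convergence it suffices to prove that, for each fixed $r>0$, $\sup_{s\in[0,T]}\E{\abs{\Gamma_r(\wt{s})-\Gamma_r(f_s)}}\to0$ as $N\to\infty$.

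To get the fixed-$r$ statement I would split $\Gamma_r(\wt{s})-\Gamma_r(f_s)=(\Gamma_r(\wt{s})-\Gamma_r(\ft{s}))+(\Gamma_r(\ft{s})-\Gamma_r(f_s))$. As $\Gamma_r$ is Lipschitz, $\abs{\Gamma_r(g_1)-\Gamma_r(g_2)}\lesssim\norm[2]{\phi}^2\norm[\infty]{g_1-g_2}$, the second (deterministic) difference is $\lesssim\norm[\infty]{\ft{s}-f_s}\lesssim\delta_N^{\alpha\wedge(2-\alpha)}\to0$ by Proposition~\ref{prop:stable:convergence_centering_term}. For the first, I would write $\wt{s}=\ft{s}+(\tau_N/\eta_N)^{1/2}\zt{s}$; since $\Gamma_r$ is quadratic in $g$ through the two linear maps $g\mapsto\avg{g}(\cdot,r)$ and $g\mapsto\avg{\phi g}(\cdot,r)$, the difference is a finite sum of terms each carrying a factor $(\tau_N/\eta_N)^{1/2}$ or $\tau_N/\eta_N$ and each bilinear in $\{\avg{\zt{s}}(\cdot,r),\avg{\phi\zt{s}}(\cdot,r)\}$ and $\{\avg{\ft{s}}(\cdot,r),\avg{\phi\ft{s}}(\cdot,r)\}$. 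Taking expectations, Cauchy--Schwarz reduces everything to $\sup_x\E{\avg{\zt{s}}(x,r)^2}$, $\sup_x\E{\avg{\phi\zt{s}}(x,r)^2}$ and $\sup_x\E{\abs{\avg{\phi\zt{s}}(x,r)}}$. The proof of Lemma~\ref{lemma_bound_avg_sr} in fact yields $\sup_x\E{\avg{\zt{s}}(x,r)^2}\lesssim\delta_N^{-\alpha}$ for \emph{every} $r>0$ (the bound on the martingale part there is radius-free), and re-running its Gronwall argument with a general $\psi\in L^{1,\infty}(\R^d)$ in place of $\frac1{V_r}\1{\abs{\cdot-x}<r}$ — using \eqref{stochastic_integral_zn_sr}, Lemma~\ref{lem_bound_qvar_sr} and Lemma~\ref{lemma:stable:convergence_varphi} — gives $\E{\dual{\zt{s}}{\psi}^2}\lesssim\delta_N^{-\alpha}\norm[1]{\psi}^2+\norm[1]{\psi}\norm[\infty]{\psi}$; specialising to $\psi=\frac{\phi}{V_r}\1{\abs{\cdot-x}<r}$ gives $\sup_x\E{\avg{\phi\zt{s}}(x,r)^2}\lesssim\delta_N^{-\alpha}$ (constant depending on $r$ and $\phi$), while Lemma~\ref{lem_L1_bound_z_sr} gives $\sup_x\E{\abs{\avg{\phi\zt{s}}(x,r)}}\lesssim1$ similarly. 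Since $\tau_N/\eta_N=\varepsilon_N=\littleO{\delta_N^{2\alpha}}$, both $(\tau_N/\eta_N)^{1/2}\delta_N^{-\alpha/2}$ and $(\tau_N/\eta_N)\delta_N^{-\alpha}$ tend to $0$, so for each fixed $r$ the first difference tends to $0$ in $L^1$ uniformly in $s\in[0,T]$, completing the dominated-convergence argument and hence part (ii).

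The step I expect to be the main obstacle is the very one that forces the general-$\psi$ strengthening of the regularity estimate: unlike the Brownian case, the reproduction radii here range over $[\delta_N,\infty)$ rather than collapsing to $0$, so one cannot pull $\phi$ out of the spatial averages at a vanishing cost $\bigO{r_N}$, and the \emph{weighted} averages $\avg{\phi\zt{s}}(\cdot,r)$ must be controlled in $L^2$ uniformly over all radii. The attendant bookkeeping — tracking the $r$-dependence of every constant so that the integrals in $\ld r$ remain finite, which is where $\alpha<d$ and $\varepsilon_N=\littleO{\delta_N^{2\alpha}}$ get used — is really the whole content of part (ii). Conversely, a minor simplification relative to the Brownian proof is that $W^\alpha$ is a coloured noise whose covariation retains the ball-averages, so there is no analogue of the Brownian ``remove the averages from $f$'' step.
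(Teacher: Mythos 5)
Part (i) of your proposal is exactly the paper's argument. For part (ii) you follow a genuinely different route. The paper never exchanges the $r$- and $(z_1,z_2)$-integrations: it works at the level of the kernels and telescopes $\sigma^{(\alpha,\delta_N)}_{z_1,z_2}(\wt{s})-\sigma^{(\alpha,\delta_N)}_{z_1,z_2}(\ft{s})$ into three terms (see \eqref{difference_sigma}), \emph{each linear} in $\wt{s}-\ft{s}$; the first is bounded by Cauchy--Schwarz in $r$ against the integrated second-moment bound of Lemma~\ref{lemma_bound_avg_sr}, which keeps a factor $\abs{z_1-z_2}^{-\alpha/2}$ so that Lemma~\ref{lem:alpha_int_bound} applies, and the other two are written as $\dual{\zt{s}}{\psi^N_{z_2}}$ with \emph{deterministic} $\psi^N_{z_2}$ and handled by Lemma~\ref{lem_L1_bound_z_sr}; the truncation at $\delta_N$ and the replacement $\ft{}\to f$ are then done at the kernel level. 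Your alternative --- swap the order of integration, dominate in $r$ using $\Gamma_r\lesssim\min(\norm[2]{\phi}^2,\norm[1]{\phi}^2/V_r)$ and $\alpha<d$, and prove fixed-$r$ convergence --- is viable, and your two auxiliary claims are indeed extractable from the paper's machinery: the proof of Lemma~\ref{lemma_bound_avg_sr} does yield $\sup_x\E{\avg{\zt{t}}(x,r)^2}\lesssim\delta_N^{-\alpha}$ for each fixed $r$, and the mild representation based on \eqref{taylor_F_1} (the one used inside that proof, rather than \eqref{stochastic_integral_zn_sr}, whose quadratic remainder would otherwise need fourth-moment control) combined with Lemmas~\ref{lem_bound_qvar_sr} and~\ref{lemma_bound_avg_sr} gives $\E{\dual{\zt{t}}{\psi}^2}\lesssim\delta_N^{-\alpha}\norm[1]{\psi}^2+\norm[1]{\psi}(\norm[1]{\psi}+\norm[\infty]{\psi})$ for deterministic $\psi\in L^{1,\infty}(\R^d)$. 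What your route buys is a conceptually clean reduction to a fixed-radius statement; what the paper's route buys is that only \emph{linear} functionals of $Z^N$ ever appear, so no new moment estimates beyond the stated lemmas are needed.

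Two points in your fixed-$r$ step need repair as written. First, $\Gamma_r$ is \emph{cubic} in $g$, not quadratic: $\dual{\avg{g}}{\avg{\phi(1-g)}^2}$ contains $\avg{g}\,\avg{\phi g}^2$. Expanding $\wt{s}=\ft{s}+(\tau_N/\eta_N)^{1/2}\zt{s}$ therefore also produces terms with prefactors $\tau_N/\eta_N$ and $(\tau_N/\eta_N)^{3/2}$ that are bi- and trilinear in the $Z^N$-averages, which your accounting omits. Second, even for those terms the reduction to \emph{suprema} over $x$ of moments is insufficient: for instance the term $(\tau_N/\eta_N)\dual{1-\avg{\ft{s}}}{\avg{\phi \zt{s}}^2}$ requires control of the full spatial integral $\int_{\R^d}\E{\avg{\phi \zt{s}}(y,r)^2}\ld y$, and a supremum bound does not integrate over $y$. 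Both issues are fixable inside your framework: either telescope $\Gamma_r(\wt{s})-\Gamma_r(\ft{s})$ so that every term is linear in $\zt{s}$ (the exact analogue of \eqref{difference_sigma}), after which your first- and second-moment bounds suffice; or keep the full expansion and use the deterministic bound $\abs{\zt{s}}\leq(\eta_N/\tau_N)^{1/2}(1+\norm[\infty]{\ft{s}})$ --- equivalently, apply your general-$\psi$ $L^2$ bound with $\psi_y=\frac{1}{V_r}\phi\,\1{\abs{\cdot-y}<r}$ and integrate over $y$, noting that $\norm[1]{\psi_y}=\avg{\abs{\phi}}(y,r)$ is square-integrable --- so that the higher-order terms are $\littleO{1}$ because $\tau_N/\eta_N=\littleO{\delta_N^{2\alpha}}$ by \eqref{parameters_regime_stable}. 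With one of these repairs, and with the $\ft{}\to f$ step justified by Proposition~\ref{prop:stable:convergence_centering_term} as you indicate, your argument goes through.
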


\begin{proof}
The proof of the first part is the same as for Lemma \ref{lemma:convergence_MN}:
\begin{equation*}
\sup _{t\geq 0} \abs{\Delta M^N_t(\phi)} \leq \alpha_{N}(\eta_N/\tau_N)^{1/2}\norm[1]{\phi},
\end{equation*}
which tends to zero since $\alpha_{N}^{2}=\littleO{\tau_N/\eta_N}$.	For the second part of the statement, we first show that
\begin{equation}\label{sigma_wf_L1conv}
\abs{\intrd{2}\phi(z_1)\phi(z_2)\sigma_{z_1,z_2}^{(\alpha, \delta_N)}(\wt{s})\ld z_1\ld z_2 - \intrd{2}\phi(z_1)\phi(z_2)\sigma_{z_1,z_2}^{(\alpha, \delta_N)}(\ft{s})\ld z_1\ld z_2}\cvgas[L^1]{N} 0.%	
\end{equation}
We have from the definition of $\sigma_{z_1,z_2}^{(\alpha, \delta_N)}$ in \eqref{definition_sigma_stable} that
\begin{multline*}
\sigma_{z_1,z_2}^{(\alpha, \delta_N)}(w) = \intg{\delta_N\vee \frac{|z_1-z_2|}{2}}{\infty} \Big\lbrace (1-w(z_1)-w(z_2))\int_{B(z_1,r)\cap B(z_2,r)}\avg{w}(x,r)\ld x \\ + V_r(z_1,z_2)w(z_1)w(z_2) \Big\rbrace \frac{\ld r}{r^{d+\alpha +1}}.
\end{multline*}
Subtracting the corresponding expressions with $\wt{s}$ and $\ft{s}$ and reordering terms, we write
\begin{multline}\label{difference_sigma}
\sigma_{z_1,z_2}^{(\alpha, \delta_N)}(\wt{s})-\sigma_{z_1,z_2}^{(\alpha, \delta_N)}(\ft{s}) \\ = \intg{\delta_N\vee \frac{|z_1-z_2|}{2}}{\infty} \Big\lbrace (1-\wt{s}(z_1)-\wt{s}(z_2))\int_{B(z_1,r)\cap B(z_2,r)}\left( \avg{\wt{s}}(x,r)-\avg{\ft{s}}(x,r) \right)\ld x \\ + (\ft{s}(z_1)-\wt{s}(z_1)+\ft{s}(z_2)-\wt{s}(z_2))\int_{B(z_1,r)\cap B(z_2,r)}\avg{\ft{s}}(x,r)\ld x \\ + V_r(z_1,z_2)\left(\wt{s}(z_1)(\wt{s}(z_2)-\ft{s}(z_2))+\ft{s}(z_2)(\wt{s}(z_1)-\ft{s}(z_1))\right) \Big\rbrace \frac{\ld r}{r^{d+\alpha +1}}.
\end{multline}
We shall deal with the terms from each of the three lines separately, so let us call them $A(z_1,z_2)$, $B(z_1,z_2)$ and $C(z_1,z_2)$ (they are in fact defined for a.e. $z_1$ and $z_2$, and so is all that follows, but this is not a problem since what we really show is \eqref{sigma_wf_L1conv}). 
%% The last term, once integrated against $\phi(z_1)\phi(z_2)$, will be $\bigO{s_N}$ by Lemma~\ref{lem:alpha_int_bound}.
For the first term write
\begin{align*}
\E{\abs{A(z_1,z_2)}} &\leq (\tau_N/\eta_N)^{1/2}\intg{\delta_N\vee \frac{|z_1-z_2|}{2}}{\infty} \int_{B(z_1,r)\cap B(z_2,r)}\E{\abs{\avg{\zt{s}}(x,r)}}\ld x  \frac{\ld r}{r^{d+\alpha +1}} \\
& \leq (\tau_N/\eta_N)^{1/2}\intg{\delta_N\vee \frac{|z_1-z_2|}{2}}{\infty} V_r(z_1,z_2)\sup _{x\in\R^d} \E{\abs{\avg{\zt{s}}(x,r)}}  \frac{\ld r}{r^{d+\alpha +1}} \\
& \leq (\tau_N/\eta_N)^{1/2}V_1\intg{\delta_N}{\infty} \1{2r>\abs{z_1-z_2}}\sup _{x\in\R^d} \E{\abs{\avg{\zt{s}}(x,r)}^2}^{1/2}  \frac{\ld r}{r^{\alpha +1}} \\
& \leq (\tau_N/\eta_N)^{1/2}\frac{V_1}{\alpha^{1/2}}\left( \delta_N\vee\frac{\abs{z_1-z_2}}{2} \right)^{-\alpha/2}\left(\intg{\delta_N}{\infty} \sup _{x\in\R^d} \E{\abs{\avg{\zt{s}}(x,r)}^2} \frac{\ld r}{r^{\alpha +1}}\right)^{1/2}.
\end{align*}
(We have used the Cauchy-Schwartz inequality in the last line.) 
In addition, by Lemma~\ref{lemma_bound_avg_sr},
\begin{align*}
\intg{\delta_N}{\infty} \sup _{x\in\R^d} \E{\abs{\avg{\zt{s}}(x,r)}^2} \frac{\ld r}{r^{\alpha +1}} &=  \delta_N^{-\alpha}\intg{1}{\infty} \sup _{x\in\R^d} \E{\abs{\avg{\zt{s}}(x,\delta_N r)}^2} \frac{\ld r}{r^{\alpha +1}} \\
& \lesssim \delta_N^{-2\alpha}.
\end{align*}
Hence
\begin{equation*}
\E{\abs{A(z_1,z_2)}} \lesssim (\tau_N/\eta_N)^{1/2}\delta_N^{-\alpha}\abs{z_1-z_2}^{-\alpha/2},
\end{equation*}
and, using Lemma~\ref{lem:alpha_int_bound} and \eqref{parameters_regime_stable},
\begin{equation*}
\intrd{2}\phi(z_1)\phi(z_2)A(z_1,z_2)\ld z_1\ld z_2 \cvgas[L^1]{N} 0.
\end{equation*}
For the second term, by symmetry,
\begin{equation*}
\abs{\intrd{2}\phi(z_1)\phi(z_2)B(z_1,z_2)\ld z_1\ld z_2} \leq 2(\tau_N/\eta_N)^{1/2}\intrd\abs{\phi(z_2)}\abs{\dual{\zt{s}}{\psi^N_{z_2}}}\ld z_2,
\end{equation*}
where
\begin{equation*}
\psi^N_{z_2}(z_1)=\phi(z_1)\intg{\delta_N\vee \frac{\abs{z_1-z_2}}{2}}{\infty} \int_{B(z_1,r)\cap B(z_2,r)}\avg{\ft{s}}(x,r)\ld x\frac{\ld r}{r^{d+\alpha+1}}.
\end{equation*}
In particular, by Proposition \ref{prop:stable:convergence_centering_term} $\norm[q]{\psi^N_{z_2}}\lesssim \delta_N^{-\alpha}\norm[q]{\phi}$ for $q\in\braced{1,\infty}$ and, since $\psi^N_{z_2}$ is deterministic, by Lemma~\ref{lem_L1_bound_z_sr}
\begin{equation*}
\E{\abs{\intrd{2}\phi(z_1)\phi(z_2)B(z_1,z_2)\ld z_1\ld z_2}} \lesssim (\tau_N/\eta_N)^{1/2}\delta_N^{-\alpha}\norm[1]{\phi}(\norm[1]{\phi}+\norm[\infty]{\phi}).
\end{equation*}
Hence, by \eqref{parameters_regime_stable},
\begin{equation*}
\intrd{2}\phi(z_1)\phi(z_2)B(z_1,z_2)\ld z_1\ld z_2 \cvgas[L^1]{N} 0.
\end{equation*}
The third term is controlled in a similar way, this time setting
\begin{equation*}
\psi^N_{z_2}(z_1) = \phi(z_1)\intg{\delta_N\vee \frac{\abs{z_1-z_2}}{2}}{\infty} V_r(z_1,z_2) \frac{\ld r}{r^{d+\alpha+1}},
\end{equation*}
which satisfies the same inequalities as the previous $\psi^N_{z_2}$ and using the bound on $\norm[\infty]{\ft{s}}$ from Proposition~\ref{prop:stable:convergence_centering_term}. As a result we have proved \eqref{sigma_wf_L1conv}. Now write
\begin{align*}
\abs{\sigma^{(\alpha,\delta_N)}_{z_1,z_2}(\ft{s})-\sigma^{\alpha}_{z_1,z_2}(\ft{s})}&\lesssim \1{\abs{z_1-z_2}\leq 2\delta_N}\intg{\frac{\abs{z_1-z_2}}{2}}{\delta_N}V_r(z_1,z_2)\frac{\ld r}{r^{d+\alpha+1}}\\
&\lesssim \1{\abs{z_1-z_2}\leq 2\delta_N}\abs{z_1-z_2}^{-\alpha}.
\end{align*}
Hence 
\begin{multline*}
\abs{\intrd{2}\phi(z_1)\phi(z_2)(\sigma^{(\alpha,\delta_N)}_{z_1,z_2}(\ft{s})-\sigma^{\alpha}_{z_1,z_2}(\ft{s}))\ld z_1 \ld z_2 } \\
\begin{aligned} 
&\lesssim  \intrd{2}\abs{\phi(z_1)}\abs{\phi(z_2)}\1{\abs{z_1-z_2}\leq 2\delta_N}\abs{z_1-z_2}^{-\alpha}\ld z_1 \ld z_2 \\
&\lesssim \norm[\infty]{\phi}\intrd \abs{\phi(z_1)} \intg{0}{2\delta_N}r^{-\alpha+d-1}\ld r \ld z_1 \\
&\lesssim \norm[\infty]{\phi}\norm[1]{\phi}\delta_N^{d-\alpha}\cvgas{N}0.
\end{aligned}
\end{multline*}
Finally, replacing $\wt{s}$ by $f_s$ and $\sigma^{(\alpha,\delta_N)}$ by $\sigma^{\alpha}$ in \eqref{difference_sigma}, one writes
\begin{align*}
\abs{\sigma^{\alpha}_{z_1,z_2}(\ft{s})-\sigma^{\alpha}_{z_1,z_2}(f_s)} &\lesssim\norm[\infty]{\ft{s}-f_s}\intg{\frac{\abs{z_1-z_2}}{2}}{\infty}V_r(z_1,z_2)\frac{\ld r}{r^{1+d+\alpha}} \\
&\lesssim \abs{z_1-z_2}^{-\alpha}\delta_N^{\alpha\wedge(2-\alpha)},
\end{align*}
using Proposition~\ref{prop:stable:convergence_centering_term}. It follows from Lemma~\ref{lem:alpha_int_bound} that
\begin{equation*}
\abs{\intrd{2}\phi(z_1)\phi(z_2)(\sigma^{\alpha}_{z_1,z_2}(\ft{s})-\sigma^{\alpha}_{z_1,z_2}(f_s))\ld z_1 \ld z_2}\cvgas{N} 0,
\end{equation*}
and we have shown that, for all $t\in[0,T]$
\begin{equation*}
\qvar{M^N(\phi)} \cvgas[L^1,P]{N} \qvar{W^{\alpha}(\phi)}.
\end{equation*}
\end{proof}

\subsection{Conclusion of the proof}\label{subsec:stable:conclusion}

We can now conclude the proof of Theorem~\ref{thm:clt_stable}. We have proved that the sequence $\proc{Z^N}{N\geq 1}$ is tight and we can characterise its potential limit points using the convergence of $M^N$. Recall the following expression for $\dual{\zt}$ from \eqref{stochastic_integral_zn_sr}~:
\begin{multline*}
\dual{\zt} = -\left(\frac{\tau_{N}}{\eta_{N}}\right)^{1/2}\alpha\intg{0}{t}\intg{1}{\infty}\dual{(\avg{\zt{s}}{\delta_N r})^{2}}{R_{2}(\avg{\wt{s}},\avg{\ft{s}})\avg{\varphi^{N}(s,t)}{\delta_N r}}\frac{\ld r}{r^{\alpha+1}}\ld s \\ + \intg{0}{t}\intrd\varphi^{N}(x,s,t)M^{N}(\ld x\ld s).
\end{multline*}
In Section~\ref{subsec:stable:tightness}, we showed that the first term converges to zero in $L^1$. In addition, by Lemmas~\ref{lemma:stable:convergence_varphi} and~\ref{lem_bound_qvar_sr},
\begin{equation*}
\intg{0}{t}\intrd\varphi^N(x,s,t)M^N(\ld x\ld s) - \intg{0}{t}\intrd\varphi(x,s,t)M^N(\ld x\ld s) \cvgas[L^2]{N} 0.
\end{equation*}
For $\phi_1,\ldots,\phi_p$ in $\calS(\R^d)$, let $\varphi_1,\ldots,\varphi_p$ be the corresponding solutions of \eqref{definition_varphi_sr} with $\phi=\phi_i$. Since $M^{N}$ converges weakly to $W^{\alpha}$, by \citep[Proposition 7.12]{walsh_introduction_1986}, for $t_1,\ldots,t_p\in[0,T]$
\begin{multline*}
 \left(\intg{0}{t_1}\intrd\varphi_1(x,s,t_1)M^{N}(\ld x\ld s),\ldots,\intg{0}{t_k}\intrd\varphi_k(x,s,t_k)M^{N}(\ld x\ld s)\right) \\ \cvgas[d]{N} \left(\intg{0}{t_1}\intrd\varphi_1(x,s,t_1)W^{\alpha}(\ld x\ld s),\ldots,\intg{0}{t_k}\intrd\varphi_k(x,s,t_k)W^{\alpha}(\ld x\ld s)\right).
\end{multline*}
Hence the same convergence holds (in distribution) for $\left( \dual{\zt{t_1}}{\phi_1},\ldots,\dual{\zt{t_p}}{\phi_p} \right)$ and this characterises the potential limit points of $\proc{Z^N}{N\geq 1}$. By Theorem~\ref{thm:mitoma}, $\proc{Z^N}$ converges in distribution to a distribution-valued process $\proc{z}$ which satisfies
\begin{equation*}
\dual{z_t} = \intg{0}{t}\intrd\varphi(x,s,t)W^{\alpha}(\ld x\ld s).
\end{equation*}
By the same argument as in Section~\ref{subsec:brownian:conclusion}, $\proc{z}$ solves the stochastic PDE \eqref{limiting_fluctuations_stable}, which concludes the proof.

\section{Drift load - proof of Theorem~\ref{thm:driftload_slfv}}\label{sec:proof_drift_load}

Recall the definition of $F$ and $\rho^{(r_{N})}_{z_{1},z_{2}}$ in \eqref{definition_F_driftload} and \eqref{def_tau_r} respectively. 

\begin{definition} [Martingale Problem (M3)] \label{formal_def_M3}
Given $\proc{\varepsilon_{N}}{N\geq 1}$, $\proc{\delta_{N}}{N\geq 1}$ and $F$, let $\eta_N = \varepsilon_N \delta_N^2$, $\tau_N = \varepsilon_N^2 \delta_N^d$ and $r_N=\delta_N R$. Then for $N\geq 1$, we say that a $\Xi$-valued process $(w_t^N)_{t\geq 0}$ satisfies the martingale problem (M3) if 
for all $\phi$ in $L^{1,\infty}(\R^d)$,
\begin{equation} \label{martingale_w_dr}
\dual{w^{N}_{t}} - \dual{w_{0}} - \eta_{N} u V_R \intg{0}{t} \braced{ \frac{2R^2}{d+2} \dual{w^{N}_{s}}{\L{r_{N}}\phi} - s\dual{ \avg{F(\avg{w^{N}_{s}})}{r_{N}} }} \ld s
\end{equation}
defines a (mean zero) square-integrable martingale with (predictable) variation process
\begin{equation} \label{variation_w_dr}
\tau_{N} u^2 V_R^2 \intg{0}{t} \intrd{2} \phi(z_{1}) \phi(z_{2}) \rho^{(r_{N})}_{z_{1},z_{2}}(w^{N}_{s}) \ld z_{1} \ld z_{2} \ld s + \bigO{t \tau_N \delta_N^2 \norm[2]{\phi}^2}.
\end{equation}
\end{definition}
(Again, uniqueness does not hold for this martingale problem, but we will not require it.)

Let $q^N_t$ denote the SLFVS with overdominance defined in Definition~\ref{definition_slfv_diploid} with parameters as defined in~\eqref{driftload_params} in Section \ref{subsec:driftload}.
As in Subsection \ref{subsec:mart_pb:fixed}, we consider the rescaled process
$w^N_t(x)=q_t^N(x/\delta_N)$.
By Proposition~\ref{prop:martingale_pb_slfv_dr}, using the same rescaling argument as in Proposition~\ref{prop:mart_pb_m1}, we have the following result.

\begin{proposition} \label{prop:mart_pb_M3}
The process $\proc{w^N}$ satisfies the martingale problem (M3).
\end{proposition}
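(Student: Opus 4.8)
The plan is to show that Proposition~\ref{prop:mart_pb_M3} follows from Proposition~\ref{prop:martingale_pb_slfv_dr} by exactly the same change of variables argument used to deduce Proposition~\ref{prop:mart_pb_m1} from Proposition~\ref{prop:martingale_pb_slfv}. First I would recall that Proposition~\ref{prop:martingale_pb_slfv_dr} gives, for every $\phi\in L^{1,\infty}(\R^d)$,
\begin{equation*}
\dual{q^N_t} = \dual{q^N_0} + u_N V_R \intg{0}{t}\braced{\dual{q^N_s}{\dbavg{\phi}{R}-\phi} - s_N \dual{\avg{F(\avg{q^N_s})}{R}}}\ld s + \calM^N_t(\phi),
\end{equation*}
where $\calM^N_t(\phi)$ is a square-integrable martingale whose predictable variation process is given by \eqref{variation_q_dr}, with $F$ the overdominance selection function \eqref{definition_F_driftload} and $\rho^{(R)}$ in place of $\sigma^{(R)}$. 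Here $u_N = \varepsilon_N u$ and $s_N = s_{1,N}+s_{2,N} = \delta_N^2 s$ with $s = s_1 + s_2$, by \eqref{driftload_params}.

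Next I would carry out the spatial rescaling $w^N_t(x) = q^N_t(x/\delta_N)$. Writing $\phi^{(\delta)}(x) = \phi(\delta x)$, the change of variables gives $\dual{w^N_t}{\phi} = \delta_N^d \dual{q^N_t}{\phi^{(\delta_N)}}$, and, just as in the proof of Proposition~\ref{prop:mart_pb_m1}, $\delta_N^d\dual{q^N_s}{\dbavg{\phi^{(\delta_N)}}{R}} = \dual{w^N_s}{\dbavg{\phi}{\delta_N R}}$ and $\delta_N^d \dual{\avg{F(\avg{q^N_s})}{R}}{\phi^{(\delta_N)}} = \dual{\avg{F(\avg{w^N_s})}{\delta_N R}}$. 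Recalling the definition of $\L$ in \eqref{definition_calL}, the identity $\dbavg{\phi}{\delta_N R} - \phi = \frac{2(\delta_N R)^2}{d+2}\L{\delta_N R}\phi = \frac{2\delta_N^2 R^2}{d+2}\L{r_N}\phi$ and $u_N s_N = \varepsilon_N \delta_N^2 u s = \eta_N u s$, $u_N = \eta_N u /\delta_N^2$ together with $r_N = \delta_N R$ turn the drift term into precisely the integrand appearing in \eqref{martingale_w_dr}. Thus $\dual{w^N_t} - \dual{w_0} - \eta_N u V_R \intg{0}{t}\{\frac{2R^2}{d+2}\dual{w^N_s}{\L{r_N}\phi} - s\dual{\avg{F(\avg{w^N_s})}{r_N}}\}\ld s = \delta_N^d \calM^N_t(\phi^{(\delta_N)})$, which is a martingale. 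For the variation process, one uses the change of variables in \eqref{variation_q_dr}, namely $\delta_N^{2d}\qvar{\calM^N(\phi^{(\delta_N)})} = \varepsilon_N^2 u^2 V_R^2 \intg{0}{t}\intrd{2}\phi(z_1)\phi(z_2)\rho^{(R)}_{z_1/\delta_N,z_2/\delta_N}(q^N_s)\ld z_1\ld z_2\ld s + \bigO{t\varepsilon_N^2 \delta_N^2 \delta_N^d \norm[2]{\phi}^2}$, together with the scaling identity $\rho^{(R)}_{z_1/\delta_N,z_2/\delta_N}(q^N_s) = \delta_N^d \rho^{(\delta_N R)}_{z_1,z_2}(w^N_s)$ (which follows directly from the definition \eqref{def_tau_r} of $\rho^{(r)}$ by substituting the rescaled arguments). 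Since $\varepsilon_N^2 \delta_N^d = \tau_N$ and $\delta_N^{2d}$ is exactly the factor by which $\delta_N^d \calM^N(\phi^{(\delta_N)})$ differs in variation from $\calM^N(\phi^{(\delta_N)})$, this yields the variation process \eqref{variation_w_dr} with error term $\bigO{t\tau_N \delta_N^2 \norm[2]{\phi}^2}$, as required.

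There is essentially no serious obstacle here: the content is entirely contained in Propositions~\ref{prop:martingale_pb_slfv_dr} and~\ref{prop:mart_pb_m1}, and the only thing to check is that the deterministic scaling identities go through with $\rho$ in place of $\sigma$, which is immediate from the explicit formulae. The one point that deserves a line of care is the bookkeeping of the powers of $\delta_N$ and $\varepsilon_N$: one must confirm that $\eta_N = \varepsilon_N \delta_N^2$, $\tau_N = \varepsilon_N^2 \delta_N^d$ as declared in Definition~\ref{formal_def_M3} are consistent with $s_N = \delta_N^2 s$, $\nu_{i,N} = \delta_N^2 \nu_i$, $u_N = \varepsilon_N u$ from \eqref{driftload_params}, and that the martingale problem (M3) is literally (M1) with the single modification $\sigma^{(r_N)} \rightsquigarrow \rho^{(r_N)}$. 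Given this, I would simply state that the proof is identical to that of Proposition~\ref{prop:mart_pb_m1}, citing Proposition~\ref{prop:martingale_pb_slfv_dr} where Proposition~\ref{prop:martingale_pb_slfv} was used, and noting that the passage from \eqref{def_sigma_r} to \eqref{def_tau_r} does not affect the rescaling relation since $\rho^{(r)}$, like $\sigma^{(r)}$, scales as $\rho^{(R)}_{z_1/\delta_N, z_2/\delta_N}(q^N_s) = \delta_N^d \rho^{(\delta_N R)}_{z_1,z_2}(w^N_s)$.
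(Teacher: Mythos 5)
Your proposal is correct and is essentially the paper's own argument: the paper proves Proposition~\ref{prop:mart_pb_M3} by invoking Proposition~\ref{prop:martingale_pb_slfv_dr} together with "the same rescaling argument as in Proposition~\ref{prop:mart_pb_m1}", which is exactly the change of variables, the identity $\dbavg{\phi}{r_N}-\phi = \tfrac{2r_N^2}{d+2}\L{r_N}\phi$, the bookkeeping $u_N s_N = \eta_N u s$, $\varepsilon_N^2\delta_N^d=\tau_N$, and the scaling relation $\rho^{(R)}_{z_1/\delta_N,z_2/\delta_N}(q^N_s)=\delta_N^d\rho^{(r_N)}_{z_1,z_2}(w^N_s)$ that you spell out. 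Your write-up just makes explicit the details the paper leaves implicit, and they all check out.
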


As in Theorem \ref{thm:clt_slfv_fixed_radius}, we define the process of rescaled fluctuations by
\begin{equation}\label{definition_zn_driftload}
\zt = \left( \eta_N/\tau_N \right)^{1/2}\left( \wt-\lambda \right).
\end{equation}
(Recall that since $w_0 = \lambda$, the centering term is constant and equals $\lambda$.)
Then by the definition of $\Delta^N$ in \eqref{defn_driftload},
\begin{equation*}
\Delta^N(t,x) = \delta_N^2(s_1+s_2)\varepsilon_N\delta_N^{d-2}\E{\avg{\zt{\eta_N t}}(\delta_N x,r_N)^2}.
\end{equation*} 
Let us define the following notation for any $\phi\in L^{1,\infty}(\R^d)$,
\begin{equation}\label{scaling_phi}
\phi_r(x) = \frac{1}{r^d}\phi(x/r).
\end{equation}
Theorem~\ref{thm:driftload_slfv} is then a direct consequence of the following theorem. 
\begin{thm}\label{thm:driftload}
Suppose that $\tau_N/\eta_N = \littleO{r_N^{d+2}}$. Then for all $\phi\in L^{1,\infty}(\R^d)$, there exists a constant $C>0$ - depending on the dimension $d$ - such that, as $N \to\infty$ with $t \to \infty$ for $d\leq 2$ and $t\delta_N^{-2}\to \infty$ for $d\geq 3$, 
\begin{equation*}
\E{\dual{\zt}{\phi_{r_N}}^2} \underset{N,t\to\infty}{\sim} C \delta_N^{2-d} c_N.
\end{equation*}
\end{thm}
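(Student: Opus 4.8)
The plan is to reuse the machinery of Theorem~\ref{thm:clt_brownian} in the stationary regime. As in Section~\ref{sec:proof_brownian} I normalise $uV_R$, $2R^2/(d+2)$ and $s$ to $1$ and write $r=r_N$. Since $q^N_0\equiv\lambda$, $F(\lambda)=0$ and $\L{r}\lambda=0$, the centering term is constant, $\ft\equiv\lambda$, so the time-dependent test function of \eqref{definition_varphi_n} solves a time-homogeneous backward equation and $\varphi^N(\cdot,s,t)=\psi^N_{t-s}$ with $\psi^N_\sigma:=e^{\sigma A_N}\phi_{r}$, where $A_N\phi:=\L{r}\phi-F'(\lambda)\dbavg{\phi}{r}$ is a bounded self-adjoint operator on $L^2(\R^d)$. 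Writing $A_N\phi=(c-F'(\lambda))\dbavg{\phi}{r}-c\phi$ with $c=\tfrac{d+2}{2r^2}$ and using that the double-average is an $L^p$-contraction, one gets $\norm[p]{\psi^N_\sigma}\le e^{-F'(\lambda)\sigma}\norm[p]{\phi_{r}}$ for $p\in\{1,2,\infty\}$: the reaction term provides a uniform exponential decay, so in particular $\intg{0}{\infty}\norm[1]{\psi^N_\sigma}\ld\sigma\lesssim\norm[1]{\phi}$. The representation \eqref{stochastic_integral_zn}, valid for (M3) with $\rho^{(r)}$ replacing $\sigma^{(r)}$, reads
\begin{equation*}
\dual{\zt}{\phi_{r}}=-\left(\tfrac{\tau_N}{\eta_N}\right)^{1/2}\intg{0}{t}\dual{(\avg{\zt{s}})^2}{R_2(\avg{\wtN{s}},\lambda)\,\avg{\psi^N_{t-s}}{r}}\ld s+\intg{0}{t}\intrd\psi^N_{t-s}(x)\,M^N(\ld x\ld s),
\end{equation*}
and I would square this, take expectations, and deal with the three resulting pieces.

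The error terms first. Lemma~\ref{lemma_bound_avg} holds for (M3) with the same proof --- the quadratic form $\intrd{2}\phi(z_1)\phi(z_2)\rho^{(r)}_{z_1,z_2}(q)\ld z_1\ld z_2$ is a sum of three nonnegative terms, each bounded by a constant times $\norm[2]{\phi}^2$, so Lemma~\ref{lemma:bound_qvar} still applies --- giving $\sup_x\E{\avg{\zt{s}}(x,r)^2}\lesssim V_r^{-1}$; and $|\E{\avg{\zt{s}}(x,r)}|\lesssim(\tau_N/\eta_N)^{1/2}V_r^{-1}$ because this expectation solves an ODE forced only by the $R_2$ term. Combined with $\intg{0}{\infty}\norm[1]{\psi^N_\sigma}\ld\sigma\lesssim 1$, the first term above has $L^1$ norm $\lesssim(\tau_N/\eta_N)^{1/2}V_r^{-1}$, so by $\tau_N/\eta_N=\littleO{r^{d+2}}$ its second moment is $\littleO{r^{d+2}V_r^{-2}}=\littleO{r^{2-d}}$; the cross term is then $\littleO{r^{2-d}c_N}$ by Cauchy--Schwarz once the order of the main term is known.

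The variance term. By \eqref{variation_w_dr},
\begin{equation*}
\E{\left(\intg{0}{t}\intrd\psi^N_{t-s}(x)M^N(\ld x\ld s)\right)^2}=\E{\intg{0}{t}\intrd{2}\psi^N_{t-s}(z_1)\psi^N_{t-s}(z_2)\rho^{(r)}_{z_1,z_2}(\wtN{s})\ld z_1\ld z_2\ld s}+\bigO{\delta_N^2\intg{0}{t}\norm[2]{\psi^N_{t-s}}^2\ld s}.
\end{equation*}
The $\bigO{\delta_N^2}$ error is negligible relative to the leading term. Using the algebraic identity $\rho^{(r)}_{z_1,z_2}(\lambda)=\tfrac12\lambda(1-\lambda)\,V_r(z_1,z_2)/V_r^2$ (which follows from $\lambda(1-\lambda)+(\tfrac12-\lambda)^2=\tfrac14$), one has $\intrd{2}\psi(z_1)\psi(z_2)\rho^{(r)}_{z_1,z_2}(\lambda)\ld z_1\ld z_2=\tfrac12\lambda(1-\lambda)\intrd\avg{\psi}(x,r)^2\ld x$; since $\rho^{(r)}$ is quadratic in its argument, expanding $\wtN{s}=\lambda+(\tau_N/\eta_N)^{1/2}\zt{s}$ produces, besides $\rho^{(r)}(\lambda)$, terms linear and quadratic in $\zt{s}$, and after taking expectations each of these is $\littleO{r^{2-d}c_N}$ --- this uses the bounds above, the estimate $\E{|\dual{\zt{s}}{\psi}|}\lesssim\norm[2]{\psi}+(\tau_N/\eta_N)^{1/2}V_r^{-1}\norm[1]{\psi}$ proved as in \eqref{L1_bound_z}, and $\tau_N/\eta_N=\littleO{r^{d+2}}$. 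Finally $\intrd\avg{\psi^N_{t-s}}(x,r)^2\ld x=(1+\bigO{r})\norm[2]{\psi^N_{t-s}}^2$ since $\psi^N_\sigma$ has a bounded gradient. Hence $\E{\dual{\zt}{\phi_{r}}^2}=\tfrac12\lambda(1-\lambda)(1+o(1))\intg{0}{t}\norm[2]{\psi^N_\sigma}^2\ld\sigma+\littleO{r^{2-d}c_N}$.

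It remains to compute $I_N(t):=\intg{0}{t}\norm[2]{\psi^N_\sigma}^2\ld\sigma$. By Plancherel, $I_N(t)=(2\pi)^{-d}\intrd|\hat\phi(r\xi)|^2\,\frac{1-e^{2t\lambda_N(\xi)}}{-2\lambda_N(\xi)}\,\ld\xi$, where $\lambda_N(\xi)=\tfrac{d+2}{2r^2}(m_1(r\xi)^2-1)-F'(\lambda)m_1(r\xi)^2$ and $m_1$ is the Fourier transform of the normalised unit-ball indicator; since $0\le m_1^2\le 1$ with equality only at the origin, $-\lambda_N(\xi)\ge\min(\tfrac{d+2}{2r^2},F'(\lambda))=F'(\lambda)$ for $N$ large. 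Substituting $\eta=r\xi$ and using $(d+2)(1-m_1(\eta)^2)=|\eta|^2+\bigO{|\eta|^4}$ near $\eta=0$,
\begin{equation*}
\intg{0}{\infty}\norm[2]{\psi^N_\sigma}^2\ld\sigma\sim\frac{r^{2-d}}{(2\pi)^d}\intrd\frac{|\hat\phi(\eta)|^2}{(d+2)(1-m_1(\eta)^2)+2r^2F'(\lambda)m_1(\eta)^2}\,\ld\eta,
\end{equation*}
and the $\eta$-integral converges to a finite limit when $d\ge 3$ ($c_N=1$), diverges like $\norm[1]{\phi}^2\abs{\log r^2}\asymp c_N$ when $d=2$ (the $2r^2F'(\lambda)$ cutting off the logarithmic divergence at $\abs{\eta}\sim r$), and diverges like $\norm[1]{\phi}^2(2F'(\lambda))^{-1/2}r^{-1}$ when $d=1$; since $r=\delta_N R$ this gives $\intg{0}{\infty}\norm[2]{\psi^N_\sigma}^2\ld\sigma\sim C\delta_N^{2-d}c_N$. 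To replace $\intg{0}{\infty}$ by $\intg{0}{t}$ one observes that the bulk of the integral comes from wavenumbers with $\abs{\xi}=\bigO{1}$, where $-\lambda_N(\xi)=\bigO{1}$, when $d\le 2$ --- so $t\to\infty$ alone gives $I_N(t)\to\intg{0}{\infty}\norm[2]{\psi^N_\sigma}^2\ld\sigma$ --- but from $\abs{\xi}\asymp r^{-1}$, where $-\lambda_N(\xi)\asymp r^{-2}$, when $d\ge 3$, which is exactly why one then needs $t\gg r^2\asymp\delta_N^2$. Combining everything yields $\E{\dual{\zt}{\phi_{r}}^2}\sim C\delta_N^{2-d}c_N$. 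The error bookkeeping aside, the main obstacle is this last step: correctly extracting the dimension-dependent behaviour and the two distinct growth requirements on $t$ from the interplay of the ultraviolet regularisation built into $\L{r}$ (cutting off Fourier modes at scale $r^{-1}$), the infrared regularisation by the reaction coefficient $F'(\lambda)$, and the finite-time truncation.
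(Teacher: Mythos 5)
Your plan has the right skeleton (it is essentially the paper's: centering at $\lambda$, a semigroup test function decaying like $e^{-F'(\lambda)\sigma}$, splitting $\dual{Z^N_t}{\phi_{r_N}}$ into a nonlinear error plus a stochastic integral, replacing $w^N$ by $\lambda$ in the quadratic variation, and then computing a time-integrated squared $L^2$ norm with dimension-dependent asymptotics), but there is a genuine gap in the moment estimates, and it is exactly the point where the drift-load regime differs from Theorem~\ref{thm:clt_brownian}. You assert that Lemma~\ref{lemma_bound_avg} ``holds for (M3) with the same proof'', giving a bound $\lesssim V_{r_N}^{-1}$ on the second moment of the ball averages of $Z^N_s$, and you then use this (together with an $L^1$ bound on $\dual{Z^N_s}{\psi}$ ``proved as in \eqref{L1_bound_z}'') over the whole interval $[0,t]$ with $t\to\infty$. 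But the proof of Lemma~\ref{lemma_bound_avg} is a Gronwall argument whose constant is of order $T e^{2T^2\norm{F'}^2}/V_{r_N}$; it degenerates as the time horizon grows, and the theorem is precisely a statement about $t\to\infty$. The exponential decay of $\psi^N_\sigma$ does not rescue you: with a constant growing like $e^{cT^2}$ in the moment bound, the convolution against $e^{-F'(\lambda)(t-s)}$ still blows up. What is needed is a \emph{time-uniform} bound, and this is where the paper's Lemma~\ref{lemma:bound_avg_2} does genuinely new work: It\^o's formula applied to the second and fourth powers of the ball average, the coercivity inequality of Lemma~\ref{lemma:magic}, which rests on $\inf_x R_1(x,\lambda)=\gamma>0$ (the stability conditions \eqref{F_stable_equilibrium_1}--\eqref{F_stable_equilibrium_2}), and the comparison Lemma~\ref{lemma_differential_inequality}. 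You exploit the restoring force $F'(\lambda)>0$ for the test function but nowhere for $Z^N$ itself. Compounding this, you never establish any fourth-moment bound, yet you need one: the inference ``the first term has $L^1$ norm $\lesssim(\tau_N/\eta_N)^{1/2}V_{r_N}^{-1}$, so its second moment is $\littleO{r_N^{d+2}V_{r_N}^{-2}}$'' is not valid (a first-moment bound does not control a second moment), and both the square of the nonlinear term and its cross term with the martingale part require the time-uniform estimate $\E{(\text{ball average of }Z^N_t)^4}\lesssim r_N^{-2d}$, which is exactly the second half of Lemma~\ref{lemma:bound_avg_2}.

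Granting those moment bounds, your error bookkeeping and the replacement of $w^N_s$ by $\lambda$ can be made to work, and your Fourier computation of the main term is a legitimate alternative to the paper's real-space route (explicit kernel \eqref{eq:driftload_varphi}, the scaling identity \eqref{eq:f_rescaling}, the local-CLT estimate of Lemma~\ref{lem:estimate_f} and a dimension-by-dimension splitting of the time integral); in particular it correctly recovers the two conditions $t\to\infty$ for $d\le2$ and $t\delta_N^{-2}\to\infty$ for $d\ge3$ from the infrared/ultraviolet structure of $\lambda_N(\xi)$. One slip there, however: the step $\norm[2]{\psi^N_\sigma \text{ averaged over } B(\cdot,r_N)}^2=(1+\bigO{r_N})\norm[2]{\psi^N_\sigma}^2$ is false for $d\ge3$, because in that case the bulk of $\int_0^t\norm[2]{\psi^N_\sigma}^2\ld\sigma$ comes from $\sigma\lesssim r_N^2$, where the Fourier mass of $\psi^N_\sigma$ sits at frequencies of order $r_N^{-1}$ and the ball-average multiplier $m_1(r_N\xi)$ is not close to $1$ (the gradient of $\phi_{r_N}$ is of order $r_N^{-d-1}$, not bounded). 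The fix is simply to keep the averaged test function and carry the factor $m_1(\eta)^2$ in the numerator of your Plancherel formula; this changes only the constant $C$, so unlike the moment issue it is harmless to the statement, but as written the claim is incorrect.
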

\begin{proof}[Proof of Theorem \ref{thm:driftload_slfv}]
Setting $\phi = \1{|x|\leq 1}$ gives the result for $\Delta ^N(t,0)$; the general result follows by symmetry.
\end{proof}

Note that the only difference between the martingale problems (M1) and (M3) in Definitions~\ref{formal_def_M1} and~\ref{formal_def_M3} is that $\sigma^{(r_{N})}_{z_{1},z_{2}}$ is replaced by $\rho^{(r_{N})}_{z_{1},z_{2}}$.
Hence it is easy to see that Lemma \ref{lemma:bound_qvar} and Lemma \ref{lemma_bound_avg} also hold in this case (with different constants).
It is also possible to adapt the proofs in Section \ref{subsec:brownian:convergence_M} to show that on compact time intervals, $\proc{Z^N}$ converges to the solution of the following SPDE, 
\begin{equation*}
\ld z_t = \left[ \frac{1}{2}\Delta z_t - F'(\lambda)z_t \right]\ld t + \sqrt{\frac{1}{2}\lambda(1-\lambda)}\ld W_t.
\end{equation*}
This process admits a stationary distribution, under which $\dual{z_t}$ is a Gaussian random variable with variance $$\frac{1}{2} \lambda (1-\lambda) \intg{0}{\infty} \intrd e^{-2 F'(\lambda) t} G_t \ast \phi (x)^2 \ld x \ld t.$$
We can thus hope to extend the convergence of $\proc{Z^N}$ to the whole real line (as in \cite{norman_ergodicity_1977}), and use the above expression to estimate the second moment of $\dual{\zt}{\phi_{r_N}}$ for large times. 
Some care is needed though, as we are letting the support of the test function vanish as $N\to\infty$.

\begin{proof}[Proof of Theorem~\ref{thm:driftload}]
Since $q_0^N = \lambda$, by the same argument as for \eqref{martingale_z},
\begin{equation} \label{driftload:spde_zn}
\ld \zt = \left[ \L{r_N}\zt - F'(\lambda)\dbavg{\zt}{r_N} - \left( \tau_N/\eta_N \right)^{1/2}\avg{(\avg{\zt})^2 R_2(\avg{\wt},\lambda)}{r_N} \right]\ld t + \ld M^N_t,
\end{equation}
where $M^N$ is a martingale measure with covariation measure $Q^N$ given by
\begin{equation} \label{eq:Q_N_dr}
Q^N(dz_1 dz_2 ds)= \rho ^{(r_N)}_{z_1,z_2} (\wt{s}) dz_1 dz_2 ds + \bigO{\delta_N^2} \delta_{z_1=z_2} (\ld z_1 \ld z_2) \ld s.
\end{equation}
Consider a time dependent test function $\varphi^N$ which solves
\begin{equation*}
\left\lbrace
\begin{aligned}
& \partial_s\varphi^N(x,s,t) + \L{r_N}\varphi^N(x,s,t) - F'(\lambda)\dbavg{\varphi^N(s,t)}(x,r_N) = 0, \\
& \varphi^N(x,t,t) = \phi(x).
\end{aligned}
\right.
\end{equation*}
Then, by Proposition~\ref{prop:time_dependent_tf} and \eqref{driftload:spde_zn},
\begin{equation}\label{driftload:stochastic_integral}
\dual{\zt} = -\left( \tau/\eta \right)^{1/2}\intg{0}{t}\dual{(\avg{\zt{s}})^2}{R_2(\avg{\wt{s}},\lambda)\avg{\varphi^N(s,t)}{r_N}}\ld s + \intg{0}{t}\intrd\varphi^N(x,s,t)M^N(\ld x\ld s).
\end{equation}
The remainder of the proof now consists of proving that the main contribution to the variance of $\dual{\zt}{\phi_{r_N}}$ is made by the last term on the right-hand-side and then estimating this contribution. Note that $\varphi^N$ is given explicitly by
\begin{equation} \label{eq:driftload_varphi}
 \varphi^N(x,s,t) = e^{-F'(\lambda)(t-s)}\G{r_N}_{D_N(t-s)}\ast\phi(x),
\end{equation}
with $D_N = 1-F'(\lambda)\frac{2r_N^2}{d+2}$. In particular, $\norm[q]{\varphi(s,t)}\leq\norm[q]{\phi}e^{-F'(\lambda)(t-s)}$. The following lemma extends the result of Lemma \ref{lemma_bound_avg} to arbitrarily large times, and will be proved in Subsection~\ref{subsec:driftload_proofs}.
\begin{lemma}\label{lemma:bound_avg_2}
There exist constants $K_1'$ and $K_0'$ such that, for all $x\in\R^d$ and all $t\geq 0$,
\begin{align*}
 \E{\avg{\zt}(x,r_N)^2} \leq \frac{K_1'}{r_N^d}, && \mathrm{and} && \E{\avg{\zt}(x,r_N)^4} \leq \frac{K_0'}{r_N^{2d}}.
\end{align*}
\end{lemma}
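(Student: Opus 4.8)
The plan is to rerun the argument behind Lemma~\ref{lemma_bound_avg}, but centred on $\lambda$ and driven by the Ornstein--Uhlenbeck semigroup attached to the linearised operator $\phi\mapsto\L{r_N}\phi-F'(\lambda)\dbavg{\phi}{r_N}$ rather than by the pure heat-type semigroup $\G{r_N}$, so that the assumption $F'(\lambda)>0$ produces an exponential-in-time damping. Concretely, I would apply Proposition~\ref{prop:time_dependent_tf} to the SPDE~\eqref{driftload:spde_zn} with the time-dependent test function $\varphi^N(x,s,t)=e^{-F'(\lambda)(t-s)}\G{r_N}_{D_N(t-s)}\ast\phi(x)$ from~\eqref{eq:driftload_varphi} and specialise to $\phi(\cdot)=\tfrac{1}{V_{r_N}}\1{B(x,r_N)}$, obtaining
\begin{equation*}
\avg{\zt}(x,r_N) = -(\tau_N/\eta_N)^{1/2}\intg{0}{t}\dual{(\avg{\zt{s}})^{2}}{R_{2}(\avg{\wt{s}},\lambda)\avg{\varphi^{N}(s,t)}{r_N}}\ld s + \intg{0}{t}\intrd\varphi^{N}(y,s,t)M^{N}(\ld y\ld s),
\end{equation*}
where $\norm[1]{\varphi^{N}(s,t)}\le e^{-F'(\lambda)(t-s)}$ and $\norm[\infty]{\varphi^{N}(s,t)}\le e^{-F'(\lambda)(t-s)}/V_{r_N}$ (the latter from $\avg{\G{r_N}_u\ast\phi}\le 1/V_{r_N}$ together with mass conservation). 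The decisive gain over Lemma~\ref{lemma_bound_avg} is that $\intg{0}{\infty}e^{-2F'(\lambda)u}\ld u<\infty$, so all the Gronwall constants below become time-independent.

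For the stochastic-integral term, the analogue of Lemma~\ref{lemma:bound_qvar} for (M3) — valid because $\rho^{(r_N)}_{z_1,z_2}(w)\le CV_{r_N}^{-1}\1{|z_1-z_2|<2r_N}$ uniformly in $w\in\Xi$ — gives $\E{(\int\varphi^{N}M^{N})^{2}}\le C\intg{0}{t}\norm[2]{\varphi^{N}(s,t)}^{2}\ld s\le C'V_{r_N}^{-1}$ uniformly in $t$; and since, for fixed $t$, $r\mapsto\intg{0}{r}\intrd\varphi^{N}(y,s,t)M^{N}(\ld y\ld s)$ is a martingale whose bracket is dominated by the deterministic constant $C'V_{r_N}^{-1}$, Burkholder--Davis--Gundy yields $\E{(\int\varphi^{N}M^{N})^{4}}\le C''V_{r_N}^{-2}$, again uniformly in $t$. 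These are exactly the orders $r_N^{-d}$ and $r_N^{-2d}$ claimed, so it remains to absorb the quadratic term.

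The quadratic term is the main obstacle: its presence, together with the fact that $\avg{\zt}$ converges only as a distribution, is why one cannot simply pass to a limit, and controlling it uniformly over all of $[0,\infty)$ is the heart of the proof. I would run a bootstrap in $t$: writing $g_p(s):=\sup_x\E{\avg{\zt{s}}(x,r_N)^p}$, and starting from $\zt{0}=0$, on the maximal interval where $g_2(s)\le K_1'/r_N^d$ and $g_4(s)\le K_0'/r_N^{2d}$ hold, Cauchy--Schwarz against the sub-probability measure $\avg{\varphi^N(s,t)}{r_N}(y)\ld y$ on each time slice (total mass $\le e^{-F'(\lambda)(t-s)}$), with $|R_2|\le\tfrac12\norm[\infty]{F''}$ and the a priori bound $|\avg{\zt}|\le(\eta_N/\tau_N)^{1/2}$, bounds the second, respectively fourth, moment of the quadratic term by $C(\tau_N/\eta_N)\intg{0}{t}e^{-F'(\lambda)(t-s)}g_4(s)\ld s$ and a comparable quantity; by the hypothesis $\tau_N/\eta_N=\littleO{r_N^{d+2}}$ these are $\littleO{r_N^{-d}}$ and $\littleO{r_N^{-2d}}$, hence strictly smaller than the stochastic-integral contributions, so the two bounds improve strictly and the bootstrap interval is all of $[0,\infty)$. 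To make the damping genuinely dissipative and independent of the size of $\norm[\infty]{F''}$ — which for the overdominance $F$ of~\eqref{definition_F_driftload} is not small compared with $F'(\lambda)$ — one can, in place of (or to reinforce) the linearisation, use the global monotonicity $F(w)(w-\lambda)\ge 0$ on $[0,1]$, equivalently $R_1(w,\lambda)\ge 0$, which is where I expect the bulk of the technical care to go.
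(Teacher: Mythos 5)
Your reduction to the Duhamel representation with $\varphi^N(x,s,t)=e^{-F'(\lambda)(t-s)}\G{r_N}_{D_N(t-s)}\ast\phi(x)$ and $\phi=V_{r_N}^{-1}\1{B(x,r_N)}$ is fine, and the second-moment half of your bootstrap would indeed close \emph{provided} a fourth-moment bound of order $r_N^{-2d}$ is already available: the quadratic term's second moment is $\lesssim(\tau_N/\eta_N)\int_0^t e^{-F'(\lambda)(t-s)}g_4(s)\,\ld s=\littleO{r_N^{-d}}$. The genuine gap is the fourth-moment half. Estimating the fourth moment of the quadratic term by Jensen against the measure $\avg{\abs{\varphi^N(s,t)}}(y)\,\ld y\,\ld s$ produces $(\tau_N/\eta_N)^{2}\int_0^t e^{-F'(\lambda)(t-s)}\sup_x\mathbb{E}\bigl[\avg{\zt{s}}(x,r_N)^{8}\bigr]\ld s$, i.e.\ it requires \emph{eighth} moments, which are not part of your bootstrap; and if you cut the regress with the only a priori information available, $\abs{\avg{\zt{s}}}\lesssim(\eta_N/\tau_N)^{1/2}$, the factor $(\eta_N/\tau_N)^{2}$ exactly cancels the prefactor $(\tau_N/\eta_N)^{2}$ and you are left with a contribution of the \emph{same} order $r_N^{-2d}$ as the bound you are trying to prove, with a constant involving powers of $\norm[\infty]{F''}/F'(\lambda)$ that is not small (as you yourself note for the overdominance $F$). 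The same cancellation occurs at every level of the moment hierarchy, so the "strict improvement" needed to extend the maximal interval never materialises for $g_4$, and since your $g_2$ bound leans on $g_4$, the whole scheme is left hanging. A secondary issue: for the jump martingale $\int\varphi^N\,\ld M^N$, BDG at exponent $4$ requires the quadratic variation $[\,\cdot\,]$, not just the predictable bracket, so you also need jump-size control (jumps of size $\lesssim u\varepsilon_N^{1/2}\delta_N^{1-d/2}$ at rate $\eta_N^{-1}$) to justify $\mathbb{E}\bigl[(\int\varphi^N\ld M^N)^4\bigr]\lesssim V_{r_N}^{-2}$; this is fixable but not automatic as stated.

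The paper sidesteps the moment regress entirely by \emph{not} linearising: it applies the generalised It\^o formula to $\avg{\zt}(x,r_N)^2$ and $\avg{\zt}(x,r_N)^4$, keeps the drift in the factored form $F(\avg{w})=(\avg{w}-\lambda)R_1(\avg{w},\lambda)$ with the strict bound $\inf_{[0,1]}R_1(\cdot,\lambda)=\gamma>0$ (your ``$R_1\geq 0$'' is not enough; the strict positivity, quoted from Norman, is what gives uniform damping), and uses the pointwise inequality of Lemma~\ref{lemma:magic} to show that the full nonlinear drift is dissipative for both the second and fourth powers. This yields a differential inequality $\partial_t h\leq \L h-\alpha_N h+\bigO{V_{r_N}^{-1}}$ (resp. $\bigO{r_N^{-2d}}$), closed by the comparison Lemma~\ref{lemma_differential_inequality}; the jump corrections in the fourth-moment expansion are computed explicitly and shown to be $\littleO{r_N^{-2d}}$. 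Your closing remark points in exactly this direction, but exploiting the sign of the nonlinearity is naturally an It\^o/differential-inequality argument, not something you can graft onto the stochastic-convolution representation you set up, so as written the proposal does not establish the lemma.
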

Using the expression for $\varphi^N$ in \eqref{eq:driftload_varphi} and the Cauchy-Schwartz inequality,
\begin{multline*}
\E{\left( \intg{0}{t}\dual{(\avg{\zt{s}})^2}{R_2(\avg{\wt{s}},\lambda)\avg{\varphi^N(s,t)}}\ld s \right)^2} \\ \leq \frac{1}{4}\norm[\infty]{F''}^2\frac{1-e^{-F'(\lambda)t}}{F'(\lambda)}\E{\intg{0}{t}e^{-F'(\lambda)(t-s)}\dual{(\avg{\zt{s}})^2}{\abs{\avg{\G{r_N}_{D_N(t-s)}\ast\phi}}}^2\ld s}.
\end{multline*}
Another use of the Cauchy-Schwartz inequality yields
\begin{equation*}
\dual{(\avg{\zt{s}})^2}{\abs{\avg{\G{r_N}_{D_N(t-s)}\ast\phi}}}^2 \leq \norm[1]{\G{r_N}_{D_N(t-s)}\ast\phi}\dual{(\avg{\zt{s}})^4}{\abs{\avg{\G{r_N}_{D_N(t-s)}\ast\phi}}}.
\end{equation*}
Hence, using Lemma~\ref{lemma:bound_avg_2} and the fact that $\norm[1]{\G_t\ast\phi}\leq \norm[1]{\phi}$,
\begin{equation*}
\E{\dual{(\avg{\zt{s}})^2}{\abs{\avg{\G{r_N}_{D_N(t-s)}\ast\phi}}}^2} \leq \norm[1]{\phi}^2\frac{K_0'}{r_N^{2d}}.
\end{equation*}
As a result,
\begin{equation}\label{bound_nonlinearity}
\E{\left( \intg{0}{t}\dual{(\avg{\zt{s}})^2}{R_2(\avg{\wt{s}},\lambda)\avg{\varphi^N(s,t)}}\ld s \right)^2} \lesssim \norm[1]{\phi}^2r_N^{-2d},
\end{equation}
uniformly in $t\in\R_+$. We now move on to estimating the contribution of the second term in \eqref{driftload:stochastic_integral}. The following lemma will be proved in Subsection~\ref{subsec:driftload_proofs}.
\begin{lemma}\label{lemma:estimate_variance}
As $N\to\infty$,
 \begin{equation*}
\E{\left( \intg{0}{t}\intrd\varphi^N(x,s,t)M^N(\ld x\ld s) \right)^2} = \frac{1}{2}\lambda(1-\lambda)\intg{0}{t}\norm[2]{\avg{\varphi^N(s,t)}{r_N}}^2\ld s + \littleO{r_N} \intg{0}{t}\norm[2]{\varphi^N(s,t)}^2\ld s.
 \end{equation*}
\end{lemma}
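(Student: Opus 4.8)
The plan is to start from the second-moment identity for stochastic integrals against worthy martingale measures (as in \cite[Theorem~2.5]{walsh_introduction_1986}), which rewrites the left-hand side as
\[
\E{\intg{0}{t}\intrd{2}\varphi^N(z_1,s,t)\varphi^N(z_2,s,t)\rho^{(r_N)}_{z_1,z_2}(\wt{s})\ld z_1\ld z_2\ld s}
\]
plus the contribution of the diagonal part $\bigO{\delta_N^2}\delta_{z_1=z_2}$ of the covariation measure $Q^N$ in \eqref{eq:Q_N_dr}, which equals $\bigO{\delta_N^2}\intg{0}{t}\norm[2]{\varphi^N(s,t)}^2\ld s$ and is already of the required form since $\delta_N^2=\littleO{r_N}$. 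In the main term I would insert the definition \eqref{def_tau_r} of $\rho^{(r_N)}$ and interchange the order of integration so that the integral over $x\in B(z_1,r_N)\cap B(z_2,r_N)$ sits on the outside; the $z_1$- and $z_2$-integrals then factor, producing $\intg{0}{t}\intrd\Theta_s(x)\ld x\ld s$ with
\[
\Theta_s(x)=\avg{\wt{s}}(x)^2\,\avg{\varphi^N(s,t)(1-\wt{s})}(x)^2+2\avg{\wt{s}}(x)(1-\avg{\wt{s}}(x))\,\avg{\varphi^N(s,t)(\tfrac12-\wt{s})}(x)^2+(1-\avg{\wt{s}}(x))^2\,\avg{\varphi^N(s,t)\wt{s}}(x)^2,
\]
all averages being over balls of radius $r_N$.

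Next I would substitute $\wt{s}=\lambda+\epsilon_N\zt{s}$ with $\epsilon_N:=(\tau_N/\eta_N)^{1/2}$ and expand $\Theta_s(x)$ as a polynomial in $\epsilon_N$, using the affineness of the weights $(1-\cdot)$, $(\tfrac12-\cdot)$, $(\cdot)$ and the identity $\lambda(1-\lambda)+(\tfrac12-\lambda)^2=\tfrac14$. The $\epsilon_N^0$-term is $\big[\lambda^2(1-\lambda)^2+2\lambda(1-\lambda)(\tfrac12-\lambda)^2+(1-\lambda)^2\lambda^2\big]\avg{\varphi^N(s,t)}(x)^2=\tfrac12\lambda(1-\lambda)\avg{\varphi^N(s,t)}(x)^2$, which integrates to the leading term $\tfrac12\lambda(1-\lambda)\norm[2]{\avg{\varphi^N(s,t)}{r_N}}^2$. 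Writing $A=\avg{\zt{s}}(x)$, $B=\avg{\varphi^N(s,t)\zt{s}}(x)$, $V=\avg{\varphi^N(s,t)}(x)$, I expect a substantial cancellation: the coefficients of $\epsilon_N^3$ and $\epsilon_N^4$ should vanish identically, the $\epsilon_N^1$-coefficient should reduce to $(\tfrac12-\lambda)AV^2$ (the $BV$-terms cancelling), and the $\epsilon_N^2$-coefficient should be $(B-AV)^2-\tfrac12 A^2V^2$. Thus $\Theta_s(x)=\tfrac12\lambda(1-\lambda)V^2+\epsilon_N(\tfrac12-\lambda)AV^2+\epsilon_N^2(B-AV)^2-\tfrac12\epsilon_N^2A^2V^2$, and it remains to show that the last three terms contribute $\littleO{r_N}\intg{0}{t}\norm[2]{\varphi^N(s,t)}^2\ld s$ in expectation.

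The terms in $AV^2$ and $A^2V^2$ are easy: Lemma~\ref{lemma:bound_avg_2} gives $\E{\avg{\zt{s}}(x,r_N)^2}\le K_1'/r_N^d$, Jensen's inequality gives $\norm[2]{\avg{\varphi^N(s,t)}{r_N}}\le\norm[2]{\varphi^N(s,t)}$, and $\tau_N/\eta_N=\littleO{r_N^{d+2}}$ yields $\epsilon_N r_N^{-d/2}=\littleO{r_N}$ and $\epsilon_N^2 r_N^{-d}=\littleO{r_N^2}$, which suffices. The main obstacle is the cross term $\epsilon_N^2\E{\intrd(B-AV)^2\ld x}$, where $B-AV=V_{r_N}^{-1}\int_{B(x,r_N)}\big(\varphi^N(y,s,t)-\avg{\varphi^N(s,t)}(x,r_N)\big)\zt{s}(y)\ld y$ is the local covariance of $\varphi^N(\cdot,s,t)$ and $\zt{s}$ over $B(x,r_N)$, which vanishes unless $x$ lies within $r_N$ of the support of $\varphi^N(\cdot,s,t)$; the difficulty is that the available a priori bounds control ball averages of $\zt{s}$ rather than such weighted averages. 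Here I would use that $\varphi^N(\cdot,s,t)=e^{-F'(\lambda)(t-s)}\G{r_N}_{D_N(t-s)}\ast\phi$ has first derivatives bounded uniformly in $s\le t$ (convolution with the transition kernel $\G{r_N}_{\cdot}$ does not enlarge sup-norms of derivatives), so that $|\varphi^N(y,s,t)-\avg{\varphi^N(s,t)}(x,r_N)|\lesssim r_N$ on $B(x,r_N)$ and hence $|B-AV|\lesssim r_N\,\avg{\abs{\zt{s}}}(x,r_N)$, together with the complementary a.e. bound $|B-AV|\le 2\epsilon_N^{-1}\avg{\abs{\varphi^N(s,t)}}(x,r_N)$ coming from $\abs{\wt{s}-\lambda}\le 1$; combining these and using $\tau_N/\eta_N=\littleO{r_N^{d+2}}$ should give the required estimate. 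A cleaner alternative for this last term is to re-run the mild-form plus Gronwall argument of Lemma~\ref{lemma_bound_avg} with the zero-mean test function $y\mapsto V_{r_N}^{-1}\big(\varphi^N(y,s,t)-\avg{\varphi^N(s,t)}(x,r_N)\big)\1{\abs{y-x}<r_N}$ in place of a ball indicator, which bounds $\E{(B-AV)^2}$ by the squared $L^2$-norm of this test function, of order $r_N^2/V_{r_N}$ times the (bounded) derivatives of $\varphi^N$, again integrating to smaller order once $\tau_N/\eta_N=\littleO{r_N^{d+2}}$ is used. Summing the remaining contributions then gives the stated identity.
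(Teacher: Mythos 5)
Your skeleton is sound and coincides with the paper's starting point: the Walsh second-moment identity with the covariation \eqref{eq:Q_N_dr}, the factorisation into the three averaged terms, the substitution $w^N_s=\lambda+(\tau_N/\eta_N)^{1/2}Z^N_s$, and the algebra (the constant term summing to $\tfrac12\lambda(1-\lambda)V^2$, the cancellation of the $BV$ terms, the vanishing of the cubic and quartic coefficients) are all correct, and your treatment of the $AV^2$ and $A^2V^2$ contributions via Lemma~\ref{lemma:bound_avg_2} is exactly in the spirit of the paper. The genuine gap is the term you yourself single out, $(\tau_N/\eta_N)\,\E{(B-AV)^2}$, and neither of your two fixes closes it. For route (a): the oscillation bound $|B-AV|\lesssim r_N$ times the ball average of $|Z^N_s|$ requires a gradient bound on $\varphi^N(\cdot,s,t)$ that is uniform in $N$; but in the proof of Theorem~\ref{thm:driftload} the lemma is applied with $\phi=\phi_{r_N}$ (and $\varphi^N(\cdot,t,t)=\phi_{r_N}$ exactly), whose derivatives blow up like $r_N^{-d-1}$, while the stated error $\littleO{r_N}\int_0^t\norm[2]{\varphi^N(s,t)}^2\ld s$ must hold with a constant independent of the test function. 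Worse, even granting bounded gradients, you would need a second moment for the ball average of $|Z^N_s|$ (absolute value \emph{inside} the average); Lemma~\ref{lemma:bound_avg_2} controls only the signed average, and its $1/V_{r_N}$ decay comes precisely from the cancellation that the absolute value destroys — no such moment bound is available, and interpolating with the trivial bound $2(\eta_N/\tau_N)^{1/2}$ times the averaged $|\varphi^N|$ only produces an error of size $\bigO{r_N}\norm[1]{\varphi^N(s,t)}$, which is neither $\littleO{r_N}$ nor of the required $\norm[2]{\cdot}^2$ form.

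Route (b) is closer, but as written it does not deliver the estimate either. Lemma~\ref{lemma_bound_avg} is a finite-horizon Gronwall bound whose constant grows like $e^{cT}$, whereas here you need control uniform in $t\in[0,\infty)$ (the theorem sends $t\to\infty$); the time-uniform analogue, Lemma~\ref{lemma:bound_avg_2}, is proved through the It\^o/differential-inequality mechanism of Lemmas~\ref{lemma:magic} and~\ref{lemma_differential_inequality}, which is tied to the ball-average structure and does not transfer verbatim to your centred test function $\psi_x$. You would therefore have to prove a new, time-uniform estimate of the type $\E{\langle Z^N_s,\psi\rangle^2}\lesssim\norm[2]{\psi}^2+\littleO{1}\norm[1]{\psi}^2$ for deterministic $\psi$, e.g.\ via the damped mild form \eqref{driftload:stochastic_integral}--\eqref{eq:driftload_varphi} together with \eqref{bound_nonlinearity}; note that once you have this, the derivative bound should be dropped altogether, since $\int_{\R^d}\norm[2]{\psi_x}^2\ld x\le 4V_{r_N}^{-1}\norm[2]{\varphi^N(s,t)}^2$ with no smoothness, already giving $\littleO{r_N^2}\norm[2]{\varphi^N(s,t)}^2$. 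The paper avoids the problem entirely by a different decomposition: it never isolates a purely quadratic functional of $Z^N$, but writes each difference as a double integral against the combined kernel $(w^N_s(z_1)-\lambda)(w^N_s(z_2)+\lambda-2)$, bounds the second factor crudely by $2$ (using $|w^N_s-\lambda|\le1$), and applies the time-uniform first-moment bound of Lemma~\ref{lemma:bound_phi} to the resulting linear functional, so that only $\norm[1]{\cdot}$ and $\norm[2]{\cdot}$ norms of the test function ever appear. Either adopt that device or supply the missing time-uniform second-moment estimate; as it stands, your handling of the $(B-AV)^2$ term is a real gap.
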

As we shall see in Subsection \ref{subsec:driftload_proofs}, this is a consequence of the fact that in the expression for $Q^N$ in \eqref{eq:Q_N_dr}, $w^N$ can be replaced by $\lambda$. 
As a result, using \eqref{eq:driftload_varphi} and \eqref{bound_nonlinearity} in~\eqref{driftload:stochastic_integral} and since $\tau_N/\eta_N=\littleO{r_N^{d+2}}$, we have
\begin{multline*}
\E{\dual{\zt}{\phi_{r_N}}^2} = \frac{1}{2}\lambda(1-\lambda)\intg{0}{t}e^{-2F'(\lambda)s}\norm[2]{\avg{\G{r_N}_{D_N s}\ast\phi_{r_N}}{r_N}}^2\ld s \\ + \littleO{r_N} \intg{0}{t}e^{-2F'(\lambda)s}\norm[2]{\G{r_N}_{D_N s}\ast\phi_{r_N}}^2\ld s + \littleO{r_N^{2-d}}.
\end{multline*}
To study the asymptotic behaviour of the first integral, we use the scaling properties of the function $\G$. Recall that $\proc{\xi^{(r)}}$ is a L\'evy process with infinitesimal generator $\L$; it is not difficult to show that it satisfies the following scaling property:
\begin{equation*}
 \E[x]{\phi(\xi_t^{(r)})} = \E[x/c]{\phi(c\,\xi_{t/c^2}^{(r/c)})}.
\end{equation*}
(Simply look at the infinitesimal generator of both processes.) Hence
\begin{equation*}
\G{r_N}_t\ast\phi_{r_N}(x) = r_N^{-d}\G{1}_{t/r_N^2}\ast\phi_1(x/r_N).
\end{equation*}
Set $f(t) = \norm[2]{\avg{\G{1}_t\ast\phi}{1}}^2$; it follows that
\begin{equation} \label{eq:f_rescaling}
\norm[2]{\avg{\G{r_N}_{D_N s}\ast\phi_{r_N}}{r_N}}^2 = r_N^{-d}f(D_N s/r_N^2).
\end{equation}
If we can show that, as $N,t\to\infty$, there is a constant $\tilde{C}>0$ such that
\begin{equation}\label{integral_estimate}
\intg{0}{t}e^{-2F'(\lambda)s}f(D_Ns/r_N^2)\ld s \sim \tilde{C}r_N^2c_N,
\end{equation}
the result will follow.
For this we need the following estimate of $f(t)$ when $t \to \infty$.

\begin{lemma} \label{lem:estimate_f}
For $\phi \geq 0$, as $t \to \infty$,
\begin{equation}\label{asymptotics_f}
f(t) \sim (4\pi t)^{-d/2}\norm[1]{\phi}^2.
\end{equation}
\end{lemma}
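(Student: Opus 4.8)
The plan is to compute the large-time asymptotics of $f(t)=\norm[2]{\avg{\G{1}_t\ast\phi}{1}}^2$ by first reducing it to the large-time behaviour of the semigroup $\G{1}_t$ itself, and then invoking a local central limit theorem for the L\'evy process $\proc{\xi^{(1)}}$. Recall that $\G{1}_t$ is not a genuine function (since $\xi^{(1)}_t=0$ with positive probability), but $\avg{\G{1}_t}$ is in $L^{1,\infty}(\R^d)$, and by definition $\avg{\G{1}_t\ast\phi}(x,1)=\E[x]{\avg{\phi}(\xi^{(1)}_t,1)}$, so the quantity we must control is $\intrd\big(\E[x]{(\avg{\phi})(\xi^{(1)}_t)}\big)^2\ld x$, where now $\avg{\phi}$ is a fixed bounded integrable function with $\norm[1]{\avg{\phi}}=\norm[1]{\phi}$. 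The key point is that $\xi^{(1)}$ has generator $\L$, which by Proposition~\ref{prop:average} differs from $\tfrac12\Delta$ by an operator whose symbol vanishes to higher order at the origin; hence $\xi^{(1)}$ is a mean-zero, finite-variance random walk in continuous time whose covariance matrix is a multiple of the identity (by rotational symmetry of $\mu$ and of the ball), say $\Sigma t = \sigma^2 t\,\mathrm{Id}$ with $\sigma^2$ computable from $\L$.

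First I would establish a \emph{local CLT}: there is a constant $c_d>0$ such that, uniformly in $x$,
\begin{equation*}
\E[x]{g(\xi^{(1)}_t)} = \frac{1}{(2\pi\sigma^2 t)^{d/2}}\left(\intrd g(y)\ld y + o(1)\right)\qquad(t\to\infty)
\end{equation*}
for bounded integrable $g$, and moreover $t^{d/2}\E[x]{g(\xi^{(1)}_t)}$ is bounded uniformly in $x$ and $t\geq 1$. This can be proved via the Fourier transform: the characteristic function of $\xi^{(1)}_t$ is $e^{t\hat{\L}(\theta)}$ where $\hat{\L}(\theta)=-\tfrac12\sigma^2|\theta|^2+O(|\theta|^4)$ near $0$ and $\mathrm{Re}\,\hat{\L}(\theta)<0$ away from $0$ (since the jump measure of $\xi^{(1)}$, which is $\propto$ the law of a uniform point in a ball, is not supported on a sublattice), so one splits the inversion integral into a neighbourhood of the origin — where the Gaussian approximation gives the stated leading term after rescaling $\theta\mapsto\theta/\sqrt t$ and dominated convergence — and the complement, which decays faster than any polynomial in $t$. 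Then
\begin{equation*}
f(t) = \intrd\left(\E[x]{(\avg{\phi})(\xi^{(1)}_t)}\right)^2\ld x.
\end{equation*}
Writing $h_t(x):=t^{d/2}\E[x]{(\avg{\phi})(\xi^{(1)}_t)}$, the local CLT gives $h_t(x\sqrt t)\to (2\pi\sigma^2)^{-d/2}\norm[1]{\phi}\,G^{\sigma^2}_1(x)$-type behaviour after the substitution $x=y\sqrt t$; more directly, $t^{d}\,[\E[x]{(\avg\phi)(\xi^{(1)}_t)}]^2$ integrated against $\ld x$ and rescaled by $x=y\sqrt t$ yields $t^{d/2}f(t)\to \norm[1]{\phi}^2\intrd\left(\tfrac{1}{(2\pi\sigma^2)^{d/2}}e^{-|y|^2/(2\sigma^2)}\right)^2\ld y$ by dominated convergence (the uniform bound $h_t\leq C$ and the integrable Gaussian dominating function — obtained from a standard off-diagonal decay estimate on $\E[x]{(\avg\phi)(\xi^{(1)}_t)}$ for $|x|$ large — justify the limit). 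Finally, the Gaussian integral evaluates to $(4\pi\sigma^2)^{-d/2}$, and one checks that the normalisation from Proposition~\ref{prop:average} makes $\sigma^2=1$ in the conventions used (the operator $\L$ approximates $\tfrac12\Delta$, consistently with $G_t=(2\pi t)^{-d/2}e^{-|x|^2/(2t)}$), giving exactly $f(t)\sim(4\pi t)^{-d/2}\norm[1]{\phi}^2$.

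The main obstacle will be justifying the passage to the limit in the integral defining $f(t)$ uniformly enough: one needs both the pointwise local CLT \emph{and} a dominating function integrable over all of $\R^d$, which requires an off-diagonal upper bound showing that $\E[x]{(\avg\phi)(\xi^{(1)}_t)}$ decays (say sub-exponentially, or at least faster than $|x|^{-d-1}$ after the $\sqrt t$-rescaling) when $|x|\gg\sqrt t$. Since $\xi^{(1)}$ has bounded jumps (its jump distribution is supported in $B(0,2)$, as each reproduction event has radius $1$), such bounds follow from standard large-deviation estimates for continuous-time random walks with bounded increments (e.g.\ a Bernstein-type inequality via the Laplace transform $\E[0]{e^{\langle\lambda,\xi^{(1)}_t\rangle}}=e^{t(\hat{\L}(-i\lambda))}$ with $\hat{\L}(-i\lambda)\leq C|\lambda|^2$ for the relevant range). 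The rest — the Fourier-analytic local CLT and the evaluation of the Gaussian integral — is routine.
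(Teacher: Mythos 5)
Your approach is correct in outline, and it is genuinely different from the paper's. The paper avoids any local CLT: by the semigroup (Chapman--Kolmogorov) property it first rewrites $f(t)=\dual{ \G{1}_{2t} \ast \avg{\phi}(1) }{ \avg{\phi}(1) }$ so that only a single transition kernel appears, then uses the scaling identity $\G{1}_{2t}\ast\phi(x)=\E[0]{ \phi( x + \sqrt{t} \xi^{(1/\sqrt{t})}_2 ) }$, splits the law of $\xi^{(r)}_2$ into the atom at $0$ plus a continuous, radially decreasing density $g^{(r)}_2$ (Lemma~\ref{lem:density_green_fct}), upgrades the convergence $g^{(r)}_2\to G_2$ (from generator convergence, Proposition~\ref{prop:average}, and Ethier--Kurtz) to locally uniform convergence via Dini, and finishes with dominated convergence, which is painless because the density is bounded by its value at $0$. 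You instead keep the square $\intrd\bigl(\E[x]{\avg{\phi}(\xi^{(1)}_t)}\bigr)^2\ld x$ and pay for it with a local CLT plus off-diagonal (large-deviation) bounds to dominate the rescaled integrand; this is heavier, but self-contained and more quantitative. Your constant bookkeeping is right: the jump rate $(d+2)/2$ and jump law $\psi=u\ast u$ ($u$ the uniform density on $B(0,1)$) give covariance $t\,\mathrm{Id}$, so $\sigma^2=1$, $\intrd G_1(y)^2\ld y=(4\pi)^{-d/2}$, and $\phi\geq 0$ gives $\norm[1]{\avg{\phi}(1)}=\norm[1]{\phi}$. Incidentally, within your Fourier framework Plancherel gives a shortcut bypassing both the local CLT and the domination problem: $f(t)=(2\pi)^{-d}\intrd\abs{\hat\phi(\theta)}^2\,\hat u(\theta)^2\,e^{2\lambda t(\hat\psi(\theta)-1)}\ld\theta$ with $\lambda=(d+2)/2$; since $\lambda(\hat\psi(\theta)-1)=-\abs{\theta}^2/2+\bigO{\abs{\theta}^4}$ near $0$ and $\sup_{\abs{\theta}\geq\epsilon}\hat\psi(\theta)<1$, the substitution $\theta=\eta/\sqrt{t}$ and dominated convergence give the result directly, using only that $\hat u(\theta)^2$ is integrable.

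Two steps in your sketch need repair before it is a proof. First, the displayed ``local CLT uniformly in $x$'' is false as stated: for $\abs{x}\gg\sqrt{t}$ one has $t^{d/2}\E[x]{g(\xi^{(1)}_t)}\to 0$, not $(2\pi\sigma^2)^{-d/2}\intrd g$; indeed, if the statement held uniformly, your own dominated-convergence step would produce a divergent limit. What you actually use afterwards is the correct statement, namely a local CLT for the density part of the kernel, $\sup_{z}\abs{g^{(1)}_t(z)-G_t(z)}=\littleO{t^{-d/2}}$, so that the Gaussian profile in $y$ appears only after the rescaling $x=y\sqrt{t}$; the write-up should say this. Second, the Fourier-inversion argument has to cope with the atom: the law of $\xi^{(1)}_t$ is $e^{-\lambda t}\delta_0$ plus a density, so its characteristic function $e^{\lambda t(\hat\psi(\theta)-1)}$ tends to $e^{-\lambda t}>0$ at infinity and is not integrable, and for a general bounded integrable $g$ the transform $\hat g$ need not be in $L^1$ either, so the ``tail region decays faster than any polynomial'' step is not justified as written. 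The fix is to split off the atom and the one-jump term (exactly the decomposition of Lemma~\ref{lem:density_green_fct}) and use that $\hat\psi=\hat u^2$ decays polynomially, so the components with at least two jumps have integrable transforms. With these corrections, together with the Gaussian off-diagonal bound you sketch (legitimate, since the jumps are bounded by $2$), your argument goes through.
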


For the proof of this estimate we will use the following properties of the semigroup $\G$, which will be proved in Appendix \ref{append:driftload}.

\begin{lemma} \label{lem:density_green_fct}
For any $r>0$ and $t>0$, the law of $\xi^{(r)}_t$ takes the form
\begin{equation*}
\G_t(\ld x) = e^{-\frac{(d+2)}{2r^2} t} \delta_0 (\ld x) + g^{(r)}_t (x) \ld x.
\end{equation*}
Furthermore, $g^{(r)}_t$ is continuous on $\R^d$, is invariant under rotations which fix the origin and $g^{(r)}_t (y)$ is a decreasing function of $\abs{y}$.
\end{lemma}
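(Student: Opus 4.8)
The final statement to prove is Lemma~\ref{lem:density_green_fct}: the law of $\xi^{(r)}_t$ decomposes as an atom at $0$ plus a continuous density $g^{(r)}_t$ which is radially symmetric and radially decreasing.

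\bigskip

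The plan is as follows. First I would exploit the structure of the generator $\L$. Recall from~\eqref{definition_calL} that $\L\phi(x) = \frac{d+2}{2r^2}(\dbavg{\phi}(x,r) - \phi(x))$, so $\xi^{(r)}$ is a compound Poisson process: it waits an exponential time with parameter $c := \frac{d+2}{2r^2}$, and then jumps by adding an independent increment distributed as $Y_1 + Y_2$, where $Y_1, Y_2$ are i.i.d.\ uniform on $B(0,r)$ (this is exactly what the double average $\dbavg{\cdot}$ encodes: average over $B(x,r)$, then over $B(y,r)$). Consequently, conditioning on the number $K$ of jumps up to time $t$ (which is Poisson with mean $ct$),
\begin{equation*}
\G_t(\ld x) = \sum_{k=0}^\infty e^{-ct}\frac{(ct)^k}{k!} \, \nu^{*k}(\ld x),
\end{equation*}
where $\nu$ is the law of $Y_1 + Y_2$ and $\nu^{*k}$ is its $k$-fold convolution (with $\nu^{*0} = \delta_0$). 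The $k=0$ term is $e^{-ct}\delta_0(\ld x)$, giving the claimed atom with mass $e^{-\frac{d+2}{2r^2}t}$. Since $Y_1$ has a bounded density on $\R^d$ (namely $V_r^{-1}\1{|y|<r}$), so does $\nu = \mathcal U_{B(0,r)} * \mathcal U_{B(0,r)}$, and hence so does every $\nu^{*k}$ for $k\geq 1$; setting $g^{(r)}_t(x) = \sum_{k\geq 1} e^{-ct}\frac{(ct)^k}{k!}\,p_{\nu^{*k}}(x)$ where $p_{\nu^{*k}}$ is the density of $\nu^{*k}$, one gets the absolutely continuous part. Uniform convergence of this series (using $\|p_{\nu^{*k}}\|_\infty \leq \|p_{\nu^{*2}}\|_\infty \leq \|p_\nu\|_\infty < \infty$ for $k \geq 2$, say) gives continuity of $g^{(r)}_t$.

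\bigskip

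For the symmetry and monotonicity, I would argue at the level of the increment law $\nu$ and propagate through convolutions. Rotational invariance is immediate: $\mathcal U_{B(0,r)}$ is rotation-invariant, convolutions of rotation-invariant measures are rotation-invariant, and the Poisson mixture preserves this, so $g^{(r)}_t(y)$ depends only on $|y|$. For radial monotonicity, the key point is that the density $p_\nu$ of $Y_1+Y_2$ is a radially symmetric, radially non-increasing function (it is the self-convolution of the indicator of a ball, which is a radially decreasing ``bump''; this can be checked directly, or deduced from the fact that the indicator of a ball is a symmetric decreasing function and the class of symmetric decreasing functions is closed under convolution — a consequence of the Riesz rearrangement inequality / the fact that convolution of two such functions is again such). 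Then I would use that the convolution of two radially symmetric non-increasing functions is again radially symmetric and non-increasing, so each $p_{\nu^{*k}}$ ($k\geq 1$) is radially decreasing, and the positive-coefficient sum $g^{(r)}_t$ inherits this property.

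\bigskip

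The main obstacle I anticipate is the clean justification that convolution preserves ``radially symmetric and non-increasing'' in all dimensions $d$, and in particular that $p_\nu$ itself has this property. The cheapest rigorous route is probably the layer-cake representation: write a radially symmetric non-increasing $f$ as $f = \int_0^\infty \1{B(0,\rho(s))}\,ds$ for suitable radii, so $f * g = \int\int \1{B(0,\rho(s))} * \1{B(0,\sigma(u))}\,ds\,du$, and then use that $\1{B_1} * \1{B_2}$, being $|B_1 \cap B_2(\cdot)|$ as a function of the displacement, is itself radially symmetric and non-increasing (the volume of the intersection of two equal-radius balls is manifestly a decreasing function of the distance between centres; for unequal radii it is also decreasing, which is classical). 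I would state this as an elementary lemma and cite a standard reference (e.g.\ Lieb--Loss, \emph{Analysis}) rather than reprove the rearrangement inequality. Everything else — the compound Poisson representation, the atom, the continuity and boundedness of the densities — is routine bookkeeping.
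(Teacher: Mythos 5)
Your proposal is correct and follows essentially the same route as the paper: the compound Poisson representation of $\xi^{(r)}_t$ with jump rate $\tfrac{d+2}{2r^2}$ and jump density $\psi(y)=V_r(0,y)/V_r^2$, the atom at $0$ from the zero-jump event, the uniformly convergent series of convolution powers for the continuous density, and radial symmetry/monotonicity propagated through convolutions. The only difference is cosmetic: where the paper asserts the radial monotonicity of $\psi^{\ast n}$ ``by induction'' via $\phi\ast\psi=\dbavg{\phi}$, you spell out the underlying justification (layer-cake plus monotonicity of the ball-intersection volume, i.e.\ convolution preserves symmetric decreasing functions), which is exactly the fact the paper's induction implicitly relies on.
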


\begin{proof}[Proof of Lemma~\ref{lem:estimate_f}]
By the semigroup property of $\phi\mapsto\G_t\ast\phi$, $f(t)$ can also be written\\ $ \dual{ \G{1}_{2t} \ast \avg{\phi}(1) }{ \avg{\phi}(1) }$. 
In addition, by the scaling property of $\proc{\xi^{(r)}}$ and using Lemma~\ref{lem:density_green_fct},
\begin{align*}
\G{1}_{2t} \ast \phi (x) &= \E[0]{ \phi( x + \sqrt{t} \xi^{(1/\sqrt{t})}_2 ) } \\
&= \phi(x) e^{-(d+2)t } + \intrd \phi(x + \sqrt{t}y) g^{(1/\sqrt{t})}_2 (y) \ld y \\
&= \phi(x) e^{-(d+2)t } + t^{-d/2} \intrd \phi(x+y) g^{(1/\sqrt{t})}_2(y/\sqrt{t}) \ld y.
\end{align*}
By Proposition~\ref{prop:average}.ii and Theorem 4.8.2 in \cite{ethier_markov_1986}, the finite dimensional distributions of $\proc{\xi^{(r)}}$ converge to those of standard Brownian motion as $r \to 0$.
In particular, $\xi^{(r)}_2 \cvgas[d]{r}{0} \mathcal{N}(0,2)$, and $g^{(r)}_2(x) \to G_2(x)$ as $r \to 0$ for almost every $x \in \R^d$ (the probability that $\xi^{(r)}_t =0$ vanishes as $r \to 0$ for any $t >0$).
Since $G_2$ is continuous on $\R^d$ and $g^{(r)}_2$ is decreasing as a function of the modulus, this convergence takes place uniformly on compact sets by Dini's second theorem.
So, fixing $\epsilon >0$, for any $R > 0$, for $r$ small enough,
\begin{equation*}
\sup_{ \abs{x} < R } \abs{ g^{(r)}_2(x) - G_2(x) } \leq \epsilon.
\end{equation*}
As a result, using the continuity of $G_2$, for any $y$, for $t$ large enough,
\begin{equation*}
\abs{ g^{(1/\sqrt{t})}_2(y/\sqrt{t}) - G_2(0) } \leq 2 \epsilon.
\end{equation*}
Hence, since $g^{(r)}_t(y) \leq g^{(r)}_t (0)$, by dominated convergence,
\begin{equation} \label{convergence_gt}
\intrd \phi(x+y) g^{(1/\sqrt{t})}_2(y/\sqrt{t}) \ld y \cvgas{t} (4\pi)^{-d/2} \intrd \phi(y) \ld y.
\end{equation}
From the above expression for $f$,
\begin{equation*}
f(t) = e^{-(d+2)t} \intrd \avg{\phi}(x,1)^2 \ld x + t^{-d/2} \intrd{2} g^{(1/\sqrt{t})}_2(y/\sqrt{t}) \avg{\phi}(x+y,1) \avg{\phi}(x,1) \ld y \ld x. 
\end{equation*}
Replacing $\phi$ with $\avg{\phi}(1)$ in \eqref{convergence_gt} and letting $t \to \infty$ yields the result.
\end{proof}

Furthermore, $0\leq f(t) \leq \norm[2]{\phi}^2$ for all $t\geq 0$, and thus $f$ is integrable on $(0,\infty)$ if and only if $d\geq 3$. 
\begin{remark}
This is in fact a consequence of the fact that $\proc{\xi^{(1)}}$ is transient if and only if $d\geq 3$ (as with Brownian motion). 
The function $f$ can be expressed in terms of the probability of $\xi^{(1)}_{2t}$ being in a ball of radius $1$, which is integrable on $(0,\infty)$ if and only if $\proc{\xi^{(1)}}$ is transient.
\end{remark}

We now prove \eqref{integral_estimate} separately for each regime.

\paragraph*{High dimension}

If $d\geq 3$, change the variable of integration to write
\begin{equation*}
\intg{0}{t}e^{-2F'(\lambda)s}f(D_N s/r_N^2)\ld s = r_N^2\intg{0}{t/r_N^2}e^{-2F'(\lambda)r_N^2 s}f(D_N s)\ld s.
\end{equation*}
Since $f$ is integrable, by dominated convergence, and since $t\delta_N^{-2}\to \infty$,
\begin{equation*}
\intg{0}{t/r_N^2}e^{-2F'(\lambda)r_N^2 s}f(D_N s)\ld s \cvgas{N,t} \intg{0}{\infty}f(s)\ld s.
\end{equation*}

\paragraph*{Dimension 1}

If $d=1$, however, from \eqref{asymptotics_f}, we see that, as $N\to\infty$, $\frac{1}{r_N}f(s/r_N^2)\to (4\pi s)^{-1/2}\norm[1]{\phi}^2$, so, by dominated convergence,
\begin{equation*}
\intg{0}{t}e^{-2F'(\lambda)s}f(D_Ns/r_N^2)\ld s \underset{N,t\to\infty}{\sim} r_N\norm[1]{\phi}^2\hat{C},
\end{equation*}
for some constant $\hat{C}>0$.

\paragraph*{Dimension 2}

If $d=2$, let $T_1$ and $T_2$ be two positive constants and assume that $t \geq T_2$. We split the integral as follows~:
\begin{multline*}
\intg{0}{t}e^{-2F'(\lambda)s}f(D_Ns/r_N^2)\ld s = \intg{0}{r_N^2 T_1}e^{-2F'(\lambda)s}f(D_Ns/r_N^2)\ld s \\ + \intg{r_N^2 T_1}{T_2}e^{-2F'(\lambda)s}f(D_Ns/r_N^2)\ld s + \intg{T_2}{t}e^{-2F'(\lambda)s}f(D_Ns/r_N^2)\ld s.
\end{multline*}
We first show that the first and last terms are of order $r_N^2$. Since $0\leq f(t) \leq \norm[2]{\phi}^2$ for all $t\geq 0$,
\begin{equation*}
 \abs{\intg{0}{r_N^2 T_1}e^{-2F'(\lambda)s}f(D_Ns/r_N^2)\ld s} \leq r_N^2 T_1\norm[2]{\phi}^2,
\end{equation*}
and by \eqref{asymptotics_f}
\begin{align*}
 \abs{\intg{T_2}{t}e^{-2F'(\lambda)s}f(D_Ns/r_N^2)\ld s} &\lesssim r_N^2\intg{T_2}{\infty}e^{-2F'(\lambda)s}\frac{\ld s}{s}.
\end{align*}
For the middle term, by \eqref{asymptotics_f}, $\frac{1}{r_N^2}f(s/r_N^2) \cvgas{N} (4\pi s)^{-1}\norm[1]{\phi}^2$, so as $N\to\infty$, by dominated convergence,
\begin{equation*}
 \intg{r_N^2 T_1}{T_2}e^{-2F'(\lambda)s}f(D_Ns/r_N^2)\ld s \sim r_N^2(4\pi)^{-1}\norm[1]{\phi}^2\intg{r_N^2 T_1}{T_2}e^{-2F'(\lambda)s}\frac{\ld s}{s}.
\end{equation*}
Further
\begin{equation*}
 \abs{\intg{T_1 r_N^2}{T_2}e^{-2F'(\lambda)s}\frac{\ld s}{s}-\intg{T_1 r_N^2}{T_2}\frac{\ld s}{s}} \leq 2F'(\lambda)\intg{T_1 r_N^2}{T_2}s\frac{\ld s}{s} \leq 2F'(\lambda)T_2,
\end{equation*}
and
\begin{equation*}
 \intg{T_1 r_N^2}{T_2}\frac{\ld s}{s} = \log\left(\frac{T_2}{T_1 r_N^2}\right) \sim \abs{\log r_N^2}.
\end{equation*}
As a result
\begin{equation*}
 \intg{0}{t}e^{-2F'(\lambda)s}f(D_Ns/r_N^2)\ld s \sim \frac{\norm[1]{\phi}^2}{4\pi}r_N^2\abs{\log r_N^2},
\end{equation*}
as $N, t\to\infty$. We have thus proved \eqref{integral_estimate}, and the result.
\end{proof}

\subsection{Proofs of Lemmas \ref{lemma:bound_avg_2} and \ref{lemma:estimate_variance}}\label{subsec:driftload_proofs}

The proof of Lemma \ref{lemma:bound_avg_2} requires the following two technical lemmas, which are proved in Appendix~\ref{append:driftload}.
\begin{lemma}\label{lemma:magic}
 Let $\phi:\R^d\to\R$, $r>0$ and suppose that $g:\R^d\to\R$ satisfies $0<\gamma\leq g(x)\leq 1$ for all $x\in\R^d$. Then
\begin{equation*}
 2\phi(x)\L\phi(x) - 2\phi(x)\dbavg{\phi\,g}(x,r) \leq \L\phi^2(x) - 2\left(\gamma-\frac{r^2}{d+2}\right)\phi(x)^2.
\end{equation*}
Further, for some constant $c>0$, for $r$ small enough,
\begin{equation*}
 4\phi(x)^3\L\phi(x) - 4\phi(x)^3\dbavg{\phi\,g}(x,r) \leq \L\phi^4(x) - 4(\gamma-c\,r^2)\phi(x)^4.
\end{equation*}
\end{lemma}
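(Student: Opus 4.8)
The plan is to prove both inequalities by expanding the operator $\L$ defined in \eqref{definition_calL} and exploiting Jensen's inequality together with the elementary algebraic identities for powers of real numbers. Recall that $\L\phi(x) = \tfrac{d+2}{2r^2}(\dbavg{\phi}(x,r)-\phi(x))$, so the left-hand side of the first inequality is $\tfrac{d+2}{r^2}\phi(x)(\dbavg{\phi}(x,r)-\phi(x)) - 2\phi(x)\dbavg{\phi g}(x,r)$, while the right-hand side involves $\L\phi^2(x) = \tfrac{d+2}{2r^2}(\dbavg{\phi^2}(x,r)-\phi(x)^2)$. After multiplying through by $r^2/(d+2)$ and rearranging, the first claim reduces to showing
\begin{equation*}
2\phi(x)\dbavg{\phi}(x,r) - \dbavg{\phi^2}(x,r) - \phi(x)^2 \leq \frac{2r^2}{d+2}\phi(x)\dbavg{\phi g}(x,r) - \frac{2r^2}{d+2}\gamma\,\phi(x)^2.
\end{equation*}
The left-hand side equals $-\dbavg{(\phi(x)-\phi(\cdot))^2}(x,r) \leq 0$ by expanding the square (using that the double average of a constant is that constant), so it suffices to control the right-hand side from below; here the bound $\gamma \leq g \leq 1$ and $0 \le \phi$ are not quite enough as stated, so the actual argument must instead use the identity $2\phi(x)\L\phi(x) = \L(\phi^2)(x) - \tfrac{d+2}{2r^2}\dbavg{(\phi-\phi(x))^2}(x,r)$ directly and then compare $\phi(x)\dbavg{\phi g}(x,r)$ with $\gamma\phi(x)^2$ via $\dbavg{\phi g}(x,r) \geq \gamma\dbavg{\phi}(x,r)$ and $\dbavg{\phi}(x,r) \geq \phi(x) - $ (an error term of size $O(r^2)$ coming from Proposition~\ref{prop:average}), absorbing that error into the $-\tfrac{r^2}{d+2}\phi(x)^2$ slack. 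I would write $\dbavg{\phi}(x,r) - \phi(x) = \tfrac{2r^2}{d+2}\L\phi(x)$ and then use $\dbavg{(\phi-\phi(x))^2} \ge (\dbavg\phi(x,r) - \phi(x))^2$ (Jensen) to see that the quadratic slack dominates.

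For the second inequality the strategy is identical but one power higher: write $4\phi(x)^3\L\phi(x)$ in terms of $\L(\phi^4)(x)$. The key algebraic fact is that for reals $a,b \geq 0$,
\begin{equation*}
4a^3(b-a) = (b^4 - a^4) - \big(3a^4 - 4a^3 b + b^4\big),
\end{equation*}
and the bracketed quantity $3a^4 - 4a^3 b + b^4$ is nonnegative (it factors as $(b-a)^2(b^2+2ab+3a^2) \ge 0$). Applying this pointwise inside the double average with $a = \phi(x)$, $b = \phi(y)$ and integrating $y$ over the relevant balls gives $4\phi(x)^3(\dbavg\phi(x,r) - \phi(x)) \le \dbavg{\phi^4}(x,r) - \phi(x)^4$, i.e. $4\phi(x)^3\L\phi(x) \le \L(\phi^4)(x)$. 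It then remains to bound $-4\phi(x)^3\dbavg{\phi g}(x,r) \le -4\gamma\phi(x)^3\dbavg\phi(x,r) \le -4\gamma\phi(x)^4 + (\text{error})$, where again $\dbavg\phi(x,r) \ge \phi(x) - C r^2$ by the regularity estimate, and the $O(r^2)$ discrepancy, multiplied by $\phi(x)^3 \le \phi(x)^4 + 1$ type bounds, is absorbed into the $-4c\,r^2\phi(x)^4$ term for $r$ small enough and $c$ a suitable constant. Since $\phi$ need not be bounded by $1$ here, I would be careful to phrase the error control using $\norm[\infty]{\phi}$-free estimates, relying only on the second-order Taylor expansion of the average (Proposition~\ref{prop:average}) applied to $\phi$, $\phi^2$, $\phi^3$, $\phi^4$.

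The main obstacle I anticipate is bookkeeping the $O(r^2)$ error terms carefully enough to show they really are absorbed by the explicit quadratic/quartic slacks $-2(\gamma - \tfrac{r^2}{d+2})\phi^2$ and $-4(\gamma - c r^2)\phi^4$, rather than producing a spurious lower-order term of the wrong sign; in particular one must check that the comparison $\dbavg{\phi g}(x,r) \ge \gamma \dbavg\phi(x,r)$ combined with $\dbavg\phi(x,r) = \phi(x) + \tfrac{2r^2}{d+2}\L\phi(x)$ and the Jensen bound $\dbavg{(\phi - \phi(x))^2}(x,r) \ge \big(\tfrac{2r^2}{d+2}\L\phi(x)\big)^2$ fit together so that no uncontrolled $r^2 \phi(x)\L\phi(x)$ cross-term survives. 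The factorization identities $(b-a)^2(b+a)^2$-type for the square case and $(b-a)^2(b^2+2ab+3a^2)$ for the quartic case are the algebraic heart of the argument and are elementary to verify, so the work is entirely in the error analysis, which is routine given Proposition~\ref{prop:average}.
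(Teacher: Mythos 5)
Your two pointwise facts are correct ($2\phi(x)\L\phi(x)\leq\L\phi^2(x)$ via the square identity, and $4\phi(x)^3\L\phi(x)\leq\L\phi^4(x)$ via convexity of $x\mapsto x^4$, whose factorisation you verify correctly), but the way you then handle the $g$-term does not work, and this is exactly where the content of the lemma lies. Your comparisons $\dbavg{\phi\,g}(x,r)\geq\gamma\,\dbavg{\phi}(x,r)$, followed by multiplication by $\phi(x)$ (resp.\ $\phi(x)^3$), silently assume $\phi\geq 0$; the lemma makes no sign assumption, and in its application $\phi=\avg{\zt}$ is a signed fluctuation, so both steps can flip. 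Your second ingredient, $\dbavg{\phi}(x,r)\geq\phi(x)-Cr^2$ "by Proposition~\ref{prop:average}", requires $\phi\in C^2$ with bounded second derivatives — again not assumed, and unavailable (uniformly in $N$) for $\avg{\zt}$ — and even granting it, the error it produces is an additive $Cr^2$ (or, after your "$\phi(x)^3\leq\phi(x)^4+1$" device, $Cr^2(\phi(x)^4+1)$), not a multiple of $\phi(x)^2$ resp.\ $\phi(x)^4$; the statement has no room for such an additive term, so it cannot be absorbed into $-2\bigl(\gamma-\tfrac{r^2}{d+2}\bigr)\phi(x)^2$ or $-4(\gamma-c\,r^2)\phi(x)^4$. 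Note also that the quantity you would need to be small, $\phi(x)\dbavg{\phi\,g}(x,r)-\gamma\phi(x)^2$, is generically of order one (take $\phi$ oscillating at scale $r$ around $x$), so the plan is not repairable by sharper bookkeeping: the compensation must come from the square/convexity defect $\tfrac{d+2}{2r^2}\dbavg{(\phi-\phi(x))^2}(x,r)$ that your inequalities $2\phi\L\phi\leq\L\phi^2$ and $4\phi^3\L\phi\leq\L\phi^4$ throw away. Your Jensen step $\dbavg{(\phi-\phi(x))^2}(x,r)\geq(\dbavg{\phi}(x,r)-\phi(x))^2$ lower-bounds that defect by $\bigl(\tfrac{2r^2}{d+2}\L\phi(x)\bigr)^2$, which discards essentially all of it and does not control the cross term either.

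The paper's proof avoids all of this by keeping the $g$-dependence inside the algebraic step, with no smoothness or sign hypotheses. For the first inequality one completes the square \emph{including} the $g$-term: pointwise in $y$,
\begin{equation*}
\phi(y)^2-\phi(x)^2-2\phi(x)(\phi(y)-\phi(x))+2\tfrac{2r^2}{d+2}\phi(x)(\phi(y)-\phi(x))g(y)
=\Bigl(\phi(y)-\phi(x)+\tfrac{2r^2}{d+2}\phi(x)g(y)\Bigr)^2-\bigl(\tfrac{2r^2}{d+2}\bigr)^2\phi(x)^2g(y)^2
\geq -\bigl(\tfrac{2r^2}{d+2}\bigr)^2\phi(x)^2,
\end{equation*}
using only $g^2\leq 1$; double-averaging in $y$ and multiplying by $\tfrac{d+2}{2r^2}$ leaves the term $-2\phi(x)^2\dbavg{g}(x,r)$, and the hypothesis on $g$ enters only as $\dbavg{g}(x,r)\geq\gamma$ multiplying the nonnegative quantity $\phi(x)^2$ — no sign issue — while the defect is exactly the $\tfrac{2r^2}{d+2}\phi(x)^2$ slack in the statement. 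For the quartic inequality the same idea is implemented via convexity of $x^4$ applied at the shifted point $b=(1-\epsilon)^{1/3}\phi(x)$ with $\epsilon=\tfrac{2r^2}{d+2}g(y)$: the convexity inequality $a^4-b^4-4b^3(a-b)\geq 0$ (your factorisation, valid for all reals) leaves a defect $\phi(x)^4\bigl[(1-\epsilon)^{4/3}-1-4(1-\epsilon)((1-\epsilon)^{1/3}-1)\bigr]\sim-\tfrac23\phi(x)^4\epsilon^2$, i.e.\ exactly $-O(r^4)\phi(x)^4$ before the final multiplication by $\tfrac{d+2}{2r^2}$, which is the $-4c\,r^2\phi(x)^4$ term. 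If you want to stay closer to your own decomposition, the correct repair is to write $\dbavg{\phi\,g}(x,r)=\phi(x)\dbavg{g}(x,r)+\dbavg{(\phi-\phi(x))g}(x,r)$ and control the cross term by Cauchy--Schwarz/AM--GM against the \emph{retained} defect $\tfrac{d+2}{2r^2}\dbavg{(\phi-\phi(x))^2}(x,r)$, rather than by Taylor-expanding $\dbavg{\phi}(x,r)-\phi(x)$.
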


\begin{lemma}\label{lemma_differential_inequality}
Suppose $h:\R_{+}\times\R^{d}\to\R$ is a function that is continuously differentiable with respect to the time variable $t$ and which satisfies the following differential inequality for some positive $\alpha$~:
\begin{equation*}
\partial_{t}h_{t}(x)\leq \calL h_{t}(x)-\alpha h_{t}(x) + g_{t}(x).
\end{equation*}
Then for all $0\leq s\leq t$ and for any $1\leq q\leq \infty$,
\begin{equation*}
\norm[q]{h_{t}} \leq e^{-\alpha (t-s)}\norm[q]{h_{s}} + \frac{1}{\alpha}\sup_{u\in[s,t]}\norm[q]{g_{u}}.
\end{equation*}
\end{lemma}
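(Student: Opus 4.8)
The statement is a Duhamel (variation-of-constants) estimate, so the plan is to represent the solution operator of $\partial_t-\calL$ through the semigroup $\G$ of the L\'evy process $\xi^{(r)}$ (recall $\G_\tau\ast\phi(x)=\E[x]{\phi(\xi^{(r)}_\tau)}$, which is a probability kernel by Lemma~\ref{lem:density_green_fct}), turn the differential inequality into a pointwise integral inequality, and then take $L^q$ norms using that $\G_\tau$ is a contraction on $L^q$. The argument is especially clean here because $\calL=\L{r_N}$ is a \emph{bounded} operator on every $L^q$ and $\xi^{(r)}$ is a compound Poisson process, so $\G_\tau\ast\,=e^{\tau\calL}$ is a uniformly continuous semigroup and all the manipulations below are immediate.

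Concretely, fix $0\le s\le t$ and $x\in\R^d$ and set
\begin{equation*}
F(u)=e^{-\alpha(t-u)}\,\G_{t-u}\ast h_u(x),\qquad u\in[s,t].
\end{equation*}
Using the backward equation $\partial_\tau(\G_\tau\ast\phi)=\calL(\G_\tau\ast\phi)=\G_\tau\ast(\calL\phi)$ together with differentiation under the expectation, one computes
\begin{equation*}
F'(u)=e^{-\alpha(t-u)}\,\G_{t-u}\ast\bigl(\partial_u h_u-\calL h_u+\alpha h_u\bigr)(x)\le e^{-\alpha(t-u)}\,\G_{t-u}\ast g_u(x),
\end{equation*}
the inequality holding because $\partial_u h_u-\calL h_u+\alpha h_u\le g_u$ pointwise and $\G_{t-u}$ is a positive measure. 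Integrating over $[s,t]$ and using $F(t)=h_t(x)$, $F(s)=e^{-\alpha(t-s)}\G_{t-s}\ast h_s(x)$ gives
\begin{equation*}
h_t(x)\le e^{-\alpha(t-s)}\,\G_{t-s}\ast h_s(x)+\intg{s}{t}e^{-\alpha(t-u)}\,\G_{t-u}\ast g_u(x)\,\ld u.
\end{equation*}

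It then remains to take $\norm[q]{\cdot}$ of both sides. Since $\G_\tau$ is a probability measure, Young's inequality gives $\norm[q]{\G_\tau\ast\phi}\le\norm[q]{\phi}$ for every $1\le q\le\infty$; combining this with the triangle inequality and $\intg{s}{t}e^{-\alpha(t-u)}\,\ld u=\alpha^{-1}(1-e^{-\alpha(t-s)})\le\alpha^{-1}$ yields
\begin{equation*}
\norm[q]{h_t}\le e^{-\alpha(t-s)}\norm[q]{h_s}+\frac1\alpha\sup_{u\in[s,t]}\norm[q]{g_u},
\end{equation*}
which is the claim. In the applications $h_t=\E{\avg{\zt}(x,r_N)^2}$ (or the fourth moment) is nonnegative, so this one-sided bound directly controls $\norm[q]{h_t}=\norm[q]{\abs{h_t}}$; for a general signed $h$ one runs the same argument on $u\mapsto\norm[q]{h_u}$ directly, using dissipativity of $\calL$ on $L^q$.

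The only point requiring a little care is the justification of $F'(u)$ — the backward equation for $\G$ and the interchange of $\partial_u$ with the convolution/expectation — which needs mild regularity of $u\mapsto h_u$ (continuity of its time derivative suffices, given that $\calL$ is bounded on $L^q$); this should be stated but presents no genuine obstacle. Everything else reduces to positivity and the $L^q$-contractivity of $\G_\tau$ together with an elementary one-dimensional integral bound.
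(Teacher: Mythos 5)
Your proof is correct and follows essentially the same route as the paper: you introduce the same auxiliary function $u\mapsto e^{-\alpha(t-u)}\,G^{(r)}_{t-u}\ast h_u(x)$, differentiate in $u$ to exploit the differential inequality, integrate over $[s,t]$ to obtain the Duhamel-type pointwise bound, and conclude using the $L^q$-contractivity of the semigroup. The only cosmetic difference is in the last step, where the paper bounds the Duhamel term via Jensen's inequality while you use Minkowski's integral inequality together with $\int_s^t e^{-\alpha(t-u)}\,\mathrm{d}u\le \alpha^{-1}$; this is not a substantive divergence.
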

\begin{proof}[Proof of Lemma~\ref{lemma:bound_avg_2}]
Set
\begin{equation*}
h(t,x) = \E{\avg{\zt}(x,r_N)^{2}}.
\end{equation*}
We are going to make use of Lemma~\ref{lemma_differential_inequality}, so we want to obtain a differential inequality for $h$. To this end, average \eqref{driftload:spde_zn} on $B(x, r_N)$ to get
\begin{equation*}
\ld\avg{\zt{s}}(x,r_N) = \left[\L{r_N}\avg{\zt{s}}(x,r_N) - \dbavg{\avg{\zt{s}} \, R_{1}(\avg{\wt{s}},\lambda)}(x,r_N)\right]\ld s + \frac{1}{V_{r_N}}\ld M^{N}_{s}(B(x,r_N)).
\end{equation*}
(From now on all averages will be over radius $r_N$.) By the generalised It\^o formula, noting $\Delta Y_s = Y_s - Y_{s^-}$,
\begin{multline*}
\ld \left(\avg{\zt{s}}(x)\right)^{2} = 2\avg{\zt{s}}(x)\ld \avg{\zt{s}}(x) + \ld\left[\avg{\zt{\cdot}}(x)\right]_{s} \\+ \left(\avg{\zt{s^{-}}}(x)+\Delta\avg{\zt{s}}(x)\right)^{2}-\left(\avg{\zt{s^{-}}}(x)\right)^{2} - 2\avg{\zt{s^{-}}}(x)\Delta\avg{\zt{s}}(x) - \left(\Delta\avg{\zt{s}}(x)\right)^{2}.
\end{multline*}
Expanding the brackets, the terms on the second line cancel and, integrating for $s\in[0,t]$, we have
\begin{multline*}
\avg{\zt}(x)^{2} = 2\intg{0}{t}\avg{\zt{s}}(x)\left[\L{r_N}\avg{\zt{s}}(x)-\dbavg{\avg{\zt{s}} \, R_{1}(\avg{\wt{s}},\lambda)}(x)\right]\ld s \\+ \frac{2}{V_{r_N}}\intg{0}{t}\avg{\zt{s}}(x)\ld M^{N}_{s}(B(x,r_N)) + \frac{1}{V_{r_N}^{2}}\left[M^{N}(B(x,r_N))\right]_{t}.
\end{multline*}
Taking expectations on both sides, since the second term is a martingale,
\begin{equation*}
h(t,x) = 2\intg{0}{t}\E{\avg{\zt{s}}(x)\L{r_N}\avg{\zt{s}}(x) - \avg{\zt{s}}(x)\dbavg{\avg{\zt{s}} \, R_{1}(\avg{\wt{s}},\lambda)}(x)}\ld s + \frac{1}{V_{r_N}^{2}}\E{\qvar{M^{N}(B(x,r_N))}}.
\end{equation*}
Differentiating yields
\begin{multline*}
\deriv*{h}{t}(t,x) = 2\E{\avg{\zt{t}}(x)\L{r_N}\avg{\zt{t}}(x) - \avg{\zt{t}}(x)\dbavg{\avg{\zt{t}} \, R_{1}(\avg{\wt},\lambda)}(x)} \\ + \frac{1}{V_{r_N}^2}\E{\int_{B(x,r_N)^2}\rho^{(r_N)}_{z_1,z_2}(\wt)\ld z_1\ld z_2} + \bigO{\frac{\delta_N^2}{V_{r_N}}}.
\end{multline*}
The second term is bounded by $\frac{1}{V_{r_N}}$, and the first one has the same form as the left-hand-side of the first statement of Lemma~\ref{lemma:magic}. In \cite{norman_markovian_1974} (at the beginning of the proof of Theorem~3.2), it is proved that the conditions on $F$ in \eqref{F_stable_equilibrium_1}-\eqref{F_stable_equilibrium_2} imply
\begin{equation}\label{bound_R1}
\inf_{x\in[0,1]}R_{1}(x,\lambda) =: \gamma >0.
\end{equation}
Then, taking $\phi = \avg{\zt}$ and $g = R_1(\avg{\wt{s}},\lambda)$, Lemma~\ref{lemma:magic} implies that, for all $t\geq 0$,
\begin{equation*}
\deriv*{h}{t}(t,x) \leq \calL h(t,x) - \alpha_{N} h(t,x) + \frac{1+\bigO{\delta_N^2}}{V_{r_N}},
\end{equation*}
with $\alpha_N = \gamma+\bigO{r_N^2}$. Using Lemma~\ref{lemma_differential_inequality} (with $s=0$) we can now write, since $\zt{0}=0$,
\begin{equation} \label{z_dr_squared_bound}
\E{\avg{\zt}(x)^{2}} \leq \frac{1+\bigO{\delta_N^2}}{\alpha_{N} V_{r_N}} \lesssim \frac{1}{r_N^d}.
\end{equation}
The second inequality is proved in essentially the same way, although the computations become more involved. We compute the fourth moment of $\avg{\zt}$ with It\^o's formula, as before:
\begin{multline*}
 \ld\left(\avg{\zt}(x)\right)^4 = 4(\avg{\zt}(x))^3\ld\avg{\zt}(x) + \frac{1}{2}4\times 3 (\avg{\zt}(x))^2\ld \left[\avg{\zt{\cdot}}\right]_t \\ + \left(\avg{\zt{t^-}}(x)+\Delta\avg{\zt}(x)\right)^4 - \left(\avg{\zt{t^-}}(x)\right)^4 - 4(\avg{\zt{t^-}}(x))^3\Delta\avg{\zt}(x) - \frac{1}{2}3\times 4(\avg{\zt{t^-}}(x))^2(\Delta\avg{\zt}(x))^2.
\end{multline*}
Hence, taking expectations, the martingale terms can be dropped and we write:
\begin{multline*}
\E{ \left( \avg{\zt}(x) \right)^4 } = 4 \intg{0}{t} \E{ \avg{\zt{s}}(x)^3 \L{r_N} \avg{\zt{s}}(x) - \avg{\zt{s}}(x)^3 \dbavg{\avg{\zt{s}} \, R_1(\avg{\wt{s}},\lambda) }(x) } \ld s \\ + 6 \frac{1}{V_{r_N}^2} \intg{0}{t} \int_{B(x,r_N)^2} \E{ \avg{\zt{s}}(x)^2 \rho^{(r_N)}_{z_1,z_2}(\wt{s}) } \ld z_1 \ld z_2 \ld s + \bigO{\delta_N^2} \frac{1}{V_{r_N}} \int_{0}^{t} \E{ \avg{\zt{s}}(x)^2 } \ld s  \\ + \E{ \sum_{s\leq t} \braced{ 4\avg{\zt{s^{-}}}(x) (\Delta \avg{\zt{s}}(x))^3 + (\Delta \avg{\zt{s}}(x))^4 } },
\end{multline*}
where the sum is over jump times for the process $(\avg{\zt}(x))_{t\geq 0}$. 
We can bound the size of the jumps $\Delta\avg{\zt{s}}(x)$ by a deterministic constant. 
By the definition of the SLFVS with overdominance in Definition \ref{definition_slfv_diploid},
$$\sup_{t\geq 0}\abs{\dual{q^N_t}-\dual{q^N_{t^-}}}\leq u\varepsilon_N \norm[1]{\phi}. $$
Hence $\abs{\Delta\avg{\zt{s}}(x)}\leq u\varepsilon_N (\eta_N/\tau_N)^{1/2}=u\varepsilon_N^{1/2}\delta_N^{1-d/2}$.
As a result
\begin{multline*}
\E{ \sum_{s\leq t} \braced{ 4 \avg{\zt{s^{-}}}(x) ( \Delta \avg{\zt{s}}(x))^3 + (\Delta \avg{\zt{s}}(x))^4 } } \\ \leq \E{ \sum_{s\leq t} \braced{ 4 (u \varepsilon_N^{1/2} \delta_N^{1-d/2})^3 \abs{ \avg{\zt{s^{-}}}(x) } + (u \varepsilon_N^{1/2} \delta_N^{1-d/2})^4 } },
\end{multline*}
where the sum is still over the jump times of $\avg{\zt{s}}(x)$.
These jumps occur according to a Poisson process with rate $V_{2R} \, \eta_N^{-1}$, so, using~\eqref{z_dr_squared_bound} to bound $\E{\abs{\avg{\zt{s^{-}}}(x)}}$, we obtain
\begin{multline*}
\E{ \sum_{s\leq t} \braced{ 4\avg{\zt{s^{-}}}(x) (\Delta \avg{\zt{s}}(x))^3 + (\Delta \avg{\zt{s}}(x))^4 }} \\ \leq V_{2R} \, \eta_N^{-1} \braced{ 4 (u \varepsilon_N^{1/2} \delta_N^{1-d/2})^3 \E{ \int_{0}^{t} \abs{ \avg{\zt{s^{-}}}(x) } \ld s } + t (u \varepsilon_N^{1/2} \delta_N^{1-d/2})^4 } = \littleO{r_N^{-2d}}.
\end{multline*}
Now note that
\begin{align*}
 \int_{B(x,r_N)^2}\E{\avg{\zt{s}}(x)^2\rho^{(r)}_{z_1,z_2}(\wt{s})}\ld z_1\ld z_2 &\lesssim \frac{1}{r_N^d}\int_{B(x,r_N)^2}\frac{V_{r_N}(z_1,z_2)}{V_{r_N}^2}\ld z_1\ld z_2\\
 & \lesssim 1.
\end{align*}
Hence, setting $h(t,x) = \E{(\avg{\zt}(x))^4}$,
\begin{equation*}
\deriv*{h}{t}(t,x) = 4\E{\avg{\zt}(x)^3\L{r_N}\avg{\zt}(x) - \avg{\zt}(x)^3\dbavg{\avg{\zt}\,R_1(\avg{\wt},\lambda)}} + \frac{g_t(x)}{r_N^{2d}},
\end{equation*}
where $\abs{g_t(x)}\lesssim 1$. Now the second statement of Lemma~\ref{lemma:magic} yields~:
\begin{equation*}
\deriv*{h}{t}(t,x) \leq \calL h(t,x) - 4(\gamma-cr_N^2)h(t,x) + \frac{g_t(x)}{r_N^{2d}},
\end{equation*}
and by Lemma~\ref{lemma_differential_inequality}, we have
\begin{equation*}
h(t,x) \lesssim \frac{1}{r_N^{2d}},
\end{equation*}
uniformly in $t\geq 0$.
\end{proof}
The following lemma is needed in the proof of Lemma \ref{lemma:estimate_variance}.
\begin{lemma}\label{lemma:bound_phi}
The following holds uniformly for all $t\geq 0$:
\begin{equation*}
 \E{\abs{\dual{\zt}}} \lesssim r_N^{1-d/2}c_N^{1/2}(\norm[1]{\phi}+r_N^{d/2}\norm[2]{\phi}).
\end{equation*}
\end{lemma}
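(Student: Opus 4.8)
The plan is to combine the mild representation \eqref{driftload:stochastic_integral} of $\dual{\zt}{\phi}$ with the explicit formula \eqref{eq:driftload_varphi}, $\varphi^N(x,s,t)=e^{-F'(\lambda)(t-s)}\G{r_N}_{D_N(t-s)}\ast\phi(x)$ with $D_N\to1$, and to estimate in $L^1$ each of the two terms on the right-hand side of \eqref{driftload:stochastic_integral}, uniformly in $t\ge0$. Since $\abs{\G{r_N}_v\ast\phi}\le\G{r_N}_v\ast\abs{\phi}$ we may assume $\phi\ge0$. The exponential weight $e^{-F'(\lambda)(t-s)}$, which genuinely decays because $F'(\lambda)>0$ by \eqref{F_stable_equilibrium_2}, is exactly what makes the bounds uniform in time.

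The nonlinear term is dealt with quickly. Bounding $\abs{R_2}\le\tfrac12\norm[\infty]{F''}$, replacing $\E{(\avg{\zt{s}})^{2}}$ by $K_1'/r_N^d$ via Lemma~\ref{lemma:bound_avg_2}, and using $\norm[1]{\avg{\varphi^N(s,t)}{r_N}}\le\norm[1]{\varphi^N(s,t)}\le e^{-F'(\lambda)(t-s)}\norm[1]{\phi}$ (ball-averaging and convolution with the law $\G{r_N}_{v}$ do not increase the $L^1$ norm), I obtain
\[(\tau_N/\eta_N)^{1/2}\,\E{\abs{\intg{0}{t}\dual{(\avg{\zt{s}})^{2}}{R_{2}(\avg{\wt{s}},\lambda)\avg{\varphi^{N}(s,t)}{r_N}}\ld s}}\lesssim(\tau_N/\eta_N)^{1/2}r_N^{-d}\norm[1]{\phi}=\littleO{r_N^{1-d/2}}\norm[1]{\phi},\]
since $\tau_N/\eta_N=\littleO{r_N^{d+2}}$; as $c_N\ge1$ this is $\lesssim r_N^{1-d/2}c_N^{1/2}\norm[1]{\phi}$.

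The substance is the stochastic-integral term, which I bound via $\E{\abs{\,\cdot\,}}\le\E{(\,\cdot\,)^{2}}^{1/2}$. Revisiting the proof of the drift-load analogue of Lemma~\ref{lemma:bound_qvar} --- using $\abs{\rho^{(r_N)}_{z_1,z_2}(q)}\lesssim V_{r_N}(z_1,z_2)/V_{r_N}^2$ (immediate from \eqref{def_tau_r}) and stopping \emph{before} the final Jensen step --- gives from \eqref{eq:Q_N_dr}
\[\E{\Big(\intg{0}{t}\intrd\varphi^N(x,s,t)M^N(\ld x\ld s)\Big)^{2}}\lesssim\intg{0}{t}\norm[2]{\avg{\varphi^N(s,t)}{r_N}}^{2}\ld s+\delta_N^2\intg{0}{t}\norm[2]{\varphi^N(s,t)}^{2}\ld s.\]
The second integral is $\lesssim\norm[2]{\phi}^{2}$ ($L^2$-contraction plus the exponential weight), contributing at most $\delta_N^2\norm[2]{\phi}^2\lesssim r_N^2c_N\norm[2]{\phi}^2$. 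For the first, the key point is an on-diagonal heat-kernel bound: $\avg{\G{r_N}_{v}}{r_N}$ is the density of $\xi^{(r_N)}_v+U$ with $U$ uniform on $B(0,r_N)$, and by Lemma~\ref{lem:density_green_fct} it is maximised at the origin, so the scaling property of $\proc{\xi^{(r)}}$ yields
\[\norm[\infty]{\avg{\G{r_N}_{v}}{r_N}}\le V_{r_N}^{-1}\,\G{r_N}_v\big(B(0,r_N)\big)=V_{r_N}^{-1}\,\P{\abs{\xi^{(1)}_{v/r_N^2}}<1}\lesssim r_N^{-d}\big(1+v/r_N^2\big)^{-d/2},\]
the last bound being a local central limit estimate for the compound Poisson process $\xi^{(1)}$ (whose jump law has a bounded density and finite variance). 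Hence $\norm[2]{\avg{\G{r_N}_{v}}{r_N}}^{2}\le\norm[\infty]{\avg{\G{r_N}_{v}}{r_N}}\lesssim r_N^{-d}(1+v/r_N^2)^{-d/2}$, and Young's inequality together with the $L^2$-contraction bound give $\norm[2]{\avg{\varphi^N(s,t)}{r_N}}^{2}\lesssim e^{-2F'(\lambda)(t-s)}\min(\norm[2]{\phi}^{2},\,r_N^{-d}(1+(t-s)/r_N^2)^{-d/2}\norm[1]{\phi}^{2})$. Splitting the $s$-integral at $t-s=r_N^2$ and bounding the minimum by its second argument,
\[\intg{0}{t}\norm[2]{\avg{\varphi^N(s,t)}{r_N}}^{2}\ld s\lesssim r_N^{2-d}\norm[1]{\phi}^{2}+\norm[1]{\phi}^{2}\intg{r_N^2}{\infty}e^{-2F'(\lambda)u}u^{-d/2}\ld u\lesssim r_N^{2-d}c_N\norm[1]{\phi}^{2},\]
the elementary estimate $\intg{r_N^2}{\infty}e^{-cu}u^{-d/2}\ld u\lesssim r_N^{2-d}$ for $d\ge3$, $\lesssim\abs{\log r_N^2}$ for $d=2$, $\lesssim1$ for $d=1$ being precisely $\lesssim r_N^{2-d}c_N$ in each case (this is where the dimension-dependent factor $c_N$ enters). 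Putting the two integrals together, the second moment of the stochastic-integral term is $\lesssim r_N^{2-d}c_N(\norm[1]{\phi}^{2}+r_N^{d}\norm[2]{\phi}^{2})\lesssim r_N^{2-d}c_N(\norm[1]{\phi}+r_N^{d/2}\norm[2]{\phi})^{2}$; its square root, added to the nonlinear bound, yields the claimed estimate uniformly in $t$. The only genuinely non-routine step is the local central limit estimate $\P{\abs{\xi^{(1)}_w}<1}\lesssim(1+w)^{-d/2}$ behind the heat-kernel decay; everything else is bookkeeping.
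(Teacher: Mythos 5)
Your argument is correct, and for its first half it follows the paper's own proof: the same decomposition \eqref{driftload:stochastic_integral}, the same treatment of the quadratic term via Lemma~\ref{lemma:bound_avg_2} and $\tau_N/\eta_N=\littleO{r_N^{d+2}}$, and the same sharpened variance bound retaining the ball average (i.e.\ stopping the proof of Lemma~\ref{lemma:bound_qvar} before the final Jensen step, with $\rho^{(r_N)}$ in place of $\sigma^{(r_N)}$ and the $\bigO{\delta_N^2}$ term of \eqref{eq:Q_N_dr} kept separate), which is exactly what produces the paper's first display. Where you genuinely diverge is in estimating $\intg{0}{t}e^{-2F'(\lambda)(t-s)}\norm[2]{\avg{\G{r_N}_{D_N(t-s)}\ast\phi}{r_N}}^2\ld s$: the paper rescales via \eqref{scaling_phi} and \eqref{eq:f_rescaling} so as to reuse $f(t) = \norm[2]{\avg{\G{1}_t\ast\phi}{1}}^2$, Lemma~\ref{lem:estimate_f} and the dimension-by-dimension computation behind \eqref{integral_estimate}, whereas you prove the uniform on-diagonal bound $\norm[\infty]{\avg{\G{r_N}_{v}}{r_N}}\lesssim r_N^{-d}(1+v/r_N^2)^{-d/2}$ directly (via Lemma~\ref{lem:density_green_fct}, the rearrangement observation that the ball average of a radially decreasing density is maximal at the origin, and the scaling property of $\xi^{(r)}$), and then split the time integral at $t-s=r_N^2$. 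Your route buys a genuinely non-asymptotic, uniform-in-$t$ estimate — note that the paper's step ``looking at the proof of \eqref{integral_estimate}'' implicitly requires upgrading the asymptotic $f(t)\sim(4\pi t)^{-d/2}\norm[1]{\phi}^2$ of Lemma~\ref{lem:estimate_f} into a uniform upper bound, which is essentially what you establish — and it even yields the main term with $\norm[1]{\phi}^2$ alone, the $r_N^{d}\norm[2]{\phi}^2$ part entering only through the $\bigO{\delta_N^2}$ error, which you absorb correctly. The price is the concentration estimate $\mathbb{P}\left(\abs{\xi^{(1)}_w}<1\right)\lesssim(1+w)^{-d/2}$, which you assert but do not prove; it is true and standard: condition on the Poisson number of jumps $N_w$ and use $\norm[\infty]{\psi^{\ast n}}\lesssim n^{-d/2}$ for the jump density $\psi(y)=V_1(0,y)/V_1^2$ (by Fourier inversion, since $\hat\psi\geq 0$ is integrable, is bounded by $e^{-c\abs{\theta}^2}$ near the origin and is bounded away from $1$ elsewhere — or invoke Esseen's concentration inequality), together with $\mathbb{E}\left[(1+N_w)^{-d/2}\right]\lesssim(1+w)^{-d/2}$. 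With that one ingredient written out, your proof is complete and the remaining bookkeeping checks out.
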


\begin{proof}
Recall the expression for $\dual{\zt}$ in \eqref{driftload:stochastic_integral}; using Lemma~\ref{lemma:bound_avg_2} and Lemma~\ref{lemma:bound_qvar}, we can write
\begin{equation*}
 \E{\abs{\dual{\zt}}} \lesssim \frac{\left( \tau/\eta \right)^{1/2}}{r_N^d}\intg{0}{t}\norm[1]{\phi}e^{-F'(\lambda)(t-s)}\ld s + \left(\intg{0}{t}e^{-2F'(\lambda)(t-s)}\norm[2]{\avg{\G{r_N}_{D_N(t-s)}\ast\phi}{r_N}}^2\ld s\right)^{1/2}.
\end{equation*}
Replacing $\phi$ by $(\phi_{1/r_N})_{r_N}$ - as defined in \eqref{scaling_phi} - to use \eqref{eq:f_rescaling} and then looking at the proof of \eqref{integral_estimate} in the proof of Theorem~\ref{thm:driftload}, we see that
\begin{equation*}
 \intg{0}{t}e^{-2F'(\lambda)(t-s)}\norm[2]{\avg{\G{r_N}_{D_N(t-s)}\ast\phi}{r_N}}^2\ld s \lesssim r_N^{2-d}c_N(\norm[1]{\phi_{1/r_N}}^2 + \norm[2]{\phi_{1/r_N}}^2).
\end{equation*}
But $\norm[1]{\phi_{1/r_N}} = \norm[1]{\phi}$ and $\norm[2]{\phi_{1/r_N}} = r_N^{d/2}\norm[2]{\phi}$, hence
\begin{equation*}
 \E{\abs{\dual{\zt}}} \lesssim \norm[1]{\phi}\frac{(\tau/\eta)^{1/2}}{r_N^{d}} + r_N^{1-d/2}c_N^{1/2}(\norm[1]{\phi}+r_N^{d/2}\norm[2]{\phi}),
\end{equation*}
and we have the required result since $\tau_N/\eta_N = \littleO{r_N^{d+2}}$.
\end{proof}

\begin{proof}[Proof of Lemma~\ref{lemma:estimate_variance}]
We drop the superscript $N$ from $\varphi^N$ throughout the proof and take averages over radius $r:=r_N$. Recall from the expressions for $Q^N$ in \eqref{eq:Q_N_dr} and $\rho^{(r)}$ in \eqref{def_tau_r} that the variance of the stochastic integral $\intg{0}{t}\intrd\varphi(x,s,t)$ $M^N(\ld x\ld s)$ is given by
\begin{multline*}
 \intg{0}{t}\intrd{3}\frac{1}{V_{r}^2}\inball{x}{z_1}{x}{z_2}\varphi(z_1,s,t)\varphi(z_2,s,t)\mathbb{E}\Big[ \avg{\wt{s}}(x,r_N)^2(1-\wt{s}(z_1))(1-\wt{s}(z_2)) \\ 
 \quad \quad \quad +2\avg{\wt{s}}(x,r_N)(1-\avg{\wt{s}}(x,r_N))(\tfrac{1}{2}-\wt{s}(z_1))(\tfrac{1}{2}-\wt{s}(z_2))\\
 + (1-\avg{\wt{s}}(x,r_N))^2\wt{s}(z_1)\wt{s}(z_2)\Big]\ld x\ld z_1\ld z_2\ld s + \bigO{\delta_N^2} \int_{0}^{t} \norm[2]{\varphi(s,t)}^2\ld s,
\end{multline*}
which can also be written
\begin{align}
\intg{0}{t}\mathbb{E} \left[ \dual{(\avg{\wt{s}})^2}{\left( \avg{(1-\wt{s})\varphi(s,t)}\right)^2}
+ \dual{2\avg{\wt{s}}(1-\avg{\wt{s}})}{\left( \avg{(\tfrac{1}{2}-\wt{s})\varphi(s,t)} \right)^2}\nonumber \right. \\
\left. + \dual{(1-\avg{\wt{s}})^2}{\left( \avg{\wt{s}\varphi(s,t)} \right)^2} + \bigO{\delta_N^2\norm[2]{\varphi(s,t)}^2} \right] \ld s. \label{eq:lambda_exp}
\end{align}
We want to show that in this expression, $\wt{s}$ can (asymptotically) be replaced by $\lambda$, hence we write
\begin{multline*}
\dual{(\avg{\wt})^2}{\left( \avg{(1-\wt)\varphi} \right)^2} - \dual{\lambda^2}{(1-\lambda)^2\avg{\varphi}^2} \\ = \dual{(\avg{\wt})^2-\lambda^2}{\left(\avg{(1-\wt)\varphi}\right)^2} + \dual{\lambda^2}{\left( \avg{(1-\wt)\varphi} \right)^2-(1-\lambda)^2\avg{\varphi}^2}.
\end{multline*}
Since $(\avg{\wt})^2-\lambda^2=(\tau/\eta)^{1/2}\avg{\zt}(\avg{\wt}+\lambda)$, using Lemma~\ref{lemma:bound_avg_2},
\begin{align*}
 \E{\abs{\dual{(\avg{\wt})^2-\lambda^2}{\left(\avg{(1-\wt)\varphi}\right)^2}}} &\leq 2(\tau/\eta)^{1/2}\dual{\E{(\avg{\zt})^2}^{1/2}}{\avg{\abs{\varphi}}^2} \\
&\lesssim \frac{(\tau/\eta)^{1/2}}{r_N^{d/2}}\norm[2]{\varphi}^2 = \littleO{r_N\norm[2]{\varphi}^2}.
\end{align*}
In addition, 
\begin{multline*}
 \dual{\lambda^2}{\left( \avg{(1-\wt)\varphi} \right)^2-(1-\lambda)^2\avg{\varphi}^2} \\= \lambda^2\intrd{3}\frac{1}{V_r^2}\inball{x}{z_1}{x}{z_2}\varphi(z_1)\varphi(z_2)(\wt(z_1)-\lambda)(\wt(z_2)+\lambda-2)\ld x\ld z_1\ld z_2.
\end{multline*}
(The cross terms cancel out by symmetry.) Thus,
\begin{equation*}
\abs{\dual{\lambda^2}{\left( \avg{(1-\wt)\varphi} \right)^2-(1-\lambda)^2\avg{\varphi}^2}} \leq 2\lambda^2(\tau/\eta)^{1/2}\intrd\abs{\varphi(z_2)}\abs{\dual{\zt}{\psi^N_{z_2}}}\ld z_2,
\end{equation*}
where $\psi^N_{z_2}(z_1) = \frac{V_r(z_1,z_2)}{V_r^2}\varphi(z_1)$. In particular, 
\begin{align} \label{norm_psi}
\norm[1]{\psi^N_{z_2}} = \dbavg{\abs{\varphi}}(z_2,r_N), && \text{and} && \norm[2]{\psi^N_{z_2}}^2 &\leq  \frac{1}{V_{r_N}}\dbavg{\abs{\varphi}^2}(z_2,r_N).
\end{align}
By Lemma~\ref{lemma:bound_phi}, we get
\begin{align*}
\E{\intrd\abs{\varphi(z_2)}\abs{\dual{\zt}{\psi_{z_2}}}\ld z_2} 
&\lesssim r_N^{1-d/2}c_N^{1/2}\intrd\abs{\varphi(z_2)}\left(\norm[1]{\psi^N_{z_2}} + r_N^{d/2}\norm[2]{\psi^N_{z_2}}\right)\ld z_2 \\
&\lesssim r_N^{1-d/2}c_N^{1/2}\norm[2]{\varphi} \left( \intrd(\norm[1]{\psi^N_{z_2}}^2+r_N^d\norm[2]{\psi^N_{z_2}}^2)\ld z_2 \right)^{1/2},
\end{align*}
using the Cauchy-Schwartz inequality in the second line.
By \eqref{norm_psi},
\begin{equation*}
\intrd(\norm[1]{\psi^N_{z_2}}^2+r_N^d\norm[2]{\psi^N_{z_2}}^2)\ld z_2 \lesssim \norm[2]{\varphi}^2.
\end{equation*}
Since $\tau_N/\eta_N = \littleO{r_N^{d+2}}$,
\begin{equation*}
\E{\abs{\dual{\lambda^2}{\left( \avg{(1-\wt)\varphi} \right)^2-(1-\lambda)^2\avg{\varphi}^2}}} = \littleO{r_N^2c_N^{1/2}\norm[2]{\varphi}^2}.
\end{equation*}
We use a similar argument for the other terms in \eqref{eq:lambda_exp} to show that replacing $\wt{s}$ by $\lambda$ makes a difference of $\littleO{r_N^2c_N^{1/2}\norm[2]{\varphi}^2}$. We have thus shown that, since $r_Nc_N^{1/2}\cvgas{N}0$,
\begin{multline*}
\mathbb{E} \Bigg[ \dual{(\avg{\wt{s}})^2}{\left( \avg{(1-\wt{s})\varphi}\right)^2}
+ \dual{2\avg{\wt{s}}(1-\avg{\wt{s}})}{\left( \avg{(\tfrac{1}{2}-\wt{s})\varphi} \right)^2} \\
+ \dual{(1-\avg{\wt{s}})^2}{\left( \avg{\wt{s}\varphi} \right)^2} \Bigg]
= \tfrac{1}{2}\lambda(1-\lambda)\norm[2]{\avg{\varphi}}^2 + \littleO{r_N\norm[2]{\varphi}^2},
\end{multline*}
uniformly in $s\geq 0$. The result follows. 
\end{proof}

\appendix

\section{Approximating the (fractional) Laplacian}\label{append:average}

We use here the notation $\lesssim$ defined in \eqref{definition_lesssim}.
\begin{proposition} \label{prop:average}
Let $\phi:\R^{d}\to\R$ be twice continuously differentiable and suppose that $\norm[q]{\partial_{\beta}\phi}<\infty$ for $0\leq \abs{\beta}\leq 2$ and $1\leq q\leq \infty$. Then
\begin{enumerate}[label=\roman*)]
\item $\norm[q]{\avg{\phi}(r) -\phi} \leq \frac{d}{2} r^{2} \max_{\abs{\beta}=2}\norm[q]{\partial_{\beta}\phi}$.
\end{enumerate}
If in addition, $\phi$ admits $\norm[q]{\cdot}$-bounded derivatives of up to the fourth order,
\begin{enumerate}[label=\roman*),resume]
\item $\norm[q]{\dbavg{\phi}(r) -\phi-\frac{r^{2}}{d+2}\Delta\phi} \leq \frac{d^{3}}{3} r^{4}\max_{\abs{\beta}=4}\norm[q]{\partial_{\beta}\phi}$.
\end{enumerate}
\end{proposition}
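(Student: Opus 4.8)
The plan is to prove both estimates by Taylor expansion of $\phi$ against the uniform density on a ball, killing odd–order terms by the symmetry $h\mapsto -h$, and controlling the integral remainder by Minkowski's integral inequality together with translation invariance of $\norm[q]{\cdot}$.

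For part (i), I would first substitute $h=y-x$, so that $\avg{\phi}(x,r)-\phi(x)=\tfrac1{V_r}\int_{B(0,r)}(\phi(x+h)-\phi(x))\,dh$, and apply the first–order Taylor formula with integral remainder to $\phi(x+h)$. The gradient term integrates to zero because $B(0,r)$ is symmetric about the origin, leaving
\[
\avg{\phi}(x,r)-\phi(x)=\frac1{V_r}\int_{B(0,r)}\sum_{i,j}h_ih_j\int_0^1(1-t)\,\partial_{ij}\phi(x+th)\,dt\,dh .
\]
Taking $\norm[q]{\cdot}$ in $x$, Minkowski's integral inequality moves the norm inside the $h$– and $t$–integrals, and since $x\mapsto x+th$ is a \emph{translation} one has $\norm[q]{\partial_{ij}\phi(\cdot+th)}=\norm[q]{\partial_{ij}\phi}$ (this is precisely why I change to the variable $h$ first: the map $x\mapsto x+t(y-x)$ is a dilation for $t<1$, which would be fatal for $q<\infty$). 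Using $\int_0^1(1-t)\,dt=\tfrac12$ and $\sum_i|h_i|\le\sqrt d\,|h|\le\sqrt d\,r$ on $B(0,r)$, the geometric factor is at most $\tfrac12 d r^2$, which gives (i).

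For part (ii) the key observation is that $\dbavg{\phi}(x,r)=\tfrac1{V_r^2}\iint_{B(0,r)^2}\phi(x+h+h')\,dh\,dh'$; writing $v:=h+h'$, I would apply the \emph{third}–order Taylor formula with integral remainder to $\phi(x+v)$ — this uses only fourth derivatives, so the hypothesis is exactly enough, and it avoids the trap of iterating (i) through $\Delta\avg{\phi}=\avg{\Delta\phi}$, which would require sixth derivatives. The linear and cubic terms vanish under $(h,h')\mapsto(-h,-h')$ (which sends $v\mapsto -v$). The quadratic term is handled via $\tfrac1{V_r}\int_{B(0,r)}h_ih_j\,dh=\tfrac{r^2}{d+2}\delta_{ij}$, which follows from $\tfrac1{V_r}\int_{B(0,r)}|h|^2\,dh=\tfrac{d}{d+2}r^2$ (polar coordinates) and rotational symmetry; since the cross integral $\iint h_ih'_j\,dh\,dh'$ vanishes, $\tfrac1{V_r^2}\iint v_iv_j\,dh\,dh'=\tfrac{2r^2}{d+2}\delta_{ij}$, so the quadratic contribution is exactly $\tfrac{r^2}{d+2}\Delta\phi(x)$. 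The remainder, bounded exactly as in (i),
\[
\Big\|\tfrac1{6}\cdot\tfrac1{V_r^2}\iint_{B(0,r)^2}\sum_{i,j,k,l}v_iv_jv_kv_l\int_0^1(1-t)^3\,\partial_{ijkl}\phi(\cdot+tv)\,dt\,dh\,dh'\Big\|_q ,
\]
is at most $\tfrac1{24}\cdot\tfrac1{V_r^2}\iint_{B(0,r)^2}\big(\sum_i|v_i|\big)^4\,dh\,dh'\cdot\max_{\abs{\beta}=4}\norm[q]{\partial_\beta\phi}$; bounding $\sum_i|v_i|\le\sqrt d\,|v|$ and $|v|\le|h|+|h'|\le 2r$ (and, for the sharp constant in low dimension, evaluating $\tfrac1{V_r^2}\iint_{B(0,r)^2}|h+h'|^4\,dh\,dh'$ in closed form) yields a bound $C(d)\,r^4\max_{\abs{\beta}=4}\norm[q]{\partial_\beta\phi}$ with $C(d)\le d^3/3$, proving (ii).

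I do not expect any deep obstacle here: the argument is essentially two Taylor expansions plus Minkowski. The only points requiring care are (a) staying within the four–derivative regularity budget in part (ii), which dictates expanding the double average directly to third order rather than fourth order or iterating (i); (b) for $q<\infty$, always reducing the shifted derivative to a translate before using invariance of $\norm[q]{\cdot}$; and (c) the routine but slightly fiddly bookkeeping of the geometric constants (the moments $\tfrac1{V_r}\int_{B(0,r)}|h|^{2k}\,dh$) needed to land on the precise constants $\tfrac d2$ and $\tfrac{d^3}{3}$.
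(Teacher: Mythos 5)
Your proposal is correct, and its skeleton is the same as the paper's proof: Taylor expansion with integral remainder, cancellation of the odd-order terms by the symmetry of the ball, and the second moment $\frac{1}{V_r}\int_{B(0,r)}h_ih_j\,dh=\frac{r^2}{d+2}\delta_{ij}$ producing the $\frac{r^{2}}{d+2}\Delta\phi$ correction. Where you differ is in the implementation, and both variants work: for the $L^q$ control of the remainder you use Minkowski's integral inequality plus translation invariance, whereas the paper applies Jensen's inequality against the weight $\abs{y}_{ij}\,\ld y$ before using the same translation invariance -- the essential precaution, passing to the variable $h$ so that the shift $x\mapsto x+th$ is a translation rather than a dilation, is identical; and in (ii) you write $\dbavg{\phi}(x,r)=\frac{1}{V_r^2}\iint_{B(0,r)^2}\phi(x+h+h')\,\ld h\,\ld h'$ and expand in $v=h+h'$, computing the quadratic term from product moments (the cross terms vanish), while the paper keeps the original variables and evaluates the second and fourth moments via the parallelogram identity; your convolution-square viewpoint is, if anything, slightly cleaner. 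The one point you flagged is real but resolves exactly as you suggest: the crude bound $|v|\leq 2r$ only gives $\frac{2d^2}{3}r^4$, which exceeds $\frac{d^3}{3}r^4$ when $d=1$, but the exact moment $\frac{1}{V_r^2}\iint_{B(0,r)^2}|h+h'|^4\,\ld h\,\ld h' = \left(\frac{2d}{d+4}+\frac{2d}{d+2}\right)r^4\leq 4r^4$ bounds your remainder by $\frac{d^2}{6}\,r^4\max_{\abs{\beta}=4}\norm[q]{\partial_\beta\phi}\leq \frac{d^3}{3}\,r^4\max_{\abs{\beta}=4}\norm[q]{\partial_\beta\phi}$ in every dimension $d\geq 1$ (the paper's own bookkeeping lands at $\frac{d^4}{3(d+4)}r^4$, likewise below the stated constant). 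So there is no gap.
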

\begin{proof}[Proof of Proposition~\ref{prop:average}]
By Taylor's theorem,
\begin{equation*}
\phi(y)=\phi(x)+\sum_{i=1}^{d}\partial_{i}\phi(x)(y-x)_{i} + \sum_{i,j}R_{ij}(y)(y-x)_{ij},
\end{equation*}
where $R_{ij}(y) = \intg{0}{1}(1-t)\partial_{ij}\phi(x+t(y-x))\ld t$ (we use the notation $x_{i_{1}\ldots i_{k}}=x_{i_{1}}\ldots x_{i_{k}}$). By symmetry, the integral of the first sum over a ball vanishes, and
\begin{align} \label{avg_1}
\abs{\avg{\phi}(x,r)-\phi(x)} &\leq \sum_{i,j}\frac{1}{V_{r}}\intbr{x}\abs{R_{ij}(y)}\abs{y-x}_{ij}\ld y.
\end{align}
If $q=\infty$, then $\abs{R_{ij}(y)}\leq \frac{1}{2}\norm[\infty]{\partial_{ij}\phi}$ and we write
\begin{align*}
\norm[\infty]{\avg{\phi}(r) -\phi} &\leq \frac{1}{2}d\max_{\abs{\beta}=2}\norm[\infty]{\partial_{\beta}\phi}\frac{1}{ V_r}\intbr{0}\abs{y}^2\ld y=\frac{d^2}{2(d+2)}r^2 \max_{\abs{\beta}=2}\norm[\infty]{\partial_{\beta}\phi}.
\end{align*}
If instead $1\leq q<\infty$, write
\begin{align*}
\norm[q]{\avg{\phi}(r) -\phi} &\leq \sum_{i,j}\left(\intrd\left(\frac{1}{V_{r}}\intbr{0}\abs{R_{ij}(x+y)}\abs{y}_{ij}\ld y\right)^{q}\ld x\right)^{1/q} \\
 &\leq \sum_{i,j}\left(\intrd\left(\frac{1}{V_{r}}\intbr{0}\abs{y}_{ij}\ld y\right)^{q-1}\frac{1}{V_{r}}\intbr{0}\abs{R_{ij}(x+y)}^{q}\abs{y}_{ij}\ld y\ld x\right)^{1/q},
\end{align*}
by Jensen's inequality.
But, by the definition of $R_{ij}$
\begin{align*}
\intrd\abs{R_{ij}(x+y)}^{q}\ld x &\leq \frac{1}{2^{q-1}}\intg{0}{1}(1-t)\intrd\abs{\partial_{ij}\phi(x+ty)}^{q}\ld x\ld t \\
&= \frac{1}{2^{q}}\norm[q]{\partial_{ij}\phi}^{q}.
\end{align*}
Plugging this into the previous inequality, we get
\begin{align*}
\norm[q]{\avg{\phi}(r) -\phi} &\leq \sum_{i,j}\frac{1}{2}\norm[q]{\partial_{ij}\phi}\left(\left(\frac{1}{V_{r}}\intbr{0}\abs{y}_{ij}\ld y\right)^{q-1}\frac{1}{V_{r}}\intbr{0}\abs{y}_{ij}\ld y\right)^{1/q} \\
 &\leq \frac{1}{2}d\max_{\abs{\beta}=2}\norm[q]{\partial_{\beta}\phi}\frac{1}{ V_r}\intbr{0}\abs{y}^2\ld y \\ 
 &\leq \frac{d}{2}r^{2}\max_{\abs{\beta}=2}\norm[q]{\partial_{\beta}\phi}.
\end{align*}

The second inequality is proved in essentially the same way. We expand $\phi$ according to Taylor's theorem to the fourth order:
\begin{multline*}
\phi(y)=\phi(x)+\sum_{i}\partial_{i}\phi(x)(y-x)_{i} + \frac{1}{2}\sum_{i,j}\partial_{ij}\phi(x)(y-x)_{ij} \\ +\frac{1}{3!}\sum_{i,j,k}\partial_{ijk}\phi(x)(y-x)_{ijk} + \sum_{ijkl}R_{ijkl}(y)(y-x)_{ijkl},
\end{multline*}
where $R_{ijkl}(y) = \frac{1}{3!}\intg{0}{1}(1-t)^{3}\partial_{ijkl}\phi(x+t(y-x))\ld t$. Integrating, all the antisymmetric terms vanish and we obtain
\begin{multline*}
\dbavg{\phi}(x,r)-\phi(x)=\frac{1}{2}\sum_{i}\partial_{ii}\phi(x)\frac{1}{V_{r}^{2}}\intrd{2}(y-x)_{ii}\inball{x}{z}{y}{z}\ld z\ld y \\ + \sum_{ijkl}\frac{1}{V_{r}^{2}}\intrd{2} R_{ijkl}(y)(y-x)_{ijkl}\inball{x}{z}{y}{z}\ld z\ld y.
\end{multline*}
We begin by calculating the first term before bounding the second one. Note that, by symmetry, the integral of $(y-x)_{ii}$ does not depend on $i$, so the first sum above can be written as
\begin{equation*}
\frac{1}{2}\Delta\phi(x)\frac{1}{dV_{r}^{2}}\intrd{2}\abs{y-x}^{2}\inball{x}{z}{y}{z}\ld z\ld y.
\end{equation*}
By the parallelogram identity, $\abs{y-x}^{2} = 2(\abs{x-z}^{2}+\abs{y-z}^{2})-\abs{2z-(x+y)}^{2}$. Integrating, we see that
\begin{align*}
\frac{1}{V_{r}^{2}}\intrd{2}\abs{y-x}^{2}\inball{x}{z}{y}{z}\ld z\ld y &= 4\frac{1}{V_{r}}\intbr{0}\abs{y}^{2}\ld y - \frac{1}{V_{r}^{2}}\intrd{2}\abs{(2z-y)-x}^{2}\inball{x}{z}{(2z-y)}{z}\ld z\ld y.
\end{align*}
Changing the variable of integration in the rightmost integral, we obtain
\begin{align*}
\frac{1}{V_{r}^{2}}\intrd{2}\abs{y-x}^{2}\inball{x}{z}{y}{z}\ld z\ld y &= \frac{2}{V_{r}}\intbr{0}\abs{y}^{2}\ld y \\
&= \frac{2d}{d+2}r^{2}.
\end{align*}
Replacing this term in the equation above, we can write
\begin{equation*}
\abs{\dbavg{\phi}(x,r)-\phi(x)-\frac{r^{2}}{d+2}\Delta\phi(x)} \leq \sum_{ijkl}\frac{1}{V_{r}^{2}}\intrd{2} \abs{R_{ijkl}(y)}\abs{y-x}_{ijkl}\inball{x}{z}{y}{z}\ld z\ld y.
\end{equation*}
Proceeding exactly as before and writing $\abs{y}_{ijkl}\leq \frac{1}{4}(\abs{y_{i}}^{4}+\abs{y_{j}}^{4}+\abs{y_{k}}^{4}+\abs{y_{l}}^{4})$, one shows that
\begin{align*}
\norm[q]{\dbavg{\phi}(r) -\phi-\frac{r^{2}}{d+2}\Delta\phi} &\leq \frac{d^{3}}{4!}\max_{\abs{\beta}=4}\norm[q]{\partial_{\beta}\phi}\sum_{i}\frac{1}{V_{r}^{2}}\intrd{2}\abs{y_{i}}^{4}\inball{}{z}{y}{z}\ld z\ld y.
\end{align*}
Note that $\sum_{i}\abs{y_{i}}^{4}\leq \abs{y}^{4}$, and by the parallelogram identity, $\abs{y}^{4}+\abs{2z-y}^{4} \leq 8(\abs{z}^{4}+\abs{z-y}^{4})$. As before, we can integrate on both sides:
\begin{equation*}
\frac{1}{V_{r}^{2}}\intrd{2}\abs{y}^{4}\inball{}{z}{y}{z}\ld z\ld y + \frac{1}{V_{r}^{2}}\intrd{2}\abs{2z-y}^{4}\inball{}{z}{(2z-y)}{z}\ld z\ld y \leq 16 \frac{1}{V_{r}}\intbr{0}\abs{y}^{4}\ld y.
\end{equation*}
Hence,
\begin{equation*}
\frac{1}{V_{r}^{2}}\intrd{2}\abs{y}^{4}\inball{}{z}{y}{z}\ld z\ld y \leq 8 \frac{1}{V_{r}}\intbr{0}\abs{y}^{4}\ld y = \frac{8d}{d+4}r^4.
\end{equation*}
\end{proof}

\begin{proposition}\label{prop:stable_average}
Take $\phi:\R^{d}\to\R$ to be twice continuously differentiable and suppose that $\norm[q]{\partial_{\beta}\phi}<\infty$ for $0\leq \abs{\beta}\leq 2$ and $q\in\braced{1,\infty}$. Then
\begin{enumerate}[label=\roman*)]
\item $\norm[q]{\L*\phi} \lesssim \norm[q]{\phi} + \max_{\abs{\beta}=2}\norm[q]{\partial_\beta\phi}$,
\item $\norm[q]{\L*\phi-\D\phi} \lesssim \delta^{2-\alpha} \max_{\abs{\beta}=2}\norm[q]{\partial_\beta\phi}$.
\end{enumerate}
Further if $0\leq \phi\leq 1$,
\begin{enumerate}[label=\roman*),resume]
\item $\norm[\infty]{F^{(\delta)}(\phi)-F(\phi)} \lesssim \delta^\alpha\left( 1 + \max_{\abs{\beta}=1}\norm[\infty]{\partial_\beta\phi}^2 + \max_{\abs{\beta}= 2}\norm[\infty]{\partial_\beta\phi} \right)$.
\end{enumerate}
\end{proposition}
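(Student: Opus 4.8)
The plan rests on two elementary facts about the kernels $\Phi$ and $\Phi^{(\delta)}$, which I would record first. (a) Both are radial, i.e.\ depend on $z$ only through $\abs{z}$, so for any $C^2$ function $\psi$ the first-order term of the Taylor expansion of $z\mapsto\psi(x+z)-\psi(x)$ integrates to zero against $\Phi^{(\delta)}(\abs{z})\,\ld z$ and against $\Phi(\abs{z})\,\ld z$; only the quadratic remainder survives, and in $L^q$ it is bounded by a constant times $\abs{z}^2\max_{\abs{\beta}=2}\norm[q]{\partial_\beta\psi}$. (b) A change of variables $r=\abs{z}s$ gives $\Phi(\abs{z})=c_{d,\alpha}\abs{z}^{-(d+\alpha)}$, while always $\Phi^{(\delta)}(\abs{z})\leq\Phi(\abs{z})$, with equality on $\{\abs{z}\geq 2\delta\}$, and on $\{\abs{z}<2\delta\}$ one has $\Phi(\abs{z})-\Phi^{(\delta)}(\abs{z})=\int_{\abs{z}/2}^{\delta}\tfrac{V_r(z)}{V_r}\tfrac{\ld r}{r^{d+\alpha+1}}\leq c\abs{z}^{-(d+\alpha)}$ and $\Phi^{(\delta)}(\abs{z})\leq\int_\delta^\infty r^{-(d+\alpha+1)}\ld r\lesssim\delta^{-(d+\alpha)}$.

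For (i) and (ii) I would write $\L*{\delta}\phi(x)=\intrd\Phi^{(\delta)}(\abs{z})\,(\phi(x+z)-\phi(x))\,\ld z$ and split the integral over $\{\abs{z}\leq 1\}$ and $\{\abs{z}>1\}$. On the far part one bounds $\abs{\phi(x+z)-\phi(x)}\leq 2\norm[\infty]{\phi}$ (and $\norm[1]{\phi(\cdot+z)-\phi(\cdot)}\leq 2\norm[1]{\phi}$ when $q=1$) and uses $\int_{\abs{z}>1}\Phi^{(\delta)}(\abs{z})\ld z\leq\int_{\abs{z}>1}\Phi(\abs{z})\ld z<\infty$ (finite because $\alpha>0$), giving a bound $\lesssim\norm[q]{\phi}$. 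On the near part, by (a) only the quadratic remainder remains, contributing $\lesssim\max_{\abs{\beta}=2}\norm[q]{\partial_\beta\phi}\int_{\abs{z}\leq 1}\Phi^{(\delta)}(\abs{z})\abs{z}^2\ld z$, and this last integral is bounded uniformly in $\delta\in(0,1]$: on $\{\abs{z}\geq 2\delta\}$ it is $\lesssim\int_{2\delta}^1\rho^{1-\alpha}\ld\rho\leq(2-\alpha)^{-1}$ (here $\alpha<2$ enters) and on $\{\abs{z}<2\delta\}$ it is $\lesssim\delta^{-(d+\alpha)}\delta^{d+2}=\delta^{2-\alpha}\leq 1$. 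This proves (i) (and, by the same splitting, that $\L*{\delta}\phi$ and $\D\phi$ are individually finite in $L^q$). For (ii), $\Phi-\Phi^{(\delta)}$ is supported on $\{\abs{z}<2\delta\}$ and is radial, so the first-order Taylor term again drops out, and with (b), $\norm[q]{\L*{\delta}\phi-\D\phi}\lesssim\max_{\abs{\beta}=2}\norm[q]{\partial_\beta\phi}\int_{\abs{z}<2\delta}\abs{z}^{-(d+\alpha)}\abs{z}^2\ld z\lesssim\delta^{2-\alpha}\max_{\abs{\beta}=2}\norm[q]{\partial_\beta\phi}$.

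For (iii) I would use the identity $\alpha\int_1^\infty r^{-(\alpha+1)}\ld r=1$ to write $F^{(\delta)}(\phi)(x)-F(\phi(x))=\alpha\int_1^\infty\bigl[\avg{F(\avg{\phi})}(x,\delta r)-F(\phi(x))\bigr]r^{-(\alpha+1)}\ld r$, and bound the bracket for fixed $r$ via the triangle inequality through $F(\avg{\phi}(x,\delta r))$: since $\avg{\phi}(x,\delta r)\in[0,1]$ and $F$ is Lipschitz there, Proposition~\ref{prop:average}.i gives $\abs{F(\avg{\phi}(x,\delta r))-F(\phi(x))}\lesssim(\delta r)^2\max_{\abs{\beta}=2}\norm[\infty]{\partial_\beta\phi}$; and applying Proposition~\ref{prop:average}.i to the $C^2$ map $x\mapsto F(\avg{\phi}(x,\delta r))$ — whose second derivatives are $\lesssim\max_{\abs{\beta}=1}\norm[\infty]{\partial_\beta\phi}^2+\max_{\abs{\beta}=2}\norm[\infty]{\partial_\beta\phi}$, because averaging commutes with $\partial_\beta$ and contracts $\norm[\infty]{\cdot}$ — gives $\abs{\avg{F(\avg{\phi})}(x,\delta r)-F(\avg{\phi}(x,\delta r))}\lesssim(\delta r)^2\bigl(\max_{\abs{\beta}=1}\norm[\infty]{\partial_\beta\phi}^2+\max_{\abs{\beta}=2}\norm[\infty]{\partial_\beta\phi}\bigr)$. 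Since $\int_1^\infty(\delta r)^2 r^{-(\alpha+1)}\ld r$ diverges, I would split the $r$-integral at $r=1/\delta$: on $[1,1/\delta]$ the $(\delta r)^2$-bound integrates to $\lesssim\delta^2\delta^{-(2-\alpha)}=\delta^\alpha$ times $\bigl(1+\max_{\abs{\beta}=1}\norm[\infty]{\partial_\beta\phi}^2+\max_{\abs{\beta}=2}\norm[\infty]{\partial_\beta\phi}\bigr)$, while on $[1/\delta,\infty)$ I would instead use the crude bound $\abs{\avg{F(\avg{\phi})}(x,\delta r)-F(\phi(x))}\leq 2\norm[\infty]{F}\lesssim 1$, whose integral against $\alpha r^{-(\alpha+1)}$ over $[1/\delta,\infty)$ equals $\delta^\alpha$. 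Adding the two pieces yields (iii).

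The one genuinely delicate point is this last split in (iii): the Taylor estimate cannot be used on all of $r\in[1,\infty)$ because $r^2 r^{-(\alpha+1)}$ is not integrable at infinity when $\alpha<2$, and the remedy is to exploit that $\avg{F(\avg{\phi})}$ stays in the bounded range of $F$, so that the far part of the $r$-integral is handled by the exact identity $\alpha\int_{1/\delta}^\infty r^{-(\alpha+1)}\ld r=\delta^\alpha$. Everything else is routine bookkeeping with the power-law estimates in (b) and the vanishing-of-the-linear-term argument in (a).
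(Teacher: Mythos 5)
Your proposal is correct, and part (iii) is essentially the paper's own argument: after the change of variables $r\mapsto\delta r$, your split of the $r$-integral at $1/\delta$ is exactly the paper's split of the radius integral at radius $1$, with the same triangle inequality through $F(\avg{\phi})$, the same chain-rule bound $\partial_{ij}F(\phi)=\partial_{ij}\phi\,F'(\phi)+\partial_i\phi\,\partial_j\phi\,F''(\phi)$ on the near radii, and the same crude bound by $\norm[\infty]{F}$ on the far radii. For (i)--(ii), however, you take a genuinely different, kernel-side route: you Taylor-expand at $x$, kill the linear term by radial symmetry, and use the explicit power law $\Phi(\abs{z})=c_{d,\alpha}\abs{z}^{-(d+\alpha)}$ together with $\Phi^{(\delta)}\lesssim\delta^{-(d+\alpha)}$, splitting in the spatial variable $\abs{z}$. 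The paper instead uses Fubini to write $\L*\phi(x)=V_1\intg{\delta}{\infty}\bigl(\dbavg{\phi}(x,r)-\phi(x)\bigr)r^{-(\alpha+1)}\,\ld r$ (and the analogous formula for $\D\phi$ with the integral starting at $0$), quotes the averaging estimates of Proposition~\ref{prop:average}, and splits in the radius variable ($r\in[\delta,1]$ versus $r\geq1$ for (i); $r\in(0,\delta]$ for (ii)). The two computations are equivalent up to this Fubini step. What the paper's version buys is the re-use of Proposition~\ref{prop:average} with no new Taylor work and, more substantively, absolute convergence of the radius integral down to $r=0$; on the kernel side, when $\alpha\geq1$, neither the linear term nor $\abs{\phi(x+z)-\phi(x)}$ is absolutely integrable against $\Phi-\Phi^{(\delta)}$ near $z=0$, so in (ii) the statement that the first-order term ``drops out'' should be implemented by pairing $z$ with $-z$ first, i.e.\ writing the difference as $\tfrac12\int\bigl(\Phi-\Phi^{(\delta)}\bigr)(\abs{z})\bigl(\phi(x+z)+\phi(x-z)-2\phi(x)\bigr)\ld z$, to which your quadratic bound applies verbatim; this is a one-line repair, not a gap (and the paper's own pointwise formula for $\D\phi$ needs the same symmetric reading when $\alpha\geq1$). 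What your route buys in exchange is that the fractional-Laplacian scaling and the precise role of $\alpha<2$, through $\int_{2\delta}^{1}\rho^{1-\alpha}\ld\rho$ and $\delta^{-(d+\alpha)}\cdot\delta^{d+2}=\delta^{2-\alpha}$, are visible directly on the kernel.
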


\begin{proof}[Proof of Proposition~\ref{prop:stable_average}]
From the definition of $\L*$ and $\Phi^{(\delta)}$ in~\eqref{definition_fractional_laplacian}, note that
\begin{equation*}
\L*\phi(x) = V_1\intg{\delta}{\infty}\left( \dbavg{\phi}(x,r) - \phi(x) \right)\frac{\ld r}{r^{\alpha+1}}.
\end{equation*}
For $q\in\lbrace 1,\infty \rbrace$, using Proposition~\ref{prop:average}
\begin{align*}
\norm[q]{\L*\phi} &\leq V_1\intg{\delta}{\infty}\norm[q]{\dbavg{\phi}(r) -\phi}\frac{\ld r}{r^{\alpha+1}}\\
& \lesssim \max_{\abs{\beta}=2}\norm[q]{\partial_\beta\phi}\intg{\delta}{1}r^2\frac{\ld r}{r^{\alpha+1}} + \norm[q]{\phi} \intg{1}{\infty}\frac{\ld r}{r^{\alpha+1}}\\
& \lesssim \max_{\abs{\beta}=2}\norm[q]{\partial_\beta\phi} + \norm[q]{\phi}.
\end{align*}
Likewise, we have
\begin{equation*}
\D\phi(x)-\L*\phi(x) = V_1 \intg{0}{\delta}\left( \dbavg{\phi}(x,r)-\phi(x) \right)\frac{\ld r}{r^{\alpha+1}}.
\end{equation*}
By Proposition~\ref{prop:average}, we then write
\begin{align*}
\norm[q]{\D\phi-\L*\phi} &\leq V_1 \intg{0}{\delta}\norm[q]{\dbavg{\phi}(r) -\phi}\frac{\ld r}{r^{\alpha+1}}\\
& \lesssim \max_{\abs{\beta}=2}\norm[q]{\partial_\beta\phi} \intg{0}{\delta}r^2\frac{\ld r}{r^{\alpha+1}}\\
& \lesssim \delta^{2-\alpha}\max_{\abs{\beta}=2}\norm[q]{\partial_\beta\phi}.
\end{align*}
The third statement is a rewording of the first one in a slightly different setting. Indeed by~\eqref{definition_F_delta},
\begin{equation*}
F^{(\delta)}(\phi)(x)-F(\phi(x)) = \alpha\delta^{\alpha}\intg{\delta}{\infty}\left( \avg{F(\avg{\phi})}(x,r)-F(\phi(x)) \right)\frac{\ld r}{r^{\alpha+1}}.
\end{equation*}
Hence as in the proof of (\textit{i})
\begin{equation*}
\norm[\infty]{F^{(\delta)}(\phi)-F(\phi)} \lesssim \delta^{\alpha}\left( \norm[\infty]{F(\phi)} + \max_{\abs{\beta}=2}\norm[\infty]{\partial_\beta F(\avg{\phi})} + \norm[\infty]{F'}\max_{\abs{\beta}=2}\norm[\infty]{\partial_\beta\phi} \right).
\end{equation*}
The last term appears because there is an average inside the function $F$. The result then follows from the fact that $\partial_{ij}F(\phi) = \partial_{ij}\phi\, F'(\phi) + \partial_i\phi\,\partial_j\phi\, F''(\phi)$.
\end{proof}

\section{The centering term}\label{append:centering_term}

\subsection{The Brownian case}

\begin{proof}[Proof of Proposition~\ref{prop:convergence_centering_term}]
Recall the following expression for $f^{N}$ from~\eqref{centering_term_green_function},
\begin{align}\label{centering_term_green_function_rpt}
f^{N}_{t}(x) &= \G_{t}\ast w_{0}(x) - \intg{0}{t}\G_{t-s}\ast\avg{F(\avg{\ft{s}})}(x)\ld s.
\end{align}
Since $\norm[\infty]{\G_{t}\ast\phi}\leq \norm[\infty]{\phi}$, it follows that $\norm[\infty]{f^{N}_{t}}\leq \norm[\infty]{w_0}+T\norm[\infty]{F}$ for $t\leq T$.
We can now prove the second part of the statement by induction on $\abs{\beta}$. 
Suppose that the result is established for every $0\leq \abs{\beta}< k \leq 4$ and take $\beta$ such that $\abs{\beta}=k$. 
(From now on we omit the superscript N in the induction proof.) Noting that
\begin{equation*}
\partial_{\beta}F(f) = \sum_{k\geq 1}F^{(k)}(f)\left(\sum_{\alpha_{1}+\ldots+\alpha_{k}=\beta} \partial_{\alpha_{1}}f\ldots\partial_{\alpha_{k}}f\right),
\end{equation*}
and recalling that $w_0$ is assumed to have uniformly bounded derivatives of up to the fourth order, we can differentiate on both sides of \eqref{centering_term_green_function_rpt}:
\begin{align*}
\partial_{\beta}f_{t}(x) &= \G_{t}\ast\partial_{\beta}w_{0}(x) - \intg{0}{t}\G_{t-s}\ast\left( \avg{F'(\avg{f_{s}})\avg{\partial_{\beta}f_{s}}} + \sum_{\substack{ \alpha_{1}+\ldots+\alpha_{k}=\beta \\ k\geq 2} } \avg{F^{(k)}(\avg{f_{s}}) \avg{\partial_{\alpha_{1}}f_{s}} \ldots \avg{\partial_{\alpha_{k}}f_{s}}} \right)(x)\ld s.
\end{align*}
The sum is uniformly bounded by a constant $K$ by the induction hypothesis, and so, using the fact that $\norm[\infty]{\G_{t}\ast\phi}\leq \norm[\infty]{\phi}$,
\begin{equation*}
\norm[\infty]{\partial_{\beta}f_{t}} \leq \norm[\infty]{\partial_{\beta}w_{0}} + TK + \norm[\infty]{F'}\intg{0}{t}\norm[\infty]{\partial_{\beta}f_{s}}\ld s.
\end{equation*}
We can apply Gronwall's inequality to conclude
\begin{equation*}
\norm[\infty]{\partial_{\beta}f_{t}} \leq \left(\norm[\infty]{\partial_{\beta}w_{0}} + TK\right)e^{\norm{F'}T},
\end{equation*}
where the right hand side is independent of both $t\in[0,T]$ and $N\geq 1$. 
We can now prove the first statement using Gronwall's inequality again, together with Proposition~\ref{prop:average} and the first part of the proof. 
Recall that $G_{t}$ denotes the fundamental solution to the heat equation.
Recalling that we set the constants $uV_R$, $2R^2/(d+2)$ and $s$ to $1$, equations \eqref{definition:centering_term_fixed_radius} and \eqref{fisher_kpp} can be written as
\begin{equation*}
\ft(x) = G_t\ast w_{0}(x) + \intg{0}{t}G_{t-s}\ast\left( \calL\ft{s}-\frac{1}{2}\Delta\ft{s}-\avg{F(\avg{\ft{s}})} \right)(x)\ld s,
\end{equation*}
and
\begin{equation*}
f_{t}(x) = G_t\ast w_{0}(x) + \intg{0}{t}G_{t-s}\ast F(f_s)\ld s.
\end{equation*}
By Proposition~\ref{prop:average},
\begin{equation*}
\norm[\infty]{\calL \ft{s}-\frac{1}{2}\Delta \ft{s}} \leq \frac{d^{3}(d+2)}{6}r_{N}^{2}\max_{\abs{\beta}=4}\norm[\infty]{\partial_{\beta}\ft{s}} \lesssim r_N^2,
\end{equation*}
since $\max_{\abs{\beta}=4}\norm[\infty]{\partial_{\beta}\ft{s}}$ is uniformly bounded from the previous argument. 
Also by Proposition~\ref{prop:average},
\begin{equation*}
\norm[\infty]{\avg{F(\avg{\ft{s}})}-F(\ft{s})} \leq \frac{d}{2}r_N^{2}\left(\max_{\abs{\beta}=2}\norm[\infty]{\partial_{\beta}F(\avg{\ft{s}})} + \norm[\infty]{F'}\max_{\abs{\beta}=2}\norm[\infty]{\partial_{\beta}\ft{s}}\right) \lesssim r_N^2.
\end{equation*}
(The term within brackets is uniformly bounded from the first part of the proof.)
Finally, we also have
\begin{equation*}
\norm[\infty]{F(\ft{s})-F(f_s)} \leq \norm[\infty]{F'}\norm[\infty]{\ft{s}-f_s}.
\end{equation*}
Hence, using the fact that $\norm[\infty]{G_t\ast\phi}\leq\norm[\infty]{\phi}$, there exists a constant $C>0$ such that, for $t\in[0,T]$,
\begin{equation*}
\norm[\infty]{\ft-f_{t}} \leq C r_{N}^{2} + \norm[\infty]{F'}\intg{0}{t}\norm[\infty]{\ft{s}-f_{s}}\ld s.
\end{equation*}
Applying Gronwall's inequality,
\begin{align*}
\norm[\infty]{f^{N}_{t}-f_{t}} &\leq Ce^{\norm{F'}T}r_{N}^{2}.
\end{align*}
\end{proof}

\subsection{The stable case}

\begin{proof}[Proof of Proposition~\ref{prop:stable:convergence_centering_term}]
The proof of the convergence of the centering term in the stable case goes along the same lines as in the Brownian case of Proposition~\ref{prop:convergence_centering_term}. Differentiating \eqref{stable_centering_term_green_function} yields~:
\begin{multline*}
\partial_{\beta}\ft(x) = \G*_{t}\ast\partial_{\beta}w_{0}(x) - \alpha\intg{0}{t}\intg{1}{\infty}\G*_{t-s}\ast\bigg( \avg{F'(\avg{\ft{s}})\avg{\partial_{\beta}\ft{s}}}{\delta_N r} \\ + \sum_{\substack{ \alpha_{1}+\ldots+\alpha_{k}=\beta \\ k\geq 2}} \avg{F^{(k)}(\avg{\ft{s}}) \avg{\partial_{\alpha_{1}}\ft{s}} \ldots \avg{\partial_{\alpha_{k}}\ft{s}}}{\delta_N r} \bigg)(x)\frac{\ld r}{r^{\alpha+1}}\ld s.
\end{multline*}
One can then proceed by induction as previously to show
\begin{equation*}
\norm[\infty]{\partial_{\beta}f^N_{t}} \lesssim \norm[\infty]{\partial_{\beta}w_{0}} + T + \norm[\infty]{F'}\intg{0}{t}\norm[\infty]{\partial_{\beta}f_{s}}\ld s,
\end{equation*}
and Gronwall's inequality yields the second part of the statement. For the first part, the proof is identical to that in the Brownian case, one simply has to replace the operators $\frac{1}{2}\Delta$ and $\L$ by $\D$ and $\L*$, respectively, and likewise replace $\avg{F(\avg{\ft})}$ by $F^{(\delta)}(\ft)$. Proposition~\ref{prop:stable_average} then yields the correct estimates on the corresponding error terms.
\end{proof}

\section{Time dependent test functions}\label{append:test_functions}

\subsection{The Brownian case}

\begin{proof}[Proof of Lemma~\ref{lemma:convergence_varphi}]
The proof of Lemma~\ref{lemma:convergence_varphi} is similar in spirit to that of Proposition~\ref{prop:convergence_centering_term}. We start by proving the bound on the derivatives of $\varphi^N$. By the definition of $\varphi^{N}$ in~\eqref{definition_varphi_n},
\begin{equation} \label{varphi_green}
\varphi^{N}(x,s,t) = \G_{t-s}\ast\phi(x) - \intg{s}{t}\G_{u-s}\ast\avg{F'(\avg{\ft{u}})\avg{\varphi^{N}(u,t)}}(x)\ld u.
\end{equation}
Using the fact that $\G_{t}$ is a contraction in $L^{q}$, we have, for $q=1,2$,
\begin{align*}
\norm[q]{\varphi^{N}(s,t)}^{q} &\leq 2^{q-1}\norm[q]{\phi}^{q} + (2(t-s))^{q-1}\intg{s}{t}\norm[q]{\avg{F'(\avg{\ft{u}})\avg{\varphi^{N}(u,t)}}}^{q}\ld u \\
&\leq 2^{q-1}\norm[q]{\phi}^{q} + (2(t-s))^{q-1}\norm[\infty]{F'}^{q}\intg{s}{t}\norm[q]{\varphi^{N}(u,t)}^{q}\ld u.
\end{align*}
By Gronwall's inequality, we conclude that
\begin{equation*} 
\norm[q]{\varphi^{N}(s,t)}\leq 2^{(q-1)/q}\norm[q]{\phi}e^{\frac{2^{q-1}}{q}T^{q}\norm{F'}^{q}}.
\end{equation*}
Thus the statement holds for $\beta=0$. We can then proceed by induction on $\abs{\beta}$ as in the proof of Proposition~\ref{prop:convergence_centering_term} to show that the same holds for every $0\leq\abs{\beta}\leq 4$ (making use of the fact that by Proposition~\ref{prop:convergence_centering_term}, $\ft{}$ has uniformly bounded derivatives). We omit the details.

We are left with proving the convergence estimate for $\varphi^{N}$ which is again a Gronwall estimate. 
As in the proof of Proposition~\ref{prop:convergence_centering_term}, write~\eqref{definition_varphi_n} and~\eqref{definition_varphi} as
\begin{equation}\label{varphi_n_gronwall}
\varphi^N(x,s,t) = G_{t-s}\ast\phi(x) + \intg{s}{t}G_{u-s}\ast\left( \L\varphi^N(u,t)-\frac{1}{2}\Delta\varphi^N(u,t) - \avg{F'(\avg{\ft{u}})\avg{\varphi^N(u,t)}} \right)(x)\ld u,
\end{equation}
and
\begin{equation}\label{varphi_gronwall}
\varphi(x,s,t) = G_{t-s}\ast\phi(x) - \intg{s}{t}G_{u-s}\ast\left( F'(f_u)\varphi(u,t) \right)(x)\ld u.
\end{equation}
By Proposition~\ref{prop:average} and the bound on the spatial derivatives of $\varphi^N$,
\begin{equation*}
\norm[q]{\L\varphi^N(u,t)-\frac{1}{2}\Delta\varphi^N(u,t)} \lesssim r_N^2.
\end{equation*}
Still by Proposition~\ref{prop:average}, (omitting superscripts $N$ and time variables)
\begin{multline*}
\norm[q]{\avg{F'(\avg{f})\avg{\varphi}}-F'(f)\varphi} \\ \leq \frac{d}{2}r_{N}^{2}\left(\max_{\abs{\beta}=2}\norm[q]{\partial_{\beta}(F'(\avg{f})\avg{\varphi})} + \norm[\infty]{F'}\max_{\abs{\beta}=2}\norm[q]{\partial_{\beta}\varphi} + \norm[q]{\varphi}\norm[\infty]{F''}\max_{\abs{\beta}=2}\norm[\infty]{\partial_{\beta}f}\right).
\end{multline*}
The last term inside the brackets is uniformly bounded by Proposition~\ref{prop:convergence_centering_term} and the second to last is bounded as a consequence of the first part of the proof. 
Also, $\partial_{ij}(F'(\avg{f})\avg{\varphi})$ is dominated by a linear combination of (averages of) derivatives of both $f$ and $\varphi$. 
The latter are bounded in $L^{q}$ while the former are bounded in $L^{\infty}$, hence the first term within the brackets is also uniformly bounded. 
To sum up,
\begin{equation}\label{estimate_avg_F(phi)}
\norm[q]{\avg{F'(\avg{\ft{u}})\avg{\varphi^{N}(u,t)}}-F'(\ft{u})\varphi^{N}(u,t)} \lesssim r_{N}^{2}.
\end{equation}
Finally, by Proposition~\ref{prop:convergence_centering_term},
\begin{equation*}
\norm[q]{F'(\ft{u})-F'(f_u)} \lesssim r_N^2.
\end{equation*}
Hence, subtracting~\eqref{varphi_gronwall} from~\eqref{varphi_n_gronwall} and using Jensen's inequality as above with the $L^q$-contraction property of $G_t$, we have, for $t\in[0,T]$,
\begin{equation*}
\norm[q]{\varphi^N(s,t)-\varphi(s,t)}^q \lesssim r_N^{2q} + \intg{s}{t}\norm[q]{\varphi^N(u,t)-\varphi(u,t)}^q\ld u.
\end{equation*}
We conclude with Gronwall's inequality, yielding the first statement of Lemma~\ref{lemma:convergence_varphi}.
\end{proof}

\begin{proof}[Proof of Lemma~\ref{lemma:continuity_varphi}]
We can assume that $t'>t\geq s$ (if $t'\geq s\geq t$, then $\varphi^{N}(s,t)=\phi=\varphi^{N}(s,s)$ and the problem reduces to bounding $\varphi^{N}(s,t')-\varphi^{N}(s,s)$). Using \eqref{varphi_green} and recalling the way we extended $\varphi^{N}$ in~\eqref{varphi_extension}, we write
\begin{multline*}
\varphi^{N}(x,s,t')-\varphi^{N}(x,s,t) = \G_{t'-s}\ast\phi(x)-\G_{t-s}\ast\phi(x) - \intg{t}{t'}\G_{u-s}\ast\avg{F'(\avg{\ft{u}})\avg{\phi}}(x)\ld u \\ - \intg{s}{T}\G_{u-s}\ast\left(\avg{F'(\avg{\ft{u}})(\avg{\varphi^{N}(u,t')}-\avg{\varphi^{N}(u,t)})}\right)(x)\ld u.
\end{multline*}
Again, we use the $L^{q}$-contraction property of $\G_{t}$ to write
\begin{multline} \label{continuity_step1}
\norm[q]{\varphi^{N}(s,t')-\varphi^{N}(s,t)}^{q} \leq 3^{q-1}\norm[q]{\G_{t'-s}\ast\phi-\G_{t-s}\ast\phi}^{q} + 3^{q-1}\abs{t'-t}^{q}\norm[\infty]{F'}^{q}\norm[q]{\phi}^{q} \\ + (3(T-s))^{q-1}\norm[\infty]{F'}^{q}\intg{s}{T}\norm[q]{\varphi^{N}(u,t')-\varphi^{N}(u,t)}^{q}\ld u.
\end{multline}
We need a bound on the first term; recalling the definition of $\G$ in Subsection~\ref{subsec:brownian:regularity_estimate}, we have
\begin{equation*}
\G_{t'-s}\ast\phi(x)-\G_{t-s}\ast\phi(x) = \intg{t}{t'}\G_{u-s}\ast\L\phi(x)\ld u.
\end{equation*}
By Jensen's inequality,
\begin{align*}
\norm[q]{\G_{t'-s}\ast\phi-\G_{t-s}\ast\phi}^{q} &\leq \abs{t'-t}^{q-1}\intg{t}{t'}\norm[q]{\L\phi}^q\ld u \\
&\lesssim \abs{t'-t}^{q},
\end{align*}
by Proposition~\ref{prop:average}.
Hence, returning to \eqref{continuity_step1},
\begin{equation*}
\norm[q]{\varphi^{N}(s,t')-\varphi^{N}(s,t)}^{q} \lesssim \abs{t'-t}^{q} + \intg{s}{T}\norm[q]{\varphi^{N}(u,t')-\varphi^{N}(u,t)}^{q}\ld u.
\end{equation*}
Gronwall's inequality now yields the result.
\end{proof}

\begin{proof}[Proof of Lemma~\ref{lemma_varphi_uniform_bound}]
Using \eqref{varphi_green} and the definition of $\G$ we can write
\begin{equation*}
\varphi^{N}(x,s,t) = \phi(x) + \intg{s}{t}\G_{u-s}\ast\calL\phi(x)\ld u - \intg{s}{t}\G_{u-s}\ast\avg{F'(\avg{\ft{u}})\avg{\varphi^{N}(u,t)}}(x)\ld u.
\end{equation*}
Within the second integral, $u\leq t$, so we can write $\abs{\varphi^{N}(u,t)} \leq \sup_{t'\in[u,T]}\abs{\varphi^{N}(u,t')}$. Thus
\begin{equation*}
\sup_{t\in[s,T]}\abs{\varphi^{N}(x,s,t)} \leq \abs{\phi(x)} + \intg{s}{T}\G_{u-s}\ast\abs{\calL\phi}(x)\ld u + \norm[\infty]{F'}\intg{s}{T}\dbavg{\G_{u-s}}\ast\sup_{t\in[u,T]}\abs{\varphi^{N}(u,t)}(x)\ld u.
\end{equation*}
Integrating with respect to the variable $x\in\R^{d}$ yields
\begin{equation*}
\norm[1]{\sup_{t\in[s,T]}\abs{\varphi^{N}(s,t)}} \leq \norm[1]{\phi} + (T-s)\norm[1]{\calL\phi} + \norm[\infty]{F'}\intg{s}{T}\norm[1]{\sup_{t\in[u,T]}\abs{\varphi^{N}(u,t)}}\ld u.
\end{equation*}
By Gronwall's inequality,
\begin{equation*}
\norm[1]{\sup_{t\in[s,T]}\abs{\varphi^{N}(s,t)}} \leq \left(\norm[1]{\phi} + T\frac{d(d+2)}{2}\max_{\abs{\beta}=2}\norm[1]{\partial_{\beta}\phi}\right)e^{\norm{F'}(T-s)}.
\end{equation*}
(we have used Proposition~\ref{prop:average} to bound $\norm[1]{\calL\phi}$).
\end{proof}

\subsection{The stable case}

\begin{proof}[Proof of Lemma~\ref{lemma:stable:convergence_varphi}]
By the definition of $\varphi^N$ in~\eqref{defn_varphi_n_stable},
\begin{equation} \label{varphi_green_sr}
\varphi^{N}(x,s,t) = \G*_{t-s}\ast\phi(x) - \alpha\intg{s}{t}\intg{1}{\infty}\G*_{u-s}\ast\avg{F'(\avg{\ft{u}})\avg{\varphi^{N}(u,t)}}{\delta r}(x)\frac{\ld r}{r^{\alpha+1}}\ld u.
\end{equation}
The bound on the derivatives of $\varphi^N$ is proved following the same argument as in the proof of Lemma \ref{lemma:convergence_varphi} in the Brownian case (simply note that since we are only considering $q\in\braced{1,\infty}$, we can safely put the $L^q$ norm inside the integral with respect to $r$). 
By the definition of $\varphi$,
\begin{equation*}
\varphi(x,s,t) = \mathcal{G}^{(\alpha)}_{t-s} \ast \phi(x) - \intg{s}{t} \mathcal{G}^{(\alpha)}_{u-s} \ast \left( F'(f_u) \varphi(u,t) \right)(x) \ld u.
\end{equation*}
By Proposition~\ref{prop:stable_average} and by the bound on the spatial derivatives of $\varphi^N$,
\begin{equation*}
\norm[q]{ \L*\varphi^N(u,t) - \D\varphi^N(u,t) } \lesssim \delta_N^{2-\alpha}.
\end{equation*}
Using \eqref{estimate_avg_F(phi)} (which is still true in this case by the bound on the derivatives of $\varphi^N$), we have
\begin{align*}
\intg{1}{\infty}\norm[q]{\avg{F'(\avg{\ft{u}})\avg{\varphi^{N}(u,t)}}{\delta r}-F'(\ft{u})\varphi^{N}(u,t)}\frac{\ld r}{r^{\alpha+1}} &\lesssim \delta^2\intg{1}{\delta^{-1}} r^2 \frac{\ld r}{r^{\alpha+1}} +\intg{\delta^{-1}}{\infty}\frac{\ld r}{r^{\alpha+1}} \\
&\lesssim \delta^{\alpha}.
\end{align*}
Finally, by Proposition~\ref{prop:stable:convergence_centering_term},
\begin{equation*}
\norm[q]{F'(\ft{u})-F'(f_u)} \lesssim \delta_N^{\alpha\wedge(2-\alpha)}.
\end{equation*}
As a result, by the same argument as in the proof of Lemma~\ref{lemma:convergence_varphi}, by Gronwall's inequality,
\begin{equation*}
\norm[q]{\varphi^N(s,t)-\varphi(s,t)} \lesssim \delta_N^{\alpha\wedge(2-\alpha)}.
\end{equation*}
\end{proof}

\begin{proof}[Proof of Lemma~\ref{lemma:continuity_unif_bound_varphi_sr}]
The argument for the continuity estimate is the same as in the proof of Lemma \ref{lemma:continuity_varphi}, using Proposition \ref{prop:stable_average}. For the second bound, we use the same argument as in Lemma~\ref{lemma_varphi_uniform_bound}, again using Proposition \ref{prop:stable_average}.
\end{proof}

\begin{proof}[Proof of Lemma~\ref{lem:alpha_int_bound}]
Splitting the integral with respect to $z_2$, we have
\begin{multline*}
\intrd{2}\abs{f(z_{1})}\abs{g(z_{2})}|z_1-z_2|^{-\alpha}\ld z_{1}\ld z_{2} \leq \| g \|_\infty \intrd |f (z_1)|  \int_{B(z_1,1)} |z_1 -z_2|^{-\alpha}\ld z_2\ld z_1 \\ + \intrd\abs{f(z_1)}\int_{\R ^d \setminus B(z_1,1)} |g (z_2)| \ld z_2\ld z_1
\end{multline*}
But $\intbr{0}{1}\abs{y}^{-\alpha}\ld y = \frac{dV_1}{d-\alpha}$ and we have~:
\begin{equation*}
\intrd{2}\abs{f(z_{1})}\abs{g(z_{2})}|z_1-z_2|^{-\alpha}\ld z_{1}\ld z_{2} \leq \norm[\infty]{g}\frac{dV_1}{d-\alpha}\intrd\abs{f(z_1)}\ld z_1 + \norm[1]{g}\intrd\abs{f(z_1)}\ld z_1.
\end{equation*}
\end{proof}

\section{Estimates for drift load proofs}\label{append:driftload}

\begin{proof}[Proof of Lemma~\ref{lem:density_green_fct}]
For all $t>0$, $\xi^{(r)}_t$ can be written as $\xi^{(r)}_t = \sum_{k=1}^{N_t} Y_k$, where $\proc{N}$ is a Poisson process with intensity $\frac{(d+2)}{2 r^2}$ and $\proc{Y_k}{k \geq 1}$ is a sequence of independent and identically distributed random variables with density $\psi(y) = \frac{V_r(0,y)}{V_r^2}$.
As a result, the law of $\xi^{(r)}_t$ can be written
\begin{equation*}
\G_t (\ld x) = e^{- \frac{d+2}{2 r^2} t} \delta_0 (\ld x) + e^{- \frac{d+2}{2 r^2} t} \sum_{n \geq 1} \frac{\left( \frac{d+2}{2 r^2} t \right)^n}{n !} \psi^{\ast n} (x) \ld x.
\end{equation*}
Since $\psi$ is continuous on $\R^d$, so is $\psi^{\ast n}$ for any $n \geq 1$.
In addition, $\psi(y)$ is decreasing as a function of $\abs{y}$, and $\phi \ast \psi (x) = \dbavg{\phi} (x, r)$ so, by induction it follows that $\psi^{\ast n} (y)$ is also decreasing as a function of $\abs{y}$.
Since the above sum converges uniformly, we can conclude that $g^{(r)}_t$ is continuous on $\R^d$ and that $g^{(r)}_t (y)$ is a decreasing function of $\abs{y}$.
\end{proof}

\begin{proof}[Proof of Lemma \ref{lemma:magic}] 
By some elementary algebra,
\begin{multline*}
 \phi(y)^2-\phi(x)^2-2\phi(x)(\phi(y)-\phi(x)) + 2\frac{2r^2}{d+2}\phi(x)(\phi(y)-\phi(x))g(y) \\ 
 \begin{aligned} 
 &= \left(\phi(y)-\phi(x) + \frac{2r^2}{d+2}\phi(x)g(y)\right)^2 - \left(\frac{2r^2}{d+2}\right)^2\phi(x)^2 g(y)^2\\
 &\geq - \left(\frac{2r^2}{d+2}\right)^2\phi(x)^2,
 \end{aligned}
\end{multline*}
since $g(y)^2\leq 1$. Averaging the above inequality in $y$ twice around $x$ and multiplying by $\frac{d+2}{2r^2}$ yields
\begin{equation*}
\L\phi^2(x) - 2\phi(x)\L\phi(x)+2\phi(x)\dbavg{\phi\,g}(x,r) - 2\phi(x)^2\dbavg{g}(x,r) \geq -\frac{2r^2}{d+2}\phi(x)^2.
\end{equation*}
The first result then follows from the fact that $\gamma\leq g$. For the second inequality, set $a=\phi(y)$, $\epsilon = \frac{2r^2}{d+2}g(y)$ and $b=(1-\epsilon)^{1/3}\phi(x)$; then
\begin{equation*}
 \phi(y)^4-\phi(x)^4 - 4\left(1-\frac{2r^2}{d+2}g(y)\right)\phi(x)^3(\phi(y)-\phi(x)) = a^4-b^4-4b^3(a-b) + b^4-\phi(x)^4 - 4b^3(b-\phi(x)).
\end{equation*}
By convexity of the function $x\mapsto x^4$, $a^4-b^4-4b^3(a-b)\geq 0$, so the above expression is greater than
\begin{equation*}
 \phi(x)^4\left[ (1-\epsilon)^{4/3}-1 - 4(1-\epsilon)((1-\epsilon)^{1/3}-1) \right] \underset{\epsilon\to 0}{\sim} -\frac{2}{3}\phi(x)^4 \epsilon^2.
\end{equation*}
Hence there exists $c$ such that, for $r$ small enough,
\begin{equation*}
 \phi(y)^4-\phi(x)^4 - 4\left(1-\frac{2r^2}{d+2}g(y)\right)\phi(x)^3(\phi(y)-\phi(x)) \geq -4c\,r^4\phi(x)^4.
\end{equation*}
Averaging in $y$ twice around $x$ as above yields the second statement.
\end{proof}
\begin{proof}[Proof of Lemma \ref{lemma_differential_inequality}]
We define the following~:
\begin{equation*}
\calH(x,u,t) = e^{-\alpha(t-u)}G^{(r)}_{t-u}\ast h_{u}(x).
\end{equation*}
Differentiating with respect to $u$ yields
\begin{align} \label{diff_ineq_eqn}
\deriv*{\calH}{u}(x,u,t) &= e^{-\alpha(t-u)}G^{(r)}_{t-u}\ast\left(\partial_{u}h_{u}-\calL h_{u} + \alpha h_{u}\right)(x) \nonumber \\
&\leq e^{-\alpha(t-u)}G^{(r)}_{t-u}\ast g_{u}(x).
\end{align}
Integrating \eqref{diff_ineq_eqn} over $u\in[s,t]$, we have
\begin{align} \label{h_green_ineq}
h_{t}(x) &\leq e^{-\alpha(t-s)}G^{(r)}_{t-s}\ast h_{s}(x) + \intg{s}{t}e^{-\alpha(t-u)}G^{(r)}_{t-u}\ast g_{u}(x)\ld u.
\end{align}
By Jensen's inequality, 
\begin{align*}
\left(\intg{s}{t}e^{-\alpha(t-u)}G^{(r)}_{t-u}\ast g_{u}(x)\ld u\right)^{q} &\leq \left(\intg{s}{t}e^{-\alpha(t-u)}\ld u\right)^{q-1}\intg{s}{t}e^{-\alpha(t-u)}\left(G^{(r)}_{t-u}\ast g_{u}(x)\right)^{q}\ld u \\
&\leq \frac{1}{\alpha^{q-1}}\intg{s}{t}e^{-\alpha(t-u)}G^{(r)}_{t-u}\ast g_{u}^{q}(x)\ld u.
\end{align*}
The result follows by taking $\norm[q]{\cdot}$ norms on each side of \eqref{h_green_ineq}.
\end{proof}

\bibliography{biblio}

\end{document}